\documentclass[a4paper, 12pt]{amsart}
 \usepackage{amsmath, amsthm, amssymb, amsfonts}
\usepackage[utf8]{inputenc}
\usepackage[normalem]{ulem}
\usepackage[colorlinks = true,
            linkcolor = black,
            urlcolor  = black,
            citecolor = red,
            anchorcolor = red]{hyperref}
\usepackage{url}
\usepackage{longtable} 
\usepackage{enumitem}
\usepackage{relsize}
\usepackage{bm}
\usepackage{xcolor}
\usepackage{graphicx}
\usepackage{mathdots}
\usepackage{color}
\usepackage{tikz-cd,tikz}
\usepackage{amssymb}


\usetikzlibrary{arrows, decorations.markings}
\usetikzlibrary{positioning}
\tikzstyle{vecArrow} = [thick, decoration={markings,mark=at position
   1 with {\arrow[semithick]{open triangle 60}}},
   double distance=1.4pt, shorten >= 5.5pt,
   preaction = {decorate},
   postaction = {draw,line width=1.4pt, white,shorten >= 4.5pt}]
\tikzstyle{innerWhite} = [semithick, white,line width=1.4pt, shorten >= 4.5pt]
\tikzstyle{vecEq} = [thick,
   double distance=1.4pt,
   preaction = {decorate},
   postaction = {draw,line width=1.4pt, white,shorten >= 4.5pt}]
\usetikzlibrary{matrix}
\usetikzlibrary{shapes}
\usetikzlibrary{arrows,decorations.markings, tikzmark}
\usepackage{stmaryrd}

\pgfarrowsdeclare{bad to}{bad to}
{
  \pgfarrowsleftextend{-2\pgflinewidth}
  \pgfarrowsrightextend{\pgflinewidth}
}
{
  \pgfsetlinewidth{0.8\pgflinewidth}
  \pgfsetdash{}{0pt}
  \pgfsetroundcap
  \pgfsetroundjoin
  \pgfpathmoveto{\pgfpoint{-3\pgflinewidth}{4\pgflinewidth}}
  \pgfpathcurveto
  {\pgfpoint{-2.75\pgflinewidth}{2.5\pgflinewidth}}
  {\pgfpoint{0pt}{0.25\pgflinewidth}}
  {\pgfpoint{0.75\pgflinewidth}{0pt}}
  \pgfpathcurveto
  {\pgfpoint{0pt}{-0.25\pgflinewidth}}
  {\pgfpoint{-2.75\pgflinewidth}{-2.5\pgflinewidth}}
  {\pgfpoint{-3\pgflinewidth}{-4\pgflinewidth}}
  \pgfusepathqstroke
}

\def\M{\mathcal{M}}

\theoremstyle{definition}
\newtheorem{definition}{Definition}[section]
\newtheorem{theorem}[definition]{Theorem}
\newtheorem{example}[definition]{Example}
\newtheorem{proposition}[definition]{Proposition}

\newtheorem{corollary}[definition]{Corollary}

\newtheorem{lemma}[definition]{Lemma}
\newtheorem{conjecture}[definition]{Conjecture}

\newtheorem*{question}{Question}
\newtheorem{remark}[definition]{Remark}

\newtheorem{notation}[definition]{Notation}

\newcommand{\QQ}{\mathbb{Q}}

\newcommand{\BA}{{\mathbb{A}}}

\newcommand{\BC}{{\mathbb{C}}}
\newcommand{\BD}{{\mathbb{D}}}

\newcommand{\BF}{{\mathbb{F}}}

\newcommand{\BH}{{\mathbb{H}}}

\newcommand{\BL}{{\mathbb{L}}}

\newcommand{\BP}{{\mathbb{P}}}
\newcommand{\BQ}{{\mathbb{Q}}}
\newcommand{\BR}{{\mathbb{R}}}

\newcommand{\BZ}{{\mathbb{Z}}}

\newcommand{\CA}{{\mathcal A}}
\newcommand{\CB}{{\mathcal B}}
\newcommand{\CC}{{\mathcal C}}

\newcommand{\CE}{{\mathcal E}}
\newcommand{\CF}{{\mathcal F}}

\newcommand{\CH}{{\mathcal H}}

\newcommand{\CJ}{{\mathcal J}}

\newcommand{\CM}{{\mathcal M}}

\newcommand{\CO}{{\mathcal O}}

\newcommand{\CR}{{\mathcal R}}
\newcommand{\CS}{{\mathcal S}}
\newcommand{\CT}{{\mathcal T}}

\newcommand{\CW}{{\mathcal W}}
\newcommand{\CX}{{\mathcal X}}
\newcommand{\CY}{{\mathcal Y}}
\newcommand{\CZ}{{\mathcal Z}}
\newcommand{\CHom}{{\mathcal Hom}}

\DeclareMathOperator{\id}{id}
\DeclareMathOperator{\coker}{coker}

\newcommand{\Pic}{\mathop{\rm Pic}\nolimits}

\newcommand{\Ext}{{\rm Ext}}
\newcommand{\Li}{{\rm Li}}

\newcommand{\Image}{{\rm Im}}
\newcommand{\Kernel}{{\rm Ker}}

\newcommand{\pt}{{\mathsf{p}}}

\newcommand{\sfb}{{\mathsf{b}}}
\newcommand{\sfw}{{\mathsf{w}}}
\newcommand{\sfc}{{\mathsf{c}}}
\newcommand{\sff}{{\mathsf{f}}}
\newcommand{\congpf}{\xymatrix@1@=15pt{\ar[r]^-\sim&}}

\newcommand{\GW}{\mathrm{GW}}

\newcommand{\ch}{\mathrm{ch}}

\newcommand{\Coh}{\mathrm{Coh}}
\newcommand{\Per}{\mathrm{Per}}
\newcommand{\Hom}{\mathrm{Hom}}
\newcommand{\supp}{\mathrm{supp}}
\newcommand{\PT}{\mathrm{PT}}
\newcommand{\DT}{\mathrm{DT}}
\newcommand{\bs}{\mathrm{BS}}
\newcommand{\PPT}{{}^p\PT}
\newcommand{\PDT}{{}^p\DT}
\newcommand{\Pairs}{\mathrm{Pairs}}
\newcommand{\PPairs}{{}^p\mathrm{Pairs}}

\newcommand{\fbs}{\CF_{\textup{BS}}}
\newcommand{\tbs}{\CT_{\textup{BS}}}

\newcommand{\HH}{\mathrm{H}}
\newcommand{\rk}{\mathrm{rk}}
\newcommand{\len}{\mathrm{len}}
\newcommand{\twist}{\mathrm{t}_{\Phi}}
\newcommand{\cotwist}{\mathrm{cot}_{\Phi}}
\newcommand{\eff}{\mathrm{eff}}
\newcommand{\Nef}{\mathrm{Nef}}
\newcommand{\Amp}{\mathrm{Amp}}
\newcommand{\Obj}{\mathrm{Obj}}
\newcommand{\ex}{\mathrm{ex}}

\usepackage{tikz}
\usepackage{tikz-cd}
\usetikzlibrary{decorations.pathmorphing}
\AtBeginDocument{%
	\def\MR#1{}
}

\newcommand{\ssetminus}{\mathbin{\vcenter{\hbox{$\scriptscriptstyle\setminus$}}}}

\DeclareSymbolFont{extraup}{U}{zavm}{m}{n}
\DeclareMathSymbol{\varheart}{\mathalpha}{extraup}{86}
\DeclareMathSymbol{\vardiamond}{\mathalpha}{extraup}{87}

\setlist[enumerate]{label={(\roman*)}}

\begin{document}

\baselineskip=16pt
\parskip=5pt

\title[Weyl symmetry via spherical twists]{Weyl symmetry for curve counting invariants via spherical twists}
\author{Tim-Henrik Buelles}
\address {ETH Z\"urich, Department of Mathematics}
\email{buelles@math.ethz.ch}
\author{Miguel Moreira}
\address {ETH Z\"urich, Department of Mathematics}
\email{miguel.moreira@math.ethz.ch}

\date{\today}
\maketitle
\begin{abstract} 
We study the curve counting invariants of Calabi--Yau 3-folds via the Weyl reflection along a ruled divisor. We obtain a new rationality result and functional equation for the generating functions of Pandharipande--Thomas invariants. When the divisor arises as resolution of a curve of $A_1$-singularities, our results match the rationality of the associated Calabi--Yau orbifold.

The symmetry on generating functions descends from the action of an infinite dihedral group of derived auto-equivalences, which is generated by the derived dual and a spherical twist. Our techniques involve wall-crossing formulas and generalized DT invariants for surface-like objects.

\end{abstract}
\setcounter{tocdepth}{1}
\tableofcontents

\section{Introduction}

\subsection{Overview}\label{subsec: overview}

Let $X$ be a Calabi--Yau 3-fold containing a smooth geometrically ruled divisor~$W$. Physical considerations for BPS state counts~\cite{KMP96,KM96} suggest that the curve counting invariants of $X$ are constrained by this constellation. More precisely, let $\sfw\in H_4(X,\BZ)$ be the class of the divisor and $\sfb\in H_2(X,\BZ)$ be the class of the rational curve of the ruling. Consider the \emph{Weyl symmetry} on $H_2(X,\BZ)$ defined by
\begin{equation*}\beta \longmapsto \beta'=\beta + (\sfw\cdot\beta)\, \sfb\,.\end{equation*}
Since $\sfw\cdot \,\sfb=-2$, this defines a reflection. The guiding example is that of an elliptic Calabi--Yau 3-fold
\[ X \to \BP^1\times\BP^1\]
which is fibered in elliptic K3 surfaces over $\BP^1$, see Section~\ref{subsec: STU}. For K3 curve classes $\beta$, the Weyl symmetry $\beta\leftrightarrow\beta'$ is exactly realized on the level of Gopakumar--Vafa invariants
\[ n_{g,\beta}^{K3} = n_{g,\beta'}^{K3}\,.\]
The equality is reminiscent of the monodromy for quasi-polarized K3 surfaces. For arbitrary $\beta\in H_2(X,\BZ)$ such an equality cannot hold, for example when $\beta\in H_2(W,\BZ)$ in which case the invariants are given by the local surface~$K_W$. Instead, we find that the Weyl symmetry \emph{is realized as a functional equation}. This symmetry is analogous to the rationality and the $q\leftrightarrow q^{-1}$ invariance for generating series of Pandharipande--Thomas stable pairs invariants. 

The Pandharipande--Thomas (PT)~\cite{PT09} invariants~$\PT_{\beta, n}\in\BZ$ are curve counting invariants enumerating stable pairs in the derived category $D^b(X)$ with curve class $\beta$ and Euler characteristic $n\in\BZ$. Our results concern the $2$-variable generating series\footnote{We use the non-standard sign $-q$ which simplifies some formulas.}
\[ \PT_{\beta}(q,Q) = \sum_{n,j\in \BZ} \PT_{\beta+j\sfb, n} \, (-q)^n\, Q^j\,.\]
Let $\CE$ be a rank 2 bundle over a smooth projective curve~$C$ of genus~$g$ and $p\colon W=\BP_C(\CE)\to C$ be the corresponding $\BP^1$-bundle. We will assume that $X$ admits a nef class $A\in\Nef(X)$ which vanishes only on the extremal ray spanned by~$\sfb$, i.e.\footnote{We do not require the line bundle to be basepoint-free and we do not assume a contraction morphism. Such a nef class exists in many cases, e.g.\ for elliptic Calabi--Yau 3-folds.}
\begin{equation}\label{eq: nef}\tag{$\diamondsuit$}
\Kernel\left(N^\eff_1(X)\xrightarrow{A\cdot\,}\BZ\right)=\BZ_{\geq 0}\cdot \sfb\,.
\end{equation}
The generating series $\PT_0$ of multiples of $\sfb$ is easily computed as
\[\PT_0(q,Q)=\prod_{j\geq 1}(1-q^j Q )^{(2g-2)j}.\]
Our main result is:
\begin{theorem}\label{thm: main}
Let $X$ be a Calabi--Yau 3-fold containing a smooth divisor~$W$ satisfying condition~\eqref{eq: nef}. Then
\[ \frac{\PT_\beta(q,Q)}{\PT_0(q,Q)} \in \BQ(q,Q)\]
is the expansion of a rational function $f_\beta(q,Q)$ such that
\begin{align*}
    f_\beta(q^{-1},Q) &=f_\beta(q,Q)\,,\\
    f_\beta(q,Q^{-1})&=Q^{-\sfw\cdot\beta}\,f_\beta(q,Q)\,.
\end{align*}
\end{theorem}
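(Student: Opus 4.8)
The plan is to realize both functional equations as shadows of an infinite dihedral group $\Dih$ of derived autoequivalences of $D^b(X)$, generated by the (shifted) derived dualizing functor $\BD=\RHom(-,\CO_X)[\,\cdot\,]$ and the spherical twist $\twist=\mathrm{t}_\Phi$ attached to the functor $\Phi=i_*p^*\colon D^b(C)\to D^b(X)$ (possibly twisted by a line bundle on $W$), where $i\colon W\hookrightarrow X$ and $p\colon W\to C$. The first task is to check that $\Phi$ is indeed spherical: here one uses $K_X=\CO_X$ together with the adjunction identity $N_{W/X}=K_W$ and the fact that $K_W$ restricts to $\CO(-2)$ on the fibers of the ruling, so that $\Phi$ falls into Horja's ``EZ-spherical'' paradigm. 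Granting this, one computes the induced action on the numerical Grothendieck group: $\twist$ acts on curve classes by the Weyl reflection $\beta\mapsto\beta'=\beta+(\sfw\cdot\beta)\sfb$ and $\BD$ acts by $n\mapsto -n$ on the holomorphic Euler characteristic, while both fix the nef class $A$ of \eqref{eq: nef}; in particular $\langle\BD,\twist\rangle$ generates $\Dih$, and $\twist$ induces on the $\sfb$-multiplicity grading the involution $j\mapsto \sfw\cdot\beta-j$, i.e.\ $Q\mapsto Q^{-1}$ up to the monomial $Q^{\sfw\cdot\beta}$.

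Next I would set up the enumerative side in the motivic Hall algebra of a tilted heart $\CA\subset D^b(X)$ containing the PT stable pairs, following Bridgeland and Toda. The series $\PT_\beta(q,Q)$ — organized using the class $A$, which is exactly what makes the sum over curve classes $\beta+j\sfb$ into a well-defined generating function with $Q$-exponents bounded below — arises as the image under the integration map of an element built from PT stable-pair objects (and its perverse variants $\PPT$, $\PDT$). The autoequivalences $\BD$ and $\twist$ carry $\CA$ together with the PT stability datum to other hearts and stability data, and since generalized Donaldson--Thomas invariants depend only on the abstract Calabi--Yau triangulated category together with its charge lattice, the invariants computed from the image hearts are the $\BD$- resp.\ $\twist$-transforms of the original ones.

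The core of the argument is then a wall-crossing computation relating the PT chamber to its $\twist$-image (and likewise its $\BD$-image). Crossing the intervening walls multiplies the generating function by a factor built from the generalized DT invariants of the objects that become semistable along the way; the role of \eqref{eq: nef} is to force all such objects to be \emph{surface-like}, namely $2$-dimensional sheaves supported on $W$ together with $1$-dimensional sheaves whose class is a multiple of $\sfb$, so that the wall-crossing factor reduces to an enumerative invariant of the ruled surface $W=\BP_C(\CE)$ (equivalently the local surface $K_W$). A direct evaluation of this factor — the same local computation that yields $\PT_0(q,Q)=\prod_{j\geq 1}(1-q^jQ)^{(2g-2)j}$ — shows it to be exactly $\PT_0$, so that $\PT_\beta=\PT_0\cdot f_\beta$ with $f_\beta$ the ``autoequivalence-invariant part''. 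Transporting the functional equations through this identity gives $f_\beta(q^{-1},Q)=f_\beta(q,Q)$ from $\BD$ and $f_\beta(q,Q^{-1})=Q^{-\sfw\cdot\beta}f_\beta(q,Q)$ from $\twist$.

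Finally, rationality is a formal consequence. By the Bridgeland--Toda rationality of the one-variable PT series $\PT_{\beta+j\sfb}(q)$ for each fixed $j$, the $Q$-coefficients of $f_\beta$ are rational functions of $q$; and a Laurent series in $Q$ with exponents bounded below (boundedness coming from pairing with an ample class on the projective variety $X$) which satisfies $f_\beta(q,Q^{-1})=Q^{-\sfw\cdot\beta}f_\beta(q,Q)$ is automatically a Laurent polynomial in $Q$, whence $f_\beta\in\BQ(q,Q)$. I expect the two main obstacles to be: (i) establishing the sphericality of $\Phi$ and the precise action of $\twist$ in the stated generality, in particular handling the contributions of $H^1(C,\CO_C)$ when $g\geq 1$ (where the naive structure sheaf $\CO_W$ is not itself spherical); and (ii) the wall-crossing bookkeeping — proving that the objects crossed between the PT chamber and its $\twist$-image are precisely the surface-like ones, controlling the motivic Hall algebra manipulations (finiteness of the relevant strata, behavior of the integration map, signs), and matching the resulting factor identically with $\PT_0$.
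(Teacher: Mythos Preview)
Your high-level plan --- the dihedral group $\langle\BD^X,\twist\rangle$, Hall-algebra wall-crossing, and a local factor coming from the ruled surface --- is the paper's philosophy, but two of your key steps are misordered or misidentified, and as written the argument does not close.

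\textbf{Rationality is not a formal consequence of the functional equation.} You argue that a Laurent series in $Q$ with bounded-below exponents satisfying $f_\beta(q,Q^{-1})=Q^{-\sfw\cdot\beta}f_\beta(q,Q)$ must be a Laurent polynomial in~$Q$. That implication is valid only if the identity holds \emph{termwise as formal series}. But neither $\twist$ nor $\rho=\twist\circ\BD^X$ carries stable pairs to stable pairs (the paper says so explicitly: ``the image of a stable pair under $\rho$ leads to complicated objects''), so there is no coefficient-by-coefficient identity to invoke; what the autoequivalence produces is a series expanded in the \emph{opposite} direction. In the paper the logical order is reversed: rationality of $\PPT_\gamma$ is proved first, by a combinatorial analysis of the $\nu$-wall-crossing expansion (Section~6.2, quasi-polynomiality in Toda's style), and only afterwards is the functional equation obtained by recognizing $\PPT_\gamma$ and $\rho_\gamma(\PPT_\gamma)$ as two different expansions of the same rational function (Lemmas~6.5--6.6). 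The careful bookkeeping of expansion directions (Section~3.5 and the $\zeta$-wall-crossing of Section~7) is not a technicality but the content.

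\textbf{The wall-crossing factor involving surface-like objects is not $\PT_0$.} The multiplicative identity $\PT_\beta=\PT_0\cdot\bs_\beta$ (Section~5) comes from a wall whose category $\CW=\Coh_1(X)\cap\CT_\bs$ consists only of pure one-dimensional sheaves on fibers; their mutual Euler pairings vanish and the wall-crossing is a genuine product. The wall-crossing that implements the $\rho$-symmetry, by contrast, lives in the perverse heart $\CA=\langle\CF[1],\CT\rangle$ and involves two-dimensional objects supported on $W$ with \emph{nonzero} mutual $\chi$; as noted in the Remark after Lemma~6.4, the brackets $\{J(\delta),J(\delta')\}$ interact and no product formula holds. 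One therefore cannot wall-cross directly from the PT chamber to its $\twist$-image and read off $\PT_0$ as the discrepancy. The paper's architecture is instead: (i) construct perverse PT pairs in ${}^p\CB=\langle\CO_X[1],\CA_{\leq 1}\rangle_\ex$, after proving the hard fact that $\rho$ respects perverse dimension, $\rho(\CA_1)\subset\CA_1$ and $\rho(\CA_0)\subset\CA_0[-1]$ (Theorem~2.2 --- this is the technical heart you are underestimating); (ii) establish rationality and the single equation $f_\gamma(q^{-1},Q^{-1})=Q^{-\sfw\cdot\beta+2r}f_\gamma$ for $\PPT_\gamma$ (Theorem~1.2); (iii) identify $\PPT_{(0,\beta)}$ with $\bs_\beta=\PT_\beta/\PT_0$ by a second, $\zeta$-wall-crossing (Theorem~1.3, Section~7); and finally (iv) combine with the already-known $q\leftrightarrow q^{-1}$ symmetry to obtain both equations of Theorem~1.1.
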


The rationality in~$q$ and the invariance under $q\leftrightarrow q^{-1}$ are well-known~\cite{Br11,PT09,To10,To20}. The symmetry is induced by the action of the derived dual~$\BD^X$ on $D^b(X)$. Analogously, we introduce a derived anti-equivalence~$\rho$ of order two, which promotes the Weyl reflection to the derived category and induces the second functional equation on generating functions. It is defined as 
\[ \rho = t_\Phi\circ\BD^X\,,\]
where $t_\Phi$ is a derived equivalence of infinite order induced by a spherical functor~$\Phi$.

The image of a stable pair under~$\rho$ leads to complicated objects in the derived category and a symmetry on invariants is not easily deduced. Instead, we consider an abelian category 
\[ \CA\subset D^{[-1,0]}(X)\,,\]
defined as a tilt of $\Coh(X)$ along a torsion pair. The action of $\rho$ on $\CA$ is analogous to the action of $\BD^X$ on $\Coh(X)$. In particular, we consider a notion of dimension which is preserved by $\rho$ (up to shift). Define the extension closure
\[ {}^p\CB=\Big\langle \CO_X[1]\,,\CA_{\leq 1}\Big\rangle_\ex\,.\]
The action of $\rho$ induces a symmetry for \emph{perverse PT invariants}~$\PPT_{\gamma, n}$ enumerating torsion-free objects in~${}^p\CB$. These objects are allowed to have non-trivial first Chern class a multiple of the class $\sfw$. For $r\in\BZ$ and $\gamma=(r\sfw,\beta)$ define the generating series
\[ {}^p\PT_\gamma(q,Q) = \sum_{n,j\in\BZ} \PPT_{\gamma+j\sfb, n}\, (-q)^n\, Q^j \in \BQ[[q^{\pm 1}, Q^{\pm 1}]]\,. \]
The rationality and functional equation for ${}^p\PT_\gamma$ is proved via Joyce's wall-crossing formula~\cite{Jo07}. The formula involves generalized DT invariants for surface-like objects supported on $W$ with non-trivial Euler pairings.

\begin{theorem}\label{thm: perverse} 
\[ {}^p\PT_\gamma(q,Q) \in \BQ(q,Q)\]
is the expansion of a rational function $f_\gamma\in\BQ(q,Q)$ with functional equation
\[f_\gamma(q^{-1},Q^{-1}) = Q^{-\sfw\cdot\beta+2r}\, f_\gamma(q,Q)\,.\]

\end{theorem}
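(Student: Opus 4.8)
The strategy is to compute ${}^p\PT_\gamma$ by a wall-crossing inside the heart ${}^p\CB$ and to transport the reflection symmetry of $\rho$ through Joyce's wall-crossing formula~\cite{Jo07}. I would first introduce a path of weak stability conditions on ${}^p\CB$, interpolating between the stability whose torsion-free semistable objects of class $(r\sfw,\beta+j\sfb)$ are precisely the perverse stable pairs, and a ``limit'' stability for which such an object is semistable only if it splits as $\CO_X[1]\oplus F$ with $F$ a $\sigma$-semistable object of $\CA_{\leq 1}$. Crossing the walls between the two chambers and applying the integration map on the motivic Hall algebra then expresses ${}^p\PT_\gamma(q,Q)$ through a universal formula --- of the shape dictated by~\cite{Jo07} --- in the generalized DT invariants $\Omega_\delta$ of $\sigma$-semistable objects of $\CA$, where $\delta$ ranges over classes with first Chern class proportional to $\sfw$ and curve class in the coset $\beta+\BZ\sfb$. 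The surface-like objects supported on $W$ carry non-trivial (anti-symmetric, as $X$ is Calabi--Yau) Euler pairings $\chi(\delta,\delta')$, and these enter the combinatorial coefficients of the formula.

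The second ingredient is the $\rho$-invariance of the DT side. Since $\rho=t_\Phi\circ\BD^X$ is an anti-equivalence of $D^b(X)$ of order two preserving the heart $\CA$ up to shift and preserving the dimension filtration, choosing the weak stability $\sigma$ on $\CA_{\leq 1}$ compatibly with $\rho$ --- much as Gieseker stability of pure sheaves is compatible with $\BD^X$ --- one finds that $\rho$ identifies the moduli stacks of $\sigma$-semistable objects, whence $\Omega_\delta=\Omega_{\rho_*\delta}$. On numerical classes $\rho_*$ fixes the $\sfw$-component $r\sfw$, acts on the curve class by the Weyl reflection twisted by this component, so that the $\sfb$-multiplicity transforms as $j\mapsto\sfw\cdot\beta-2r-j$, and sends the holomorphic Euler characteristic $n\mapsto-n$; it is moreover a pairing isometry up to a global sign. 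Feeding $\Omega_\delta=\Omega_{\rho_*\delta}$ together with the elementary transformation of the universal coefficients under $q\mapsto q^{-1}$, $Q\mapsto Q^{-1}$ into the wall-crossing formula reproduces the same expression up to the overall monomial $Q^{-\sfw\cdot\beta+2r}$ --- reindexing the summation $(j,n)$ by $(\sfw\cdot\beta-2r-j,-n)$ makes this bookkeeping transparent --- which is exactly the asserted functional equation.

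For rationality, I would factor the wall-crossing through an intermediate series of \emph{limit perverse stable pair} invariants ${}^pL_\gamma(q,Q)$ attached to the limit stability above. The relation ${}^p\PT_\gamma={}^pL_\gamma\cdot(\text{explicit rational factor})$ produced by the wall-crossing is manifestly rational in $(q,Q)$, so it suffices to prove ${}^pL_\gamma\in\BQ(q,Q)$. This is a statement about $\sigma$-semistable surface-like objects on $W$, governed by the geometry of the ruled surface $W=\BP_C(\CE)$ and its local Calabi--Yau threefold $K_W$: the ruling provides the reflection, the generating series of the relevant $\Omega_\delta$ over $\beta+\BZ\sfb$ is rational, and this is consistent with the closed form $\PT_0(q,Q)=\prod_{j\ge1}(1-q^jQ)^{(2g-2)j}$ for the trivial curve class. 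The $q$-rationality and the $q\leftrightarrow q^{-1}$ invariance alone would already follow from $\BD^X$ by the argument of~\cite{Br11,To10}; the role of the spherical twist $t_\Phi$ is precisely to bundle in the $Q$-direction.

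The step I expect to be the main obstacle is the DT-theoretic infrastructure underlying the first and third paragraphs: constructing the interpolating weak stability conditions on ${}^p\CB$, showing that the moduli stacks of $\sigma$-semistable surface-like objects supported on $W$ are of finite type and carry well-defined generalized DT invariants despite their non-trivial Euler pairings, establishing the boundedness that renders the wall-crossing finite, and proving the rationality of ${}^pL_\gamma$ from the local geometry of $K_W$. By contrast, once the wall-crossing formula and the symmetry $\Omega_\delta=\Omega_{\rho_*\delta}$ are available, deriving the functional equation for ${}^p\PT_\gamma$ is essentially formal.
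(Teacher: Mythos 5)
Your high-level strategy — Joyce wall-crossing on a path of torsion pairs in ${}^p\CB$, generalized DT invariants of surface-like objects supported on $W$ with non-trivial Euler pairings, $\rho$-equivariance of the stability, and the action of $\rho$ on numerical classes — is exactly the paper's framework, and your identification of what should drive the functional equation (the reflection $j\mapsto\sfw\cdot\beta-2r-j$, $n\mapsto-n$ from Proposition~\ref{prop: actionCoh}, and the invariance of DT side under $\rho$) is on target. But three of your structural claims do not match the actual proof and, more importantly, one of them would fail.

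First, you expect the wall-crossing to produce a \emph{product} formula ${}^p\PT_\gamma={}^pL_\gamma\cdot(\text{explicit rational factor})$. This is the DT/PT intuition from $\Coh_{\le1}(X)$, where $\chi$ vanishes on pairs of $1$-dimensional sheaves; but here the wall-crossing terms are surface-like objects in $\CA_{\le1}$ on which the Euler pairing is non-trivial, so the Poisson brackets $\{J(\delta),J(\delta')\}$ do not vanish and the exponentials do not commute. The paper flags this explicitly (Remark after equation~\eqref{eq: pervwallcrossing}): there is no product formula, and hence no ``explicit rational factor'' separating ${}^p\PT_\gamma$ from a limit pair series. Second, the paper does not introduce a ``limit'' stability with split semistable objects $\CO_X[1]\oplus F$; there is no such endpoint. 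Instead the family $\Pairs^{\nu,\delta}$ interpolates with real parameter $\delta$, each $\PDT^{\nu,\delta}_\gamma$ is already a Laurent polynomial by Lemma~\ref{lem: deltapairsfinite}, and rationality of ${}^p\PT_\gamma$ comes from the combinatorics of the wall-crossing itself: boundedness (Lemma~\ref{lem: cclassesfinite}, Corollary~\ref{cor: finite}) plus periodicity of the $J$-invariants under $\otimes\CO_X(A)$ reduce equation~\eqref{eq: wallcrossinggrouping} to finitely many quasi-polynomial contributions, handled by \cite[Lemma~2.21]{BCR18}. Your reduction to ``rationality of ${}^pL_\gamma$ from the local geometry of $K_W$'' has no counterpart and no proof path. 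Third, for the functional equation, the paper does not plug $\Omega_\delta=\Omega_{\rho_*\delta}$ back into the wall-crossing coefficients. It proves the sharper statement $\rho(\Pairs^{\nu,\delta})=\Pairs^{\nu,-\delta}$ (Lemma~\ref{lem: symmetrydelta}), giving $\PDT^{\nu,\delta}_\alpha=\PDT^{\nu,-\delta}_{\rho(\alpha)}$ on the level of pair invariants, and then compares the two limits $\delta\to\pm\infty$: $\delta\to+\infty$ recovers ${}^p\PT_\gamma$ (Lemma~\ref{lem: wallcrossinglimite}), while $\delta\to-\infty$ recovers ${}^p\PT_{\rho(\gamma)}$ by the duality, and the degree estimate Lemma~\ref{lem: limitdegree} shows that these limits agree as rational functions. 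Your ``reindexing'' is the right bookkeeping for the numerical monomial, but without the duality on pair stacks and the degree-limit control there is a real gap: the $\rho$-symmetry of generalized DT invariants alone, fed into non-commuting exponentials, does not by itself close the argument.
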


Theorem~\ref{thm: main} is a consequence of Theorem~\ref{thm: perverse} in the special case $r=0$, together with the $q\leftrightarrow q^{-1}$ symmetry. The comparison between stable pairs and perverse stable pairs is given by a second wall-crossing. The following formula holds as an equality of rational functions but not necessarily as generating series.

\begin{theorem}\label{thm: wall-crossing}
\[{}^p\PT_{(0,\beta)}(q,Q) = \frac{\PT_\beta(q,Q)}{\PT_0(q,Q)}\,.\]
\end{theorem}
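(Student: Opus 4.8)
The plan is to compare the stable pairs generating series $\PT_\beta(q,Q)$ with the perverse stable pairs series ${}^p\PT_{(0,\beta)}(q,Q)$ by a single wall-crossing in the space of weak stability conditions on the category $D^{[-1,0]}(X)$ (or more precisely on the subcategory generated by $\CO_X[1]$ and $\CA_{\leq 1}$). Both invariants count objects $I^\bullet$ that are two-term complexes $\{\CO_X \to F\}$ with $F$ of dimension $\leq 1$; the difference is the ambient heart and the precise stability used to cut out the "stable pairs" locus. The stable pairs of Pandharipande--Thomas are the stable objects in $\Coh_{\leq 1}(X)$-terms (cokernel a pure one-dimensional sheaf, zero-dimensional cokernel of the section forbidden), while the perverse stable pairs live in the tilted heart ${}^p\CB$ and allow the torsion-free object to acquire first Chern class a multiple of $\sfw$ — but for $r=0$ the Chern class constraint is vacuous, so morally we are looking at the same moduli problem presented in two different $t$-structures.

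\begin{enumerate}
\item First I would set up the relevant weak stability conditions on ${}^p\CB$, following Toda's framework for stable pairs as limits of Bridgeland-type stability. One endpoint of a path in the space of stability conditions recovers the PT moduli space; the other endpoint recovers the perverse PT moduli space. The key point is that the class $\sfb$ lies on the boundary ray \eqref{eq: nef} killed by the nef class $A$, so objects supported on multiples of $\sfb$ (equivalently, supported on the ruling fibers of $W$) are exactly the ones whose stability can change along the wall.
\item Then I would run Joyce's wall-crossing formula \cite{Jo07} across the single wall separating the two chambers. The generalized DT invariants of the objects that are created or destroyed along the wall are precisely those of $\CO_X[1]$-free sheaves supported on fibers of $p\colon W\to C$, i.e.\ the objects counted by $\PT_0$. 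Concretely, the wall-crossing identity should read
\[ {}^p\PT_{(0,\beta)}(q,Q)\;=\;\PT_\beta(q,Q)\cdot\big(\text{wall-crossing factor}\big)^{-1}, \]
and I would identify the wall-crossing factor with $\PT_0(q,Q)=\prod_{j\geq 1}(1-q^jQ)^{(2g-2)j}$ by computing the contribution of the relevant Joyce--Song pair invariants for sheaves on the local surface $K_W$ restricted to the fibers. The exponent $(2g-2)j$ is exactly the signed count (Euler characteristic) of the moduli of length-$j$ sheaves on a fiber $\BP^1$, integrated over the base curve $C$ of genus $g$, which gives the factor $2g-2=-\chi(C)$.
\item Finally I would note that the resulting equality is an identity of rational functions: both sides are rational by Theorem~\ref{thm: perverse} (with $r=0$) together with the known rationality of $\PT_\beta/\PT_0$, and the wall-crossing produces an a priori identity of formal Laurent series whose two sides, being expansions of rational functions in possibly different regions (the "$q\to 0$" versus "$q\to\infty$" expansions), agree as rational functions but need not agree term-by-term as series — hence the stated caveat.
\end{enumerate}

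The main obstacle I expect is step~2: making Joyce's wall-crossing formula rigorous in this setting requires checking that the heart ${}^p\CB$ carries enough structure (a well-behaved weak stability condition, the support property or its substitute, boundedness of the relevant moduli stacks, and existence of the Joyce motivic/Hall-algebra integration map) so that the wall-crossing is genuinely a finite sum with the expected contributions. In particular one must verify that only the fiber-supported sheaves contribute to the wall — this is where condition~\eqref{eq: nef} does the essential work, guaranteeing that any destabilizing subobject or quotient has curve class a non-negative multiple of $\sfb$ and hence is supported on the ruling. Identifying the precise combinatorial shape of the wall-crossing factor and matching it to the closed product formula for $\PT_0$ is the computational heart of the argument; the rest is bookkeeping with $t$-structures.
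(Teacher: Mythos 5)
There is a genuine structural gap in your proposal, and it is exactly the point that motivates the paper's two-step route. You write that ``both invariants count objects $I^\bullet$ that are two-term complexes $\{\CO_X\to F\}$ with $F$ of dimension $\leq 1$'' and that the difference is merely ``the ambient heart and the precise stability,'' and you then propose a single wall-crossing in (weak stability conditions on) ${}^p\CB$. But PT pairs are typically \emph{not} objects of ${}^p\CB=\langle\CO_X[1],\CA_{\leq 1}\rangle_\ex$: the PT cokernel can be a $0$-dimensional sheaf concentrated on $W$ and the underlying pure $1$-dimensional sheaf $F$ need not satisfy $R^1p_\ast\iota^\ast F=0$, so $F\notin\CA$ in general (for instance $\CO_B(-2)$ for a fiber $B$ lies in $\CF[1]$ after a shift, not in $\CT\subset\CA$). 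Since $\PT$ and $\PPT$ are defined by Hall-algebra integration in two different tilts of $D^b(X)$ (the heart built from $\Coh(X)$ versus the one built from $\CA$), there is no single Joyce wall-crossing identity between them. The paper circumvents this by introducing the Bryan--Steinberg pairs $\bs$ as a hinge: a wall-crossing \emph{inside} the $\Coh$-Hall-algebra gives the product identity $\PT_\beta=\PT_0\cdot\bs_\beta$ (with the product formula available precisely because the wall-crossing terms are $1$-dimensional and $\chi$ vanishes on $\Coh_{\leq1}\times\Coh_{\leq1}$); then $\bs$-pairs are \emph{identified} with $(\zeta,0)$-pairs in ${}^p\CB$; and finally the $\zeta$-wall-crossing inside the $\CA$-Hall-algebra carries $(\zeta,0)$-pairs to $\PPT$, but this second step is not a product formula because surface-like objects give nontrivial Euler pairings, so the equality is only one of rational functions under re-expansion.

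Your step~3 (the re-expansion caveat) and your expectation that the correcting factor is $\PT_0$ are both right in spirit, but the factor emerges at the first wall-crossing ($\PT\leftrightarrow\bs$), not as a contribution computed directly in the passage to $\PPT$. Your heuristic for the exponent $(2g-2)j$ is also roughly correct, though the paper obtains $\PT_0$ without needing to re-derive it during the wall-crossing (it appears as the wall-crossing factor $f(q,Q)$, pinned down by the normalization $\bs_0=1$). The essential missing idea in your write-up is the intermediate category $(\tbs,\fbs)$ and the observation that $\tbs=\CT_0\subset\CA_{\leq1}$ lets the $\bs$-moduli live simultaneously in both Hall algebras, so that one can cross from one tilt to the other. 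Without some such bridge, your ``single wall-crossing'' cannot be formulated.
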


\subsection{Crepant resolution}\label{subsec: crepant}
The results and techniques of this paper are strongly influenced by the recent proof of the crepant resolution conjecture by Beentjes, Calabrese and Rennemo~\cite{BCR18} for Donaldson--Thomas~(DT) invariants~\cite{Th00}. Consider a type III contraction $X\to Y$ with exceptional divisor $W$, contracting the rational curves of the ruling. Assume that $X\to Y$ is the distinguished crepant resolution of the (singular) coarse moduli space of a Calabi--Yau orbifold~$\CY$
\[
\begin{tikzcd}[row sep=small, column sep=small]
\CY \ar[dr]&&X\ar[dl]\\&Y&
\end{tikzcd}
\]
 The derived McKay correspondence proposed by Bridgeland, King, and Reid~\cite{BKR01}  induces a derived equivalence
\[ \Phi\colon D^b(X) \xrightarrow{\sim} D^b(\CY)\,,\]
which restricts to an equivalence~\cite[Theorem~1.4]{Ca16}
\[ \CA \xrightarrow{\sim}\Coh(\CY) \,.\]
The notion of perverse stable pairs on $X$ coincides with the image of stable pairs on $\CY$. The results of Theorems~\ref{thm: main},~\ref{thm: perverse}, and \ref{thm: wall-crossing} are the rationality and functional equation of $\PT(\CY)$ and the wall-crossing between $\Phi^{-1}\big(\PT(\CY)\big)$ and Bryan--Steinberg pairs of $X\to Y$~\cite{BCR18,BS16,Ca16}. The nef class is given by the pullback of an ample class on~$Y$ and the derived anti-equivalence $\rho$ corresponds to the derived dual of~$\CY$
\[ \rho = \Phi^{-1}\circ \BD^\CY\circ \Phi\,.\]

\subsection{Spherical twist}\label{subsec: spherical twists}
Define the functor $\Phi\colon D^b(C)\to D^b(X)$ as
\[\Phi(V)=\iota_\ast\big(\CO_p(-1)\otimes p^\ast V\big)\,.\]
This defines a spherical functor~\cite{AL17, Ho05, ST01}. Let $\Phi_R$ be the right adjoint. The cone of the counit morphism defines the spherical twist~$\twist$, an autoequivalence of~$D^b(X)$, via
\[ \Phi\circ\Phi_R \to \id \to \twist\,.\]
The derived dual $\BD^X$ and the spherical twist~$\twist$ generate an infinite dihedral group (containing~$\rho$) which underlies the functional equations of Theorem~\ref{thm: main}.

\subsection{Gromov--Witten/ BPS invariants}

The second functional equation of Theorem~\ref{thm: main} implies strong constraints for the enumerative invariants in curve classes $\beta+j\sfb$ for varying $j\in\BZ$ and fixed genus. In particular, finitely many~$j$ determine the full set of these invariants. Let $\GW_{g,\beta}$ be the Gromov--Witten invariants of $X$ and assume that the GW/PT correspondence~\cite{MNOPI,MNOPII,MOOP11,PP17} holds for~$X$.
\begin{corollary}\label{cor: rationality} For all $(g,\beta)\neq (0,m\sfb)\,,(1,m\sfb)$ the series
\[ \sum_{j\in \BZ} \GW_{g,\beta+j\sfb}\, Q^j \]
is the expansion of a rational function $f_\beta(Q)$ with functional equation
\[ f_\beta(Q^{-1}) = Q^{-\sfw\cdot\beta} f_\beta(Q)\,.\]
\end{corollary}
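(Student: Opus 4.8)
The plan is to push the rationality and functional equations of Theorem~\ref{thm: main} across the assumed GW/PT correspondence; the only real work is the passage from the \emph{disconnected} stable pairs partition function to the \emph{connected} Gromov--Witten free energy. Write $G_{g,\beta}(Q)=\sum_{j\in\BZ}\GW_{g,\beta+j\sfb}Q^j$ for the series in question. Condition~\eqref{eq: nef} guarantees that $\beta+j\sfb$ is effective only for $j$ bounded below, so $G_{g,\beta}(Q)$ is a well-defined Laurent series; it also guarantees that the image of $N^{\eff}_1(X)$ in $N_1(X)/\BZ\sfb$ is a finitely generated pointed monoid, and we write $[\beta]$ for the class of $\beta$ there. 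In the sign convention of the paper the GW/PT correspondence says that, under $q=e^{iu}$, the full stable pairs partition function of $X$ — graded by the cosets $[\delta]$ of $\BZ\sfb$, with the $\sfb$-direction recorded by $Q$ and a consistent choice of lift for each coset — is the exponential of the reduced Gromov--Witten free energy: its coset-$0$ part is $\log\PT_0(e^{iu},Q)$, and for a nonzero coset $[\delta]$ its PT side is $\PT_\delta(q,Q)$ while the free-energy side is $\sum_g G_{g,\delta}(Q)u^{2g-2}$, both up to the same monomial in $Q$ coming from the lift.

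\emph{Generic classes $(\beta\notin\BZ\sfb)$.} By Theorem~\ref{thm: wall-crossing} the ratio $f_\delta(q,Q):=\PT_\delta(q,Q)/\PT_0(q,Q)$ equals ${}^p\PT_{(0,\delta)}(q,Q)$, which by Theorem~\ref{thm: main} lies in $\BQ(q,Q)$, and $\PT_{\delta+\sfb}=Q^{-1}\PT_\delta$. Dividing the exponential identity by $\PT_0(e^{iu},Q)$ and taking $\log$, the factor $\log\PT_0$ cancels from every component indexed by a nonzero coset, so the coset-$[\beta]$ part of the Gromov--Witten free energy becomes, by the usual connected/disconnected inversion,
\[ \sum_g G_{g,\beta}(Q)u^{2g-2}=\sum_{k\geq1}\frac{(-1)^{k+1}}{k}\sum_{[\beta_1]+\dots+[\beta_k]=[\beta]}(Q\text{-monomial})\cdot\prod_{i=1}^k f_{\beta_i}(e^{iu},Q), \]
the inner sum running over decompositions of $[\beta]$ into nonzero effective cosets. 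Because that monoid is pointed and finitely generated, this is a \emph{finite} $\BQ$-combination of products of rational functions, so its value is a rational function $h_\beta(q,Q)\in\BQ(q,Q)$, invariant under $q\leftrightarrow q^{-1}$ by the first functional equation of Theorem~\ref{thm: main}. Since the left side has no polar part in $u$ below $u^{-2}$, $h_\beta$ has a pole of order at most $2$ at $q=1$; and the $u$-Laurent coefficients of a rational function of $(q,Q)$ evaluated at $q=e^{iu}$ are rational functions of $Q$. Extracting the coefficient of $u^{2g-2}$ thus gives $G_{g,\beta}(Q)\in\BQ(Q)$ for every $g\geq0$. For the functional equation, the second equation $f_{\beta_i}(q,Q^{-1})=Q^{-\sfw\cdot\beta_i}f_{\beta_i}(q,Q)$, together with $\PT_{\delta+\sfb}=Q^{-1}\PT_\delta$ and $\sfw\cdot\sfb=-2$, forces every summand in the display to acquire, under $Q\mapsto Q^{-1}$, one and the same factor $Q^{-\sfw\cdot\beta}$ (the relevant combination of $\sfw$ and the $Q$-degree being additive on cosets of $\BZ\sfb$); hence $G_{g,\beta}(Q^{-1})=Q^{-\sfw\cdot\beta}G_{g,\beta}(Q)$.

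\emph{The classes $\beta=m\sfb$.} Here the coset-$[\beta]$ part is $\sum_g G_{g,m\sfb}(Q)u^{2g-2}=Q^{-m}\log\PT_0(e^{iu},Q)$, and a direct expansion from $\PT_0(q,Q)=\prod_{j\geq1}(1-q^jQ)^{(2g-2)j}$ (with $g$ the genus of $C$) gives
\begin{align*}
\log\PT_0(e^{iu},Q)&=\frac{2g-2}{4}\sum_{k\geq1}\frac{Q^k}{k\sin^2(ku/2)}\\
&=\frac{(2g-2)\Li_3(Q)}{u^2}+\frac{(2g-2)\Li_1(Q)}{12}+\sum_{l\geq1}c_l\Li_{1-2l}(Q)u^{2l}
\end{align*}
with $c_l\in\BQ$. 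Consequently $G_{0,m\sfb}$ and $G_{1,m\sfb}$ are nonzero rational multiples of $Q^{-m}\Li_3(Q)$ and $Q^{-m}\Li_1(Q)$, which are not rational functions unless $g=1$: this is precisely why the cases $(0,m\sfb)$ and $(1,m\sfb)$ are excluded. For $g'\geq2$ one has $G_{g',m\sfb}(Q)\in Q^{-m}\BQ(Q)$ since $\Li_{1-2l}$ is rational for $l\geq1$, and $\Li_{-n}(Q^{-1})=(-1)^{n+1}\Li_{-n}(Q)$ with $n=2g'-3$ odd, together with $-\sfw\cdot m\sfb=2m$, yields the functional equation in this case as well.

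\emph{Main obstacle.} The crux is the middle step: organizing the GW/PT correspondence by cosets of $\BZ\sfb$, passing through the logarithm so that the non-rational factor $\PT_0$ cancels from every nonzero-coset component, and recognizing the result as a \emph{finite} combination of products of the rational functions $\PT_\delta/\PT_0$. Finiteness here — as well as the well-definedness of all the Novikov series involved — is exactly where condition~\eqref{eq: nef} enters; and one still needs to check that the powers of $Q$ produced by this combinatorics assemble, under $Q\mapsto Q^{-1}$, into a single decomposition-independent factor, which works only because $\sfw\cdot\sfb=-2$.
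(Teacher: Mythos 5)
Your argument is correct and follows essentially the same route as the paper's Section~\ref{sec: applicationGW}: push the rationality and $Q$-functional equation of ${}^p\PT_\beta$ (Theorems~\ref{thm: main}, \ref{thm: wall-crossing}) through the GW/PT correspondence and the disconnected-to-connected logarithm, handling the exceptional coset $[\beta]\in\BZ\sfb$ separately via the explicit formula for $\PT_0$ and the polylogarithms $\Li_{1-n}$. The only packaging difference is that you expand the logarithm explicitly over the pointed monoid of effective cosets (and check by hand that the $Q\mapsto Q^{-1}$ prefactor is decomposition-independent using $\sfw\cdot\sfb=-2$), whereas the paper encodes the same computations in the rings $R_a$ and $\CR$, with the one-line assertion you make about $u$-Laurent coefficients of a rational $h(q,Q)$ at $q=e^{iu}$ being rational in $Q$ corresponding to the polylogarithm calculation of Proposition~\ref{prop: feperversegw}.
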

The rational function is expected to have the particular form
\[ f_\beta(Q) = \frac{p_\beta(Q)}{(1-Q)^{d}}\]
which leads to polynomiality of $\GW_{g,\beta+j\sfb}$ and the limit behavior of BPS counts (as $j\to\infty$) discussed in the physics literature~\cite[Section 5]{KKV97}. For the local Hirzebruch surface $K_W$ we give full proofs in Appendix~\ref{appendix}.

\subsection{Elliptic Calabi--Yau 3-folds}
Let $p\colon S\to C$ be a $\BP^1$-bundle over a smooth projective curve~$C$ and
\[f\colon X\to S\]
an elliptic fibration\footnote{Since $X$ is Calabi--Yau, $C$ is necessarily rational and $S$ is a Hirzebruch surface.} with a section~$W$. Let $D$ be a sufficiently ample divisor on~$C$ such that $-K_S+p^\ast D$ is ample. A nef class satisfying the~condition~\eqref{eq: nef} is given by
\[\sfw+f^*(-K_S+p^\ast D)\in H^2(X,\BZ)\,.\]

For any $\beta\in H_2(W,\BZ)$ define 
\[ P_\beta(q,t) = \sum_{d\geq 0}\sum_{n\in \BZ} \PT_{\beta+d\sff, n}\, (-q)^n\, t^d\,,\]
where $\sff\in H_2(X,\BZ)$ is the class of a smooth elliptic fiber. Recent considerations in topological string theory~\cite{HKK15} predict that 
\[ Z_\beta(q, t) = \frac{P_\beta(q,t)}{P_0(q,t)}\]
is the expansion of a meromorphic Jacobi form. Theorem~\ref{thm: main} implies non-trivial constraints among the Jacobi forms $\{Z_{\beta+j\sfb}\}_{j\in\BZ}$.

\subsection{STU}\label{subsec: STU}
Theorem~\ref{thm: main} and Corollary~\ref{cor: rationality} provide mathematical proofs of a heterotic mirror symmetry on BPS invariants as observed in~\cite{KKRS05}. The symmetry is discussed for type IIA duals of the STU model, i.e.\ the elliptic Calabi--Yau 3-fold 
\[X\to \BP^1\times \BP^1\]
such that both projections to $\BP^1$ define K3-fibrations with~$528$ singular fibers with exactly one double point as singularity. This geometry can be constructed as a hypersurface in a toric variety~\cite{KMPS10}.

The symmetry on BPS invariants~\cite[Section 6.10.3]{KKRS05} is realized by the second functional equation of Theorem~\ref{thm: main} and we can identify the infinite order symmetry~\cite[Equation 6.65]{KKRS05} with the action of~$\twist$. The rationality and functional equation of Corollary~\ref{cor: rationality} verifies~\cite[Equation 6.67]{KKRS05}. We obtain the precise form of the rational function for the local case $K_{\BP^1\times\BP^1}$ in Appendix~\ref{appendix}.\medskip

As a special case of the rationality and functional equation, consider $\beta=h\sff$ a multiple of the elliptic fiber class. Then, the generating function is in fact a \emph{Laurent polynomial} in $Q$ and the functional equation 
\[ f_\beta(q,Q^{-1})=Q^{-\sfw\cdot\beta}\,f_\beta(q,Q)\]
holds at the level of coefficients and recovers the symmetry 
\[n^{K3}_{g,m\sfb+h\sff}=n^{K3}_{g,(h-m)\sfb+h\sff}\]
of BPS invariants for K3 surfaces. This symmetry is usually seen as a consequence of the monodromy for quasi-polarized K3 surfaces.\medskip

A related geometry, also called an STU model in the physics literature, may be useful towards a crepant resolution conjecture in the \emph{non hard Lefschetz} case. We consider 
\[ X \to \BF_1\]
an elliptic Calabi--Yau 3-fold over the Hirzebruch surface~$\BF_1$. The fibration has a section~$W$ and we obtain $X \dashrightarrow X'$ as the Atiyah flop along the rational curve in~$W$ of self-intersection~$-1$. After this transformation we have a type~II contraction $X'\to Y'$ with exceptional divisor~$\BP^2$, which is the crepant resolution of an isolated canonical singularity. After the flop formula for DT invariants~\cite{Cal16,To13}, the symmetry of Theorem~\ref{thm: main} must induce a symmetry on~$X'$.

\subsection{Outline}
We briefly sketch the strategy of the paper. Section~\ref{sec: perverse t-structure} contains a discussion of perverse sheaves and the perverse $t$-structure associated to the geometry. We introduce the anti-equivalence~$\rho$ and show several important properties that will be needed in the later parts. In Section~\ref{sec: Hall} we recall some facts about Hall algebras, pairs, and wall-crossing, and we set notation for the rest of the paper. Stability conditions play an important role for this paper and we comment on them in Section~\ref{sec: stability}. In Section~\ref{sec: BS} we introduce invariants which resemble Bryan--Steinberg invariants \cite{BS16} and we prove a wall-crossing formula between those and usual $\PT$ invariants. The wall-crossing formula shows a relation of the form
\[\bs_\beta=\frac{\PT_\beta}{\PT_0}\]
and thus gives a natural interpretation to the quotient on the right hand side. The rationality and symmetry for $\PPT$ invariants are proven in Section~\ref{sec: perversept}. Essentially, the result is obtained by comparing $\PPT$ invariants with $\rho(\PPT)$ invariants in two ways: first using the anti-equivalence~$\rho$, and then using wall-crossing. In Section~\ref{sec: zetawallcrossing} we describe a wall-crossing between the $\bs$ invariants and the perverse $\PPT$ invariants (which in the crepant case $X\to Y$ are the orbifold invariants). An important aspect is that while $\PT$ and $\bs$ invariants are defined using the integration map on the Hall algebra obtained from the heart $\Coh(X)\subset D^b(X)$, the perverse $\PPT$ invariants are defined using the heart $\CA\subset D^b(X)$. The $\zeta$-wall-crossing of Section~\ref{sec: zetawallcrossing} takes place in $\CA$. In Section~\ref{subsec: limitI} we identify $\bs$-pairs as the pairs in the end of the $\zeta$–wall-crossing.
The following diagram represents the different invariants we use in the paper and their relations. The squiggly arrows represent wall-crossing formulas. 
\begin{center}
    \begin{tikzcd}[row sep=1.2cm]
     \PT\arrow[r,leftrightsquigarrow, "\ref{sec: BS}"]&
   \bs\arrow[r, Rightarrow, no head, "\ref{subsec: limitI}"] &
    (\zeta, 0)\arrow[r,leftrightsquigarrow, "\ref{sec: zetawallcrossing}"]&
    \PPT\arrow[d,leftrightsquigarrow, bend left, "\ref{sec: perversept}"]\arrow[d,leftrightarrow, bend right, "\ref{subsec: rho}" swap]
    \\ 
    &
    &
    &
    \rho(\PPT)
    \end{tikzcd}
\end{center}

\subsection{Related work}
The following question was posed by Toda~\cite{To16}:
\begin{question}\label{question: Toda}
How are stable pair invariants on a Calabi–Yau 3–fold constrained, due to the presence of non-trivial autoequivalences of the derived category?
\end{question}
The most famous instance is the rationality and functional equation induced by the derived dual. Similarly, the elliptic transformation law for $Z_\beta(q,t)$ is deduced from a derived involution~\cite{OS20}. Significant progress for abelian 3-folds was made using Bridgeland stability conditions~\cite{OPT18}. The Seidel--Thomas spherical twist for an embedded $\BP^2$ was considered in~\cite{To16} and certain polynomial relations for stable pairs invariants were obtained. Our results provide an answer to Question~\ref{question: Toda} for the involution~$\rho$. The flop construction $X\dashrightarrow X'$ of the previous section must connect our results with the ones obtained in~\cite[Theorem 1.2]{To16}.

\subsection{Conventions}
We work over the complex numbers. The canonical bundle of~$W$ is denoted $K_W$. Intersection products are denoted by a dot, e.g.\ $\sfw\cdot\beta$. Stable pairs are considered in cohomological degree $-1$ and $0$. This convention follows~\cite{BCR18} and differs from~\cite{PT09}.

\subsection{Acknowledgements} We are grateful to Y.\ Bae, T.\ Beckmann, G.\ Oberdieck, R.\ Pandharipande, D.\ Nesterov, J.\ Rennemo, E.\ Scheidegger, R.\ Thomas for discussions on stable pairs in the derived category and curve counting on Calabi--Yau 3-folds. Conversations with E.\ Scheidegger on the STU model were very helpful. We thank G.\ Oberdieck for pointing out the connection to the DT crepant resolution conjecture, and R.\ Thomas for discussions on spherical twists and wall-crossing. The first author thanks the IHES for hospitality during the final stage of this work. The authors were supported by ERC-2017-AdG-786580-MACI. The project received funding from the
European Research Council (ERC) under the European Union Horizon 2020 research and innovation programme (grant agreement 786580).


\section{Perverse t-structure}\label{sec: perverse t-structure}

In this section we give a self-contained discussion of perverse sheaves and duality associated to the following geometry. 

\subsection{Geometry}\label{subsec: geometry} Let $C$ be a smooth projective curve, $\CE$ a locally free sheaf of rank~$2$, and $W=\BP_C(\CE)$ a geometrically ruled surface with projection $p\colon W \to C$. We assume that $\CE^\vee$ is globally generated\footnote{Twisting~$\CE^\vee$ with an ample line bundle does not change the geometry~$\BP_C(\CE)$.} and we fix line bundles $L_1,L_2\in\Pic(C)$ such that
\[ 0 \to L_1 \to \CE^\vee \to L_2 \to 0\,.\] 

Let $X$ be a smooth projective Calabi--Yau 3-fold containing~$W$ as a divisor: 
\[
\begin{tikzcd}[row sep=small, column sep=small]
W \ar[r, hook, "\iota"] \ar[d, "p", swap]& X \\ C
\end{tikzcd}
\]
The curve class of a fiber of~$p$ (and its pushforward to~$X$) is denoted~$\sfb$. The nef class~$A$ of condition~\eqref{eq: nef} restricts to a multiple of the fiber class, i.e.\ $\iota^\ast A$ is numerically equivalent to $a_0\sfb$ for some $a_0\in\BZ_{>0}$. Recall that we have the Euler sequence on~$W$ which we will use repeatedly:
\[ 0\to \omega_p\to \CO_p(-1)\otimes p^\ast \CE^\vee \to \CO_W\to 0\,.\]

\subsection{Torsion pair}
Define the category
\[ \CT = \big\{T\in \Coh(X) \mid R^1p_\ast(\iota^\ast T)=0\big\}\,.\]
\begin{lemma}
The subcategory $\CT\subset \Coh(X)$ is closed under extensions and quotients in $\Coh(X)$.
\end{lemma}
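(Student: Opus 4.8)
The plan is to derive everything from two elementary inputs: (a) the restriction functor $\iota^\ast\colon\Coh(X)\to\Coh(W)$ is right exact (it is $\otimes_{\CO_X}\CO_W$), and (b) the morphism $p$ has one-dimensional fibres, so $R^ip_\ast$ vanishes on $\Coh(W)$ for all $i\geq 2$. Input (b) already yields the sheaf-level statement I will use twice: \emph{if $F\twoheadrightarrow G$ in $\Coh(W)$ and $R^1p_\ast F=0$, then $R^1p_\ast G=0$}; indeed, writing $0\to K\to F\to G\to 0$, the relevant segment $R^1p_\ast F\to R^1p_\ast G\to R^2p_\ast K$ of the long exact sequence has both outer terms zero. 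I would first establish closure under quotients and then bootstrap it to closure under extensions.

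Closure under quotients is immediate: if $T\in\CT$ and $T\twoheadrightarrow T''$ is an epimorphism in $\Coh(X)$, then by (a) we get an epimorphism $\iota^\ast T\twoheadrightarrow\iota^\ast T''$, and the sheaf-level statement applied to it gives $R^1p_\ast(\iota^\ast T'')=0$, i.e.\ $T''\in\CT$.

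For closure under extensions, let $0\to T'\to T\to T''\to 0$ be exact in $\Coh(X)$ with $T',T''\in\CT$. The one point requiring care is that $\iota^\ast$ need not be left exact, so $\iota^\ast T'\to\iota^\ast T$ may fail to be a monomorphism; I repair this by setting $Q=\Image(\iota^\ast T'\to\iota^\ast T)$. Then $\iota^\ast T'\twoheadrightarrow Q$, and by right exactness $0\to Q\to\iota^\ast T\to\iota^\ast T''\to 0$ is exact. Applying the sheaf-level statement to $\iota^\ast T'\twoheadrightarrow Q$ gives $R^1p_\ast Q=0$; then the long exact sequence of $0\to Q\to\iota^\ast T\to\iota^\ast T''\to 0$ contains $R^1p_\ast Q\to R^1p_\ast(\iota^\ast T)\to R^1p_\ast(\iota^\ast T'')$ with both outer terms zero (the right one because $T''\in\CT$), so $R^1p_\ast(\iota^\ast T)=0$ and $T\in\CT$. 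There is no genuine obstacle here: once the two vanishing inputs are in place the argument is a short diagram chase, and the only subtlety---the failure of left exactness of $\iota^\ast$---is handled by passing to the image $Q$.
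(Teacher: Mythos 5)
Your proof is correct and follows essentially the same route the paper sketches (the long exact sequence of higher pushforwards together with $R^2p_\ast=0$, which forces $R^1p_\ast$ to be preserved under quotients). The one thing you make explicit that the paper leaves implicit is the repair for the failure of left exactness of $\iota^\ast$ via the image $Q$, which is exactly the right move.
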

\begin{proof}
Use the long exact sequence of higher pushforward sheaves and the fact that $R^2p_\ast=0$ since the fibers of $p$ are $1$-dimensional.
\end{proof}

By~\cite[Lemma 2.15]{To13} we obtain a torsion pair $(\CT,\CF)$ in $\Coh(X)$ where 
\[\CF=\CT^\perp=\{F\in\Coh(X)\mid \Hom(T,F)=0 \text{ for all }T\in\CT\}\,.\]
We consider the \emph{perverse $t$-structure} on $D^b(X)$ whose heart is the tilt~\cite{HRS96}
\[\CA=\big\langle \CF[1],\CT\big\rangle\,.\]

Every $E\in D^b(X)$ has associated perverse cohomology ${}^p\CH^i(E)\in\CA$ and exact triangles lead to long exact sequences of perverse cohomology. 
Define the \emph{perverse dimension}
\[^p\dim(E) = \max\big\{\dim \supp(E)\cap(X\ssetminus W)\,,\dim p\big(\supp(E)\cap W\big)\big\}\,.\] 
We write $\CA_{\leq k}$ for elements of $\CA$ with perverse dimension at most~$k$ and $\CA_{k}$ for elements with pure perverse dimension~$k$, i.e.\ \[ \Hom (\CA_{\leq k-1},\CA_k)=0\,.\]
We also denote $\CF_k[1]=\CF[1]\cap \CA_k$ and $\CT_k=\CT\cap \CA_k$.

\subsection{Duality}\label{subsec: rho}
The derived dualizing functor~$(-)^\vee=R\CHom(-,\CO_X)$ is a duality for the standard $t$-structure on~$D^b(X)$. We introduce a duality $\rho$ on $D^b(X)$ which is the analog for the perverse $t$-structure. 

Define the functor $\Phi\colon D^b(C)\to D^b(X)$ as
\[\Phi(V)=\iota_\ast\big(\CO_p(-1)\otimes p^\ast V\big)\,.\]
The right adjoint is
\[ \Phi_R(V) = Rp_\ast\big( \CO_p(1)\otimes\omega_W[-1]\otimes L\iota^\ast V\big)\,.\]
The cotwist $\cotwist$ is defined as the cone of the unit morphism
\[ \id \to \Phi_R\circ\Phi \to \cotwist\,.\]
A direct calculation shows that $\Phi_R\circ\Phi$ splits as \[\Phi_R\circ\Phi \cong \id\,\oplus \,\omega_C[-2]\]
and $\cotwist$ is isomorphic to $\omega_C[-2]$, which is an auto-equivalence. Thus, $\Phi$ is a spherical functor~\cite{AL17, Ho05, ST01} and we obtain an auto-equivalence of $D^b(X)$, the twist $\twist$, defined as the cone of the counit morphism~\cite[Theorem 1.1]{AL17}
\[ \Phi\circ\Phi_R \to \id \to \twist\,.\]
We consider an anti-equivalence of order two defined as\footnote{The derived dual of Section~\ref{subsec: overview} is $\BD^X=[2]\circ(-)^\vee$.}
\[ \rho = \twist\circ[2]\circ(-)^\vee\,.\]
For any $E\in D^b(X)$ we have the important exact triangle
\begin{equation}\label{eq: triangle} \tag{$\Delta$} E^\vee[2] \to \rho(E) \to \Phi\circ\Phi_R[1]\big(E^\vee[2]\big)\,.\end{equation}

We can now state the main, and most difficult, result of this section.
\begin{theorem}\label{thm: rho}\hfill
\begin{enumerate}
    \item $\rho\big(\CA_0\big)\subset\CA_0[-1]$,
    \item $\rho\big(\CA_1\big) \subset \CA_1$.
\end{enumerate}
\end{theorem}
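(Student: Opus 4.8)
The plan is to analyze both statements through the defining exact triangle~\eqref{eq: triangle}, reducing claims about $\rho$ to claims about the derived dual $(-)^\vee$ and about the composite $\Phi\circ\Phi_R$. First I would record the basic ``dictionary'' statements: for the standard $t$-structure on $X$, the derived dual $[2]\circ(-)^\vee$ sends a pure-dimensional sheaf of dimension $d$ to a sheaf of dimension $d$ placed in degree $3-d$ (shifted), and it is exact for Cohen--Macaulay sheaves of a fixed codimension. Since $\CO_X[1]$ is the heart-normalization, objects of $\CA$ with perverse dimension $\leq 1$ have underlying complex concentrated in degrees $[-1,0]$ with cohomology sheaves of dimension $\leq 1$ on $X\ssetminus W$ and whose $W$-part is killed (in the relevant $R^1p_*$ sense) — so the duality behaviour over the generic fiber is controlled, and I would first dispose of the part of the support away from $W$, where $\rho$ and $\BD^X$ literally agree because $\Phi\circ\Phi_R$ is supported on $W$.

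The core of the argument is the local analysis along $W$. For part (1), an object $E\in\CA_0$ has $p(\supp E\cap W)$ of dimension $0$, i.e.\ it is (perverse) supported on finitely many fibers of $p$; off $W$ it is $0$-dimensional. I would compute $E^\vee[2]$: a $0$-dimensional sheaf $F$ on $X$ satisfies $F^\vee[2]\cong F^\vee[2]$ concentrated in degree $1$ with $\CH^1$ again $0$-dimensional, i.e.\ $F^\vee[3]$ is a $0$-dimensional sheaf; after the twist $[2]$ this lands in $\CA_0[-1]$ provided the spherical-twist correction term also respects the perverse truncation. The term $\Phi\circ\Phi_R[1](E^\vee[2])$ is where the fiberwise structure of $\Phi(V)=\iota_*(\CO_p(-1)\otimes p^*V)$ enters: because $\Phi_R$ is $Rp_*$ of a twist, applied to something supported on finitely many fibers it produces $\iota_*$ of $p^*$ of a complex on $C$ supported at finitely many points; I would use the splitting $\Phi_R\circ\Phi\cong\id\oplus\omega_C[-2]$ and the Euler sequence $0\to\omega_p\to\CO_p(-1)\otimes p^*\CE^\vee\to\CO_W\to 0$ to pin down the cohomological amplitude and the perverse dimension of this correction term, showing it too lies in $\CA_0[-1]$. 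Then the long exact sequence of perverse cohomology applied to~\eqref{eq: triangle} forces $\rho(E)\in\CA_0[-1]$.

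For part (2) the strategy is parallel but one dimension up: $E\in\CA_1$ has $p(\supp E\cap W)$ at most a curve and the off-$W$ part is at most a curve. Now $E^\vee[2]$ of a pure $1$-dimensional sheaf on a $3$-fold is again a pure $1$-dimensional sheaf (placed in degree $2$, so $E^\vee[2]$ is in degree $0$), the classical statement that $\CHom$-duality is exact on pure codimension-$2$ sheaves; combined with the shift by $[2]$ and the heart convention this should keep the dual inside $\CA$. The spherical correction $\Phi\circ\Phi_R[1](E^\vee[2])$ must be shown to have perverse dimension $\leq 1$ and, crucially, no ``$\CA_0[-1]$'' or higher-degree junk that would push $\rho(E)$ out of $\CA_1$ — again via the explicit description of $\Phi_R$ as $Rp_*(\CO_p(1)\otimes\omega_W[-1]\otimes L\iota^*(-))$ and the Euler sequence. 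Finally purity (the $\Hom(\CA_{\leq 0},\rho(E))=0$ condition) I would get by dualizing: $\Hom(\CA_{\leq 0},\rho(E))=\Hom(\rho(\CA_1),\CA_{\leq 0}[?])$ using that $\rho$ is an anti-equivalence of order two together with part (1), i.e.\ purity of $\rho(E)$ is equivalent to $\rho$ not creating maps from perverse dimension $0$ objects, which is controlled by (1).

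The main obstacle I expect is the precise bookkeeping of the correction term $\Phi\circ\Phi_R[1](E^\vee[2])$: one must show not merely that it has the right support/perverse dimension but that, in the long exact perverse cohomology sequence coming from~\eqref{eq: triangle}, the connecting maps and the placement of $\Phi\circ\Phi_R(E^\vee[2])$ in cohomological degree are such that all perverse cohomology of $\rho(E)$ outside the asserted degree vanishes. This requires genuinely computing $L\iota^*$ of a dual sheaf (which brings in $\mathrm{Tor}$ against the normal bundle / $\mathcal{O}_W$-resolution of $\mathcal{O}_X$) and then $Rp_*$ of the resulting twist — a short but delicate spectral-sequence / Euler-sequence computation. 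The other subtlety is making the naive ``$\BD^X$ shifts dimension-$d$ sheaves into degree $3-d$'' statement interact correctly with the tilt defining $\CA$, in particular near $W$ where the torsion-pair condition $R^1p_*(\iota^*(-))=0$ replaces honest dimension by perverse dimension; I would isolate this in a preliminary lemma about how $(-)^\vee$ acts on $\CT$ and $\CF$ fiberwise over $C$.
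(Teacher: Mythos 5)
Your plan correctly identifies the triangle $\eqref{eq: triangle}$ as the central tool and your final ``purity by dualizing'' step for part~(ii) matches the paper, but there are two genuine gaps.

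For part~(ii) you rely on the fact that $(-)^\vee[2]$ is exact on pure $1$-dimensional sheaves. That is valid on $\Coh_{\leq 1}(X)\cap\CT$, and the paper applies the triangle $\eqref{eq: triangle}$ directly there (Proposition~\ref{prop: rho(1-dim)}). However, $\CA_1$ also contains genuinely $2$-dimensional objects supported on $W$: both $\iota_*(\CO_p(-1)\otimes p^*V)\in\CT_1$ and $\iota_*(\omega_p\otimes p^*V)[1]\in\CF_1[1]$ for a bundle $V$ on $C$. For a $2$-dimensional $G$ on $W$ one has $\iota_*(G)^\vee[2]\cong\iota_*(G^\vee\otimes\omega_W)[1]$, which sits in cohomological degree $-1$ and is nowhere near $\CA_{\leq 1}$; the outer terms of $\eqref{eq: triangle}$ are then both ``wrong'' individually, and $\rho(E)$ lands in $\CA_1$ only after a cancellation inside the triangle. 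The paper makes that cancellation explicit on generators in Lemma~\ref{lem: 1-dim generators}, using the splitting $\Phi_R\Phi\cong\id\oplus\omega_C[-2]$ to find e.g.\ $\rho(\Phi(V))\cong\Phi(\omega_C\otimes\widetilde V)$. You cannot see this by separately estimating the perverse cohomological amplitude of $E^\vee[2]$ and of $\Phi\circ\Phi_R[1](E^\vee[2])$, which is what your plan calls for.

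Second, even granting those computations on a chosen generating set, one still needs to reduce an arbitrary object of $\CA_1$ (and of $\CA_0$) to iterated extensions of those generators. The paper supplies this via Lemma~\ref{lem: supp} and Proposition~\ref{prop: supp} (objects in $\CA$ supported set-theoretically on $W$ are iterated extensions of objects scheme-theoretically supported on $W$), Lemma~\ref{lem: A0} (generators of $\CA_0$: points, $\CO_{B_y}(k)$, $\CO_{B_y}(k)[1]$), and Lemma~\ref{lem: generatorsT} (decomposing $\CT_{\leq 1}$ into $\Coh_{\leq 1}(X)\cap\CT$ and $\Phi(\Coh_1(C))$). Your plan works with a general $E$ throughout, so the ``precise bookkeeping of the correction term'' you flag as the main obstacle has no clear terminus: without the decomposition lemmas you cannot control $L\iota^*(E^\vee[2])$ or $Rp_*$ of its twist for a general $E$. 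The same omission affects part~(i): there $\rho(k(x))$ for $x\in W$ is a genuine two-term complex (an extension of $\CO_B(-1)[-1]$ by $\CO_B(-2)$, see Lemma~\ref{lem: 0-dim generators}), not a shifted sheaf, and recognizing it as an object of $\CA_0[-1]$ again goes through the generator description of Lemma~\ref{lem: A0} rather than a direct triangle estimate.
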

\begin{proof}[Outline]
The proof will be given in Sections~\ref{subsec: basic} to~\ref{subsec: 1-dim perverse}. We start with properties and basic results in Sections~\ref{subsec: properties},~\ref{subsec: basic}. In Section~\ref{subsec: supp} we prove that objects in $\CA$ with support contained in~$W$ are successive extensions of objects which are \emph{scheme-theoretically} supported on~$W$. This will also be applied in Section~\ref{sec: stability} to prove that a function~$\nu$ defines a stability function on~$\CA_{\leq 1}$. Theorem~\ref{thm: rho} (i) is proved in Section~\ref{subsec: 0-dim perverse}. In Section~\ref{subsec: 1-dim perverse} we prove that for any $E\in\CA_{\leq 1}$ the perverse cohomology sheaves satisfy
\begin{equation}\label{eq: star}\tag{$\ast$}
    {}^p\CH^i\big(\rho(E)\big)=0\,, \quad i\neq 0,1\,,\quad {}^p\CH^1\big(\rho(E)\big)\in\CA_0\,.
\end{equation}
This suffices to deduce Theorem~\ref{thm: rho} (ii).
\end{proof}

Theorem~\ref{thm: rho} should remind the reader of an analogous property of the derived dual $\BD^X$ acting on $\Coh(X)$:
\[\BD^X(\Coh_0(X))=\Coh_0(X)[-1] \,,\quad \BD^X(\Coh_1(X))=\Coh_1(X)\,.\]
Indeed, the next section clarifies the origin of this analogy.

\subsection{Crepant case}

We explain now our main motivation for the tilt $\CA$ and for the derived anti-equivalence $\rho$ by considering the case of a type III contraction $X\to Y$, as described in Section~\ref{subsec: crepant}. 

In this setting, $Y$ is the coarse moduli space of a Calabi--Yau orbifold $\CY$ that has $B\BZ_2$-singularities along a copy of the curve~$C$. The derived categories of $X$ and $\CY$ are isomorphic via the derived McKay correspondence~\cite{BKR01}
\[\Phi\colon D^b(X)\xrightarrow{\sim} D^b(\CY)\,.\]

The heart $\CA\subset D^b(X)$ coincides with Bridgeland's category of perverse sheaves~\cite{Br02,VdB04}
\[\CA={}^0\Per(X/Y)\,,\]
so under the McKay correspondence it should be regarded as $\Coh(\CY)$.
Indeed, let $j_0\colon C_0\hookrightarrow Y$ be the contraction of~$W$, i.e.\ $C_0=\pi(W)$. Then, for any $T\in \Coh(X)$ the higher pushforward $R^1 \pi_\ast T$ is supported on $C_0$, so $R^1 \pi_\ast T=0$ if and only if
\[0=j^\ast R^1 \pi_\ast T=R^1 p_\ast \iota^\ast T.\]
The equality used holds by the proper base change theorem.

Under the McKay correspondence, the notion of perverse dimension that we defined coincides with the usual dimension on the orbifold. The anti-equivalence $\rho$ coincides with the derived dual $\BD^\CY=R\CHom(-,\CO_{\CY})[2]$ on the orbifold, i.e.:
\begin{proposition}\label{prop: rhocrepant}In the setting above, we have
\[ \rho = \Phi^{-1}\circ\BD^{\CY}\circ\Phi\,.\]
\end{proposition}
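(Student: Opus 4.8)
The plan is to verify the identity $\rho = \Phi^{-1}\circ\BD^{\CY}\circ\Phi$ by unwinding both sides through the definition of $\rho = \twist\circ[2]\circ(-)^\vee$ and matching the spherical twist $\twist$ on $D^b(X)$ with the transport of the trivial automorphism (or rather, the genuine content: matching $\rho$ with the conjugate of $\BD^{\CY}$). The cleanest route is \emph{not} to compute term by term but to use the triangle~\eqref{eq: triangle}: for every $E\in D^b(X)$ we have
\[
E^\vee[2] \to \rho(E) \to \Phi\circ\Phi_R[1]\bigl(E^\vee[2]\bigr)\,,
\]
so it suffices to show that $\Phi^{-1}\circ\BD^{\CY}\circ\Phi$ sits in a compatible triangle with the same outer terms and the same connecting maps. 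First I would recall that $\BD^X = [2]\circ(-)^\vee$ on $D^b(X)$ and that, under any derived equivalence $\Phi$ between Calabi--Yau categories respecting the Serre functor, one has a canonical comparison between $\Phi\circ\BD^X$ and $\BD^\CY\circ\Phi$; the failure of these to agree on the nose is exactly measured by a twist by the relative dualizing complex, which here is trivial because $X\to\CY$ is crepant (both are Calabi--Yau of dimension $3$, so $\omega_X\cong\CO_X$ and $\omega_\CY\cong\CO_\CY$). Concretely, $\Phi$ intertwines $R\CHom(-,\CO_X)$ and $R\CHom(-,\CO_\CY)$ because $\Phi$ is given by a Fourier--Mukai kernel $\CP$ on $X\times\CY$ that is self-dual up to the Serre twists, which cancel in the Calabi--Yau situation. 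This gives $\Phi\circ\BD^X \cong \BD^\CY\circ\Phi'$ where $\Phi'$ is $\Phi$ composed with some autoequivalence; the whole point is to identify that autoequivalence with precisely $\twist$, so that
\[
\Phi^{-1}\circ\BD^\CY\circ\Phi \;=\; \Phi^{-1}\circ\Phi\circ\twist\circ\BD^X \;=\; \twist\circ[2]\circ(-)^\vee \;=\;\rho\,.
\]

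The key steps, in order, are: (1) recall the McKay kernel and its duality behaviour, identifying $\BD^\CY\circ\Phi\circ\BD^X{}^{-1}$ as an explicit autoequivalence $\sigma$ of $D^b(X)$; (2) show $\sigma$ fixes $\CO_X$ and acts on the spherical object(s) generating $\twist$ — namely $\Phi(\CO_x)$ for $x\in C$, i.e.\ $\iota_*(\CO_p(-1)\otimes p^*\CO_x)$ — in the same way $\twist$ does, using that $\BD^\CY$ is compatible with the orbifold structure sheaf of the $B\BZ_2$-locus; (3) invoke the fact (e.g.\ as in the Bridgeland--King--Reid / Kawamata picture) that the spherical twist $\twist$ generates the kernel of the natural map from autoequivalences of $D^b(X)$ to those of $D^b(Y)$ along the contraction $\pi\colon X\to Y$, so any autoequivalence that becomes the identity on $D^b(Y)$ and has the right behaviour on spherical objects \emph{is} a power of $\twist$ — and then pin down the exponent to be $1$ by checking on a single test object (say $\CO_W$ or $\iota_*\CO_p(-1)$), where the cone structure in~\eqref{eq: triangle} makes the comparison explicit. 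Alternatively, and perhaps more robustly, step (3) can be replaced by a direct kernel computation: both $\rho$ and $\Phi^{-1}\BD^\CY\Phi$ are Fourier--Mukai functors, so it suffices to identify their kernels in $D^b(X\times X)$, and the kernel of $\Phi^{-1}\BD^\CY\Phi$ is computed by dualizing the McKay kernel and convolving, which reproduces $\twist\circ[2]\circ(-)^\vee$ because the McKay kernel for a $\BZ_2$-orbifold resolution is, up to the shift, the structure sheaf of the fibre product together with its derived dual contribution along $W$.

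I expect the \textbf{main obstacle} to be step (1)--(2): pinning down the precise autoequivalence by which $\BD^\CY$ and $\Phi\circ\BD^X$ differ, i.e.\ being careful about \emph{which} spherical twist (there is a $\BZ$ worth of powers, and also the inverse twist $\twist^{-1}$) and \emph{which} shift appears, since sign and shift conventions proliferate (the paper already flags its nonstandard conventions for stable pairs and for $\BD^X$ versus $(-)^\vee$). The cleanest way to control this is to evaluate both anti-equivalences on the two natural generators $\CO_X$ and $\Phi(V)$ for $V\in D^b(C)$: on $\CO_X$ one checks $\rho(\CO_X)=\CO_X[3]$ (or whatever the correct self-dual answer is) and that $\BD^\CY(\CO_\CY)=\CO_\CY[3]$ transports to the same; on $\Phi(V)$ one uses that $\BD^\CY$ restricted to the $B\BZ_2$-gerbe along $C$ swaps the trivial and nontrivial characters, which is exactly the effect of $\twist$ combined with $(-)^\vee$ on $\iota_*(\CO_p(-1)\otimes p^*V)$. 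Once these two cases agree — and they generate $D^b(X)$ together with $\CO_W$ and its twists — a standard generation-and-naturality argument (the two functors are both exact, both Fourier--Mukai, and agree on a spanning class with compatible morphisms) upgrades the object-wise agreement to an isomorphism of functors. Verifying the compatibility of morphisms, not just objects, is the last technical point, and it is handled by the uniqueness part of the spherical-twist construction in~\cite{AL17}: the triangle~\eqref{eq: triangle} is \emph{canonical}, so matching its three terms and two maps with those coming from $\BD^\CY$ suffices.
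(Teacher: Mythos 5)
Your proposal takes a genuinely different route from the paper's, but it contains several issues that would make it hard to carry out, and I'd rate it as having a real gap rather than just being a valid alternative.

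The paper's proof is short and concrete: it sets $\Psi=\Phi^{-1}\circ\BD^{\CY}\circ\Phi\circ\rho$, observes $\Psi$ is a derived \emph{equivalence} (composition of two anti-equivalences), then shows $\Psi\cong\id$ by computing $\Psi(k(x))$ for every point $x\in X$, using the explicit triangles of Lemma~\ref{lem: 0-dim generators} together with the known images of $\CO_B(-2)[1],\CO_B(-1)$ under $\Phi$ and the fact that $\BD^{\CY}(\CO_p^\pm)=\CO_p^\pm[-1]$, and finally invoking Huybrechts' classification of autoequivalences that preserve skyscraper sheaves. You instead want to conjugate the dualities directly and identify the resulting ``correction'' autoequivalence with $\twist$.

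There are several concrete problems with your route. First, you assert that ``$\Phi$ intertwines $R\CHom(-,\CO_X)$ and $R\CHom(-,\CO_\CY)$'' because of Calabi--Yau cancellation, but if that held on the nose the proposition would reduce to $\rho=\BD^X$, which is false; you then contradict this one sentence later by introducing the correction $\Phi'$. The whole content of the proposition is that the correction is nontrivial and equals $\twist$, so the opening claim is not a usable lemma. Second, your step~(3) asserts that $\twist$ \emph{generates} the kernel of the map from autoequivalences of $D^b(X)$ to those of $D^b(\CY)$ (or $D^b(Y)$). This is not an established fact you can cite, and it is not obviously true: line bundle twists by sheaves like $\CO_X(W)$ and shifts also restrict to the identity away from the exceptional locus, so pinning $\sigma$ down to $\twist$ exactly requires precisely the kind of evaluation-on-points argument the paper does — you cannot bypass it. Third, your proposed spanning class $\{\CO_X,\ \Phi(V),\ \CO_W\text{ and twists}\}$ is not manifestly a spanning class for $D^b(X)$; the objects $\Phi(V)$ and $\CO_W$ all live on the divisor $W$, so you still need the skyscrapers $k(x)$ for $x\notin W$, at which point you have essentially reconstructed the paper's argument. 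Finally, you flag your own uncertainty about signs and shifts (e.g.\ guessing $\rho(\CO_X)=\CO_X[3]$; the paper's Proposition~\ref{prop: rho} gives $\CO_X[2]$ with the paper's convention $\BD^X=[2]\circ(-)^\vee$). In short: the high-level idea of conjugating dualities is sound, but the intermediate steps you lean on are either false as stated or unjustified, and the honest way to fill them in collapses to the paper's skyscraper-sheaf computation plus Huybrechts' uniqueness result.
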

\begin{proof}
We let $\Psi=\Phi^{-1}\circ \BD^{\CY}\circ\Phi \circ \rho$. Since $\Phi$ is a derived equivalence, whereas $\rho$ and $\BD^\CY$ are derived anti-equivalences, the composition $\Psi$ is a derived equivalence. We prove that $\Psi$ is isomorphic to the identity by analysing $\Psi(k(x))$ and using again~\cite[Corollary~5.23]{Hu06}. 

If $x\in X\ssetminus W$ then Lemma~\ref{lem: 0-dim generators} shows that 
\[\Psi(k(x))=\big(\Phi^{-1}\circ \BD^{\CY}\circ\Phi\big)\big(k(x)[-1]\big)=\big(\Phi^{-1}\circ \BD^{\CY}\big)\big(k(\pi(x)\big)[1]=k(x)\,.\]

For $x\in W$, one has the exact triangle of Lemma~\ref{lem: 0-dim generators} and applying $\Phi^{-1}\circ \BD^{\CY}\circ\Phi$ to it produces the exact triangle 
\begin{equation}\CO_B(-1)\to \Psi(k(x))\to \CO_B(-2)[1].\label{eq: trianglelocal}\end{equation}
We used that $\Phi^{-1}\circ \BD^{\CY}\circ\Phi$ is an anti-equivalence and we determine the images of $\CO_B(-2), \CO_B(-1)[-1]$ using~\cite[Section~4.3]{BCY12},~\cite[Appendix A]{BCR18}
\[
    \Phi\big(\CO_B(-2)[1]\big)=\CO_p^+\,,\quad \Phi\big(\CO_B(-1)\big)=\CO_p^-\,,\quad \BD^\CY\big(\CO_p^\pm\big)=\CO_p^\pm[-1]\,.
\]

Extensions determined by \eqref{eq: trianglelocal} are classified by \[\Hom(\CO_B(-2), \CO_B(-1))\cong \BC^2\]
and we get that $\Psi(k(x))\cong k(f(x))$ for some $f(x)\in B=\pi^{-1}\big(\pi(x)\big)$. 

By \cite[Corollary~5.23]{Hu06} it follows that $f\colon X\to X$ is an isomorphism and $\Psi= (M\otimes - )\circ f_*$ for some line bundle $M$. Since $f_{|X\ssetminus W}=\textup{id}_{X\ssetminus W}$, we conclude that $f=\textup{id}$. By Proposition~\ref{prop: rho} and the fact that $\Phi$ preserves structure sheaves, one easily sees that $\Psi(\CO_X)=\CO_X$ and thus $M$ is the trivial line bundle, so $\Psi\cong \textup{id}$.\qedhere
\end{proof}

As we mentioned in Section~\ref{subsec: crepant}, when $X$ is obtained as a crepant resolution our results follow from \cite{BCR18}. The previous proposition explains how the heart $\CA$ and the duality $\rho$ play the role of $\Coh(\CY)$ and $\BD^\CY$, respectively, in the proof of the rationality and functional equation for the orbifold $\PT$ invariants \cite{BCR18}. 

\subsection{Properties of \texorpdfstring{$\rho$}{rho}}\label{subsec: properties}

We gather here some of the key properties of the duality operator $\rho$. We begin with a direct computation of the image of some objects (of perverse dimension $0$) under $\rho$.

\begin{lemma}\label{lem: 0-dim generators}
For all points $x\in X$ and fibers $B\subset W$
\begin{enumerate}
    \item If $x\not \in W$, then $\rho(k(x))=k(x)[-1]$,
    \item $\rho\big(\CO_B(-2)[1]\big) = \CO_B(-2)$,
    \item $\rho\big(\CO_B(-1)\big) = \CO_B(-1)[-1]$,
    \item if $x\in B$ there is an exact triangle
    \[ \CO_B(-2) \to \rho\big(k(x)\big)\to \CO_B(-1)[-1]\,,\]
    \item for all $k\leq -2$, $\rho\big(\CO_B(k)[1]\big)\in\CA_0[-1]$,
    \item for all $k\geq -1$, $\rho\big(\CO_B(k)\big)\in\CA_0[-1]$.
\end{enumerate}
\end{lemma}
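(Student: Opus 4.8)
The plan is to compute each image directly using the defining exact triangle~\eqref{eq: triangle} together with the splitting $\Phi_R\circ\Phi\cong\id\oplus\omega_C[-2]$ and the adjunction/base-change formulas for $\Phi$ and $\Phi_R$. For part~(i), if $x\notin W$, then $\iota^*k(x)=0$, so $\Phi_R(k(x)^\vee[2])=0$; since $k(x)^\vee = k(x)[-3]$ on the Calabi--Yau 3-fold $X$, the triangle~\eqref{eq: triangle} collapses to $\rho(k(x))=k(x)^\vee[2]=k(x)[-1]$. For parts~(ii) and~(iii), I would use that $\CO_B(-1)=\Phi(\CO_x)$ for $x=p(B)$, more precisely that the sheaves $\CO_B(k)$ are pushed forward from $W$ and have explicit restrictions to $W$; the key computational inputs are $L\iota^*\iota_*\CF \cong \CF\oplus\CF\otimes N_{W/X}^\vee[1]$ (with $N_{W/X}^\vee=\CO_X(-W)|_W$) together with $\CO_X(W)|_W = K_W^{-1}$ since $X$ is Calabi--Yau, and the fact that on a fiber $B\cong\BP^1$ one has $K_W|_B=\CO_B(-2)$. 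Feeding $\CO_B(-2)[1]$ and $\CO_B(-1)$ through $(-)^\vee[2]$ and then through $\Phi\circ\Phi_R[1]$ and reading off the triangle~\eqref{eq: triangle} should give the stated self-duality (up to the shift) — here the relation to the local model $\CO_p^\pm$ recorded in the proof of Proposition~\ref{prop: rhocrepant}, namely $\BD^\CY(\CO_p^\pm)=\CO_p^\pm[-1]$, is exactly the shadow of~(ii)--(iii), so I expect these to be a clean Serre-duality-on-$\BP^1$ computation.

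For part~(iv), I would resolve the skyscraper $k(x)$ for $x\in B$ by sheaves adapted to the ruling: there is a Koszul-type presentation expressing $k(x)$ as built from $\CO_B$ and the structure sheaf of a nearby fiber, or more directly one uses that $k(x)$ sits in an exact sequence on $W$ relating $\CO_B$ and $\CO_B(-1)$, then pushes forward to $X$. Applying $\rho$ to this sequence and invoking parts~(ii), (iii) (after suitable twists — note $\CO_B(-2)=\CO_B(-1)\otimes\omega_B$ and the Euler sequence on $W$ from Section~\ref{subsec: geometry} controls the twist) yields the asserted triangle $\CO_B(-2)\to\rho(k(x))\to\CO_B(-1)[-1]$; alternatively one transports the corresponding orbifold statement $\BD^\CY(k(\text{pt}))$ via Proposition~\ref{prop: rhocrepant} in the crepant case and checks the general case is formally identical since~\eqref{eq: triangle} is local. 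Finally, parts~(v) and~(vi) follow by induction on $k$: the sequences $0\to\CO_B(k-1)\to\CO_B(k)\to k(x)\to 0$ and their twists, combined with the long exact sequence of perverse cohomology, let me bootstrap from the base cases $k=-2$ in~(ii) [for~(v)] and $k=-1$ in~(iii) [for~(vi)] and from~(iv); one checks at each step that the perverse cohomology stays concentrated so that the result lands in $\CA_0[-1]$, using that $\CO_B(k)$ has support inside a fiber $B$, hence perverse dimension $0$, and that $\rho$ shifts $\CA_0$ into $\CA_0[-1]$ by Theorem~\ref{thm: rho}(i) — though to avoid circularity I would instead verify directly that the relevant ${}^p\CH^i$ vanish for $i\neq 1$ from the explicit triangles.

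The main obstacle I anticipate is part~(iv): getting the extension class right and identifying precisely which of the (two-dimensional space of) extensions $\Hom(\CO_B(-1)[-1],\CO_B(-2)[1])$ the object $\rho(k(x))$ realizes, and confirming it is the one making $\rho(k(x))$ a genuine perverse point sheaf rather than a split object. This requires carefully tracking the connecting maps through the two-step triangle~\eqref{eq: triangle}, in particular the morphism $\Phi\circ\Phi_R[1](k(x)^\vee[2])\to k(x)^\vee[3]$, rather than just computing cohomology objects. Everything else reduces to Serre duality on $\BP^1$-fibers and the standard formulas for $L\iota^*$, $\iota_*$, $Rp_*$ applied to line bundles on $W$, which are routine but must be done in the correct normalization dictated by the Euler sequence and the Calabi--Yau condition.
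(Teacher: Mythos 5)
Your plan is essentially the paper's proof: part~(i) from $k(x)^\vee[2]=k(x)[-1]$ and $\Phi_R$ vanishing off $W$, parts~(ii)--(iii) by direct computation, part~(iv) by applying $\rho$ to a canonical triangle involving $\CO_B(-1)$ and $\CO_B(-2)[1]$, and parts~(v)--(vi) by induction on $k$ via the sequences $0\to\CO_B(k-1)\to\CO_B(k)\to k(x)\to 0$. That all matches.

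Two remarks. First, the obstacle you flag for part~(iv) --- pinning down the extension class in $\Ext^2(\CO_B(-1),\CO_B(-2))$ --- is not actually an issue, because the lemma does not assert anything about which extension $\rho(k(x))$ is; it only asserts the existence of \emph{some} triangle of the stated shape. Any point $x\in B$ gives a short exact sequence $0\to\CO_B(-2)\to\CO_B(-1)\to k(x)\to 0$ on $B\cong\BP^1$, hence a distinguished triangle $\CO_B(-1)\to k(x)\to\CO_B(-2)[1]$ in $D^b(X)$. Since $\rho$ is an anti-equivalence of triangulated categories, it sends this to a distinguished triangle, and substituting (ii) and (iii) gives exactly $\CO_B(-2)\to\rho(k(x))\to\CO_B(-1)[-1]$. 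There is no connecting map to track and no split-versus-nonsplit dichotomy to resolve for this statement. (Note also the small slip: $\CO_B(-2)=\CO_B\otimes\omega_B$, not $\CO_B(-1)\otimes\omega_B$; and the relevant sequence pairs $\CO_B(-1)$ with $\CO_B(-2)$, not with $\CO_B$.)

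Second, the alternative route you mention for part~(iv) --- transporting $\BD^\CY(k(\mathrm{pt}))$ through Proposition~\ref{prop: rhocrepant} --- would be circular, since the proof of Proposition~\ref{prop: rhocrepant} invokes this very lemma (for both $x\notin W$ and $x\in W$). Your primary direct approach avoids this, and you correctly note the analogous circularity danger in using Theorem~\ref{thm: rho}(i) for (v)--(vi) and propose verifying the ${}^p\CH^i$ vanishing directly instead, which is the right move.
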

\begin{proof}
Part~(i) follows from $k(x)^\vee[2] = k(x)[-1]$ and $\Phi_R\big(k(x)\big)=0$. Part~(ii) and (iii) are computed directly. Then, any $x\in B$ corresponds to an exact triangle
\[ \CO_B(-1) \to k(x) \to \CO_B(-2)[1]\,,\]
and application of $\rho$ yields~(iv). For~(v) and~(vi) we can use induction on~$k$ to reduce to~(ii) and (iii) respectively.
\end{proof}

\begin{proposition}\label{prop: rho} We have
\[ \rho\big(\CO_X) = \CO_X[2]\,,\qquad \rho\circ \rho = \id\,.\]
\end{proposition}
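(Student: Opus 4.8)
The statement splits into two parts, and the plan is to prove the first and feed it into the second.

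\emph{First, $\rho(\CO_X)=\CO_X[2]$.} Since $(\CO_X)^\vee=\CO_X$ one has $\rho(\CO_X)=\twist(\CO_X[2])=\twist(\CO_X)[2]$, so it suffices to show $\twist(\CO_X)\cong\CO_X$; by the defining triangle $\Phi\circ\Phi_R(\CO_X)\to\CO_X\to\twist(\CO_X)$ this reduces to $\Phi_R(\CO_X)=0$. Here $L\iota^\ast\CO_X=\CO_W$, so $\Phi_R(\CO_X)=Rp_\ast\big(\CO_p(1)\otimes\omega_W\big)[-1]$, and the line bundle $\CO_p(1)\otimes\omega_W$ restricts to $\CO_{\BP^1}(-1)$ on every fiber of $p$ (equivalently, the Euler sequence gives $\omega_p\cong\CO_p(-2)\otimes p^\ast\det\CE^\vee$, hence $\CO_p(1)\otimes\omega_W\cong\CO_p(-1)\otimes p^\ast(\det\CE^\vee\otimes\omega_C)$). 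A line bundle that is $\CO(-1)$ on every fiber of a $\BP^1$-bundle has vanishing total derived pushforward, so $\Phi_R(\CO_X)=0$ and the first claim follows.

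\emph{Second, $\rho\circ\rho=\id$.} Since $\twist\circ[2]$ is an equivalence and $(-)^\vee$ is contravariant, $\rho$ is an anti-equivalence and $\rho\circ\rho$ is an exact autoequivalence of $D^b(X)$; I would identify it with the identity through its action on skyscrapers, in the style of the proof of Proposition~\ref{prop: rhocrepant}. For $x\notin W$, Lemma~\ref{lem: 0-dim generators}(i) gives $\rho(k(x))=k(x)[-1]$, and since $\rho$ is contravariant ($\rho(-[n])=\rho(-)[-n]$) we get $\rho\rho(k(x))=\rho(k(x))[1]=k(x)$. For $x$ on a fiber $B$, apply the contravariant $\rho$ to the triangle $\CO_B(-2)\to\rho(k(x))\to\CO_B(-1)[-1]$ of Lemma~\ref{lem: 0-dim generators}(iv); evaluating $\rho(\CO_B(-1)[-1])=\CO_B(-1)$ and $\rho(\CO_B(-2))=\CO_B(-2)[1]$ via parts~(ii)--(iii) of that lemma produces a triangle $\CO_B(-1)\to\rho\rho(k(x))\to\CO_B(-2)[1]\to\CO_B(-1)[1]$, i.e. $\rho\rho(k(x))$ lies in the same family of extensions of $\CO_B(-2)[1]$ by $\CO_B(-1)$ that defines the point sheaves, classified by $\Hom(\CO_B(-2),\CO_B(-1))\cong\BC^2$. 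This extension must be non-split, since otherwise $\Hom(\rho\rho(k(x)),\rho\rho(k(x)))$ would be at least $2$-dimensional, contradicting $\Hom(\rho\rho(k(x)),\rho\rho(k(x)))\cong\Hom(k(x),k(x))\cong\BC$; hence $\rho\rho(k(x))\cong k(f(x))$ for some $f(x)\in B$.

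Thus $\rho\circ\rho$ carries every skyscraper to a degree-$0$ skyscraper, so by \cite[Corollary~5.23]{Hu06} it is of the form $(M\otimes-)\circ f_\ast$ for an automorphism $f\colon X\to X$ and a line bundle $M$. The equality $\rho\rho(k(x))=k(x)$ for $x\notin W$ forces $f=\id$ on the dense open $X\ssetminus W$, hence $f=\id$; and then $M\cong\rho\rho(\CO_X)$, which by the first part equals $\rho(\CO_X[2])=\rho(\CO_X)[-2]=\CO_X$. Therefore $M\cong\CO_X$ and $\rho\circ\rho\cong\id$. The only non-formal point in this argument is identifying the extension triangle for $\rho\rho(k(x))$ with $x$ on a fiber and checking that it does not degenerate to a split object (handled by the $\Hom$-dimension count above); everything else is either a fiberwise cohomology vanishing or an application of the structure theorem for skyscraper-preserving derived equivalences.
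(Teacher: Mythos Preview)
Your proof is correct and follows essentially the same approach as the paper's: compute $\Phi_R(\CO_X)=0$ for the first claim, then use Lemma~\ref{lem: 0-dim generators} to show $\rho\circ\rho$ sends skyscrapers to skyscrapers and invoke \cite[Corollary~5.23]{Hu06} to conclude. You supply more detail than the paper does (the explicit fiberwise vanishing for $\Phi_R(\CO_X)$ and the $\Hom$-dimension argument ruling out a split extension for $x\in W$), but the strategy is identical.
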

\begin{proof}
The first claim follows from $\Phi_R(\CO_X)=0$, thus 
\[ \rho(\CO_X) = \CO_X^\vee[2]=\CO_X[2]\,.\]
For the second claim we use the computations in Lemma~\ref{lem: 0-dim generators} which imply that for all $x\in X$ there is $y\in X$ such that
\[ \rho\circ\rho\big(k(x)\big)\cong k(y)\,.\]
Moreover, $x=y$ for $x\in X\ssetminus W$. Now we apply the general fact~\cite[Corollary~5.23]{Hu06} that any auto-equivalence $\Psi$ with $\Psi(k(x))\cong k(f(x))$ is of the form
\[ \Psi = (M\otimes - )\circ f_*\,,\]
where $f\colon X\to X$ is an isomorphism and $M$ is a line bundle. Then $f_{|X\ssetminus W}=\id$, thus $f=\id$, and by the first claim $M$ must be the trivial line bundle. 
\end{proof}

The action of~$\rho$ on cohomology can be directly computed using the exact triangle~\eqref{eq: triangle}. For our purposes, it suffices to consider objects $E\in\CA_{\leq 1}$, in particular $\ch_0(E)=0$, and $\ch_1(E)$ is a multiple of~$\sfw$. It is convenient to compute the action using $(\ch_1,\ch_2,\chi)$.

\begin{proposition}\label{prop: actionCoh}
The anti-equivalence~$\rho$ acts on $(\ch_1,\ch_2,\chi)$ as
\[\big(r\sfw,\beta,n\big)\stackrel{\rho}{\longmapsto} \big(r\sfw,\beta+(\sfw\cdot\beta-2r)\,\sfb,-n\big)\,.\]
\end{proposition}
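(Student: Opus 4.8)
The plan is to use the exact triangle~\eqref{eq: triangle}
\[ E^\vee[2] \to \rho(E) \to \Phi\circ\Phi_R[1]\big(E^\vee[2]\big) \]
and extract the Chern character action additively, since the Grothendieck group descends through triangles. First I would compute the effect of $(-)^\vee[2]$ on $(\ch_1,\ch_2,\chi)$. Since $X$ is a Calabi--Yau 3-fold, for $E$ with $\ch_0(E)=0$ the Grothendieck--Riemann--Roch / Serre duality bookkeeping gives $\ch_1(E^\vee) = -\ch_1(E)$, $\ch_2(E^\vee) = \ch_2(E)$ and $\chi(E^\vee) = -\chi(E)$ (the sign on $\chi$ coming from the shift by $2$ combined with Serre duality, or directly from $\ch_3$ together with the vanishing of $\ch_0,\ch_1\cdot c_1(X)$ since $X$ is Calabi--Yau); thus $(r\sfw,\beta,n)\mapsto (r\sfw,\beta,-n)$ after the $[2]$ is absorbed into the sign of $\chi$. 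The only remaining task is to compute the numerical class of the correction term $\Phi\circ\Phi_R[1]\big(E^\vee[2]\big)$, which by the triangle must be added to the class of $E^\vee[2]$ to give the class of $\rho(E)$.

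For the correction term, note that $\Phi\circ\Phi_R$ lands in the image of $\Phi$, hence is $\iota_\ast$ of an object pulled back and twisted on the ruled surface $W$; in particular its support maps to points of $C$, so its $\ch_1$ vanishes and its class is supported on multiples of $\sfb$ together with an Euler characteristic. I would compute $\Phi_R(E^\vee[2]) = Rp_\ast\big(\CO_p(1)\otimes \omega_W[-1]\otimes L\iota^\ast(E^\vee[2])\big) \in D^b(C)$ at the level of $K$-theory: only its rank and degree on $C$ matter, and these are governed by $\chi$ of appropriate restrictions to fibers. Concretely, for $E\in\CA_{\leq 1}$ the restriction $\iota^\ast E$ (in the derived sense) has a well-defined class on $W$ determined by $(r,\beta,n)$ via the normal bundle sequence $\CO_X(-W)|_W = \omega_p\otimes p^\ast(\text{line bundle})$ coming from adjunction and $K_X=\CO_X$; pushing forward and tensoring by $\CO_p(1)\otimes\omega_W$ is then a Riemann--Roch computation on the $\BP^1$-bundle. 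The upshot should be that $\Phi\circ\Phi_R[1](E^\vee[2])$ contributes exactly $(\sfw\cdot\beta - 2r)\,\sfb$ to $\ch_2$ and a compensating contribution to $\chi$ that cancels against the piece one would otherwise expect, leaving $-n$.

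I would organize the fiber computation by first checking it on the generators of $\CA_0$ appearing in Lemma~\ref{lem: 0-dim generators}: for $k(x)$ with $x\notin W$ the class is $(0,0,1)\mapsto(0,0,-1)$, consistent with $\sfb$-coefficient $0$ since $\sfw\cdot 0 = 0$; for $\CO_B(-1)$, $\CO_B(-2)[1]$ and $k(x)$ with $x\in B$, parts (ii)--(iv) of that lemma give the classes directly and one checks the formula predicts the right $\sfb$-shift (here $\beta=0$, so the shift is $-2r$, matching $\CO_B(-1)\mapsto\CO_B(-1)[-1]$ with $r$ read off from $\ch_1$ — though these have $\ch_1 = 0$ as well, so $r=0$ and there is no shift; the genuine $r\neq 0$ test comes from a scheme-theoretic thickening of $W$). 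Then additivity over the Grothendieck group and the fact that $\CA_{\leq 1}$ is generated (in $K$-theory) by such pieces together with curve classes $\beta$ not proportional to $\sfb$ finishes the argument; for the latter one uses the normal bundle formula $\iota_\ast\CO_W|_B$ and $\sfw\cdot\beta = \deg_B(\CO_X(W)|_{\text{curve}})$ to see the coefficient $\sfw\cdot\beta$ emerge.

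The main obstacle I anticipate is pinning down the precise Riemann--Roch constants in $\Phi_R$ — in particular getting the twist by $\CO_p(1)\otimes\omega_W$ and the shift $[-1]$ correct so that the $\ch_2$-contribution is exactly $(\sfw\cdot\beta-2r)\sfb$ rather than off by a multiple of $\sfb$ or by a sign, and simultaneously checking that the Euler characteristic contribution of the correction term vanishes so that $\chi$ flips cleanly to $-n$. This is a bounded computation on the $\BP^1$-bundle $W=\BP_C(\CE)$ using the Euler sequence $0\to\omega_p\to\CO_p(-1)\otimes p^\ast\CE^\vee\to\CO_W\to 0$ recalled in Section~\ref{subsec: geometry}, but it requires care with the adjunction identification of $\CO_X(-W)|_W$ and with whether one works with $\iota^\ast$ or $L\iota^\ast$.
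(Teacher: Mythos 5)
Your overall strategy — extract the numerical class from the triangle $(\Delta)$, using additivity of the Chern character — is the natural one and is surely what the paper has in mind (no proof is given there). However, the execution contains two genuine errors that happen to cancel one another on the $\ch_1$ component, so your sanity checks did not detect them.

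First, you correctly state $\ch_1(E^\vee)=-\ch_1(E)$, but then write that $(-)^\vee[2]$ sends $(r\sfw,\beta,n)\mapsto(r\sfw,\beta,-n)$, attributing a sign to ``the $[2]$ absorbed into the sign of $\chi$.'' That is inconsistent with your own formula: $[2]$ is an even shift and changes nothing in $K$-theory, and the sign on $\chi$ comes from the dual negating $\ch_1$ and $\ch_3$ (with $\ch_0=0$). The correct statement is $(\ch_1,\ch_2,\chi)\big(E^\vee[2]\big)=(-r\sfw,\beta,-n)$.

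Second, your claim that $\Phi\circ\Phi_R[1](E^\vee[2])$ has vanishing $\ch_1$, with the justification that ``its support maps to points of $C$,'' is false. The essential image of $\Phi$ consists of objects $\iota_\ast\big(\CO_p(-1)\otimes p^\ast V\big)$, and when $V\in D^b(C)$ has nonzero $K$-theoretic rank $r_V$ these are supported on all of $W$ and have $\ch_1=r_V\sfw\neq 0$. Since $\rho$ preserves $\ch_1=r\sfw$ while $E^\vee[2]$ has $\ch_1=-r\sfw$, the cone must contribute $\ch_1=2r\sfw$, not $0$. A concrete test your $\CA_0$-generator checks cannot see (since they all have $r=0$): for $E=\Phi(L)$ the paper's Lemma~\ref{lem: 1-dim generators}(ii) gives $\rho(E)\cong\Phi(\omega_C^2\otimes\det\CE^\vee\otimes L^{-1})$, both with $\ch_1=\sfw$, while $E^\vee[2]$ has $\ch_1=-\sfw$. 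Consequently the remaining Riemann--Roch computation cannot be organized the way you describe (the correction is not supported over multiples of $\sfb$ in all of $(\ch_1,\ch_2)$); you would need to compute the full class of $\Phi_R(E^\vee[2])$ on $C$ (rank and degree), push forward through $\Phi$ using $\ch(\iota_\ast G)=\iota_\ast\big(\ch(G)\td(N_\iota)^{-1}\big)$ with $N_\iota\cong\omega_W$, and verify that its $\ch_1$ is $2r\sfw$, its $\ch_2$ is $(\sfw\cdot\beta-2r)\sfb$, and its Euler characteristic is $0$, which includes a nonzero $\ch_3$-contribution cancelling $\tfrac{1}{12}c_2(X)\cdot 2r\sfw$. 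None of this is carried out, and the heuristic you give in its place is wrong once $r\neq 0$.
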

\subsection{Basic results (proof of Theorem~\ref{thm: rho})}\label{subsec: basic}

We start by setting up some notation that will later be useful in the induction process we'll use.
\begin{notation}\label{not: ch}
Let $\omega\in\Amp(X)$ be an ample class and $E\in\Coh(X)$ with at most $1$-dimensional support outside of~$W$. Denote by $\ch_i^\omega(E)=\omega^{3-i}\cdot\ch_i(E)$. We write $\ch^\omega(E')<\ch^\omega(E)$, if 
\begin{enumerate}
    \item $0\leq \ch_1^\omega(E')<\ch_1^\omega(E)$, or
    \item $0=\ch_1^\omega(E')=\ch_1^\omega(E)$, and $0\leq\ch_2^\omega(E')<\ch_2^\omega(E)$, or
    \item $0=\ch_1^\omega(E')=\ch_1^\omega(E)$, and $0=\ch_2^\omega(E')=\ch_2^\omega(E)$, and\linebreak[5] $0~\leq~\ch_3^\omega(E')<\ch_3^\omega(E)$.
\end{enumerate}
Then, $\ch^\omega(E)\geq 0$ with equality if and only if $E=0$. Note that $\ch^\omega(E)>0$ is minimal if and only if $E\cong k(x)$ for some $x\in X$.
\end{notation}
\begin{notation}\label{not: R1p}
For $G',G\in\Coh(W)$ we write $R^1p_\ast G'<R^1p_\ast G$ if
\begin{enumerate}
    \item $0\leq \rk\big(R^1p_\ast G'\big)<\rk\big(R^1p_\ast G\big)$, or
    \item $0=\rk\big(R^1p_\ast G'\big)=\rk\big(R^1p_\ast G\big)$, and $\len\big(R^1p_\ast G'\big)<\len\big(R^1p_\ast G\big)$, where $\len(-)$ is the length of a $0$-dimensional sheaf.
\end{enumerate}
\end{notation}
\begin{lemma}\label{lem: basic}
\begin{enumerate}
    \item For all $T\in\Coh(X)$ 
    \[ R^1p_\ast L\iota^\ast T = R^1p_\ast\iota^\ast T \,.\]
    \item There is a short exact sequence
    \[0 \to R^1p_\ast L^{-1} \iota^\ast(T) \to p_\ast L\iota^\ast (T) \to  p_\ast \iota^\ast (T)\to 0\,.\]
    \item For all $G\in\Coh(W)$, $L^k\iota^\ast\iota_\ast G=0$ for $k\neq 0,-1$ and
    \[ L^{-1}\iota^\ast \iota_\ast G = \omega_W^{\vee}\otimes G\,,\quad \iota^\ast \iota_\ast G = G\,.\]
\end{enumerate}
\end{lemma}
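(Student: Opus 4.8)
The plan is to reduce all three statements to the Koszul resolution of the divisor $W\subset X$ together with the vanishing $R^{\geq 2}p_\ast(-)=0$, which holds because the fibers of $p$ are at most $1$-dimensional (and was already used above). Since $W$ is a smooth divisor, the section $s_W\in H^0(X,\CO_X(W))$ cutting it out provides a locally free resolution $[\CO_X(-W)\xrightarrow{\,s_W\,}\CO_X]\xrightarrow{\ \sim\ }\iota_\ast\CO_W$ in degrees $-1,0$. Tensoring it with $T\in\Coh(X)$ over $\CO_X$ and using $\iota_\ast L\iota^\ast T\cong\iota_\ast\CO_W\otimes^L_{\CO_X}T$ (the projection formula for the closed immersion $\iota$, equivalently the definition of the derived pullback), we see that $\iota_\ast L\iota^\ast T$ is represented by the two-term complex of $\CO_X$-modules $[\,T\otimes\CO_X(-W)\xrightarrow{\,s_W\,}T\,]$ placed in degrees $-1,0$, whose cohomology sheaves are $\iota_\ast\iota^\ast T$ in degree $0$ and $\ker\big(s_W\colon T\otimes\CO_X(-W)\to T\big)$ in degree $-1$. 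As $\iota_\ast$ is exact and faithful this already gives $L\iota^\ast T\in D^{[-1,0]}(W)$, $L^0\iota^\ast T=\iota^\ast T$, and $L^k\iota^\ast T=0$ for $k\neq 0,-1$, which is the vanishing part of (iii).

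For the remaining identifications in (iii) I would take $T=\iota_\ast G$. By the projection formula the complex above reads $[\,\iota_\ast(G\otimes N_{W/X}^\vee)\xrightarrow{\,s_W\,}\iota_\ast G\,]$, where $N_{W/X}=\CO_X(W)|_W$, and its differential, which is multiplication by the section $s_W$ on the $\CO_W$-module $\iota_\ast G$, vanishes; hence $L^0\iota^\ast\iota_\ast G=G$ and $L^{-1}\iota^\ast\iota_\ast G=G\otimes N_{W/X}^\vee$. Now the adjunction $\omega_W\cong\omega_X|_W\otimes N_{W/X}$ together with the Calabi--Yau condition $\omega_X\cong\CO_X$ gives $N_{W/X}\cong\omega_W$, so $L^{-1}\iota^\ast\iota_\ast G\cong\omega_W^\vee\otimes G$. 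For parts (i) and (ii) I would feed the truncation triangle
\[ L^{-1}\iota^\ast T[1]\longrightarrow L\iota^\ast T\longrightarrow \iota^\ast T\longrightarrow L^{-1}\iota^\ast T[2] \]
into $Rp_\ast$ and read off the long exact sequence of cohomology sheaves (equivalently, run the spectral sequence $R^a p_\ast\,\mathcal{H}^b(L\iota^\ast T)\Rightarrow\mathcal{H}^{a+b}(Rp_\ast L\iota^\ast T)$). Since $\iota^\ast T$ and $L^{-1}\iota^\ast T$ are sheaves and $R^a p_\ast(-)=0$ for $a\geq 2$, only the terms with $a\in\{0,1\}$ and $b\in\{-1,0\}$ survive, so $\mathcal{H}^1(Rp_\ast L\iota^\ast T)=R^1p_\ast\iota^\ast T$, which is (i), while the part of the long exact sequence around degree $0$ reads
\[ 0=R^{-1}p_\ast\iota^\ast T\to R^1p_\ast\big(L^{-1}\iota^\ast T\big)\to p_\ast\big(L\iota^\ast T\big)\to p_\ast\iota^\ast T\to R^2p_\ast\big(L^{-1}\iota^\ast T\big)=0\,, \]
which is (ii).

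I do not anticipate a real obstacle: the whole argument is the local Koszul computation plus the elementary vanishing $R^{\geq 2}p_\ast=0$. The two points deserving care are invoking the Calabi--Yau hypothesis $\omega_X\cong\CO_X$ at precisely the normal-bundle twist in (iii), in order to replace $N_{W/X}^\vee$ by $\omega_W^\vee$, and keeping track of which $E_2$-term survives, equivalently which maps in the long exact sequence vanish, in (i) and (ii); both are routine.
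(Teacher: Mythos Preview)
Your proof is correct and follows essentially the same approach as the paper: the spectral sequence $E_2^{k,l}=R^kp_\ast\,\CH^l(L\iota^\ast T)\Rightarrow R^{k+l}p_\ast L\iota^\ast T$ together with $R^{\geq 2}p_\ast=0$ for (i) and (ii), and the identification $\iota^\ast\CO_X(-W)=\omega_W^\vee$ for (iii). You supply more detail on (iii) via the explicit Koszul resolution and the adjunction formula $\omega_W\cong\omega_X|_W\otimes N_{W/X}$, and you phrase (i)--(ii) via the truncation triangle rather than the spectral sequence, but these are the same computation.
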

\begin{proof}
There is a spectral sequence
\[ E_2^{k,l}=R^kp_\ast \CH^l(L\iota^\ast T) \implies R^{k+l}p_\ast L\iota^\ast T\,.\]
Since $\dim(p)=1$, the only non-vanishing term contributing to $R^1p_\ast L\iota^\ast T$ is $R^1p_\ast\iota^\ast T$ and the differentials vanish. The second statement follows analogously. The third assertion follows from $\iota^\ast \CO_X(-W)=\omega_W^{\vee}$.\qedhere
\end{proof}

\begin{lemma}\label{lem: Rp}\hfill
\begin{enumerate}
    \item If $\iota_\ast G\in\CF$, then $p_\ast G=0$,
    \item if $\iota_\ast G\in\CA_{\leq 1}$, then $Rp_\ast(G)\in\Coh(C)$.
\end{enumerate}
\end{lemma}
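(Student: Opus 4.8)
The plan is to reduce both statements to computations of higher pushforwards along $p$ on the surface $W$. For part (i): suppose $\iota_\ast G\in\CF=\CT^\perp$. I would produce an object $T\in\CT$ whose presence forces $p_\ast G=0$. The natural candidate is $T=\iota_\ast(p^\ast p_\ast G)$ (or more precisely the adjunction unit). Since $R^1p_\ast(\iota^\ast T)=R^1p_\ast(p^\ast p_\ast G)=0$ — the fibers being $\BP^1$ and $R^1p_\ast p^\ast(-)=0$ for any sheaf on $C$ — we have $T\in\CT$. Then $\Hom(T,\iota_\ast G)=\Hom(p^\ast p_\ast G, G)$ by adjunction, and the adjunction counit $p^\ast p_\ast G\to G$ lives in this group; it is nonzero whenever $p_\ast G\neq 0$, because composing with the unit $p_\ast G\to p_\ast p^\ast p_\ast G$ recovers the identity of $p_\ast G$. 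Since $\iota_\ast G\in\CF$ kills all such homomorphisms, we get $p_\ast p^\ast p_\ast G=0$, hence $p_\ast G=0$. (Alternatively, if $p_\ast G$ had a nonzero subsheaf $N$ pulled back from $C$, one tilts this into a nonzero map; the cleanest route is the adjunction counit argument above.)

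For part (ii): assume $\iota_\ast G\in\CA_{\leq 1}$. We want $Rp_\ast G\in\Coh(C)$, i.e.\ concentrated in degree $0$, which amounts to $R^1p_\ast G=0$. Since $\CA=\langle\CF[1],\CT\rangle$, the perverse filtration of $\iota_\ast G$ gives an exact triangle $\iota_\ast G_\CT\to\iota_\ast G\to \iota_\ast G_\CF[1]$ — but one must first check the torsion and torsion-free parts are again pushed forward from $W$ (this holds because $\CT$ and $\CF$ are closed under sub/quotient in $\Coh(X)$ and $\iota_\ast$ is exact with image the sheaves set-theoretically supported on $W$, so the $\Coh(X)$-subobject $G_\CT\subset\iota_\ast G$ and the quotient are of the form $\iota_\ast(-)$). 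Applying $p_\ast$ and the long exact sequence, it suffices to show $R^1p_\ast(-)=0$ for a sheaf $G'$ on $W$ with $\iota_\ast G'\in\CT$ and for one with $\iota_\ast G'\in\CF$. The first is immediate: $\iota_\ast G'\in\CT$ means by definition $R^1p_\ast(\iota^\ast\iota_\ast G')=R^1p_\ast G'=0$ using Lemma~\ref{lem: basic}(iii). For the second, where $\iota_\ast G'\in\CF$: here $R^1p_\ast G'$ could a priori be nonzero, but I would use that $G'\in\CF$ forces $G'$ to be pure of perverse dimension covering the fibers — more concretely, part (i) already gives $p_\ast G'=0$, and combined with the constraint that $\iota_\ast G'\in\CA_{\leq 1}$ (perverse dimension $\le 1$, so its support projects to at most a curve, i.e.\ all of $C$) one argues that $R^1p_\ast G'$ must vanish because a sheaf on $W$ with $p_\ast=0$ and $R^1p_\ast\neq 0$ has perverse dimension $2$ (its restriction to a generic fiber is a nonzero sheaf on $\BP^1$ with vanishing $H^0$, hence nonzero $H^1$, spread over all of $C$ — contradicting $^p\dim\le 1$ only if we are careful). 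Cleaner: decompose via the generic-fiber behavior — the locus where the fiberwise $H^1$ is nonzero is $p(\supp\text{ of }R^1p_\ast G')$, which by semicontinuity is closed; if it equals $C$ then $\iota_\ast G'$ has perverse dimension $\ge 1$ contribution but actually one needs it to be $2$-dimensional in the perverse sense, contradiction with $\CA_{\leq 1}$.

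The main obstacle I expect is part (ii) in the $\CF$-component: unwinding precisely why $R^1p_\ast G'=0$ for $\iota_\ast G'\in\CF\cap\CA_{\leq 1}$. The heart of it is the interplay between "$p_\ast G'=0$ and $R^1p_\ast G'\neq 0$" and the definition of perverse dimension: such a $G'$ restricted to a general fiber $B\cong\BP^1$ is a sheaf with $H^0=0$, the simplest being $\CO_B(-1)$ or $\CO_B(-2)$-type pieces, which in the perverse $t$-structure have dimension $2$ (cf.\ Lemma~\ref{lem: 0-dim generators}: $\CO_B(-1)\in\CA_0$ but a family of them over all of $C$ is $2$-dimensional). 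So I would make this rigorous by: (1) generic flatness to find a dense open $U\subset C$ over which $G'|_{p^{-1}(U)}$ is flat with fiberwise cohomology computed by cohomology-and-base-change; (2) if $R^1p_\ast G'\neq 0$ then (shrinking $U$) the general fiber $G'|_B$ has $H^1\neq 0$ hence (being on $\BP^1$) has a summand $\CO_B(k)$ with $k\le -2$ or a subsheaf forcing $^p\dim\ge 2$ after spreading over $U$, contradicting $\iota_\ast G'\in\CA_{\leq 1}$; thus $R^1p_\ast G'=0$ and $Rp_\ast G'\in\Coh(C)$. Everything else is a formal diagram chase with the torsion pair and the long exact sequence of $Rp_\ast$.
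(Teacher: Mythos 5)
Your part (i) is essentially sound and is a variant of the paper's argument: where the paper picks a sufficiently ample $L\in\Pic(C)$ and a nonzero $L^\vee\to p_\ast G$, then pushes forward $p^\ast L^\vee\to G$ to contradict $\iota_\ast G\in\CF$, you run the adjunction with the whole sheaf $p^\ast p_\ast G$. One small caveat: the stated equality $\Hom(T,\iota_\ast G)=\Hom(p^\ast p_\ast G,G)$ is not literally correct ($\Hom_X(\iota_\ast A,\iota_\ast B)=\Hom_W(\iota^\ast\iota_\ast A,B)$, and $\iota^\ast\iota_\ast A$ is not just $A$); but you only need the \emph{existence} of a nonzero element, which $\iota_\ast$ of the counit $p^\ast p_\ast G\to G$ provides (using that $\iota_\ast$ is exact and conservative). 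Also note $\iota_\ast(p^\ast p_\ast G)\in\CT$ because $\iota^\ast\iota_\ast(p^\ast p_\ast G)=p^\ast p_\ast G$ as a sheaf (Lemma~\ref{lem: basic}(iii)) and $R^1p_\ast p^\ast(-)=0$.

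Part (ii) has a genuine gap, and it stems from a shift-bookkeeping error. Decomposing $\iota_\ast G$ with respect to the standard $t$-structure gives the triangle $F'[1]\to G\to T'$ in $D^b(W)$ with $\iota_\ast F'\in\CF$, $\iota_\ast T'\in\CT$ (note the orientation: the torsion part of the tilt is $\CF[1]$, not $\CT$, as you wrote). Applying $Rp_\ast$ and the long exact sequence of standard cohomology, the conclusion $Rp_\ast G\in\Coh(C)$ needs only:
\begin{itemize}
\item $\CH^{-1}(Rp_\ast G)=0$: this is sandwiched between $\CH^{-1}(Rp_\ast F'[1])=p_\ast F'=0$ by part~(i), and $\CH^{-1}(Rp_\ast T')=0$ automatically;
\item $\CH^{1}(Rp_\ast G)=0$: sandwiched between $\CH^{1}(Rp_\ast F'[1])=R^2p_\ast F'=0$ (fiber dimension $1$), and $\CH^{1}(Rp_\ast T')=R^1p_\ast T'=0$ since $\iota_\ast T'\in\CT$.
\end{itemize}
In particular you do \emph{not} need $R^1p_\ast F'=0$; since $F'$ enters shifted by $[1]$, its $R^1p_\ast$ sits in degree $0$ and is perfectly welcome. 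Your claim that ``it suffices to show $R^1p_\ast(-)=0$'' on the $\CF$-component is both unnecessary and false: for example, $F'=\omega_p\otimes p^\ast V$ has $\iota_\ast F'\in\CF$ and $R^1p_\ast F'=V\neq 0$, and such objects pervade the paper. The perverse-dimension excursion you then undertake to ``prove'' $R^1p_\ast F'=0$ is also incorrect: it conflates the ordinary dimension of the support with the perverse dimension. Any sheaf supported on $W$ (shifted or not) has $p(\supp\cap W)\subset C$, hence perverse dimension $\leq 1$ by definition; there is no contradiction with $\CA_{\leq 1}$ to extract from $R^1p_\ast F'\neq 0$, and no object of the form $\iota_\ast F'[1]$ can have perverse dimension $2$.
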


\begin{proof}
If $p_\ast G\neq 0$ we may choose a sufficiently ample $L\in\Pic(C)$ and a non-zero section $L^\vee\to p_\ast G$. By adjunction we have a non-zero $p^\ast L^\vee\to G$. This contradicts $\iota_\ast G\in\CF$ because $R^1p_\ast p^\ast L^\vee=0$, i.e.\ $\iota_\ast p^\ast L^\vee\in\CT$. The statement~(ii) follows from (i) and the definition of~$\CT$.
\end{proof}

\begin{lemma}\label{lem: ample}
\begin{enumerate}
    \item For all $G\in\Coh(W)$
    \[\rk\big(R^1p_\ast(\CO_p(1)\otimes G)\big) \leq \rk(R^1p_\ast G)\,,\]
    with strict inequality if $\rk(R^1p_\ast G) >0$. In that case
    \[\rk\big(R^1p_\ast(\omega_W^\vee\otimes G)\big) < \rk(R^1p_\ast G)\,.\]
    \item If $\rk(R^1p_\ast G)=0$, then
    \[\dim\big(R^1p_\ast(\CO_p(1)\otimes G)\big) \leq \dim(R^1p_\ast G)\,,\]
    with strict inequality if $\dim(R^1p_\ast G)>0$. In that case
    \[\dim\big(R^1p_\ast(\omega_W^\vee\otimes G)\big) < \dim(R^1p_\ast G)\,.\]

\end{enumerate}
\end{lemma}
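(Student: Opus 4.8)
The plan is to reduce the whole statement to a single monotonicity fact about twisting by the $p$-ample line bundle $\CO_p(1)$, prove that fact from one exact sequence coming from a general relative hyperplane section, and then extract the strict inequalities by a fibrewise computation on the $\BP^1$-fibres of $p$. It is convenient to first note that the combined content of (i) and (ii) is: \emph{$R^1p_\ast(\CO_p(1)\otimes G)\le R^1p_\ast(G)$ in the order of Notation~\ref{not: R1p}, and $R^1p_\ast(\omega_W^\vee\otimes G)<R^1p_\ast(G)$ whenever $R^1p_\ast(G)\neq 0$}; under the hypothesis $\rk R^1p_\ast(G)=0$ the sheaf $R^1p_\ast(G)$ is $0$-dimensional, so "$\dim$" in (ii) is read as the length $\len$. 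Taking determinants in the Euler sequence on $W$ gives $\omega_p\cong\CO_p(-2)\otimes p^\ast\det\CE^\vee$, hence $\omega_W^\vee\cong\CO_p(2)\otimes p^\ast N$ for a line bundle $N$ on $C$; by the projection formula $R^1p_\ast(\omega_W^\vee\otimes G)\cong R^1p_\ast(\CO_p(2)\otimes G)\otimes N$ has the same rank, and the same length if torsion, as $R^1p_\ast(\CO_p(2)\otimes G)$. So it suffices to prove: \emph{(a)} $R^1p_\ast(\CO_p(1)\otimes G)$ is a quotient of $R^1p_\ast(G)$ up to twist by a line bundle on $C$; and \emph{(b)} this quotient is proper at a general point of $C$ when $\rk R^1p_\ast(G)>0$, and at some point of $\supp R^1p_\ast(G)$ when $\rk R^1p_\ast(G)=0$ but $R^1p_\ast(G)\neq 0$. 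Granting (a) and (b): in (i) the kernel of the surjection has positive rank, so the rank drops strictly, and applying (a) once more gives $\rk R^1p_\ast(\CO_p(2)\otimes G)\le\rk R^1p_\ast(\CO_p(1)\otimes G)<\rk R^1p_\ast(G)$; in (ii) the surjection has a nonzero stalk in its kernel over a support point, so the length drops strictly, and again one extra application of (a) settles the $\omega_W^\vee$ statement.

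For step (a), I would choose an ample line bundle $A_C$ on $C$ of large enough degree that $\CO_p(1)\otimes p^\ast A_C$ is very ample on $W$, and take $D'$ a general member of $|\CO_p(1)\otimes p^\ast A_C|$. Since $(\CO_p(1)\otimes p^\ast A_C)\cdot(\text{fibre})=1$, $D'$ is a section of $p$, so $p|_{D'}\colon D'\xrightarrow{\ \sim\ }C$; being general, $D'$ avoids the finitely many $0$-dimensional associated points of $G$ and lies in none of the finitely many $1$-dimensional ones, so $\operatorname{Tor}_1^{\CO_W}(\CO_{D'},G)=0$. Tensoring $0\to\CO_W(-D')\to\CO_W\to\CO_{D'}\to 0$ with $G\otimes\CO_p(1)\otimes p^\ast A_C$ then yields a short exact sequence $0\to G\to G\otimes\CO_p(1)\otimes p^\ast A_C\to\CQ\to 0$ with $\CQ$ a coherent sheaf on $D'\cong C$. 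Applying $Rp_\ast$, using $R^{>0}p_\ast(\CQ)=0$ (as $D'\to C$ is finite) and the projection formula, gives an exact sequence of sheaves on $C$
\[ p_\ast(G\otimes\CO_p(1))\otimes A_C\ \xrightarrow{\ \alpha\ }\ p_\ast(\CQ)\ \xrightarrow{\ \delta\ }\ R^1p_\ast(G)\ \longrightarrow\ R^1p_\ast(\CO_p(1)\otimes G)\otimes A_C\ \longrightarrow\ 0\,, \]
so $R^1p_\ast(\CO_p(1)\otimes G)\otimes A_C$ is a quotient of $R^1p_\ast(G)$, with kernel $\coker\alpha$. Since $-\otimes A_C$ preserves rank (and length, for torsion sheaves), this proves (a) and in particular shows $\rk R^1p_\ast(\CO_p(1)\otimes G)\le\rk R^1p_\ast(G)$, so that the hypothesis $\rk=0$ is preserved.

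For step (b), I would show that $\alpha$ fails to be surjective at every $c\in C$ with $\HH^1(W_c,G|_{W_c})\neq 0$. Over such a $c$ one has $W_c\cong\BP^1$ and, decomposing coherent sheaves on a curve, $G|_{W_c}\cong\CT_c\oplus\bigoplus_i\CO_{\BP^1}(a_i)$ with $\CT_c$ of finite length; since $\HH^1\neq 0$, some $a_{i_0}\le -2$, and the general point $D'\cap W_c$ avoids $\supp\CT_c$. By naturality of the base-change map, $\alpha\otimes k(c)$ factors through the evaluation $\HH^0\bigl(W_c,(G\otimes\CO_p(1))|_{W_c}\bigr)\to (G\otimes\CO_p(1))|_{D'\cap W_c}$, and on the summand $\CO_{\BP^1}(a_{i_0}+1)$ this evaluation is the zero map $0\to\BC$; hence $\alpha\otimes k(c)$, and therefore $\alpha$, is not surjective at $c$, so $(\coker\alpha)_c\neq 0$. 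Finally I would invoke cohomology and base change in top relative degree — which holds for an arbitrary coherent sheaf (the fibres of $p$ being curves, one computes $R^1p_\ast$ by the cokernel of a two-term complex, and cokernel commutes with $-\otimes k(c)$) — so that $(R^1p_\ast G)\otimes k(c)\cong\HH^1(W_c,G|_{W_c})$. If $\rk R^1p_\ast(G)>0$ this is nonzero for general $c$, so $\coker\alpha$ has positive rank and we get (i); if $\rk R^1p_\ast(G)=0$ but $R^1p_\ast(G)\neq 0$, any $c\in\supp R^1p_\ast(G)$ works, $\coker\alpha$ has a nonzero stalk there, and we get (ii).

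The main obstacle is the strictness in (ii): fibrewise the first cohomology $\HH^1(W_c,-)$ genuinely drops under the $\CO_p(1)$-twist, but a drop of fibrewise $\HH^1$ does not by itself force a drop of the \emph{length} of $R^1p_\ast$ on $C$, since the stalks need not be vector spaces over their residue fields. This is precisely why the argument is routed through the honest surjection $R^1p_\ast(G)\twoheadrightarrow R^1p_\ast(\CO_p(1)\otimes G)\otimes A_C$ produced by a global section $D'$, combined with the local evaluation-map computation pinning down where its kernel is supported; the base-change-in-top-degree input, though standard and valid without flatness, is the other point that must be stated with care rather than assumed.
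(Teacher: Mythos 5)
Your proof is correct in its essentials, but it takes a genuinely different route from the paper. The paper never exhibits a kernel directly; instead it sets $r_k=\rk R^1p_\ast(\CO_p(k)\otimes G)$ and combines two soft inequalities: monotonicity $r_k\leq r_{k-1}$ (from restricting to a section $C_0\in|\CO_p(1)|$, which you also use) and a discrete convexity $r_{k-2}-2r_{k-1}+r_k\geq 0$ coming from the Euler sequence $0\to\omega_p\to\CO_p(-1)\otimes p^\ast\CE^\vee\to\CO_W\to 0$. Together with the eventual vanishing $r_k=0$ for $k\gg 0$ (by $p$-ampleness), these force $r_1<r_0$ whenever $r_0>0$; the length case is verbatim with $\rk$ replaced by $\len$. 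Your argument replaces the convexity/vanishing contradiction with an explicit identification of the kernel as $\coker\alpha$ and a fibrewise computation showing it is nonzero. What the paper's route buys is brevity and uniformity — it avoids any discussion of generality of the section, $\operatorname{Tor}$-vanishing, base change, or fibrewise splittings, and the two halves of the lemma are handled by literally the same three lines. What yours buys is transparency: you see precisely where the drop occurs (on the $\CO_{\BP^1}(a_{i_0}+1)$ summand of the fibre, whose $H^0$ dies), and your surjection $R^1p_\ast G\twoheadrightarrow R^1p_\ast(\CO_p(1)\otimes G)\otimes A_C$ is a stronger structural statement than the mere rank/length inequality.

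Two points of care in your write-up. First, the commutativity needed to say ``$\alpha\otimes k(c)$ factors through the evaluation'' should be made precise: one wants the square with base-change maps $p_\ast(G\otimes L)\otimes k(c)\to H^0(W_c,(G\otimes L)_c)$ on the left, the isomorphism $(p_\ast\CQ)\otimes k(c)\xrightarrow{\sim}\CQ|_{D'\cap W_c}$ on the right, and $\alpha\otimes k(c)$ and the fibre evaluation on top and bottom; commutativity follows from naturality of base change, and since the right vertical is an isomorphism, nonsurjectivity of the bottom forces nonsurjectivity of the top. Second, the phrase ``the general point $D'\cap W_c$ avoids $\supp\CT_c$'' is both unnecessary and, as stated, incorrect (since $D'$ is fixed and $c$ varies); the $\CO(a_{i_0}+1)$ summand has $H^0=0$ and nonzero fibre at any point of $\BP^1$, so surjectivity fails regardless of where $D'$ meets $W_c$. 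Finally, note that base change in the top relative degree is indeed automatic here, but the relevant subtlety is not flatness of $p$ (which holds, $W$ being a $\BP^1$-bundle) but flatness of $G$ over $C$, which is not assumed; top-degree base change holds without it, as you indicate.
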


\begin{proof}The second assertion follows from the first one since 
\[\omega_W = \CO_p(-2)\otimes p^\ast(\omega_C\otimes\det\CE^\vee)\,.\]
(i) Denote by $r_k=\rk\big(R^1p_\ast(\CO_p(k)\otimes G)\big)$. Let $C_0\subset W$ be the zero locus of a section of $\CO_p(1)$, thus $C_0$ is a section of the projection~$p$. For all $k\in\BZ$ there is a sequence
\[ \CO_p(k-1)\otimes G \to \CO_p(k)\otimes G \to \CO_{C_0}(k)\otimes G\to 0\,.\]
Thus, $r_{k}\leq r_{k-1}$. The Euler sequence on $W$ implies
\begin{align*} &\det(\CE^\vee)\otimes R^1p_\ast(\CO_p(k-2)\otimes G) \to \CE^\vee\otimes R^1p_\ast(\CO_p(k-1)\otimes G)\\
&\to R^1p_\ast(\CO_p(k)\otimes G)\to 0 \,,
\end{align*}
thus $r_{k-2}-2\,r_{k-1}+r_k\geq 0$. If $r_{k-1}=r_{k-2}$, then $r_k=r_{k-1}$, thus $r_k=r_0>0$ for all $k\geq 0$. This is a contradiction since $\CO_p(1)$ is $p$-ample and so $r_k=0$ for $k\gg 0$. For (ii) The proof is the same, with rank replaced by the length of $0$-dimensional sheaves.
\end{proof}
Recall the sequence from Section~\ref{subsec: geometry}
\[ 0 \to L_1 \to \CE^\vee \to L_2 \to 0\,.\]
Let $g$ be the genus of~$C$ and define
\begin{align*}
k_-&=-g+\min\{0,\deg(L_1),\deg(L_2)\}-1\,,\\
k_+&=-g+\max\{0,\deg(L_1),\deg(L_2)\}+1\,.
\end{align*}
We have the following technical lemma which we will apply multiple times.
\begin{lemma}\label{lem: morphism}
Let $0\neq \iota_\ast G \in \CA_{\leq 1}$. There is a line bundle $L\in\Pic(C)$ and a non-zero morphism $K \to G$ with 
    \[K=\CO_p(-1)\otimes p^\ast L\,, \textup{ or } K=\omega_p\otimes p^\ast L[1]\,.\]
If $Rp_\ast G \neq 0$, we may choose $L$ such that
\[k_- +\frac{\chi(G)}{\max\{\rk(Rp_\ast G), 1\}}\leq \chi(L)\leq k_+ + \frac{\chi(G)}{\max\{\rk(Rp_\ast G), 1\}}\,.\]
If $Rp_\ast G = 0$, we may choose $L$ such that
\[\chi(L) = \chi\big(G\otimes\CO_p(1)\big) - 1\,.\]
\end{lemma}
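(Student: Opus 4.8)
The plan is to produce the morphism $K\to G$ by adjunction from a section of a pushforward sheaf, and then to control $\chi(L)$ by a Riemann--Roch/stability argument on the curve $C$. First I would observe that the two possible sources $K$ are exactly the two non-trivial objects appearing in the Euler sequence $0\to\omega_p\to\CO_p(-1)\otimes p^\ast\CE^\vee\to\CO_W\to 0$ twisted by $p^\ast L$: giving a non-zero map $\CO_p(-1)\otimes p^\ast L\to G$ is the same, by the projection formula and adjunction, as giving a non-zero map $L\to p_\ast\big(\CO_p(1)\otimes G\big)$, while giving a non-zero map $\omega_p\otimes p^\ast L[1]\to G$ in $D^b(W)$ amounts to a non-zero class in $\Hom^{-1}(\omega_p\otimes p^\ast L,G)$, which by Serre duality along the fibers (or the spectral sequence for $Rp_\ast$) is governed by $R^1p_\ast(\CO_p(1)\otimes L^\vee\otimes G)$, equivalently by $H^0\big(C, L\otimes R^1p_\ast(\CO_p(1)\otimes G)\big)$ after untwisting $L$. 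So in both cases the existence of the desired morphism is equivalent to the non-vanishing of $H^0$ of $L$ tensored with one of the two coherent sheaves $p_\ast(\CO_p(1)\otimes G)$ or $R^1p_\ast(\CO_p(1)\otimes G)$ on $C$.

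Next I would split into the two cases of the statement. If $Rp_\ast G=0$: since $\iota_\ast G\in\CA_{\leq1}$ forces (by Lemma~\ref{lem: Rp}) $Rp_\ast G\in\Coh(C)$ and here it vanishes, I would compute $\chi\big(p_\ast(\CO_p(1)\otimes G)\big)$ and $\chi\big(R^1p_\ast(\CO_p(1)\otimes G)\big)$ via Grothendieck--Riemann--Roch along $p$, using $\omega_p=\CO_p(-2)\otimes p^\ast(\omega_C\otimes\det\CE^\vee)$ and the fact that the fiberwise Euler characteristic of $\CO_p(1)\otimes G$ exceeds that of $G$ by the fiber degree; the upshot is that $R^1p_\ast(\CO_p(1)\otimes G)$ still has rank $0$ but $p_\ast(\CO_p(1)\otimes G)$ is non-zero of Euler characteristic exactly $\chi\big(G\otimes\CO_p(1)\big)$ plus a correction, and choosing $L$ with $\chi(L)=\chi(G\otimes\CO_p(1))-1$ makes $\chi\big(L\otimes p_\ast(\CO_p(1)\otimes G)\big)\geq 1$, hence $H^0\neq 0$, producing the map from $K=\CO_p(-1)\otimes p^\ast L$. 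If $Rp_\ast G\neq 0$, I would instead run Lemma~\ref{lem: ample}: either $R^1p_\ast G\neq0$, in which case $R^1p_\ast(\CO_p(1)\otimes G)$ has strictly smaller rank but its rank or length, combined with the rank/degree of the relevant pushforward, gives a sheaf $\CG$ on $C$ with $\chi(L\otimes\CG)>0$ once $\chi(L)$ is at least $k_-+\chi(G)/\max\{\rk(Rp_\ast G),1\}$; or $R^1p_\ast G=0$ but $p_\ast G\neq0$, handled symmetrically. The bound on $\chi(L)$ comes from the elementary fact that on a curve of genus $g$, a coherent sheaf $\CG$ of rank $\rho\geq1$ has $H^0(L\otimes\CG)\neq0$ whenever $\deg L + \deg\CG/\rho > g-1$ roughly (generic stability is not needed — one uses that a torsion-free sheaf contains a line subbundle of controlled degree, or simply semicontinuity after twisting a filtration), and the constants $k_\pm$ are designed exactly to absorb the degree shifts $\deg L_i$ coming from the extension $0\to L_1\to\CE^\vee\to L_2\to0$ and the Euler characteristic of $\CG$ relative to that of $G$.

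The main obstacle I anticipate is the uniformity of the estimate: one must choose a \emph{single} pair of constants $k_\pm$, depending only on $g$ and $\deg L_1,\deg L_2$ (not on $G$), that works simultaneously for all $G$ with $\iota_\ast G\in\CA_{\leq1}$ and for whichever of the three subcases (rank of $R^1p_\ast G$ positive, or zero with $p_\ast G\neq0$, or $Rp_\ast G=0$) actually occurs. This requires tracking precisely how the fiberwise twist by $\CO_p(1)$ and the Euler sequence change the rank and degree of the pushforward sheaves on $C$, and bounding the ``defect'' in Riemann--Roch for a possibly non-locally-free $\CG$ on $C$ in terms of its rank alone. Once the right coherent sheaf $\CG$ on $C$ is identified in each case together with the inequality $\chi(L\otimes\CG)\geq1$, the non-vanishing of $H^0$ and hence the existence of the non-zero morphism $K\to G$ (and, via adjunction and the Euler sequence, the correct identification of $K$ as either $\CO_p(-1)\otimes p^\ast L$ or $\omega_p\otimes p^\ast L[1]$) is immediate. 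I would organize the write-up by first proving the Euler-characteristic computations as a separate sublemma, then doing the case $Rp_\ast G=0$, and finally the case $Rp_\ast G\neq0$ using Lemma~\ref{lem: ample} to reduce the rank/length and iterate.
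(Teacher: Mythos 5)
Your high-level plan — translate the existence of a morphism $K\to G$ into non-vanishing of a cohomology group on $C$ via adjunction, then produce a section by Riemann--Roch — is indeed the engine of the paper's proof, so you are in the right neighborhood. But the execution has two real problems.

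First, the adjunction/Serre duality computations are not correct as stated. For $K=\CO_p(-1)\otimes p^\ast L$, adjunction gives $\Hom(K,G)=\Hom\bigl(L,Rp_\ast(\CO_p(1)\otimes G)\bigr)$; since $G$ may be a two-term complex (remember $\iota_\ast G\in\CA_{\leq 1}$, not necessarily a sheaf), this is \emph{not} simply $\Hom(L,p_\ast(\CO_p(1)\otimes G))$ — there is also a contribution from $\Ext^1\bigl(L,p_\ast(\CO_p(1)\otimes\CH^{-1}(G))\bigr)$. For $K=\omega_p\otimes p^\ast L[1]$, writing $\omega_p=\CO_p(-2)\otimes p^\ast(\omega_C\otimes\det\CE^\vee)$ and using adjunction yields $\Hom(K,G)\cong\Hom\bigl(L\otimes\omega_C\otimes\det\CE^\vee,\,p_\ast(\CO_p(2)\otimes\CH^{-1}(G))\bigr)$. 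Your formula ``$H^0(C, L\otimes R^1p_\ast(\CO_p(1)\otimes G))$'' has both the twist by $\CO_p(1)$ (should be $\CO_p(2)$) and the $R^1p_\ast$ (should be $p_\ast$ of $\CH^{-1}$) wrong, and the $L$ vs.\ $L^\vee$ placement is also off. These are not cosmetic: they change which sheaf on $C$ you must show has sections.

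Second — and this is the crux — you never explain the \emph{dichotomy}: why must at least one of your two $H^0$ groups be non-zero, for a single $L$ with $\chi(L)$ in the asserted window? The paper gets this for free by one clean step. It first runs Riemann--Roch on the single sheaf $Rp_\ast G\in\Coh(C)$ (which is a sheaf precisely by Lemma~\ref{lem: Rp}) to produce a non-zero map $p^\ast M\to G$. Then the Euler-sequence triangle $\CO_p(-1)\otimes p^\ast(\CE^\vee\otimes M)\to p^\ast M\to \omega_p\otimes p^\ast M[1]$ forces this map to either pull back non-trivially to the first term or factor through the third; in the first branch the filtration $0\to L_1\to\CE^\vee\to L_2\to 0$ produces a genuine line-bundle twist $L=L_i\otimes M$. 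This triangle-chase is exactly what produces the ``or'' in the statement, and the constants $k_\pm$ simply absorb the shift $\chi(L_i\otimes M)-\chi(M)=\deg L_i$ together with the $\pm 1$ coming from rounding $\chi(G)/\rk(Rp_\ast G)$. If you follow your plan through carefully (correcting the adjunctions, and pushing the Euler sequence through $Rp_\ast$ to relate $Rp_\ast G$, $Rp_\ast(\CO_p(1)\otimes G)$, $Rp_\ast(\CO_p(2)\otimes G)$) you will in effect reconstruct this argument on the $C$-side, but at higher cost and with more bookkeeping. I would adopt the paper's ordering: one Riemann--Roch computation on $Rp_\ast G$, then the triangle.

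One smaller remark: in the $Rp_\ast G=0$ case you claim that ``choosing $L$ with $\chi(L)=\chi(G\otimes\CO_p(1))-1$ makes $\chi(L\otimes p_\ast(\CO_p(1)\otimes G))\geq 1$.'' But in that case $G$ is supported on finitely many fibres (use that $R^1p_\ast$ kills the generic fibre), so $p_\ast(\CO_p(1)\otimes G)$ is a zero-dimensional sheaf on $C$ and twisting by $L$ does not change its Euler characteristic. The correct point is that \emph{any} $L$ maps non-trivially to a non-zero torsion sheaf, so the stated value of $\chi(L)$ can be imposed by fiat. The paper handles this case by checking $p_\ast(G\otimes\CO_p(1))\neq 0$ via a section $C_0\in|\CO_p(1)|$ with $j^\ast G\neq 0$ and then twisting the map $p^\ast L\to G\otimes\CO_p(1)$ by $\CO_p(-1)$.
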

\begin{remark}
Note that if $\iota_\ast G\in \CT_{\leq 1}$, then $K=\CO_p(-1)\otimes p^\ast L$ because
\[ \Hom(\CF[1], \CT)=0\,.\] 
\end{remark}

\begin{proof}
Recall that $Rp_\ast G \in \Coh(C)$ is a sheaf by Lemma~\ref{lem: Rp}, in particular $\rk(Rp_\ast G)\geq 0$. First, assume that $Rp_\ast G \neq 0$. Let $M \in \Pic(C)$ with
\[ \rk(Rp_\ast G)\deg(M) < \chi(G)\,,\]
then by Riemann--Roch
\[H^0(C, M^\vee\otimes Rp_\ast G)\neq 0\,.\]
We may choose $M$ so that $\deg(M)$ is the nearest integer to
\[\frac{\chi(G)}{\max\{\rk(Rp_\ast G), 1\}}-1\,.\]
Note that when $\rk(Rp_\ast G)=0$ we must have $\chi(G)=\chi(Rp_\ast G)\geq 0$ since $Rp_\ast G\in \Coh_0(C)$.

Now we can use the Euler sequence on~$W$ which yields an exact triangle in~$\CA_{\leq 1}$:
\[\CO_p(-1)\otimes p^\ast(\CE^\vee\otimes M) \to p^\ast M \to \omega_p\otimes p^\ast M[1]\,.\]
Since $\Hom(p^\ast M, G)=H^0(C, M^\vee\otimes Rp_\ast G)\neq 0$, we find
\begin{align*} &\Hom(\CO_p(-1)\otimes p^\ast(\CE^\vee\otimes M), G)\neq 0\,,\textup{ or } \\
&\Hom(\omega_p\otimes p^\ast L[1], G)\neq 0\,.
\end{align*}
In the latter case, set $K=\omega_p\otimes p^\ast M[1]$ and $L=M$. In the former case we can use the sequence
\[ 0 \to L_1 \to \CE^\vee \to L_2 \to 0\]
and argue as above, i.e.\ we can set $K=\CO_p(-1)\otimes L_i\otimes M$ and $L=L_i\otimes M$ for $i=1$ or $i=2$. Since $\chi(M)=\deg(M)+1-g$, we find in all three cases
the bound stated for $\chi(L)$.

Now assume that $Rp_\ast(G)=0$, thus $G\in\CT$ is a sheaf. If $G\neq 0$, we may choose a section $j\colon C_0\hookrightarrow W$ in the linear system $|\CO_p(1)|$, such that $j^\ast G\neq 0$.There is an exact triangle
\[ G \to G\otimes \CO_p(1)\to j_\ast Lj^\ast\big(G\otimes \CO_p(1)\big)\,\]
and, since $Rp_\ast(G)=0$,
\[p_\ast\big(G\otimes \CO_p(1)\big)\cong p_\ast j_\ast Lj^\ast \big(G\otimes \CO_p(1)\big)\,,\]
By Lemma~\ref{lem: basic}, the latter surjects onto $p_\ast j_\ast j^\ast \big(G\otimes \CO_p(1)\big)$ which is non-zero since $C_0$ is a section of~$p$. Now apply the first part to $G\otimes \CO_p(1)$ to obtain a non-zero $p^\ast L \to G\otimes\CO_p(1)$ and twist by $\CO_p(-1)$.
\end{proof}

\subsection{Support (proof of Theorem~\ref{thm: rho})}\label{subsec: supp}
\begin{lemma}\label{lem: supp}
For all $T\in\CT$ there are $T',T''\in\CT$ and an exact sequence
    \[ 0 \to T' \to T \to T'' \to 0\,,\]
    such that 
    \begin{enumerate}
        \item $T'\in\Coh_{\leq 1}(X)$ and $\iota^\ast T'\in\Coh_0(W)$,
        \item $\supp(T'')_{red}\subset W$.
    \end{enumerate} 
\end{lemma}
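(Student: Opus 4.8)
The plan is to split $T$ according to which irreducible components of its support lie in $W$. Write $Z\subseteq X$ for the union of those irreducible components of $\supp(T)$ that are \emph{not} contained in $W$. Since $\supp(T)$ meets $X\ssetminus W$ in dimension at most one (this is where the hypothesis on $T$ enters; note that, as $p$ already maps $W$ onto the curve $C$, this is precisely the condition ${}^p\dim(T)\le 1$), we have $\dim Z\le 1$, and no component of $Z$ is contained in $W$. Let $j\colon X\ssetminus Z\hookrightarrow X$ be the open inclusion and set
\[ T'=\Kernel\big(T\to j_\ast j^\ast T\big)\,,\qquad T''=T/T'\,, \]
so that $T'\subseteq T$ is the coherent subsheaf of local sections supported set-theoretically on $Z$ and $0\to T'\to T\to T''\to 0$ is exact.

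Next I would verify the three requirements. First, $\supp(T')\subseteq Z$, so $T'\in\Coh_{\le 1}(X)$. Second, $\supp(\iota^\ast T')\subseteq Z\cap W$; since no component of $Z$ lies in $W$, each component of $Z$ meets $W$ in a proper closed subset, so $Z\cap W$ is finite and $\iota^\ast T'\in\Coh_0(W)$. As $p$ is finite on a finite set, this also gives $R^1p_\ast\iota^\ast T'=0$, i.e.\ $T'\in\CT$ (note this is needed because $\iota^\ast$ is not exact, so one cannot simply compare $\iota^\ast T'$ with $\iota^\ast T$). Third, $T''=T/T'$ is a quotient of $T\in\CT$, hence $T''\in\CT$ since $\CT$ is closed under quotients in $\Coh(X)$. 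It remains to see $\supp(T'')_{red}\subseteq W$: on the open set $X\ssetminus W$, every component of $\supp(T)$ that meets it is by definition not contained in $W$, hence lies in $Z$, so $T|_{X\ssetminus W}$ is supported on the closed subset $Z\cap(X\ssetminus W)$; a sheaf supported on a closed set equals its own subsheaf of sections supported there, whence $T'|_{X\ssetminus W}=T|_{X\ssetminus W}$ and therefore $T''|_{X\ssetminus W}=0$. Thus $\supp(T'')\subseteq W$, as required.

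Everything here is formal except the support analysis, and I expect the only point needing real care to be the identity $T'|_{X\ssetminus W}=T|_{X\ssetminus W}$, i.e.\ checking that passing from $T$ to $T'$ discards nothing over $X\ssetminus W$. This is exactly why $Z$ is taken to be the union of the components of $\supp(T)$ \emph{not contained in} $W$, rather than, say, the scheme-theoretic closure of $\supp(T)\ssetminus W$, which could a priori carry extra embedded structure along $W$ and then spoil either the conclusion on $T''$ or the $0$-dimensionality of $\iota^\ast T'$. The second useful feature of this choice is that $Z$ meets $W$ in dimension $0$, which simultaneously keeps $T'$ at most one-dimensional and makes $\iota^\ast T'$ $0$-dimensional, so that $T'\in\CT$ comes for free. (Some bound on the dimension of $\supp(T)$ away from $W$ is genuinely needed: for $T=\CO_X\in\CT$ the conclusion fails, since any $T'\subseteq\CO_X$ with $\iota^\ast T'$ zero-dimensional must vanish while $\supp(\CO_X)_{red}=X\not\subseteq W$.)
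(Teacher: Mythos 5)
Your proof is correct and follows essentially the same route as the paper: split $\supp(T)$ into the union $Z$ of its irreducible components not contained in $W$ and the rest, take $T'=\Gamma_Z(T)$ (the subsheaf of sections with support in $Z$), verify $T'\in\CT$ directly from $R^1p_\ast\iota^\ast T'=0$ since $Z\cap W$ is finite, and get $T''\in\CT$ because $\CT$ is closed under quotients. The paper invokes ``a standard argument'' and a Stacks-project tag for this decomposition; you have simply made the construction explicit, and your added observation that $T'\in\CT$ cannot be deduced from $T'\subset T$ (as $\CT$ is only closed under extensions and quotients) is the same point the paper settles with ``immediate from the definition.'' Your closing remark is also correct and worth recording: as stated (``for all $T\in\CT$'') the lemma is false, with $T=\CO_X$ a genuine counterexample, and the hypothesis ${}^p\dim(T)\le 1$ (equivalently $T\in\CT_{\le 1}$) is needed exactly where you flag it --- the paper's own proof silently assumes $\dim Z\le 1$, and every application of this lemma (in Lemma~\ref{lem: generatorsT}, Section~\ref{subsec: curveclasses}, Lemma~\ref{lem: A0}, Propositions~\ref{prop: ell} and~\ref{prop: finitetype}) is indeed restricted to $\CT_{\le 1}$.
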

\begin{proof}
Let $\supp(T)_{red}=Z\cup W'$ with $W'\subset W$, $\dim(Z)\leq 1$ and $Z\cap W$ empty or $0$-dimensional. By a standard argument, we can find such a sequence with $\supp_{red}(T')\subset Z$ and $\supp_{red}(T'')\subset W$, see e.g.\ \cite[Tag 01YD]{stacks-project}. Then, $T'\in\CT$ is immediate from the definition and $T''\in\CT$ since $\CT$ is closed under quotients.
\end{proof}

The rest of this section concerns sheaves with support contained in~$W$. Let $B\subset W$ be a fiber of the projection~$p$.
\begin{proposition}\label{prop: supp}
Let $T\in\CT$ and $F\in\CF$, then
\begin{enumerate}
    \item If $\supp(T)_{red}\subset W$, then $T\in\big\langle \CT\cap\iota_\ast\Coh(W)\big\rangle_\ex$,
    \item if $\supp(T)_{red}\subset B$, then $T\in\big\langle \CT\cap\iota_\ast\Coh(B)\big\rangle_\ex$,
    \item $F\in\big\langle \CF\cap\iota_\ast\Coh(W)\big\rangle_\ex$,
    \item if $\supp(F)_{red}\subset B$, then $F\in\big\langle \CF\cap\iota_\ast\Coh(B)\big\rangle_\ex$
\end{enumerate}
\end{proposition}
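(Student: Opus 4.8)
The plan is to prove all four statements by a single induction, organized around the invariants introduced in Notations~\ref{not: ch} and~\ref{not: R1p}. For a sheaf $G\in\Coh(W)$, the quantity $R^1p_\ast G$ (ordered as in Notation~\ref{not: R1p}) together with $\ch^\omega$ of the total object will be the induction parameters; the base case is when the object is the pushforward of a $0$-dimensional or fiberwise sheaf, where there is nothing to prove. The key technical input is Lemma~\ref{lem: morphism}, which produces, for any nonzero $\iota_\ast G\in\CA_{\leq 1}$ with $R p_\ast G\neq 0$, a nonzero morphism $K\to G$ with $K=\CO_p(-1)\otimes p^\ast L$ or $K=\omega_p\otimes p^\ast L[1]$; the point is that such a $K$ is \emph{scheme-theoretically} supported on $W$, and (via the Euler sequence and Lemmas~\ref{lem: basic},~\ref{lem: ample}) the kernel/cokernel of $K\to G$ have strictly smaller invariants.

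For (i): given $T\in\CT$ with $\supp(T)_\red\subset W$, write $T=\iota_\ast G$ only after first reducing; more carefully, filter $T$ by the subsheaves $\iota_\ast^{(n)}\iota^{(n)*}T$ obtained from the powers of the ideal of $W$, so that each graded piece is $\iota_\ast$ of a coherent sheaf on $W$. Since $\CT$ is closed under subobjects and quotients (by the Lemma before the torsion pair was defined, using that $\CT$ is closed under extensions and quotients — and subobjects since $\CF=\CT^\perp$ gives closure under sub of things in $\CT$... here one uses that a subsheaf of a $\CT$-object has vanishing $R^1p_\ast$ of its restriction, which needs the long exact sequence argument), each graded piece lies in $\CT\cap\iota_\ast\Coh(W)$, proving (i). For (ii) one runs the same ideal-power filtration but with the ideal of $B$ in $X$; here $\dim p(\supp)\le 0$ is automatic so everything stays in $\CA_{\le 1}$.

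For (iii) and (iv): let $F\in\CF$. By Lemma~\ref{lem: supp} applied inside $\CF$ (or directly, since any $F\in\CF$ has $\supp_\red\subset W$ — indeed if $F$ had a component off $W$ it would have a nonzero subsheaf in $\CT$), we may assume $\supp(F)_\red\subset W$. Apply the ideal-power filtration of $W$ again; the graded pieces are $\iota_\ast$ of sheaves on $W$, but we must check they land in $\CF$. This is the delicate point: $\CF$ is closed under subobjects but not quotients, so the top graded piece $\iota^*F$ need not a priori be in $\CF$. The fix is to argue that since $F$ has no nonzero subobject in $\CT$, and $\CT\supset$ all $\iota_\ast p^\ast(\text{line bundle})$, the sheaf $\iota^*F$ (and each graded piece) can have no such subobject either, using Lemma~\ref{lem: Rp}(i) and the structure of morphisms from $p$-pulled-back bundles; hence each graded piece is in $\CF\cap\iota_\ast\Coh(W)$. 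Part (iv) is the fiber version, identical with $B$ in place of $W$.

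\textbf{Main obstacle.} The hard part will be (iii): controlling that the graded pieces of the natural filtration of an $\CF$-object stay in $\CF$. Unlike $\CT$, the torsion-free part of a torsion pair is not closed under quotients, so one cannot simply cite closure properties; one must instead characterize membership in $\CF$ intrinsically (no subobject $\iota_\ast p^\ast L$ for $L$ sufficiently anti-ample, equivalently $\Hom(\iota_\ast p^\ast L, -)=0$ for all $L$) and verify this survives passage to the subquotients coming from powers of the ideal sheaf of $W$ — which is where Lemma~\ref{lem: morphism} and the rank/length estimates of Lemma~\ref{lem: ample} do the real work, by showing any hypothetical such subobject would already have appeared in $F$.
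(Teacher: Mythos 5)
Your overall strategy for (i) and (ii) — reduce to sheaves scheme-theoretically supported on $W$ (or $B$) by working with the ideal-sheaf filtration, and induct on a decreasing invariant — matches the paper's. But one of your justifications is wrong and, more seriously, your approach to (iii) has a genuine gap.

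\textbf{On (i), (ii).} You claim $\CT$ is closed under subobjects. This is not established in the paper and is in general false for the torsion part of a torsion pair; the lemma in Section~2 only gives closure under extensions and quotients. The paper's argument avoids subobject closure entirely: it exhibits the kernel $T' = \CI_W\cdot T$ of $T\twoheadrightarrow\iota_\ast\iota^\ast T$ as a \emph{quotient} of $T\otimes\CO_X(-W)$, which lies in $\CT$ by Lemma~\ref{lem: ample}, and then uses closure under quotients. Your filtration by ideal powers would go through once patched with this observation, so this is a fixable slip.

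\textbf{On (iii), (iv).} You correctly flag this as the delicate case, but your proposed fix does not work. You write that ``since $F$ has no nonzero subobject in $\CT$, the sheaf $\iota^\ast F$ can have no such subobject either.'' This is false: a subobject of a quotient does not lift to a subobject, so $\iota_\ast\iota^\ast F$ can very well have a nonzero $\CT$-subsheaf even though $F\in\CF$. Indeed, the paper's Lemma~\ref{lem: F supp W} is built precisely to deal with this. Rather than taking the graded pieces $\CI^nF/\CI^{n+1}F$ of the ideal-power filtration, the paper constructs a different two-step sequence: take the torsion decomposition $0\to T\to\iota_\ast\iota^\ast F\to F''\to 0$ and set $F'=\Kernel(F\twoheadrightarrow F'')$. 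Then $F',F''\in\CF$ (here closure of $\CF$ under subobjects \emph{is} available and is used), $F''$ is scheme-theoretically supported on $W$, and — this is where Lemmas~\ref{lem: ample} and~\ref{lem: basic} enter — $R^1p_\ast\iota^\ast F'$ is strictly smaller than $R^1p_\ast\iota^\ast F$ in the ordering of Notation~\ref{not: R1p}. The induction is therefore on $R^1p_\ast\iota^\ast(-)$, with base case $R^1p_\ast\iota^\ast F=0$, which forces $F\in\CF\cap\CT=0$. So the right fix is not to control the $\CT$-subobjects of $\iota^\ast F$ directly, but to replace the ideal-power filtration by this modified one where the top piece is already torsion-free by construction.
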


\begin{proof}[Proof of Proposition~\ref{prop: supp} (i), (ii)]
Let $T\in\CT$ with $\supp(T)_{red}\subset W$, then there is an exact sequence
\[ 0 \to T' \to T \to \iota_\ast\iota^\ast T\to 0\,,\]
with $T'$ a quotient of $T\otimes\CO_X(-W)$. Note that $\iota_\ast\iota^\ast T\in\CT$ as it's a quotient of $T$. It follows from Lemma~\ref{lem: ample} that $T\otimes\CO_X(-W)\in\CT$, thus $T'\in\CT$. The sequence implies $\ch^\omega(T')<\ch^\omega(T)$, see Notation~\ref{not: ch}. Since $\ch^\omega(T)=0$ if and only if $T=0$, we conclude by induction.\medskip

The analogous argument proves~(ii). By~(i) we may consider sheaves $\iota_\ast G\in\CT$ with $\supp(G)_{red}\subset B$. Let $j\colon B\hookrightarrow W$, then we have an exact sequence
\[ 0 \to G' \to G \to j_\ast j^\ast G \to 0\,,\]
with $G'$ a quotient of $G\otimes\CO_W(-B)$. Since $\iota_\ast\big(G\otimes\CO_W(-B)\big)\in\CT$ we can conclude as above.
\end{proof}

For the proofs of (iii) and (iv) we require the following results. Recall the Notation~\ref{not: R1p}.

\begin{lemma}\label{lem: F supp W}
For all $F\in\CF$ there are $F',F''\in\CF$ and an exact sequence
\[ 0\to F' \to F \to F'' \to 0\]
such that 
\begin{enumerate}
    \item $F''\cong\iota_\ast\iota^\ast F''$,
    \item $R^1p_\ast\iota^\ast F\xrightarrow{\sim}R^1p_\ast\iota^\ast F''$,
    \item $R^1p_\ast(\iota^\ast F')<R^1p_\ast(\iota^\ast F)$.
\end{enumerate}
\end{lemma}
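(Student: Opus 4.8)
The plan is to mimic the structure of Lemma~\ref{lem: supp} and Proposition~\ref{prop: supp}, but now working with the torsion-free part $\CF$ and the invariant $R^1p_\ast\iota^\ast(-)$ instead of $\ch^\omega$. Given $F\in\CF$, first I would form the canonical exact sequence
\[ 0 \to F' \to F \to \iota_\ast\iota^\ast F \to 0\,, \]
where $F'$ is the image of the natural map $F\otimes\CO_X(-W)\to F$ (equivalently, $F'$ is a quotient of $F\otimes\CO_X(-W)$ and the cokernel $\iota_\ast\iota^\ast F$ is scheme-theoretically supported on $W$). One needs to check $F',\iota_\ast\iota^\ast F\in\CF$: since $\CF$ is closed under subobjects (being the torsion-free part of a torsion pair), $F'\subset F$ gives $F'\in\CF$ immediately; the subtler point is $\iota_\ast\iota^\ast F\in\CF$, which I would verify by showing $\Hom(\CT,\iota_\ast\iota^\ast F)=0$ — any map from $T\in\CT$ to the quotient $\iota_\ast\iota^\ast F$ lifts, after twisting or via a diagram chase, to contradict $F\in\CF=\CT^\perp$, or more directly one argues that $\iota_\ast\iota^\ast F$ being a quotient of $F$ by something torsion-free still lies in $\CF$ because $\CT$ is closed under quotients and extensions (so the torsion part of $\iota_\ast\iota^\ast F$ would pull back to a torsion subobject of $F$).

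The core of the argument is then the behavior of $R^1p_\ast\iota^\ast(-)$ under this sequence. Setting $G = \iota^\ast F$, I would analyze $\iota^\ast F'$: since $F'$ is a quotient of $F\otimes\CO_X(-W)$, pulling back gives $\iota^\ast F'$ as a quotient of $\iota^\ast(F\otimes\CO_X(-W)) = G\otimes\omega_W^\vee$ (using $\iota^\ast\CO_X(-W)=\omega_W^\vee$ as in Lemma~\ref{lem: basic}(iii)). Applying $R^1p_\ast$ and using right-exactness of $R^1p_\ast$ (valid since $R^2p_\ast=0$ as the fibers are curves), together with Lemma~\ref{lem: ample} — which says precisely that twisting by $\omega_W^\vee$ strictly decreases $\rk(R^1p_\ast(-))$ when it is positive, and strictly decreases the length when the rank is zero and the length is positive — I get $R^1p_\ast(\iota^\ast F') < R^1p_\ast(\iota^\ast F)$ in the sense of Notation~\ref{not: R1p}, unless $R^1p_\ast\iota^\ast F = 0$ already, in which case $F\in\CT\cap\CF=0$ and there is nothing to prove. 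This gives (iii). For (ii), the long exact sequence of $Rp_\ast$ applied to the defining triangle, combined with $\iota^\ast(\iota_\ast\iota^\ast F) = \iota^\ast F$ (Lemma~\ref{lem: basic}(iii) again), shows $R^1p_\ast\iota^\ast F \to R^1p_\ast\iota^\ast F''$ is surjective with kernel a quotient of $R^1p_\ast\iota^\ast F'$; combined with (iii) and a dimension/length count one upgrades this to an isomorphism. Part (i) is immediate from the construction, $F'' = \iota_\ast\iota^\ast F$.

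The main obstacle I anticipate is the bookkeeping around right-exactness of $R^1p_\ast$ and making the implication "$\iota^\ast F'$ is a quotient of $G\otimes\omega_W^\vee$ $\Rightarrow$ $R^1p_\ast\iota^\ast F'$ is a quotient of $R^1p_\ast(G\otimes\omega_W^\vee)$" fully rigorous: one must also control the contribution of the kernel $K$ of $G\otimes\omega_W^\vee\twoheadrightarrow\iota^\ast F'$, via the tail $R^1p_\ast K \to R^1p_\ast(G\otimes\omega_W^\vee)\to R^1p_\ast\iota^\ast F'\to 0$, and confirm that the strict inequality of Lemma~\ref{lem: ample} survives passing to this quotient (a quotient of a sheaf of rank $r$ has rank $\le r$, and if ranks agree the length can only drop, so the ordering of Notation~\ref{not: R1p} is preserved). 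A second, more technical point is verifying $\iota_\ast\iota^\ast F\in\CF$ cleanly — here I would lean on the fact that the torsion pair $(\CT,\CF)$ from \cite[Lemma 2.15]{To13} has $\CT$ closed under quotients (proved in the first Lemma of Section~\ref{subsec: geometry}), so the torsion subsheaf of any quotient of $F$ would be a quotient of $F$ lying in $\CT$, hence would have to vanish as $F\in\CT^\perp$ — making the construction automatic. With these two points settled, the lemma follows, and it is exactly in the form needed to run the inductive argument for Proposition~\ref{prop: supp}(iii),(iv).
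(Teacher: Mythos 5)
Your proposed decomposition takes $F'' = \iota_\ast\iota^\ast F$ and $F' = \ker(F\to\iota_\ast\iota^\ast F)$, and the verification of (iii) via ``$\iota^\ast F'$ is a quotient of $\omega_W^\vee\otimes\iota^\ast F$, then apply Lemma~\ref{lem: ample}'' is a correct and in fact rather direct route to the strict inequality. However, there is a genuine gap: you must show $F''\in\CF$, and the two justifications you offer are both flawed. First, you cannot ``lift'' a map $T\to\iota_\ast\iota^\ast F$ through the quotient $F\twoheadrightarrow\iota_\ast\iota^\ast F$ without controlling the obstruction in $\Ext^1(T,F')$, which does not vanish in general. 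Second, and more seriously, your claim that ``the torsion part of $\iota_\ast\iota^\ast F$ would pull back to a torsion subobject of $F$'' is false: if $T\subset\iota_\ast\iota^\ast F$ is the torsion part, its preimage $\widetilde T\subset F$ sits in an exact sequence $0\to F'\to\widetilde T\to T\to 0$ with $F'\in\CF$ and $T\in\CT$, so $\widetilde T$ is an extension of a torsion object by a torsion-free one and need not be in $\CT$. The underlying confusion is treating $\CF$ as closed under quotients (or treating torsion parts of quotients as quotients), when in a torsion pair $\CF$ is only closed under subobjects and extensions.

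The paper avoids this by not taking $F''=\iota_\ast\iota^\ast F$ but rather the $\CF$-part of the torsion-pair decomposition $0\to T\to\iota_\ast\iota^\ast F\to F''\to 0$; then $F'$ is the kernel of the composite $F\twoheadrightarrow\iota_\ast\iota^\ast F\twoheadrightarrow F''$ (which contains the preimage of $T$), and both $F'\in\CF$ (subobject) and $F''\in\CF$ (by construction) are automatic. Property (i) holds because $F''$ is a quotient of a sheaf scheme-theoretically supported on $W$, and (ii) holds because $T\in\CT$ has $R^1p_\ast\iota^\ast T=0$ so the surjection $\iota_\ast\iota^\ast F\twoheadrightarrow F''$ induces an isomorphism on $R^1p_\ast\iota^\ast$. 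For (iii), because the paper's $F'$ is larger than yours, the direct ``quotient of $\omega_W^\vee\otimes\iota^\ast F$'' argument does not apply; instead the paper extracts the boundary map $p_\ast L\iota^\ast F''\twoheadrightarrow R^1p_\ast\iota^\ast F'$ from the long exact sequence, identifies $p_\ast L\iota^\ast F''=R^1p_\ast(\omega_W^\vee\otimes\iota^\ast F'')$ using Lemmas~\ref{lem: basic}(ii),(iii) and $p_\ast\iota^\ast F''=0$, and then applies Lemma~\ref{lem: ample} together with (ii). To repair your proposal you would either need to prove $\iota_\ast\iota^\ast F\in\CF$ (which I do not believe holds in general, and which you would have to establish from scratch), or switch to the paper's torsion-free quotient and then adopt its argument for (iii).
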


\begin{proof}
Consider the restriction $F\twoheadrightarrow \iota_\ast\iota^\ast F$ and the decomposition
\[ 0 \to T \to \iota_\ast\iota^\ast F \to F'' \to 0\]
obtained from the torsion pair $(\CT\,,\CF)$. Since $\CF$ is closed under subobjects, we obtain the desired sequence of sheaves in~$\CF$. Property~(i) follows since $F''$ is a quotient of $\iota_\ast\iota^\ast F$. For (ii) note that $\iota^\ast F = \iota^\ast\iota_\ast\iota^\ast F$ and, as consequence of the definition of the torsion pair $(\CT\,,\CF)$, the map $\iota_\ast\iota^\ast F\twoheadrightarrow F''$ induces an isomorphism on $R^1p_\ast\iota^\ast$.\medskip

For~(iii) we consider the pullback $L\iota^\ast$ of the sequence and apply $Rp_\ast$ to obtain
\[ p_\ast L\iota^\ast F'' \to R^1p_\ast\iota^\ast F' \to R^1p_\ast\iota^\ast F \to R^1p_\ast\iota^\ast F'' \to 0\,.\]
The last map is an isomorphism, thus the first one must be surjective. By Lemma~\ref{prop: ell}, $p_\ast\iota^\ast F'' = 0$ and by Lemma~\ref{lem: basic}: 
\[p_\ast L\iota^\ast F''=R^1p_\ast(L\iota^{-1} F'') = R^1p_\ast(\omega_W^\vee\otimes \iota^\ast F'')\,. \]
Lemma~\ref{lem: ample} together with~(ii) implies~(iii).
\end{proof}

\begin{lemma}\label{lem: F supp B}
For all $\iota_\ast G\in\CF$ supported on finitely many fibers of~$p$, there exists a fiber $j\colon B_y\hookrightarrow W$ and $\iota_\ast G',\iota_\ast G''\in\CF$ and an exact sequence
\[ 0\to G' \to G \to G'' \to 0\]
such that 
\begin{enumerate}
    \item $G''\cong j_\ast j^\ast G''$,
    \item $R^1p_\ast G \otimes k(y)\xrightarrow{\sim}R^1p_\ast G''\otimes k(y)$,
    \item $R^1p_\ast G' < R^1p_\ast G$.
\end{enumerate}
\end{lemma}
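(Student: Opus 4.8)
\emph{Strategy.} The plan is to imitate the proof of Lemma~\ref{lem: F supp W}, replacing the divisor $\iota\colon W\hookrightarrow X$ by a fibre $j\colon B_y\hookrightarrow W$ of~$p$. If $R^1p_\ast G=0$ then, using $\iota^\ast\iota_\ast G=G$ (Lemma~\ref{lem: basic}(iii)), we get $\iota_\ast G\in\CT\cap\CF=0$ and there is nothing to prove; so assume $R^1p_\ast G\neq 0$. Since $\iota_\ast G$ is supported on finitely many fibres, $R^1p_\ast G$ is a non-zero finite-length sheaf on~$C$, and I would fix a point $y\in\supp(R^1p_\ast G)$; this is the fibre over which the sequence will be produced.

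\emph{Construction of the sequence.} Restricting $G$ to $B_y$ gives a surjection $\iota_\ast G\twoheadrightarrow \iota_\ast(j_\ast j^\ast G)$. Applying the torsion pair $(\CT,\CF)$ on $\Coh(X)$ to the target produces
\[0\to T\to \iota_\ast(j_\ast j^\ast G)\to \iota_\ast G''\to 0\,,\qquad T\in\CT\,,\quad \iota_\ast G''\in\CF\,.\]
As $\iota_\ast G''$ is a quotient of $\iota_\ast(j_\ast j^\ast G)$ it is annihilated by the ideal of~$B_y$, so $G''\cong j_\ast j^\ast G''$, which is~(i). Put $\iota_\ast G'=\ker(\iota_\ast G\twoheadrightarrow \iota_\ast G'')$; since $\CF$ is closed under subobjects, $\iota_\ast G'\in\CF$, and $G',G''$ are again supported on finitely many fibres, so $\rk(R^1p_\ast G')=\rk(R^1p_\ast G'')=0$. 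This yields a short exact sequence $0\to G'\to G\to G''\to 0$ on~$W$.

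\emph{Properties (ii) and (iii).} From $T\in\CT$ we get $R^1p_\ast T=0$, so the surjection $j_\ast j^\ast G\twoheadrightarrow G''$ induces an isomorphism $R^1p_\ast(j_\ast j^\ast G)\xrightarrow{\sim}R^1p_\ast G''$ of skyscrapers at~$y$. On the other hand, since $R^{\geq 2}p_\ast=0$ the top direct image commutes with arbitrary base change, giving natural isomorphisms
\[R^1p_\ast G\otimes k(y)\ \cong\ H^1\!\big(B_y,\,G|_{B_y}\big)\ \cong\ R^1p_\ast(j_\ast j^\ast G)\otimes k(y)\,,\]
compatibly with the unit $G\to j_\ast j^\ast G$, whose restriction along~$j$ is the identity. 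Composing, the map $G\to G''$ induces an isomorphism $R^1p_\ast G\otimes k(y)\xrightarrow{\sim}R^1p_\ast G''\otimes k(y)$, which is~(ii). For~(iii): since $\iota_\ast G''\in\CF$ we have $p_\ast G''=0$ (Lemma~\ref{lem: Rp}(i)), so the long exact sequence of $0\to G'\to G\to G''\to 0$ collapses to $0\to R^1p_\ast G'\to R^1p_\ast G\to R^1p_\ast G''\to 0$, whence $\len(R^1p_\ast G')=\len(R^1p_\ast G)-\len(R^1p_\ast G'')$. Now $y\in\supp(R^1p_\ast G)$, so $R^1p_\ast G\otimes k(y)\neq 0$ by Nakayama, hence $R^1p_\ast G''\neq 0$ by~(ii); therefore $\len(R^1p_\ast G')<\len(R^1p_\ast G)$, and together with $\rk(R^1p_\ast G')=0$ this is precisely $R^1p_\ast G'<R^1p_\ast G$ in the sense of Notation~\ref{not: R1p}.

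\emph{Main obstacle.} The only non-formal input is the base change statement for the top direct image, and the step requiring care is its \emph{naturality}: one must know that $R^1p_\ast(-)\otimes k(y)\cong H^1(B_y,(-)|_{B_y})$ is an isomorphism of functors, so that it is compatible with $G\to j_\ast j^\ast G\to G''$; without this, (ii) cannot be deduced. Everything else is bookkeeping with the torsion pair, parallel to Lemma~\ref{lem: F supp W}, and in fact slightly easier here: because $p_\ast G''=0$ the connecting homomorphism vanishes, so one does not need the analogue of the $L\iota^\ast$/Lemma~\ref{lem: ample} argument used there.
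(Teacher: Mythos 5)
Your proof is correct and realizes the ``parallel to Lemma~\ref{lem: F supp W}'' instruction in the paper in the natural way: decompose $j_\ast j^\ast G$ via the torsion pair, take the $\CF$-part as $G''$, and let $G'=\ker(G\to G'')$. All three properties are verified rigorously, and the identification $\len(R^1p_\ast G') < \len(R^1p_\ast G)$ is exactly what Notation~\ref{not: R1p} requires here (since everything has $\rk=0$).

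Two points are worth highlighting as genuine (if small) departures from a purely mechanical transcription of the proof of Lemma~\ref{lem: F supp W}. First, you correctly observe that the analogue of Lemma~\ref{lem: F supp W}(ii) cannot be a global isomorphism $R^1p_\ast G\cong R^1p_\ast G''$ (the map $G\to j_\ast j^\ast G$ changes things away from $y$), and the local statement requires base change for the top direct image. Since $p$ is flat with $1$-dimensional fibers, the square $B_y=W\times_C\{y\}$ is Tor-independent and $R^1p_\ast(-)\otimes k(y)\cong H^1(B_y,(-)|_{B_y})$ holds naturally, which is what you need. Second, for (iii) you note that the $L\iota^\ast$/Lemma~\ref{lem: ample} machinery used in Lemma~\ref{lem: F supp W}(iii) is unnecessary here: because the short exact sequence already lives in $\Coh(W)$, and $p_\ast G''=0$ kills the connecting map, the inclusion $R^1p_\ast G'\hookrightarrow R^1p_\ast G$ is immediate, and (ii) forces the cokernel $R^1p_\ast G''$ to be nonzero. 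This is a clean shortcut that the paper's terse ``parallel'' comment would not have made apparent; in fact, a literal mimicry of the Lemma~\ref{lem: ample} argument would fail here, since twisting by $\CO_W(-B_y)=p^\ast\CO_C(-y)$ preserves the length of $R^1p_\ast$ by the projection formula, so the needed strict inequality must come from the $y$-local nondegeneracy you exploit instead.

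One minor notational slip: at the start of the verification of (ii) you write ``$R^1p_\ast T=0$'' for $T\in\CT$; this should read $R^1p_\ast\iota^\ast T=0$ (equivalently $R^1p_\ast T_0=0$ after writing $T=\iota_\ast T_0$), which is the defining condition for $\CT$. The argument is otherwise unaffected.
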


\begin{proof}
The proof is parallel to the proof of Lemma~\ref{lem: F supp W}.
\end{proof}

\begin{proof}[Proof of Proposition~\ref{prop: supp} (iii), (iv)]
To prove (iii) we use Lemma~\ref{lem: F supp W} and induction to reduce to $F\in\CF$ with $R^1p_\ast(\iota^\ast F)=0$. But then $F\in\CF\cap\CT$, thus $F=0$. The analogous argument proves~(iv).
\end{proof}

\subsection{Zero-dimensional perverse sheaves (proof of Theorem~\ref{thm: rho})}\label{subsec: 0-dim perverse}

We use a generating set of objects with extension closure $\CA_0$ to prove Theorem~\ref{thm: rho} (i).
\begin{lemma}\label{lem: A0}
Denote the fibers of the projection by $B_y=p^{-1}(y)$, then
\begin{enumerate}
    \item $\CA_0\cap \CF[1] = \big\langle \{\CO_{B_y}(k)[1]: y\in C\,,  k\leq -2\}\big\rangle_\ex$,
    \item $\CA_0\cap \CT = \big\langle \Coh_0(X)\,, \{\CO_{B_y}(k): y\in C\,, k\geq -1\}\big\rangle_\ex$.
\end{enumerate}
\end{lemma}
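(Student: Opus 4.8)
The strategy is to characterize elements of $\CA_0$ by their perverse dimension together with their membership in one of the two halves of the torsion pair, and then peel off generators using the reduction lemmas from Section~\ref{subsec: supp}. Recall that every $E\in\CA$ sits in a short exact sequence $0\to \CF[1]\cap E\text{-part}\to E\to \CT\cap E\text{-part}\to 0$ in $\CA$ (the perverse cohomology of the two-step filtration coming from the tilt), so it suffices to treat the two pieces $E\in\CA_0\cap\CF[1]$ and $E\in\CA_0\cap\CT$ separately; this also explains why the statement is phrased as two separate extension closures.

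\emph{Part (ii), the $\CT$ case.} Let $T\in\CA_0\cap\CT$. By definition of perverse dimension, either $\supp(T)$ meets $X\ssetminus W$ only in dimension $0$, or $p(\supp(T)\cap W)$ is $0$-dimensional; combining, $\dim p(\supp(T)\cap W)=0$ and $\supp(T)\cap(X\ssetminus W)$ is finite. First apply Lemma~\ref{lem: supp} to split $T$ into a sheaf $T'$ with $\iota^\ast T'\in\Coh_0(W)$ and $\supp T'$ finite outside $W$ — hence $T'\in\Coh_0(X)$ after a further trivial dévissage — and a sheaf $T''$ with $\supp(T'')_{red}\subset W$ and still of perverse dimension $0$, meaning $p(\supp T'')$ is finite, i.e. $T''$ is supported on finitely many fibers. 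Now invoke Proposition~\ref{prop: supp}(i) then (ii) to reduce $T''$ to an extension of sheaves of the form $\iota_\ast j_\ast G$ with $G\in\Coh(B_y)$ and $\iota_\ast j_\ast G\in\CT$. Finally, every coherent sheaf on $B_y\cong\BP^1$ is an iterated extension of line bundles $\CO_{B_y}(k)$ and skyscrapers $k(x)$; one checks $\iota_\ast\CO_{B_y}(k)\in\CT$ exactly when $R^1p_\ast\CO_{B_y}(k)=H^1(\BP^1,\CO(k))=0$, i.e. $k\geq -1$, while skyscrapers are in $\Coh_0(X)$. This yields the claimed generating set. The reverse inclusion is immediate since $\Coh_0(X)\subset\CT\cap\CA_0$ and each $\CO_{B_y}(k)$ with $k\geq -1$ lies in $\CT\cap\CA_0$, and $\CA_0\cap\CT$ is closed under extensions in $\CA$.

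\emph{Part (i), the $\CF[1]$ case.} Let $F[1]\in\CA_0\cap\CF[1]$ with $F\in\CF$. The same support analysis shows $F$ is supported on finitely many fibers (a sheaf in $\CF$ cannot have $0$-dimensional support away from $W$, since $\Coh_0(X)\subset\CT$, so the $X\ssetminus W$ part is empty and $p(\supp F)$ is finite). Use Proposition~\ref{prop: supp}(iii), (iv) to reduce $F$ to an extension of sheaves $\iota_\ast j_\ast G$ with $G\in\Coh(B_y)$ and $\iota_\ast j_\ast G\in\CF$; decomposing $G$ on $\BP^1$ into line bundles and skyscrapers and noting that $\CF$ contains no nonzero subsheaf supported in dimension $0$ (those lie in $\CT$), only the line bundle pieces survive, and $\iota_\ast\CO_{B_y}(k)\in\CF$ forces $R^1p_\ast\CO_{B_y}(k)\neq 0$ together with no map from $\CT$, which gives $k\leq -2$. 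The reverse inclusion again follows from closure under extensions.

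\textbf{Main obstacle.} The routine-looking but genuinely delicate point is controlling the interaction between the torsion-pair decomposition inside $\CA$ and the support/dévissage reductions: one must be careful that the subobjects produced by Lemmas~\ref{lem: supp}, \ref{lem: F supp W}, \ref{lem: F supp B} and Proposition~\ref{prop: supp} remain of perverse dimension $0$ (so that the induction stays within $\CA_0$), and that extension closures are taken in the correct ambient category ($\CA$, not $\Coh(X)$). The termination of the induction is guaranteed by the strict decrease of the invariants $\ch^\omega$ and $R^1p_\ast$ from Notations~\ref{not: ch} and~\ref{not: R1p}, exactly as in the proofs of Proposition~\ref{prop: supp}; I expect the bookkeeping to combine those two inductions, first stripping the $R^1p_\ast$ part to land in $\CT$, then stripping $\ch^\omega$ within $\CT$.
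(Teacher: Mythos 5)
Your proof is correct and follows essentially the same route as the paper: reduce via Lemma~\ref{lem: supp} and Proposition~\ref{prop: supp} to sheaves on single fibers, split on $\BP^1$ into skyscrapers plus line bundles, and read off which line bundles land in $\CT$ versus $\CF$ from $R^1p_\ast$. (The paper instead invokes $p_\ast j_\ast G=0$ from Lemma~\ref{lem: Rp} in part~(i) to kill the torsion part, but this is an equivalent argument; also note your ``either/or'' reading of ${}^p\dim=0$ should be ``both,'' as you in fact use in the next clause.)
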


\begin{proof}
By Proposition~\ref{prop: supp}~(iv), $\CA_0\cap \CF[1]$ is the extension closure of shifted sheaves~$G[1]$ supported on a single fiber~$j\colon B_y\hookrightarrow W$. Then $p_\ast j_\ast G=0$ by Lemma~\ref{prop: ell}, thus decomposing $G$ into a $0$-dimensional sheaf and a sum of line bundles we find that $G$ is torsion-free and only line bundles $\CO_{B_y}(k)$ with $k\leq -2$ appear.\medskip

For~(ii) use Lemma~\ref{lem: supp} and Proposition~\ref{prop: supp}~(ii) to reduce to $\Coh_0(X)$ and sheaves supported on some $j\colon B_y\hookrightarrow W$. Decomposing the latter into a sum of a $0$-dimensional sheaf and line bundles $\CO_{B_y}(k)$, we must have $k\geq -1$.
\end{proof}

Theorem~\ref{thm: rho} (i) now follows from Lemmas \ref{lem: 0-dim generators} and \ref{lem: A0}.

\subsection{One-dimensional perverse sheaves (proof of Theorem~\ref{thm: rho})}\label{subsec: 1-dim perverse}
Let $F\in\CF$. By Lemma~\ref{lem: F supp W} we may assume that $F\cong\iota_\ast G$ is supported on~$W$. The proof of Lemma~\ref{prop: ell} showed that we have an injection
\[ G \hookrightarrow \omega_p\otimes p^\ast V\,,\]
where $V=R^1p_\ast G$. Let $T$ be the cokernel. The inclusion induces an isomorphism on $R^1p_\ast(-)$, thus $\iota_\ast T\in\CT_{\leq 1}$. We have an exact triangle
\[ T \to G[1] \to \omega_p\otimes p^\ast V[1]\,.\]
To prove Theorem~\ref{thm: rho} (ii) it suffices to consider sheaves in~$\CT_{\leq 1}$ and objects of the form $\iota_\ast\big(\omega_p\otimes p^\ast V\big)[1]$. 

Recall that for any $E\in D^b(X)$ we have an exact triangle~\eqref{eq: triangle}
\[ E^\vee[2] \to \rho(E) \to \Phi\circ\Phi_R[1]\big(E^\vee[2]\big)\,.\]

We consider the long exact sequence of cohomology sheaves for the \emph{standard $t$-structure} associated to this triangle. Let $\CH^i=\CH^i\big(\rho(E)\big)$. Property~\eqref{eq: star} is equivalent to
\[\CH^{-1}\in\CF\,,\quad \CH^1\in \CT\cap\CA_0\,,\quad \CH^i= 0\,, \quad i\neq -1,0,1\,.\]
\begin{lemma}\label{lem: 1-dim generators}Let $V\in\Coh(C)$, then
\begin{enumerate}
    \item $\rho\Big(\iota_\ast\big(\omega_p\otimes p^\ast V\big)[1]\Big)$ satisfies Property~\eqref{eq: star},
    \item $\rho\Big(\iota_\ast\big(\CO_p(-1)\otimes p^\ast V\big)\Big)$ satisfies Property~\eqref{eq: star}.
\end{enumerate}
\end{lemma}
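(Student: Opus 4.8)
The plan is to compute $\rho$ directly on the two families of generators using the exact triangle~\eqref{eq: triangle}, since in both cases the right adjoint $\Phi_R$ and the dual $(-)^\vee$ are explicitly computable. First I would treat case~(ii): for $E = \iota_\ast(\CO_p(-1)\otimes p^\ast V)$ we have $E = \Phi(V)$, so $\Phi_R(E^\vee[2]) = \Phi_R\circ\Phi$ of something, but it is cleaner to note that $E^\vee[2]$ should be computed via Grothendieck--Verdier duality on $W$ together with $\iota^!(-) = \iota^\ast(-)\otimes\omega_W^\vee[-1]$ (recall $\omega_X = \CO_X$). A short computation gives $E^\vee[2] = \iota_\ast\big(\CO_p(1)\otimes p^\ast V^\vee\otimes\omega_p\big)[-1]$ up to a twist by $p^\ast(\det\CE^\vee\otimes\omega_C)$; I would then apply Lemma~\ref{lem: basic}(iii) to compute $\Phi_R$ of this object, landing in $Rp_\ast$ of a sheaf on $W$, which by Lemma~\ref{lem: Rp}(ii) is concentrated in $\Coh(C)$. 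Feeding this back into~\eqref{eq: triangle} and taking the long exact sequence of standard cohomology sheaves then shows directly that $\CH^{-1}(\rho(E))\in\CF$ (it is a shifted sheaf supported on $W$ with vanishing $R^1p_\ast\iota^\ast$ after the relevant twist), that $\CH^1(\rho(E))$ is a zero-dimensional-over-$C$ sheaf in $\CT$, hence in $\CT\cap\CA_0$, and that all other cohomology sheaves vanish. This is exactly Property~\eqref{eq: star}.

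For case~(i), $E = \iota_\ast(\omega_p\otimes p^\ast V)[1]$, the same mechanism applies but now $E^\vee[2]$ is computed using $\omega_p^\vee = \CO_p(2)\otimes p^\ast(\det\CE^\vee)^\vee$ followed by the Euler sequence, so that $E^\vee[2] = \iota_\ast\big(\CO_p(-1)\otimes p^\ast(\text{twist of }V^\vee)\big)$ up to correction terms coming from $\iota^!$; again I would push everything through $\Phi_R$ using Lemma~\ref{lem: basic}, use the splitting $\Phi_R\circ\Phi\cong\id\oplus\omega_C[-2]$ when convenient, and read off standard cohomology sheaves from~\eqref{eq: triangle}. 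The outputs $\omega_C\otimes(-)[-2]$ contributing to $\Phi_R\circ\Phi$ are the reason the $\CH^1$ term is nonzero but zero-dimensional over $C$, consistent with Property~\eqref{eq: star}. Both cases reduce to: (a) an honest Serre-duality computation on $W$ twisted by $\iota^!$, and (b) an application of $Rp_\ast$ to line-bundle twists, controlled by Lemma~\ref{lem: ample} and Lemma~\ref{lem: Rp}.

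The main obstacle will be keeping track of the twists. The functors $(-)^\vee$, $\iota^!$ versus $\iota^\ast$, and $\Phi_R$ each introduce a line bundle pulled back from $C$ (powers of $\omega_C$, $\det\CE^\vee$, $\omega_p$, $\CO_p(\pm1)$), and the whole point of Property~\eqref{eq: star} is a \emph{qualitative} statement about perverse cohomology, so I must verify that after all the bookkeeping the surviving cohomology sheaves genuinely lie in $\CF$, resp.\ $\CT\cap\CA_0$ — which amounts to checking a vanishing of the form $R^1p_\ast\iota^\ast(\CH^{-1}) = 0$ and a rank-zero statement for $R^1p_\ast$ of the $\CH^1$-piece. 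For the $\CF$-membership of $\CH^{-1}$ I would invoke the characterization of $\CF$ together with the fact that the relevant sheaf is, up to a $p^\ast$-twist, of the form $\omega_p\otimes p^\ast(-)$, for which $R^1p_\ast$ is locally free (hence restricting to give the vanishing needed); this uses the proof of Lemma~\ref{prop: ell} as in the opening of this subsection. Once Lemma~\ref{lem: 1-dim generators} is established, combined with the reduction at the start of Section~\ref{subsec: 1-dim perverse} (every $F\in\CF$ supported on $W$ sits in a triangle with a $\CT_{\leq 1}$-sheaf and an object of the form $\iota_\ast(\omega_p\otimes p^\ast V)[1]$) and the already-proven Theorem~\ref{thm: rho}(i) for the $\CT_{\leq 1}$ and $\CA_0$ parts, the long exact sequence of perverse cohomology yields Property~\eqref{eq: star} for all $E\in\CA_{\leq 1}$, and hence Theorem~\ref{thm: rho}(ii).
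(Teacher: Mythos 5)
Your overall strategy --- compute $E^\vee[2]$ via Grothendieck--Verdier duality, apply $\Phi_R$, and read off standard cohomology from the triangle~\eqref{eq: triangle} --- is the same as the paper's, and for case~(i) this works essentially as written. But there are two real problems.

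First, sign errors: $\iota^!(-)=L\iota^\ast(-)\otimes\omega_W[-1]$, not $\omega_W^\vee[-1]$ (since $\iota^!\CO_X=\omega_{W/X}[-1]=\omega_W\otimes\omega_X^\vee[-1]$ and $\omega_X\cong\CO_X$), and for $E=\Phi(V)$ one gets $E^\vee[2]=\iota_\ast\bigl(\CO_p(-1)\otimes p^\ast(\omega_C\otimes\det\CE^\vee\otimes V^\vee)\bigr)[+1]$, with shift $[+1]$ not $[-1]$. These are not cosmetic: if you place the cohomology of $E^\vee[2]$ in degrees $1,2$ instead of $-1,0$, the long-exact-sequence bookkeeping that you are relying on falls apart.

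Second, and more seriously, for case~(ii) the direct long-exact-sequence chase does not by itself establish Property~\eqref{eq: star}. Writing $\widetilde V=\omega_C\otimes\det\CE^\vee\otimes V^\vee$, both $E^\vee[2]=\Phi(\widetilde V)[1]$ and $\Phi\Phi_R[1](E^\vee[2])$ have nonzero $\CH^{-1}$ (both equal to $\iota_\ast\bigl(\CO_p(-1)\otimes p^\ast\CH^0(\widetilde V)\bigr)$, which lies in $\CT$, not $\CF$). So reading positions in the long exact sequence does not tell you whether $\CH^{-1}\bigl(\rho(E)\bigr)$ vanishes or lands in $\CF$; you need to understand the maps. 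The paper resolves this by exploiting exactly the fact you chose to set aside: because $E^\vee[2]$ lies in the image of the spherical functor $\Phi$, the cotwist splitting $\Phi_R\circ\Phi\cong\id\oplus\omega_C[-2]$ collapses the triangle to give $\rho(E)\cong\Phi(\omega_C\otimes\widetilde V)$ in closed form, whose cohomology lives only in degrees $0,1$ with the required membership. Your remark that one could "use the splitting when convenient" undersells it: for~(ii) it is the essential step, not an optional simplification. Your heuristic for $\CF$-membership of $\CH^{-1}$ (via $\omega_p\otimes p^\ast(-)$ having locally free $R^1p_\ast$) is not applicable here in any case --- in both parts of this lemma the correct conclusion is $\CH^{-1}=0$; the nontrivial $\CF$-membership argument via Lemma~\ref{lem: morphism} is needed only later, in Proposition~\ref{prop: rho(1-dim)}. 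Finally, a minor point in your last paragraph: the $\CT_{\leq 1}$ input to Theorem~\ref{thm: rho}(ii) comes from Proposition~\ref{prop: rho(1-dim)} and Lemma~\ref{lem: generatorsT}, not from Theorem~\ref{thm: rho}(i).
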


\begin{proof} Denote by $E=\iota_\ast\big(\omega_p\otimes p^\ast V\big)[1]$, then
\[E^\vee[2] = \iota_\ast\big(p^\ast(\omega_C\otimes V^\vee)\big)\,.\]
Note that $V^\vee=R\CHom(V,\CO_C)$ has cohomology sheaves
\[ \CH^0\big(V^\vee\big)\in\Coh_1(C)\,,\quad \CH^1\big(V^\vee\big)\in\Coh_0(C)\,,\quad \CH^i\big(V^\vee\big) = 0\,,\quad i\neq 0,1\,.\]
Then, we find that
\[ \CH^0\big(E^\vee[2]\big)\in\CT_{\leq 1}\,,\quad \CH^1\big(E^\vee[2]\big)\in\CT_{\leq 1}\cap\CA_0\,,\quad \CH^i\big(E^\vee[2]\big)=0\,, \quad i\neq 0,1\,.\]
Direct computation yields
\[\Phi_R[1]\big(E^\vee[2]\big) = p_\ast\big(\CO_p(1)\big)\otimes\omega_C\otimes V^\vee\,,\]
with cohomology sheaves
\begin{align*} &\CH^0\big(\Phi\circ\Phi_R[1](E^\vee[2])\big)\in\CT_{\leq 1}\,,\quad \CH^1\big(\Phi\circ\Phi_R[1](E^\vee[2])\big)\in\CT_{\leq 1}\cap\CA_0\,,\\
&\CH^i\big(\Phi\circ\Phi_R[1](E^\vee[2])\big)=0\,, \quad i\neq 0,1\,.
\end{align*}
Together this proves~(i). Recall the functor $\Phi\colon D^b(C)\to D^b(X)$ defined as
\[ \Phi(V) = \iota_\ast\big(\CO_p(-1)\otimes p^\ast V\big)\,.\]
For~(ii), let $E=\Phi(V)$, then
\[ E^\vee[2] = \iota_\ast\big(\CO_p(-1)\otimes p^\ast(\omega_C\otimes\det(\CE^\vee)\otimes V^\vee)\big)[1] = \Phi\big(\widetilde V[1]\big)\,,\]
where $\widetilde V = \omega_C\otimes\det(\CE^\vee)\otimes V^\vee$. Using $\Phi_R\circ\Phi \cong \id \,\oplus\, \omega_C[-2]$ we obtain a split exact triangle
\[ \rho(E) \to \Phi\big(\widetilde V[2] + \omega_C\otimes\widetilde V\big) \to \Phi(\widetilde V)[2]\,.\]
Thus, $\rho\big(E\big) \cong \Phi\big(\omega_C\otimes \widetilde V\big)$, which satisfies 
\begin{align*} &\CH^0\big(\Phi(\omega_C\otimes \widetilde V)\big)\in\CT_{\leq 1}\,,\quad \CH^1\big(\Phi(\omega_C\otimes \widetilde V)\big)\in\CT_{\leq 1}\cap\CA_0\,,\\
&\CH^i\big(\Phi(\omega_C\otimes \widetilde V)\big)=0\,, \quad i\neq 0,1\,.\qedhere
\end{align*}
\end{proof}

\begin{proposition}\label{prop: rho(1-dim)} For all $E\in\Coh_{\leq 1}(X)\cap\CT$ the image $\rho(E)$ satisfies Property~\eqref{eq: star}.
\end{proposition}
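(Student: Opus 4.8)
The plan is to combine the support reductions of this section with an induction on the invariant $\ch^\omega$ of Notation~\ref{not: ch}, everything being funnelled through the closure of Property~\eqref{eq: star} under extensions. The first, purely formal point is that Property~\eqref{eq: star} is closed under extensions in $\Coh(X)$: if $0\to E_1\to E_2\to E_3\to 0$ is exact in $\Coh(X)$ and $\rho(E_1),\rho(E_3)$ satisfy~\eqref{eq: star}, then applying the contravariant exact functor $\rho$ gives an exact triangle $\rho(E_3)\to\rho(E_2)\to\rho(E_1)\xrightarrow{+1}$, and the long exact sequence of ordinary cohomology sheaves shows that $\CH^i(\rho(E_2))=0$ for $i\neq -1,0,1$, that $\CH^{-1}(\rho(E_2))$ is an extension of a subsheaf of $\CH^{-1}(\rho(E_1))$ by $\CH^{-1}(\rho(E_3))$, and that $\CH^1(\rho(E_2))$ is an extension of $\CH^1(\rho(E_1))$ by a quotient of $\CH^1(\rho(E_3))$; since $\CF$ is closed under subsheaves and extensions and $\CT\cap\CA_0$ is closed under quotients and extensions (perverse dimension does not increase under subsheaves or quotients, and $\CT$ is closed under quotients and extensions), this proves~\eqref{eq: star} for $\rho(E_2)$. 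The same LES shows that~\eqref{eq: star} for the two outer terms of any exact triangle implies it for the middle term.

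\emph{Reducing the support.} Apply Lemma~\ref{lem: supp} to $E\in\Coh_{\leq 1}(X)\cap\CT$: there is an exact sequence $0\to T'\to E\to T''\to 0$ with $T',T''\in\CT$, $\iota^\ast T'\in\Coh_0(W)$ and $\supp(T'')_{\mathrm{red}}\subset W$. Since $\iota^\ast T'$ is $0$-dimensional, $\supp(T')\cap W$ is finite, so $L\iota^\ast(T'^\vee[2])$ has $0$-dimensional cohomology; chasing the triangle~\eqref{eq: triangle} with Lemma~\ref{lem: basic} then shows $\rho(T')$ is concentrated in degrees $[-1,1]$, and $R^1p_\ast\iota^\ast T'=0$ together with Lemma~\ref{lem: ample} forces $\CH^{-1}(\rho(T'))\in\CF$ and $\CH^1(\rho(T'))\in\CT\cap\CA_0$. (When $T'$ is $0$-dimensional this is immediate from Lemma~\ref{lem: 0-dim generators}, and when $\supp(T')\cap W=\varnothing$ it is Serre duality on the Calabi--Yau $3$-fold, as there $\rho=[2]\circ(-)^\vee$.) By extension closure we may therefore assume $\supp(E)_{\mathrm{red}}\subset W$, whence Proposition~\ref{prop: supp}(i) reduces us to $E=\iota_\ast G$ with $G\in\Coh(W)$, $\dim\supp G\leq 1$ and $R^1p_\ast G=0$; splitting off the $0$-dimensional torsion of $G$ we may finally assume $G$ is pure of dimension~$1$.

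\emph{The induction.} Fix $\omega\in\Amp(X)$ and induct on $\ch_2^\omega(\iota_\ast G)=\omega\cdot\ch_2(\iota_\ast G)$. As $\iota_\ast G\in\CT\cap\CA_{\leq 1}$, Lemma~\ref{lem: morphism} and the Remark after it give a nonzero $\psi\colon\CO_p(-1)\otimes p^\ast L\to G$; set $G''=\operatorname{im}\psi$ (again pure of dimension~$1$, being a nonzero subsheaf of $G$) and $G'=\coker\psi$. If $G'$ has positive-dimensional support then both $\iota_\ast G''$ and $\iota_\ast G'$ lie in $\Coh_{\leq 1}(X)\cap\CT$ with strictly smaller $\ch_2^\omega$, so $\rho(\iota_\ast G'),\rho(\iota_\ast G'')$ satisfy~\eqref{eq: star} by induction and we conclude by extension closure. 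If $G'$ is $0$-dimensional then $\rho(\iota_\ast G')$ satisfies~\eqref{eq: star} by Lemma~\ref{lem: 0-dim generators} and extension closure, so by closure of~\eqref{eq: star} for the middle term it suffices to treat $\iota_\ast G''$; but $G''$, being a $\Coh(X)$-quotient of $\CO_p(-1)\otimes p^\ast L$, is of the form $\CO_Z\otimes\CO_p(-1)\otimes p^\ast L$ for a one-dimensional subscheme $Z\subset W$ with $R^1p_\ast\big(\CO_Z\otimes\CO_p(-1)\otimes p^\ast L\big)=0$.

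\emph{The surjective case, and the main obstacle.} It thus remains to prove~\eqref{eq: star} for $\rho(\iota_\ast G)$ with $G\cong\CO_Z\otimes\CO_p(-1)\otimes p^\ast L$ as above, and this is the genuine content. A formal argument via the sequence $0\to\iota_\ast\big(\mathcal I_Z\otimes\CO_p(-1)\otimes p^\ast L\big)\to\Phi(L)\to\iota_\ast G\to 0$ does not work: although $\rho(\Phi(L))$ is a sheaf by Lemma~\ref{lem: 1-dim generators}(ii) and hence satisfies~\eqref{eq: star} trivially, contravariance of $\rho$ prevents this from descending to the quotient $\iota_\ast G$. I would instead analyse the triangle~\eqref{eq: triangle} for $E=\iota_\ast G$ directly: using the two-term resolution of $\CO_Z$ by line bundles on $W$ one computes $E^\vee[2]$, $L\iota^\ast(E^\vee[2])$ and $\Phi\circ\Phi_R[1](E^\vee[2])$ from Lemma~\ref{lem: basic}, and the hypothesis $R^1p_\ast G=0$ is exactly what places $\CH^{-1}(\rho(E))$ in $\CF$ and $\CH^1(\rho(E))$ in $\CT\cap\CA_0$, the two inequalities of Lemma~\ref{lem: ample} controlling the relevant higher pushforwards. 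Alternatively one peels off the fibral components of $Z$ via a Mayer--Vietoris sequence — reducing, by Proposition~\ref{prop: supp}(ii) and extension closure, to the line bundles $\CO_{B_y}(k)$ with $k\geq -1$, each of which satisfies~\eqref{eq: star} by Lemmas~\ref{lem: 0-dim generators},~\ref{lem: 1-dim generators}(ii) and the sequences $0\to\CO_{B_y}(k-1)\to\CO_{B_y}(k)\to k(x)\to 0$ — and analyses the remaining horizontal part of $Z$, where $\CO_W(-D)$ is a pullback from $C$ twisted by some $\CO_p(-a)$, again via Lemma~\ref{lem: 1-dim generators} together with a comparison of cohomology sheaves. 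I expect this last step to be the real difficulty; everything before it is bookkeeping once the lemmas of the section are in hand.
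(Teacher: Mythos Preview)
Your proof has a genuine gap: you reduce to the case $G\cong\CO_Z\otimes\CO_p(-1)\otimes p^\ast L$ with $Z\subset W$ a pure one-dimensional subscheme, and then stop. The phrases ``I would instead analyse the triangle directly'', ``alternatively one peels off the fibral components'', and ``I expect this last step to be the real difficulty'' are not a proof. Neither sketched route is carried out, and the second one (Mayer--Vietoris decomposition of $Z$ into fibral and horizontal pieces) does not obviously work for a general one-dimensional $Z$ with embedded or non-reduced structure.

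The paper's argument shows that your inductive reduction via Lemma~\ref{lem: morphism} was wasted effort: one can handle \emph{any} $E=\iota_\ast G$ with $G$ pure one-dimensional directly, with no further reduction. Write $H=G^\vee\otimes\omega_W[1]$, a sheaf by purity, so that $E^\vee[2]=\iota_\ast H$. The rotated triangle $\Phi\Phi_R(E^\vee[2])\to E^\vee[2]\to\rho(E)$ gives an exact sequence of sheaves
\[
0\longrightarrow\CH^{-1}\big(\rho(E)\big)\longrightarrow\CO_p(-1)\otimes p^\ast p_\ast\big(H\otimes\CO_p(1)\big)\longrightarrow\iota_\ast H,
\]
the right-hand map being induced by the counit. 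For each $L\in\Pic(C)$, adjunction and $Rp_\ast\CO_W=\CO_C$ identify
\[
\Hom\!\big(\CO_p(-1)\otimes p^\ast L,\ \CO_p(-1)\otimes p^\ast p_\ast(H\otimes\CO_p(1))\big)\ \cong\ \Hom\!\big(\CO_p(-1)\otimes p^\ast L,\ \iota_\ast H\big),
\]
and the identification is exactly the map induced by the counit. Hence $\Hom(\CO_p(-1)\otimes p^\ast L,\CH^{-1}(\rho(E)))=0$ for every $L$, so $\CH^{-1}(\rho(E))\in\CF$ by Lemma~\ref{lem: morphism}. This single adjunction trick is what you were missing; it works for arbitrary $G$, so restricting to $\CO_Z\otimes\CO_p(-1)\otimes p^\ast L$ gains nothing.

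A secondary point: your treatment of the piece $T'$ with $\iota^\ast T'\in\Coh_0(W)$ is handwavy. The paper first splits off the $\CA_0$-part of $E$ via the torsion pair $(\CA_0,\CA_1)$ --- already handled by Theorem~\ref{thm: rho}(i) --- and then uses purity of the remaining $\CA_1$-part to obtain $L\iota^\ast E=\iota^\ast E$, which makes both outer terms of~\eqref{eq: triangle} honest sheaves in $\CT_{\leq 1}$ and trivializes that case. Without arranging purity first, your ``chasing the triangle with Lemma~\ref{lem: basic} and Lemma~\ref{lem: ample}'' requires more care than you give it.
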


\begin{proof}
We decompose $E$ with respect to the torsion pair $(\CA_0\,,\CA_1)$ of $\CA_{\leq 1}$. The $\CA_0$ part is covered by Theorem~\ref{thm: rho} (i). Thus, assume 
\[E\in\Coh_{\leq 1}(X)\cap\CT\cap\CA_1\,,\]
in particular $E\in\Coh_1(X)$. We apply Lemma~\ref{lem: supp} to $E$. First assume that $\iota^\ast E\in \Coh_0(W)$. It follows from purity of~$E$ that $L\iota^\ast E = \iota^\ast E$. Dualizing, we have $E^\vee[2]\in\Coh_1(X)\cap\CA_1$ and
\[L\iota^\ast \big(E^\vee[2]\big) = \iota^\ast \big(E^\vee[2]\big)\in \Coh_0(W)\,.\]
We have the exact triangle~\eqref{eq: triangle}
\[ E^\vee[2] \to \rho(E) \to \Phi\circ\Phi_R[1]\big(E^\vee[2]\big)\,.\]
The left and right objects are sheaves in~$\CT_{\leq 1}$, thus $\rho(E)\in\CT_{\leq 1}$ as well.

It remains to prove Property~\eqref{eq: star} for sheaves $E=\iota_\ast G\in \Coh_{1}(X)$. Let $\CH^i=\CH^i\big(\rho(E)\big)$, we must prove that 
\[\CH^{-1}\in\CF\,,\quad \CH^1\in \CT\cap\CA_0\,,\quad \CH^i= 0\,, \quad i\neq -1,0,1\,.\]
Note that $Rp_\ast(L\iota^\ast E^\vee[2])=p_\ast(L\iota^\ast E^\vee[2])$ lies in $D^{[-1,0]}(C)$, thus
\[ \CH^i\big(\Phi\circ\Phi_R[1](E^\vee[2])\big)=0\,,\qquad i\neq -1,0\,.\]
Thus, in fact $\CH^i =0$ for $i\neq -1,0$ from the long exact sequence. We must argue that $\CH^{-1}\in\CF$. Note that $E^\vee[2] = \iota_\ast(G^\vee\otimes\omega_W[1])$. We have an exact sequence of sheaves
\[ 0 \to \CH^{-1} \to \CO_p(-1)\otimes p^\ast p_\ast \big(G^\vee\otimes\omega_W[1]\otimes\CO_p(1)\big)\to \iota_\ast(G^\vee\otimes \omega_W[1])\,.\]
Let $L\in\Pic(C)$ be a line bundle. Since $Rp_\ast \CO_W = \CO_C$ we have 
\begin{align*} &\Hom\Big(\CO_p(-1)\otimes p^\ast L, \CO_p(-1)\otimes p^\ast p_\ast \big(G^\vee\otimes\omega_W[1]\otimes\CO_p(1)\big)\Big)\\
&\cong \Hom\Big(L, p_\ast \big(G^\vee\otimes\omega_W[1]\otimes\CO_p(1)\big)\Big)\\
&\cong \Hom\Big(\CO_p(-1)\otimes p^\ast L, \iota_\ast(G^\vee\otimes \omega_W[1])\Big)\,.
\end{align*}
Thus,
\[ \Hom\Big(\CO_p(-1)\otimes p^\ast L, \CH^{-1}\Big)=0\,,\]
and $\CH^{-1}\in\CF$ by Lemma~\ref{lem: morphism}. \end{proof}
\begin{lemma}\label{lem: generatorsT}\hfill
    \[ \CT_{\leq 1} = \Big\langle\Coh_{\leq 1}(X)\cap\CT\,, \Phi\big(\Coh_1(C)\big) \Big\rangle_\ex\,.\]
\end{lemma}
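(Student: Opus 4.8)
The goal is to show that the category $\CT_{\leq 1}$ (perverse sheaves of perverse dimension at most $1$ that lie in $\CT$, i.e.\ honest sheaves) is generated under extensions by two explicit classes of objects: ordinary sheaves in $\Coh_{\leq 1}(X)\cap\CT$, and the objects $\Phi(W)=\iota_\ast(\CO_p(-1)\otimes p^\ast W)$ for $W\in\Coh_1(C)$. The inclusion $\supseteq$ is immediate: sheaves in $\Coh_{\leq 1}(X)\cap\CT$ are tautologically in $\CT_{\leq 1}$, and $\Phi(W)$ for $W\in\Coh_1(C)$ is the pushforward of a sheaf on $W$ with $R^1p_\ast$ a sheaf on $C$ (one checks $R^1p_\ast(\CO_p(-1)\otimes p^\ast W)=0$ actually, or at worst that it is a sheaf and the perverse dimension is $1$), so it lies in $\CT_{\leq 1}$; and $\CT_{\leq 1}$ is extension-closed inside $\CA$. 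So the content is the inclusion $\subseteq$.

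\smallskip

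The plan is to run an induction on the invariant $R^1p_\ast(\iota^\ast(-))$ ordered as in Notation \ref{not: R1p}, mirroring the structure of the proof of Lemma \ref{lem: F supp W} and the support lemmas. Given $\iota_\ast G=T\in\CT_{\leq 1}$ (after reducing to $W$-supported objects via Lemma \ref{lem: supp}, noting that the non-$W$-supported part lands in $\Coh_{\leq 1}(X)\cap\CT$ directly), set $V=R^1p_\ast G\in\Coh(C)$. If $V=0$ then $G\in\CT\cap$ (sheaves with $R^1p_\ast\iota^\ast=0$)… but in fact $V=0$ means $\iota_\ast G\in\CT$ as a genuine sheaf already satisfies the defining condition, so such $G$ lies in $\Coh_{\leq 1}(X)\cap\CT$ and we are done with the base case. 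When $V\neq 0$, the proof of Lemma \ref{prop: ell}/Section \ref{subsec: 1-dim perverse} produces an injection of sheaves $G\hookrightarrow \omega_p\otimes p^\ast V$ inducing an isomorphism on $R^1p_\ast$; its cokernel $Q$ satisfies $R^1p_\ast Q=0$, so $\iota_\ast Q\in\Coh_{\leq 1}(X)\cap\CT$. This gives a short exact sequence in $\CA$
\[
0\to \iota_\ast G\to \iota_\ast(\omega_p\otimes p^\ast V)\to \iota_\ast Q\to 0,
\]
so it suffices to realize $\iota_\ast(\omega_p\otimes p^\ast V)$ in the claimed extension closure. For this I would use the Euler sequence $0\to\omega_p\to\CO_p(-1)\otimes p^\ast\CE^\vee\to\CO_W\to 0$: tensoring by $p^\ast V$ and pushing forward by $\iota_\ast$ yields an exact triangle expressing $\iota_\ast(\omega_p\otimes p^\ast V)$ in terms of $\iota_\ast(\CO_p(-1)\otimes p^\ast(\CE^\vee\otimes V))$ and $\iota_\ast(p^\ast V)$. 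The second term $\iota_\ast p^\ast V$ is an honest sheaf with $R^1p_\ast\iota^\ast=R^1p_\ast p^\ast V=0$, hence in $\Coh_{\leq 1}(X)\cap\CT$; and the first term, after using the filtration $0\to L_1\to\CE^\vee\to L_2\to 0$ to replace $\CE^\vee\otimes V$ by successive extensions of $L_i\otimes V$, is built from $\Phi(L_i\otimes V)$ with $L_i\otimes V\in\Coh_1(C)$. Checking that all these triangles are exact triangles \emph{in the abelian category} $\CA$ (i.e.\ that the relevant perverse cohomology in degrees $\neq 0$ vanishes) is the routine-but-necessary bookkeeping: each object involved is a pushforward of a sheaf on $W$ with vanishing or sheaf-valued $R^1p_\ast$, so lies in $\CA$ in degree $0$, and the maps are maps of sheaves, so the triangles are short exact sequences in $\CA$.

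\smallskip

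The main obstacle I anticipate is not the induction scheme itself but ensuring the reduction step is valid when $G$ is not pure — i.e.\ handling the interaction with the torsion pair $(\CA_0,\CA_1)$ and confirming that the injection $G\hookrightarrow\omega_p\otimes p^\ast V$ from the proof of Lemma \ref{prop: ell} still applies to all of $\CT_{\leq 1}$ and not merely to objects in $\CF$ or to pure sheaves. One should also verify that the decreasing-invariant argument terminates: each reduction strictly decreases $R^1p_\ast(\iota^\ast(-))$ in the sense of Notation \ref{not: R1p}, which follows because $\iota_\ast Q$ and $\iota_\ast p^\ast V$ have vanishing $R^1p_\ast\iota^\ast$ while the $\Phi(L_i\otimes V)$ pieces have $R^1p_\ast\iota^\ast$ that is either zero or strictly smaller by the rank/length estimates of Lemma \ref{lem: ample}. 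Once that is in place, the induction closes and the lemma follows.
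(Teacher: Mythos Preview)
Your proposal has a fundamental gap: the inductive invariant you have chosen is identically zero on the category in question. By definition, $\CT=\{T\in\Coh(X):R^1p_\ast(\iota^\ast T)=0\}$, so for any $\iota_\ast G\in\CT_{\leq 1}$ you automatically have $V=R^1p_\ast G=0$. Thus your ``base case'' $V=0$ is the entire lemma, and the claim you make there --- that $V=0$ forces $\iota_\ast G\in\Coh_{\leq 1}(X)\cap\CT$ --- is false. For instance, $\Phi(L)=\iota_\ast(\CO_p(-1)\otimes p^\ast L)$ is a $2$-dimensional sheaf on $X$, so not in $\Coh_{\leq 1}(X)$, yet it lies in $\CT_{\leq 1}$ and has $R^1p_\ast(\CO_p(-1)\otimes p^\ast L)=0$. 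The injection $G\hookrightarrow\omega_p\otimes p^\ast V$ you invoke from the proof of Proposition~\ref{prop: ell} is a construction for sheaves in $\CF$, not in $\CT$; you have transplanted the $\CF$-side argument onto $\CT$, where it is vacuous.

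There is a second structural issue even if the invariant were nontrivial: your short exact sequence $0\to\iota_\ast G\to\iota_\ast(\omega_p\otimes p^\ast V)\to\iota_\ast Q\to 0$ exhibits the \emph{middle} term as an extension, not $\iota_\ast G$. Extension closures are not closed under subobjects, so knowing that the middle and right terms lie in the extension closure does not give you the left term. The paper's argument instead produces, via Lemma~\ref{lem: morphism}, a nonzero map $\CO_p(-1)\otimes p^\ast L\to G$ and takes its image $G'$ and cokernel $G''$, so that $\iota_\ast G$ sits in the middle of $0\to\iota_\ast G'\to\iota_\ast G\to\iota_\ast G''\to 0$. The induction is then on $\ell(\iota_\ast G)$ (Proposition~\ref{prop: ell}), which strictly drops for $G''$ unless $G'$ is already one of the generators $\Phi(L)$.
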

\begin{proof}
The inclusion ``$\supset$" is immediate. By Lemma~\ref{lem: supp} and Proposition~\ref{prop: supp} we know that $\CT_{\leq 1}$ is the extension closure of sheaves $T\in\Coh_{\leq 1}(X)$ with $\iota^\ast T\in\Coh_0(W)$ and pushforwards~$\iota_\ast(G)\in\CT_{\leq 1}$. Thus, it suffices to consider sheaves~$T=\iota_\ast G$. Let
\[ T_0 \to T \to T_1\]
be the decomposition in~$\CA$ with respect to the torsion pair~$(\CA_0\,,\CA_1)$. Since $T\in\Coh(X)$ we have $\Hom(\CF[1],T)=0$, thus 
\[T_0\in\Coh(X)\cap\CA_0=\CT_0\,.\]
This category is closed under quotients. Thus, replacing~$T_0$ by its image, we may assume that $T_0\to T$ is an injection of sheaves. It follows that $T_1\in\Coh(X)\cap\CA_1=\CT_1$. We have $\CT_0\subset\Coh_{\leq 1}\cap\CT$, thus we may assume $\iota_\ast G\in\CT_1$.\medskip

By Lemma~\ref{lem: morphism} there is a line bundle~$L$ and a non-zero morphism
\[ \CO_p(-1)\otimes p^\ast L \to G\,.\]
Taking the image and cokernel of this map, we obtain an exact sequence of sheaves in $\CT_{\leq 1}$
\[ 0 \to \iota_\ast G' \to \iota_\ast G \to \iota_\ast G''\to 0\,,\]
such that $0\neq\iota_\ast(G')\in\CA_1$. If
\[ \CO_p(-1)\otimes p^\ast L \twoheadrightarrow G'\]
is an isomorphism, then $\iota_\ast G'\in \Phi\big(\Coh_1(C)\big)$. Otherwise, $G'$  has dimension at most one. By Proposition~\ref{prop: ell}~(i) we have\footnote{Here we use $\ell(-)$ as defined in Section~\ref{subsec: Nironi stability}. The properties proved in Proposition~\ref{prop: ell} do not depend on Lemma~\ref{lem: generatorsT}.} $\ell(\iota_\ast G') > 0$, thus 
\[\ell(\iota_\ast G)>\ell(\iota_\ast G'')\geq 0\,.\]
By Proposition~\ref{prop: ell}~(iii) we have $\ell(\iota_\ast G'')=0$ if and only if $\iota_\ast G''\in\CA_0$, so we can conclude by induction.
\end{proof}
\begin{proposition}
For all $T\in\CT_{\leq 1}$ the image~$\rho(T)$ satisfies Property~\eqref{eq: star}.
\end{proposition}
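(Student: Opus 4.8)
The plan is to reduce the statement to the two cases already settled and then propagate Property~\eqref{eq: star} along extensions. By Lemma~\ref{lem: generatorsT}, every $T\in\CT_{\leq 1}$ admits a finite filtration $0=T_0\subset T_1\subset\dots\subset T_n=T$ by subsheaves whose graded pieces $Q_i=T_i/T_{i-1}$ lie either in $\Coh_{\leq 1}(X)\cap\CT$ or in $\Phi(\Coh_1(C))$. Since $\rho=\twist\circ[2]\circ(-)^\vee$ is a contravariant exact functor, applying it to the short exact sequences $0\to T_{i-1}\to T_i\to Q_i\to 0$ produces exact triangles $\rho(Q_i)\to\rho(T_i)\to\rho(T_{i-1})\to\rho(Q_i)[1]$. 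By Proposition~\ref{prop: rho(1-dim)} (first case) and Lemma~\ref{lem: 1-dim generators}~(ii) (second case), each $\rho(Q_i)$ satisfies~\eqref{eq: star}, so an induction on $i$ will finish the proof, provided I know that the class of objects satisfying~\eqref{eq: star} is closed under extensions in $D^b(X)$.

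The one genuine point to check is therefore this closure. I would argue it directly from the long exact sequence of (standard) cohomology sheaves attached to a triangle $A\to B\to C\to A[1]$ with $A$ and $C$ satisfying~\eqref{eq: star}. Outside degrees $-1,0,1$ the sheaves $\CH^i(A)$ and $\CH^i(C)$ vanish, hence so does $\CH^i(B)$. In degree $-1$, the sequence exhibits $\CH^{-1}(B)$ as an extension of a subsheaf of $\CH^{-1}(C)\in\CF$ by $\CH^{-1}(A)\in\CF$, and $\CF$, being the torsion-free part of a torsion pair, is closed under subobjects and extensions, so $\CH^{-1}(B)\in\CF$. In degree $1$, the sequence exhibits $\CH^1(B)$ as an extension of $\CH^1(C)$ by a quotient of $\CH^1(A)$, both of which lie in $\CT\cap\CA_0$; here $\CT$ is closed under quotients and extensions, and $\CT\cap\CA_0$ is cut out among sheaves of $\CT$ by the condition that the perverse dimension vanish, a condition depending only on the reduced support and hence inherited by quotients and extensions. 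Thus $B$ satisfies~\eqref{eq: star}.

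Granting this, the induction is immediate: $\rho(T_0)=0$ satisfies~\eqref{eq: star} vacuously, and if $\rho(T_{i-1})$ does, then the triangle $\rho(Q_i)\to\rho(T_i)\to\rho(T_{i-1})$ together with the closure statement forces $\rho(T_i)$ to satisfy~\eqref{eq: star}; for $i=n$ this is the Proposition. I do not expect a serious obstacle here: the substantive work is already done in Proposition~\ref{prop: rho(1-dim)}, Lemma~\ref{lem: 1-dim generators}, and the generation Lemma~\ref{lem: generatorsT}, and the only mild subtlety — the place where a slip could occur — is to use the support characterisation of $\CT\cap\CA_0$ correctly, equivalently to invoke that $\CA_0=\CA_{\leq 0}$ is closed under subquotients and extensions in $\CA$, which is transparent from the explicit description in Lemma~\ref{lem: A0}.
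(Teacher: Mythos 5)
Your proof takes the same route as the paper, which simply cites Lemma~\ref{lem: generatorsT}, Proposition~\ref{prop: rho(1-dim)}, and Lemma~\ref{lem: 1-dim generators}; the extension-closure step you spell out (propagating the characterization of \eqref{eq: star} in terms of $\CH^{-1}\in\CF$, $\CH^1\in\CT\cap\CA_0$, $\CH^i=0$ otherwise along the long exact sequence of standard cohomology, using that $\CF$ is closed under subobjects and extensions and $\CT\cap\CA_0$ under quotients and extensions) is exactly the implicit content of the paper's one-line proof. Your argument is correct.
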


\begin{proof}
Follows from Lemma~\ref{lem: 1-dim generators}, Proposition~\ref{prop: rho(1-dim)}, and Lemma~\ref{lem: generatorsT}.
\end{proof}

\begin{proof}[Proof of Theorem~\ref{thm: rho} (ii)]
The results of this section imply that for all $E\in\CA$ the image $\rho(E)$ satisfies Property~\eqref{eq: star}. Let $E\in\CA_1$ and $Q\in\CA_0$. Then, $\rho(Q)\in\CA_0[-1]$ by Theorem~\ref{thm: rho} (i) and, by purity of $E$,
\[ \Hom(\rho(E), Q[-1]) = \Hom(\rho(Q)[1], E)=0\,.\]
Thus, $\rho(E)\in\CA_{\leq 1}$. But then $\rho(E)\in\CA_1$ because
\[ \Hom(Q,\rho(E)) = \Hom(E,\rho(Q))=0\,,\]
since $\Hom^k(E,F)=0$ for all $E,F\in\CA$ and $k<0$. 
\end{proof}

\section{Hall algebras, pairs, and wall-crossing}\label{sec: Hall}

\subsection{Numerical Grothendieck groups}
The numerical Grothendieck group $N(X)$ is the Grothendieck group of $D^b(X)$ modulo the Euler paring. We will tacitly use the injection into the even cohomology via the Chern character. The class $[E]\in N(X)$ is equivalently characterised by
\[\big(\ch_0(E), \ch_1(E), \ch_2(E), \chi(E)\big)\,.\]
The numerical Grothendieck group admits a dimension filtration~$N_{\leq k}(X)$. For our purposes, we define $N_{\leq k}$ as the numerical Grothendieck group of~$\CA_{\leq k}$. We will only consider objects of perverse dimension~$\leq 1$. Explicitly,
\[N_0=\BZ\cdot\sfb\oplus \BZ\cdot\pt\,,\quad N_{\leq 1}=\BZ\cdot\sfw\oplus N_{\leq 1}(X)\,,\]
where $\sfb$ and $\sfw$ are the classes of a fiber resp.\ the divisor as introduced in Section~\ref{subsec: overview} and $N_0(X)\cong \BZ$ is spanned by the point class~$\pt$. We also define $N_1=N_{\leq 1}/N_0$ and we choose a splitting 
\[N_{\leq 1}=N_0\oplus N_1\,.\]
An element $\alpha\in N_{\leq 1}$ can be written as 
\[\alpha=(\gamma, c)=(r\sfw, \beta+j\sfb, n)\]
where $\gamma=(r, \beta)\in N_1$ and $c=(j, n)\in N_0$.

We will consider various generating series of DT invariants using the Novikov parameter $z$ of $\BQ[[N_{\leq 1}]]$ and we use the notation 
\[Q=z^\sfb\,,\quad -q=z^{\pt}\,,\quad t=z^{[\CO_X]}\,.\]
In particular, for $\alpha$ as above $z^\alpha=z^\gamma\, Q^j\, (-q)^n$.

\subsection{Hall algebra}

We briefly recall the notion of Hall algebras following~\cite{To20}. Let $\CC\subset D^b(X)$ be the heart of a bounded $t$-structure. In our applications we use two different hearts: 
\[\CC=\langle \Coh_{\geq 2}[1], \Coh_{\leq 1}\rangle\textup{ and }\CC=\langle \CA_{\geq 2}[1],\CA_{\leq 1}\rangle\,.\]
The first is used to define $\PT$ and $\bs$ invariants, the second is used to define $\PPT$ invariants. Both of these hearts are open by \cite[Lemma~4.1]{BCR18} so they satisfy the technical hypothesis in \cite[Appendix B]{BCR18}, \cite[Section 3]{BR19}.

The objects of $\CC$ form an algebraic stack which we still denote by $\CC$ and we assume that it is an open substack of the stack $\CM$ of objects 
\[\big\{E\in D^b(X): \Ext^{<0}(E,E)=0\big\}\,.\] The Hall algebra $\HH(\CC)$ is the $\BQ$-vector space generated by maps of algebraic stacks $[\CZ\to \CC]$, where $\CZ$ is an algebraic stack of finite type with affine stabilizers, modulo some motivic relations described in~\cite{To20}.

The Hall algebra $\HH(\CC)$ admits a product induced by extensions and, via cartesian products, is a module over $K(\textup{St}/\BC)$, the Grothendieck ring of stacks with affine stabilizers. Equivalently,
\[K(\textup{St}/\BC)=K(\textup{Var}/\BC)[\BL^{-1}, (\BL^n-1)^{-1}]\]
where $\BL=[\BA^1\to \mathrm{Spec}\,\BC]$.
The decomposition 
\[\CC=\coprod_{\alpha\in N(X)}\CC_\alpha\]
into numerical classes induces a decomposition of the Hall algebra
\[\HH(\CC)=\bigoplus_{\alpha}\HH_\alpha(\CC).\]

The feature of most interest in the Hall algebra is the existence of the integration map. To state this we introduce two more definitions. We let
$\HH^{\textup{reg}}(\CC)\subset \HH(\CC)$ be the $K(\textup{Var}/\BC)[\BL^{-1}]$-submodule spanned by $[Z\to \CC]$ so that $Z$ is a variety and 
\[\HH^{\textup{sc}}(\CC)=H^{\textup{reg}}(\CC)/(\BL-1)\HH^{\textup{reg}}(\CC).\]
This has the structure of a Poisson algebra. The integration map maps $\HH^{\textup{sc}}(\CC)$ to the Poisson torus 
\[\QQ[N(X)]=\bigoplus_{\alpha\in N(X)}\BQ z^\alpha.\]
The Poisson torus has the structure of a Poisson algebra as well. Its bracket is defined by
\[\{z^{\alpha}, z^{\alpha'}\}=(-1)^{\chi(\alpha, \alpha')}\chi(\alpha, \alpha')z^{\alpha+\alpha'}.\]

\begin{theorem}[{\cite[Theorem 2.8]{To20}}]
\label{thm: integrationmap}
There is a Poisson algebra homomorphism 
\[I\colon \HH^{\textup{sc}}(\CC)\to \QQ[N(X)]\]
such that if $Z$ is a variety and $f\colon Z\to \CC_\alpha\hookrightarrow \CC$ then
\[I([Z\overset{f}{\to} \CC])=\left(\int_{Z}f^\ast \nu_{\CC}\right)z^\alpha\]
where $\nu_{\CC}$ is the Behrend function on the stack $\CC$.
\end{theorem}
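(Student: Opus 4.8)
The statement to be proven is Theorem~\ref{thm: integrationmap}, the existence of the integration map $I\colon \HH^{\textup{sc}}(\CC)\to \QQ[N(X)]$. Since this is quoted as \cite[Theorem 2.8]{To20}, the goal here is to explain how one would assemble a proof from the structural theory of motivic Hall algebras, in the generality of an open substack $\CC$ of the stack $\CM$ of complexes with vanishing negative self-extensions.

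First I would record the two inputs that make the Behrend-function weighting multiplicative in the Hall algebra. The key identity, due to Joyce--Song in the sheaf case and extended by Toda to open substacks of $\CM$, is that for a short exact sequence $0\to A\to E\to B\to 0$ (in the heart $\CC$, or more generally a triangle in $\CM$) one has $\nu_{\CM}(E) = (-1)^{\chi(A,B)}\nu_{\CM}(A)\nu_{\CM}(B)$ upon integrating over the space of extensions; more precisely, the stack $\CE xt$ of such exact sequences fibers over $\CC\times\CC$ with fiber over $(A,B)$ an affine space $\Ext^1(B,A)$ modulo $\Hom(B,A)$, and the Behrend function along this fiber contributes the sign $(-1)^{\chi(A,B)}$ together with a factor of $\BL^{\dim}$ that is exactly cancelled when one passes to the semiclassical quotient $\HH^{\textup{sc}}=\HH^{\textup{reg}}/(\BL-1)$. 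This is where the hypothesis that $\CC$ is \emph{open} in $\CM$ is used: it guarantees that the local model for $\CM$ near any object is a critical locus (d-critical structure), so that the Behrend function satisfies the needed multiplicativity and the dimensions of the extension fibers are governed by $\chi$.

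Next I would set up the map on generators: for a variety $Z$ with a map $f\colon Z\to\CC_\alpha$, define $I([Z\xrightarrow{f}\CC]) = \big(\int_Z f^\ast\nu_{\CC}\big)\,z^\alpha$, where the integral is the weighted Euler characteristic $\chi(Z, f^\ast\nu_{\CC})$. One must check this is well-defined on $\HH^{\textup{reg}}(\CC)$: it respects the scissor relations (additivity of weighted Euler characteristic over stratifications, $\chi(Z)=\chi(U)+\chi(Z\setminus U)$ for open $U$) and the $\BL$-module structure, since $\chi(\BA^1)=1$ means multiplication by $\BL$ acts trivially after reduction mod $(\BL-1)$ --- hence it descends to $\HH^{\textup{sc}}(\CC)$, landing in $\QQ[N(X)]$ with its grading by $N(X)$ via $[E]\mapsto z^{[E]}$. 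The Poisson-algebra statement then has two halves. Multiplicativity: $I([Z_1]\ast[Z_2])=I([Z_1])\cdot I([Z_2])$ follows because the Hall product is the pushforward from the extension stack $\CE xt(Z_2,Z_1)$, whose fiber over $(A,B)\in Z_1\times Z_2$ is $[\Ext^1(B,A)/\Hom(B,A)]$, and integrating the Behrend function over this fiber produces exactly $(-1)^{\chi([A],[B])}$ times $\nu(A)\nu(B)$ --- but note $(-1)^{\chi(\alpha,\alpha')}$ is symmetric by Calabi--Yau/Serre duality ($\chi(\alpha,\alpha')+\chi(\alpha',\alpha)=0$, so the sign $(-1)^{\chi(\alpha,\alpha')}$ equals $(-1)^{\chi(\alpha',\alpha)}$), which is why the target is \emph{commutative} and $I$ is a genuine ring map rather than twisted. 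The Poisson-bracket compatibility $I(\{\xi_1,\xi_2\})=\{I(\xi_1),I(\xi_2)\}$ comes from comparing the $\BL^1$-coefficient of the commutator $[Z_1]\ast[Z_2]-[Z_2]\ast[Z_1]$ in $\HH^{\textup{reg}}$ --- the leading term in $(\BL-1)$ of the difference of the two extension stacks is controlled by $\chi(\ext^1(B,A))-\chi(\ext^1(A,B)) = -\chi([A],[B])+\ldots$, matching the definition $\{z^\alpha,z^{\alpha'}\}=(-1)^{\chi(\alpha,\alpha')}\chi(\alpha,\alpha')z^{\alpha+\alpha'}$.

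\textbf{The main obstacle.} The genuinely hard step is the Behrend-function multiplicativity over the extension fibers --- i.e.\ establishing that $\chi\big(\Ext^1(B,A),\,\nu_{\CM}|_{\text{fiber}}\big) = (-1)^{\chi([A],[B])}\,\chi(\Ext^1(B,A))\cdot\nu_{\CM}(A)\,\nu_{\CM}(B)$ up to the $\BL$-factor that dies in $\HH^{\textup{sc}}$. This requires knowing that $\CM$ carries a d-critical (or $(-1)$-shifted symplectic) structure and that the Behrend function of a critical locus behaves multiplicatively under the relevant degenerations; in the complexes-on-a-3-fold setting this is the content of Toda's extension of the Joyce--Song machinery past the category of coherent sheaves, using Pantev--To\"en--Vaqui\'e--Vezzosi shifted symplectic geometry and the Brav--Bussi--Dupont--Joyce--Szendr\H{o}i ``Darboux'' theorem for d-critical loci. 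Everything else --- the scissor/$\BL$-linearity bookkeeping, the identification of Hall products with extension-stack pushforwards, and the symmetrization of signs via Calabi--Yau duality --- is formal once that analytic/geometric input is in hand. Since the result is cited verbatim from \cite{To20}, in the paper itself one simply invokes it; the above is the shape of the argument that justifies the citation.
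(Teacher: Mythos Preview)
The paper does not prove this theorem at all: it is quoted directly from \cite[Theorem 2.8]{To20} and stated without any accompanying proof. Your proposal correctly recognizes this in its final paragraph, and the sketch you give of the underlying Joyce--Song/Toda argument (Behrend-function multiplicativity via d-critical structure, descent to the semiclassical quotient, Poisson compatibility from the $(\BL-1)$-expansion of commutators) is a reasonable summary of the machinery behind the citation, but it is not something the paper itself supplies or needs to supply.
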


The Hall algebra can be enlarged to the graded pre-algebra $H^{\textup{gr}}(\CC)$ by defining its generators to be $[\CZ\to \CX]$ with $\CZ$ being an algebraic stack with affine stabilizers such that $\CZ_\alpha$ is of finite type for every $\alpha\in N(X)$ (instead of asking that $\CZ$ is already of finite type). One can define analogous versions $H^{\textup{gr,reg}}(\CC),H^{\textup{gr,sc}}(\CC)$. The integration map extends to
\[I: H^{\textup{gr,sc}}(\CC)\to \BQ\{N(X)\}.\]

\subsection{Pairs}
\label{subsec: pairs}
We consider various notions of stable objects in $D^b(X)$ and their associated generating series. All of them are defined via a pair of subcategories $(\CT,\CF)$ of either $\Coh_{\leq 1}$ or $\CA_{\leq 1}$. We consider the categories
\[ \CB=\Big\langle \CO_X[1]\,,\Coh_{\leq 1}\Big\rangle_\ex\,,\quad {}^p\CB=\Big\langle \CO_X[1]\,,\CA_{\leq 1}\Big\rangle_\ex\,.\]

\begin{definition}[{\cite[Definition 3.9]{BCR18}}]\label{def: pair}
An object $P\in\CB$ or $P\in{}^p\CB$ is called a $(\CT,\CF)$-pair if
\begin{enumerate}
    \item $\rk(P)=-1$,
    \item $\Hom(T,P)=0$ for all $T\in\CT$,
    \item $\Hom(P,F)=0$ for all $F\in\CF$.
\end{enumerate}
\end{definition}

In Section~\ref{sec: BS} we consider BS and PT pairs which are defined in~$\CB$. Sections~\ref{sec: perversept} and~\ref{sec: zetawallcrossing} concern pairs defined in~${}^p\CB$. The categories $(\CT,\CF)$ arise in two ways:
\begin{enumerate}
    \item As torsion pairs associated to a stability function, or
    \item in the passage of one torsion pair to another, i.e.\ when crossing a wall.
\end{enumerate}

In the former case, the stability function is $\nu$ in Section~\ref{sec: perversept} and $\zeta$ in Section~\ref{sec: zetawallcrossing}. In the latter case, given two torsion pairs $(\CT_{\pm},\CF_{\pm})$ on different sides of a wall (and sufficiently close to the wall), we consider $(\CT_{+},\CF_{-})$. Joyce's wall-crossing formula yields the comparison between pairs on either side of the wall via semistable objects on $\CW=\CT_-\cap\CF_+$.

The notion of $(\CT,\CF)$-pairs with fixed numerical class $\alpha\in N(X)$ defines a stack $\Pairs(\CT,\CF)_\alpha$ which is of finite type in all of our applications and defines an element in the Hall algebra (Lemmas \ref{prop: finitetype}, \ref{prop: finitetypezeta} and \ref{prop: finitetypebs}).

\subsection{Joyce's wall-crossing formula}
\label{subsec: wallcrossing}
Let $(\CT_\pm,\CF_\pm)$ be two torsion pairs and $\CW=\CT_-\cap\CF_+$ be as above. When all the terms are defined, we have an identity in the Hall algebra
\[[\CW]\ast [\Pairs(\CT_-, \CF_-)]=[\Pairs(\CT_+, \CF_+)]\ast[\CW]\,.\]

The ``no-poles'' theorem by Joyce \cite[Theorem 8.7]{Jo07} and Behrend-Ronagh \cite[Theorems 4, 5]{BR19} tells us that in adequate conditions 
\[(\BL-1)\log(\CW)\in H^{\textup{gr,sc}}(\CC)\]
and, therefore, 
\[w=I\big((\BL-1)\log(\CW)\big)\in \BQ\{N(X)\}\]
is well-defined. The conditions that guarantee this are the following:
\begin{enumerate}
    \item $\CW_\alpha$ is an algebraic stack of finite type,
    \item $\CW$ is closed under extensions and direct summands,
    \item for every $\alpha\in N(X)$ there are finitely many ways to decompose $\alpha=\alpha_1+\ldots+\alpha_n$ such that $\CW_{\alpha_i}\neq \emptyset$.
\end{enumerate}
When all these conditions are satisfied (including the moduli of pairs defining elements in the Hall algebra), we say that the pairs $(\CT_\pm, \CF_\pm)$ are wall-crossing material. When this happens, we have Joyce's wall-crossing formula which we will repeatedly use:
\[I\big((\BL-1)\Pairs(\CT_+,\CF_+)\big) = \exp\big(\{w,-\}\big)\circ I\big((\BL-1)\Pairs(\CT_-,\CF_-)\big)\,.\]

\subsection{Rational functions}
In this paper we repeatedly encounter series expansions of rational functions 
\[f\in \BQ(N_0)=\BQ(q,Q)\,.\]
The ``direction'' of the expansion will play an important role, especially in the $\zeta$-wall-crossing in Section \ref{sec: zetawallcrossing}. We make here precise what ``direction'' means.

Given a non-zero linear function $L\colon N_0\to \BR$, we say that a set $S\subset N_0$ is $L$-bounded if for every $M\in \BR$, the set
\[\#\{c\in S\colon L(c)<M\}\]
is finite. Given $L$, we can define a completion $\BQ[N_0]_L$ of $\BQ[N_0]$ to be the set of formal power series
\[\sum_{c\in N_0}a_c z^c\]
such that $\{c\colon a_c\neq 0\}$ is $L$-bounded. The product of power series is well-defined in this completion. Given a rational function $f=g/h$ with $g,h\in \BQ[N_0]=\BQ[q,Q]$, we say that $F\in \BQ[N_0]_L$ is the expansion of $f$ with respect to $L$ if $hF=g$ in the ring $\BQ[N_0]_L$.

We briefly go over the different choices of $L$ used throughout the paper and clarify the statements of our results. The series $\PT_\beta$ for usual stable pairs invariants or $\bs_\beta$ for Bryan--Steinberg invariants (see Section \ref{sec: BS}) can be defined in the completion $\BQ[N_0]_L$ where 
\[L(j,n)=L(j\sfb, n)=j+\varepsilon\, n\] for $0<\varepsilon\ll 1.$

The generating series of perverse stable pairs $\PPT_\gamma$ is defined in the completion $\BQ[N_0]_d$ where
\[d(j,n)=2n+j.\]
In particular, the precise formulation of Theorem \ref{thm: perverse} is that $\PPT_\gamma$ is the expansion of the rational function $f_\gamma$ with respect to $d$. Theorem \ref{thm: wall-crossing} is to be understood in $\BQ(q,Q)$: the left and right hand side are the expansions of the same rational function in different directions.

This re-expansion in different directions is fundamental in Section \ref{sec: zetawallcrossing}. There, we will introduce series $\PDT^{\zeta, (\mu, \infty)}_\gamma$ that interpolate between each side of Theorem \ref{thm: wall-crossing}: they will be the expansion of the same rational function $f_\gamma$ with respect to 
\[L_\mu(j, n)=2n+j+\frac{j}{\mu a_0}.\]
Note that $L_\mu$ for $\mu\gg 1$ is equivalent to $d$ and for $\mu\ll 1$ it is equivalent to the linear function used for $\PT$ or $\bs$.


\section{Stability}\label{sec: stability}
We use three different stability functions to define stable pairs and study their wall-crossing:
\begin{enumerate}
    \item For Bryan--Steinberg type stable pairs in Section~\ref{sec: BS} we use
    \[\mu^A\colon \Coh_{\leq 1}(X)\ssetminus\{0\} \to (-\infty,+\infty]\times(-\infty,+\infty]\,.\]
    \item For perverse stable pairs in Section~\ref{sec: perversept} we use
    \[\nu\colon \CA_{\leq 1}\ssetminus\{0\}\to (-\infty, +\infty]\,.\]
    \item For the $\bs/\PPT$ wall-crossing in Section~\ref{sec: zetawallcrossing} we use
    \[\zeta\colon \CA_{\leq 1}\ssetminus\{0\}\to (-\infty, +\infty]\times(-\infty, +\infty]\,.\]
    \end{enumerate}
We comment on~(i) in Section~\ref{subsec: BS stability}. The necessary  results about $\mu^A$-stability were proved by Bryan--Steinberg~\cite{BS16} and require only minor modification for our setting. For~(ii) we give full proofs in Sections~\ref{subsec: Nironi stability},~\ref{subsec: curveclasses}, and~\ref{subsec: boundedness}. We also observe in Section~\ref{subsec: weakstability} that $\CA_{\leq 1}$ and $\nu$-stability can be obtained from a weak stability condition through a tilting process. Finally, for (iii) we can employ the techniques used for (ii) in a similar way to study $\zeta$-stability. We briefly comment on this in Section~\ref{subsec: refined stability}.

\subsection{Bryan--Steinberg stability}\label{subsec: BS stability}

Let $Y$ be the coarse moduli space of an orbifold CY-3 satisfying the \emph{hard Lefschetz condition} and let
\[ \pi\colon X \to Y\]
be the distinguished crepant resolution~\cite{BKR01, CT08}. Denote by $\widetilde H\in\Nef(X)$ the pullback of an ample class on~$Y$, and let $\omega\in\Amp(X)$ be ample such that $\omega -\widetilde H$ is ample as well. Bryan--Steinberg~\cite{BS16} introduce a function on $\Coh_{\leq 1}(X)$ defined as 
\[\mu^\pi(E) = \Bigg(\frac{\chi(E)}{\widetilde H\cdot \ch_2(E)}\,,\frac{\chi(E)}{\omega \cdot\ch_2(E)}\Bigg)\,.\]
They are able to prove the necessary technical results~\cite[Theorem 38, Lemma 47, Lemma 51]{BS16} which allow to employ Joyce's Hall algebra machinery. We can use the exact same pathway. Critically, we do not require a projection $X\to Y$, the existence of a nef class $A\in\Nef(X)$ as described in the~condition~\eqref{eq: nef} suffices. We then define $\mu^A$ by the same formula as $\mu^\pi$, replacing $\widetilde H$ by $A$. The proofs in~\cite{BS16} carry over to our setting where a projection $\pi$ does not necessarily exist:

\begin{proposition}[\cite{BS16}]
\label{prop: BSstabilityfinitetype}
The slope $\mu^A$ defines a stability condition on $\Coh_{\leq 1}(X)$. Moreover, the moduli stack of $\mu^A$-semistable sheaves $\CM^{\mu^A}_{(\beta, n)}$ is a finite type open substack of the moduli stack $\CM$ parametrizing perfect complexes $E\in D^b(X)$ with $\Ext^{<0}(E,E)=0$.
\end{proposition}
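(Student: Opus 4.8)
The plan is to follow the blueprint laid out by Bryan--Steinberg \cite{BS16}, checking that each of their arguments goes through with the nef class $A$ in place of the pullback $\widetilde H$ of an ample class from $Y$. The key observation is that the only property of $\widetilde H$ used in \cite{BS16} is that it is nef and its kernel on the cone of effective curve classes is precisely the ray spanned by the contracted curve; this is exactly what condition~\eqref{eq: nef} provides for $A$ and $\sfb$. So the strategy is to re-derive the three ingredients: (a) $\mu^A$ is a stability function in the sense that it satisfies the see-saw/Harder--Narasimhan property on $\Coh_{\leq 1}(X)$, (b) the moduli stack of $\mu^A$-semistable sheaves of fixed class is of finite type, and (c) it embeds as an open substack of $\CM$.

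First I would verify that $\mu^A$ takes values in $(-\infty,+\infty]\times(-\infty,+\infty]$ ordered lexicographically and satisfies the weak see-saw property: for a short exact sequence $0\to E'\to E\to E''\to 0$ in $\Coh_{\leq 1}(X)$, the slope of $E$ lies (weakly) between those of $E'$ and $E''$. This is a formal consequence of the fact that both $A\cdot\ch_2$ and $\omega\cdot\ch_2$ are additive and non-negative on $\Coh_{\leq 1}(X)$ — non-negativity of $A\cdot\ch_2$ uses that $A$ is nef, and strict positivity unless the support is contained in the $\sfb$-ray uses precisely~\eqref{eq: nef}. The first coordinate is $+\infty$ exactly on sheaves supported on multiples of $\sfb$ (the ``$A$-degree zero'' sheaves), and on that locus the second coordinate is finite and governs stability, so the lexicographic refinement is what makes $\mu^A$ a genuine stability function. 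Existence and uniqueness of Harder--Narasimhan filtrations then follows from the standard argument (boundedness of slopes of subsheaves, noetherianity), identical to \cite[Theorem 38]{BS16}.

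Next I would establish finite type. The point is that a $\mu^A$-semistable sheaf $E$ of fixed class $(\beta,n)$ has bounded slopes of all subsheaves and quotients, and — crucially — the ``ample part'' $\omega\cdot\ch_2$ bounds the sheaf: semistability with respect to the second coordinate rules out sub- or quotient sheaves with very negative or very positive Euler characteristic relative to their $\omega$-degree, so the family is bounded. Concretely this is \cite[Lemma 47]{BS16}, where one decomposes $E$ according to whether the support meets $W$ and uses that $\omega\cdot\ch_2$ dominates; the argument only uses $\omega\in\Amp(X)$ and $\omega-A$ ample (to compare the two slopes), both of which we have. Openness in $\CM$, i.e. that $\Ext^{<0}(E,E)=0$ and that semistability is an open condition in families, is \cite[Lemma 51]{BS16} and is again formal: a sheaf has no negative self-exts, and semistability is open by standard semicontinuity once boundedness is known.

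The main obstacle — really the only place any thought is required — is to confirm that nowhere in \cite{BS16} is the existence of the contraction morphism $\pi\colon X\to Y$ (as opposed to merely the nef class) actually used. The suspicious steps are those invoking the geometry of fibers of $\pi$, for instance in identifying the $A$-degree-zero sheaves with sheaves set-theoretically supported on the exceptional locus. Here one replaces ``supported on fibers of $\pi$'' by ``supported on the union of curves in class a multiple of $\sfb$'', which by~\eqref{eq: nef} is a well-defined (possibly non-proper, but that is harmless for $\Coh_{\leq 1}$) closed condition; the curve $C$ and the ruled divisor $W$ of Section~\ref{subsec: geometry} play the role of the exceptional data. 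Once this translation is made consistently, every estimate in \cite[\S 3--4]{BS16} is a statement about intersection numbers with $A$ and $\omega$ and transfers verbatim. I would therefore present the proof as: recall the three lemmas of \cite{BS16}, indicate the single substitution $\widetilde H \rightsquigarrow A$, and note that their proofs use only nefness of $A$ together with~\eqref{eq: nef}, hence apply without change.
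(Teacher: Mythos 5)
Your proposal is correct and takes essentially the same route as the paper: the authors likewise observe that the only feature of $\widetilde H$ used in \cite[Theorem 38, Lemma 47]{BS16} is the intersection-theoretic condition~\eqref{eq: nef} on the nef class, so the see-saw/HN arguments and the boundedness estimate transfer verbatim after substituting $A$ for $\widetilde H$. The one cosmetic difference is the last step: you invoke \cite[Lemma 51]{BS16} for openness and finite type, whereas the paper deduces this from boundedness via \cite[Theorem 3.20]{To08} — both are standard and equivalent in substance.
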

\begin{proof}
As we pointed out already, the proofs of Theorem 38 and Lemma 47 carry over verbatim to show that $\mu^A$ is a stability condition and that the family of sheaves in $\CM^{\mu^A}_{(\beta, n)}$ is bounded. The fact that $\CM^{\mu^A}_{(\beta, n)}$ is a finite type open substack of $\CM$ then follows from \cite[Theorem 3.20]{To08}.\qedhere
\end{proof}

\subsection{Nironi stability}\label{subsec: Nironi stability}

Recall the nef class $A\in\rm{Nef}(X)$ and $a_0\in\BZ_{>0}$ such that $\iota^\ast A$ is numerically equivalent to~$a_0\sfb$. Let $g$ be the genus of the curve~$C$. For $E\in\CA_{\leq 1}$ with 
\[\big(\ch_1(E), \ch_2(E), \chi(E)\big) =\big(r\sfw,\beta, n\big)\]
define the slope $\nu\colon \CA_{\leq 1}\ssetminus\{0\}\to \BQ\cup\{+\infty\}$ as $\displaystyle \nu(E)=\frac{d(E)}{\ell(E)}$, where
\begin{align*} 
    d(E)&=r(1-g)+2\,n-\frac{1}{2}\,\sfw\cdot\beta\,,\\
    \ell(E)&=2\,A\cdot\beta + r\, a_0\,.
\end{align*}
Note that for $G\in\Coh(W)$, by Grothendieck--Riemann--Roch
\[A\cdot\ch_2(\iota_\ast G)= a_0\,\rk(Rp_\ast G)\,.\]
In the crepant case, the class $A$ can be taken as the pullback of an ample class from the coarse moduli space~$Y$ and the stability matches the notion of Nironi's slope stability~\cite{Ni08} on $\Coh_{\leq 1}(\CY)$.

Recall that Nironi's slope stability is defined in the analogous way, using a self-dual generating bundle~$V$ and the modified Hilbert polynomial
\[ p_E(k) = \chi\big(V,E\otimes \CO_X(A)^k\big) = \ell(E)\,k+d(E)\,.\]
Our definition resembles this notion replacing $V$ by the $\rho$-invariant $K$-theory class of $\CO_X\oplus\CO_X(W/2)$ and replacing the Euler pairing by the Mukai pairing.
\begin{example}
To illustrate $\nu$ for zero-dimensional perverse sheaves, consider a skyscraper sheaf $k(x)$ and the perverse sheaves $\CO_B(-2)[1]$ and $\CO_B(-1)$ supported on a fiber $B=p^{-1}(y)$. In the crepant case, these objects correspond to a non-stacky point, and the stacky points $\CO^+_y$ and $\CO^-_y$ respectively~\cite[Section 4.3]{BCY12}. In all three cases $\ell(-)=0$ and the computation for $d(-)$ is
\begin{align*}
    d(k(x))&=0+2-0=2\,,\\
    d(\CO_B(-2)[1])&=-d(\CO_B(-2))=-(0-2+1)=1\,,\\
    d((\CO_B(-1))&=0+0+1=1\,.
\end{align*}
\end{example}
\begin{proposition}\label{prop: ell}\hfill
\begin{enumerate}
    \item For all $T\in\CT_{\leq 1}$ set-theoretically supported on $W$ we have $\ell(T)\geq 0$, with equality if and only if $T\in\CT_0$.
    \item For all $F\in\CF_{\leq 1}$ we have $\ell(F)\leq 0$, with equality if and only if $F\in \CF_0$.
    \item For all $E\in\CA_{\leq 1}$ we have $\ell(E)\geq 0$, with equality if and only if $E\in\CA_0$. In that case, $d(E)\geq 0$, with equality if and only if $E=0$.
\end{enumerate}
\end{proposition}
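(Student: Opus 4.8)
The plan is to reduce everything to the explicit objects that generate $\CT_{\leq 1}$, $\CF_{\leq 1}$, $\CA_0$ and then to verify the sign of $\ell$ and $d$ on those generators, using that $\ell$ and $d$ are additive on short exact sequences in $\CA_{\leq 1}$. The key structural inputs are the support/generation results of Section~\ref{subsec: supp}: Lemma~\ref{lem: supp} and Proposition~\ref{prop: supp} reduce any $T\in\CT_{\leq 1}$ (resp.\ $F\in\CF_{\leq 1}$) supported on $W$ to successive extensions of pushforwards $\iota_*G$ with $G\in\Coh(W)$, and further to sheaves supported on a single fiber $B$. For the part of $T$ supported away from $W$ the support is at most $1$-dimensional and $\iota^*T$ is $0$-dimensional, so the contribution is essentially that of a honest curve in $X\setminus W$. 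First I would record the formula $\ell(\iota_*G)=2A\cdot\ch_2(\iota_*G)+r\,a_0$ together with the identity $A\cdot\ch_2(\iota_*G)=a_0\rk(Rp_*G)$ from the excerpt (Grothendieck--Riemann--Roch), so that for $E=\iota_*G$ with $\ch_1=r\sfw$ one has $\ell(E)=a_0\big(2\rk(Rp_*G)+r\big)$. Since $\rk(Rp_*G)\geq 0$ by Lemma~\ref{lem: Rp}, the sign of $\ell$ is governed by $2\rk(Rp_*G)+r$, which is $\geq 0$ for $T\in\CT$ (where $r\geq 0$: a nonzero $T\in\CT$ with $\ch_1$ a nonnegative multiple of $\sfw$) and $\leq 0$ for $F\in\CF$ (where $F[1]$ has $\ch_1=-r\sfw$ with $r\geq 0$, so $\ell(F)=-a_0(2\rk(Rp_*G)+r)\le 0$); I would need to be slightly careful about the direction of the inequality $r\ge 0$ and tie it to membership in $\CT$ versus $\CF[1]$.

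Next I would handle the equality cases. For (i): $\ell(T)=0$ forces $\rk(Rp_*G)=0$ and $r=0$ on every graded piece, hence $\dim p(\supp T\cap W)=0$, i.e.\ the support maps to finitely many fibers; combined with the $1$-dimensional-away-from-$W$ piece having $0$-dimensional restriction, this is exactly the condition $^p\dim(T)=0$, so $T\in\CT_0$. Conversely $T\in\CT_0$ clearly has $\ell(T)=0$ since each generator $\CO_{B_y}(k)$ ($k\ge-1$) and each $k(x)$ has $\rk(Rp_*)=0$ and $r=0$. For (ii): the same computation with signs reversed gives $\ell(F)\le 0$ with equality iff $F\in\CF_0$, again via Proposition~\ref{prop: supp}(iii),(iv) and Lemma~\ref{lem: A0}(i) which identifies $\CF_0[1]$ as generated by $\CO_{B_y}(k)[1]$, $k\le -2$, all of which have $\rk(Rp_*)=0$, $r=0$. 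For (iii): decompose $E\in\CA_{\leq 1}$ by the torsion pair $(\CA_0,\CA_1)$ (or directly by $(\CT,\CF)$), write $\ell(E)=\ell(T)+\ell(F)$ where $T\in\CT_{\leq 1}$, $F[1]\in\CF_{\leq 1}[1]$; a priori one could worry that a large negative $\ell(F)$ cancels a positive $\ell(T)$, so the substantive point is that this cannot happen — and here I would use that $\CA_{\le1}$ is generated (Lemma~\ref{lem: generatorsT} and Lemma~\ref{lem: A0}) by objects each of which individually has $\ell\ge 0$, namely sheaves in $\Coh_{\le1}\cap\CT$, the spherical objects $\Phi(\Coh_1(C))$ (with $r$ the rank of the $C$-sheaf, so $\ell>0$), and the $\CA_0$-generators. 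So $\ell$ is a nonnegative combination and $\ell(E)=0$ forces every piece to lie in $\CA_0$, i.e.\ $E\in\CA_0$.

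Finally, for the last clause of (iii): when $E\in\CA_0$ I would show $d(E)\ge 0$ with equality iff $E=0$ by evaluating $d$ on the generating set of $\CA_0$ from Lemma~\ref{lem: A0}. On $\Coh_0(X)$ one has $d(k(x))=2>0$; on $\CO_{B_y}(k)$ with $k\ge -1$ one computes $d=2\chi(\CO_{B_y}(k))=2(k+1)\ge 0$, and $=0$ only for $k=-1$ — wait, that would give a nonzero object with $d=0$, so I must instead use the \emph{perverse} counting: the correct statement is that $d$ is additive and on the $\CF_0[1]$ generators $\CO_{B_y}(k)[1]$ ($k\le-2$) one has $d(\CO_{B_y}(k)[1])=-d(\CO_{B_y}(k))=-(k+1)\ge 1>0$, so I should recombine: any nonzero $E\in\CA_0$ is a nontrivial extension of generators and $d$ is strictly positive on each generator except I need to double-check $\CO_{B_y}(-1)$ (which has $d=0$ by the Example) — so the honest claim must be that $\CO_{B_y}(-1)$ does \emph{not} lie in $\CA_0$ but the \emph{class} argument still works because in $N_0$ the class of $\CO_{B_y}(-1)$ equals $(\text{fiber class },0)$ and $\ell$ of the fiber is $2A\cdot\sfb=2a_0>0$, contradicting $\ell=0$; hence the $d=0$ generators never arise inside a genuine $\CA_0$-object of class with $\ell=0$. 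I will phrase (iii) directly in terms of the class: if $\ell(E)=0$ then $[E]=m\sfb+n'\pt$ with $m$ forced to be $0$ (else $\ell>0$), so $[E]=n'\pt$ with $n'\ge 0$ and $d(E)=2n'\ge 0$, vanishing iff $E=0$ by purity.

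\textbf{Main obstacle.} The delicate point is part (iii): ruling out cancellation between the $\CT$- and $\CF[1]$-contributions to $\ell$, which is exactly what Lemma~\ref{lem: generatorsT} and Lemma~\ref{lem: A0} are designed to supply — they present $\CA_{\le1}$ by generators that are individually $\ell$-nonnegative — and secondarily the bookkeeping to make sure the $d\ge0$, equality-iff-zero statement is phrased at the level of numerical classes (where $\ell=0$ pins the class to a nonnegative multiple of $\pt$) rather than object-wise, since individual generators like $\CO_{B}(-1)$ have $d=0$ but do not sit in the $\ell=0$ locus.
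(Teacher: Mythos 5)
There are three genuine problems in the proposal, of varying severity.

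\textbf{Part (ii): the bound on $r$ is missing.} You assert without justification that for $\iota_\ast G\in\CF$ one has $2\rk(Rp_\ast G)+r\leq 0$, i.e.\ $r\leq 2\rk(R^1 p_\ast G)$ (recall $p_\ast G=0$ by Lemma~\ref{lem: Rp}(i), so $\rk(Rp_\ast G)=-\rk(R^1p_\ast G)$). This is \emph{not} a formal consequence of $r\geq 0$; it is the substantive content of part (ii). The paper proves the stronger inequality $r\leq\rk(R^1p_\ast G)$ by lifting $Rp_\ast G\to (R^1p_\ast G)[-1]$ to a map $G\to\omega_p\otimes p^\ast V$ (where $V=R^1p_\ast G$), showing the kernel lies in $\CF\cap\CT=0$, and concluding $G\hookrightarrow\omega_p\otimes p^\ast V$ so that a comparison of ranks on the generic fiber gives $r\leq\rk V$. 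Your parenthetical remark about "tying $r\geq 0$ to membership in $\CT$ versus $\CF[1]$" does not supply this: the sign-error in your displayed formula for $\ell(F)$ is a symptom of the missing argument, not a bookkeeping slip.

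\textbf{Part (iii), first clause: circularity.} You propose to prove $\ell(E)\geq 0$ on $\CA_{\leq 1}$ by writing $\CA_{\leq 1}$ in terms of the generating set of Lemma~\ref{lem: generatorsT}. But the paper's proof of Lemma~\ref{lem: generatorsT} \emph{uses} Proposition~\ref{prop: ell}(i) and (iii) (it invokes $\ell(\iota_\ast G')>0$ and $\ell=0\Leftrightarrow\CA_0$ in its induction), and the paper even places a footnote in that lemma to flag that Proposition~\ref{prop: ell} must be established independently. Your route would make the two results circular. The paper's actual argument for (iii) avoids this: decompose $E$ by the torsion pair $(\CF[1],\CT)$, then apply Lemma~\ref{lem: supp} to the $\CT$-piece to split off the part $E'$ with $\iota^\ast E'\in\Coh_0(W)$; parts (i)--(ii) and Proposition~\ref{prop: supp} handle everything supported on $W$, and for the remaining $E'$ one uses condition~\eqref{eq: nef} directly: $\ell(E')=2A\cdot\ch_2(E')\geq 0$ with equality iff $\ch_2(E')\in\BZ_{\geq 0}\sfb$, which combined with $\sfw\cdot\ch_2(E')\geq 0$ and $\sfw\cdot\sfb=-2$ forces $\ch_2(E')=0$.

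\textbf{Part (iii), last clause: arithmetic error.} You claim $d\big(\CO_{B_y}(-1)\big)=0$ and then spend the rest of the argument trying to work around it. This is wrong: $d(E)=r(1-g)+2\chi(E)-\tfrac12\sfw\cdot\ch_2(E)$, and for $\CO_B(-1)$ we have $r=0$, $\chi=0$, $\ch_2=\sfb$, so $d=0+0-\tfrac12(-2)=1>0$ (this is exactly the Example given just before the proposition). In general $d\big(\CO_{B_y}(k)\big)=2k+3>0$ for $k\geq -1$ and $d\big(\CO_{B_y}(k)[1]\big)=-(2k+3)>0$ for $k\leq -2$, so $d$ is strictly positive on every nonzero generator of $\CA_0$ from Lemma~\ref{lem: A0}, and additivity finishes the claim immediately. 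There is no need for the ``class argument'' you improvised, which is incorrect in any case: $\CO_{B_y}(-1)$ \emph{does} lie in $\CA_0$ (it is explicitly listed as a generator in Lemma~\ref{lem: A0}(ii)); it is just not a counterexample because its $d$-value is $1$, not $0$.

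Parts (i) and the reductions via Lemma~\ref{lem: supp}/Proposition~\ref{prop: supp} in your plan are in line with the paper's approach.
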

\begin{proof}
For (i) and (ii) we may apply Proposition~\ref{prop: supp} and assume that $T$ and $F$ are scheme-theoretically supported on~$W$, i.e.\ we consider pushforwards~$\iota_\ast G$ with $G\in\Coh(W)$.

(i) Let $\iota_\ast G\in \CT_{\leq 1}$. Since $R^1p_\ast G=0$ we have $\rk(Rp_\ast G)\geq 0$, thus both summands of $\ell(\iota_\ast G)$ are non-negative, and $\ell(\iota_\ast G)=0$ if and only if $r=0$ and $A\cdot\ch_2(\iota_\ast G)=0$, thus $\iota_\ast G\in\CT_0$.\medskip

(ii) Let $\iota_\ast G\in \CF$. We claim that $r\leq \rk(R^1p_\ast G)$. Let $V=R^1p_\ast G$ and consider the map $Rp_\ast G \to V[-1]$ which lifts to $G\to \omega_p\otimes p^\ast V$. Let $\Kernel$ and $\Image$ be the kernel and image, i.e.\ 
\[ \Kernel \to G \to \Image\,.\]
Since $\CF$ is closed under subobjects, $\Kernel\in\CF$. Since $\Image\subset \omega_p\otimes p^\ast V$ we have $p_\ast\Image =0$. The isomorphism
\[ R^1p_\ast G \cong V\]
factors through $R^1p_\ast\Image$. We find that $R^1p_\ast\Kernel=0$, thus 
\[\iota_\ast \Kernel\in\CF\cap\CT=0\]
and $G\hookrightarrow \omega_p\otimes p^\ast V$ which implies $r\leq \rk(R^1p_\ast G)$ by comparing ranks. Since $p_\ast G=0$ by Lemma~\ref{prop: ell}, this implies
\[ \ell(\iota_\ast G) = -2a_0\,\rk(R^1p_\ast G) + r\,a_0 \leq - a_0\,\rk(R^1p_\ast F)\leq 0\,.\]
From this we get $\ell(\iota_\ast G)=0$ if and only if $\rk(R^1p_\ast G)=0$ and then $r=0$, thus $\iota_\ast G\in \CF_0$.

(iii) By Lemma~\ref{lem: supp}, Proposition~\ref{prop: supp}, and (i)-(ii) it remains to consider $E\in \Coh_{\leq 1}(X)$ such that $\iota^\ast E\in \Coh_0(W)$. Then, by condition~\eqref{eq: nef}
\[\ell(E)=2A\cdot\ch_2(E)\geq 0\,,\]
with equality if and only if~$\ch_2(E)\in \BZ_{\geq 0}\cdot\sfb$. Since $\sfw\cdot\ch_2(E)\geq 0$, whereas $\sfw\cdot\sfb=-2$, we must in fact have $\ch_2(E)=0$, i.e.\ $E$ is a $0$-dimensional sheaf.

For the positivity of $d(-)$ on $\CA_0$ we may use Lemma \ref{lem: A0}. If $E\in \Coh_0(X)$ then $d(E)=2\chi(E)\geq 0$. Moreover, we can compute directly 
\begin{align*}d(\CO_B(k))&=2k+3> 0\textup{ for }k\geq -1\\
d(\CO_B(k)[1])&=-(2k+3)> 0\textup{ for }k\leq -2.\qedhere\\
\end{align*}
\end{proof}

\begin{proposition}\label{prop: nustability}
The slope $\nu$ defines a stability condition on $\CA_{\leq 1}$:
\begin{enumerate}
    \item $\nu$ satisfies the see-saw property,
    \item Harder--Narasimhan filtrations exist.
\end{enumerate}
\end{proposition}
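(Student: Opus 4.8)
The plan is to verify the two defining properties of a stability condition for $\nu$ on $\CA_{\leq 1}$, using the decomposition into perverse dimension $0$ and $1$ provided by the torsion pair $(\CA_0,\CA_1)$ together with the positivity established in Proposition~\ref{prop: ell}.

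\emph{See-saw property.} Both $d(-)$ and $\ell(-)$ are additive on short exact sequences in $\CA_{\leq 1}$, since they are linear functions of the numerical invariants $(\ch_1,\ch_2,\chi)$. Thus for $0\to E'\to E\to E''\to 0$ in $\CA_{\leq 1}$ we have $d(E)=d(E')+d(E'')$ and $\ell(E)=\ell(E')+\ell(E'')$. By Proposition~\ref{prop: ell}~(iii), $\ell\geq 0$ on $\CA_{\leq 1}$, so there is no risk of cancellation among the $\ell$-values of subquotients; the usual elementary ``mediant inequality'' argument then gives $\nu(E')\leq\nu(E)\leq\nu(E'')$ or the reverse, with the appropriate degeneracies, where the convention $\nu(-)=+\infty$ is used precisely for the objects with $\ell=0$, i.e.\ those in $\CA_0$ (and there $d>0$ unless the object is zero, again by Proposition~\ref{prop: ell}~(iii)). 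One small point to record: objects of $\CA_0$ are $\nu$-semistable of slope $+\infty$ and, among them, the $\nu$-stable ones are exactly the simple objects of $\CA_0$; this is consistent because $\CA_0$ is closed under subobjects and quotients inside $\CA_{\leq 1}$ by definition of the torsion pair.

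\emph{Existence of Harder--Narasimhan filtrations.} First one reduces to objects of pure perverse dimension $1$: any $E\in\CA_{\leq 1}$ sits in $0\to E_0\to E\to E_1\to 0$ with $E_0\in\CA_0$ and $E_1\in\CA_1$, and $E_0$ (having slope $+\infty$) is the maximal destabilizing step, so it suffices to build HN filtrations for $E\in\CA_1$, where all slopes are finite rationals. For such $E$ one runs the standard argument: it is enough to show (a) that the set of slopes $\{\nu(E')\colon 0\neq E'\subset E\}$ of subobjects is bounded above and attained, and (b) that there are no infinite chains of subobjects with strictly increasing slope. For (a), note $\ell(E')\leq \ell(E)$ is bounded (all $\ell$-values are non-negative integers bounded by $\ell(E)$), so we only need that for each value of $\ell$ the possible $d(E')$ are bounded above; this is the boundedness/Noetherianity input. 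For (b) one uses the same finiteness. The cleanest way to package this is via the boundedness results of Section~\ref{subsec: boundedness} (which is why the proof is deferred), or equivalently by exhibiting $\nu$-stability as obtained from a weak stability condition on a larger heart by tilting, as indicated in Section~\ref{subsec: weakstability}; either route gives that $\CA_{\leq 1}$ is Noetherian with the required finiteness of subobject invariants, and then the formal HN machinery (e.g.\ \cite{Jo07}, or the standard argument as in Bridgeland's work) applies verbatim.

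\emph{Main obstacle.} The see-saw property is essentially formal given Proposition~\ref{prop: ell}. The genuine content is in the existence of HN filtrations, and there the crux is the Noetherian/boundedness statement for $\CA_{\leq 1}$: one must rule out infinite ascending chains of subobjects and bound the invariants of subobjects of a fixed $E$. I expect this to rely on Proposition~\ref{prop: ell} (to control $\ell$) together with the structural description of objects of $\CA_{\leq 1}$ supported on $W$ from Proposition~\ref{prop: supp} and Lemma~\ref{lem: morphism}, reducing everything to boundedness statements for sheaves on the ruled surface $W$ and on $X\ssetminus W$, which are classical. Accordingly I would prove (i) immediately and cite forward to Sections~\ref{subsec: curveclasses} and \ref{subsec: boundedness} for the finiteness inputs needed in (ii).
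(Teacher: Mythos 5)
Your argument for (i) is fine and matches the paper: additivity of $d$ and $\ell$, positivity of $\ell$ from Proposition~\ref{prop: ell}~(iii), and the slope-$+\infty$ convention on $\CA_0$.

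For (ii), however, there is a genuine gap, and your proposed way to fill it is circular. You defer the required finiteness to the boundedness results of Section~\ref{subsec: boundedness}, but those results (e.g.\ Proposition~\ref{prop: finite} and Proposition~\ref{prop: boundedfamilies}) are stated for objects of the moduli stacks $\CM_\gamma^\nu(I)$, whose very definition presupposes that $\nu$-HN filtrations exist. Likewise, your alternative route through the weak stability discussion of Section~\ref{subsec: weakstability} does not help: the paper explicitly derives the existence of HN filtrations for $Z^\dagger$-stability there \emph{from} Proposition~\ref{prop: nustability}, not the other way around, and moreover flags that section as a comparison rather than a proof. So neither forward reference is available at this point.

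The argument you are missing is short and uses only Proposition~\ref{prop: ell}: one proves that $\CA_{\leq 1}$ is $\nu$-Artinian. Given a descending chain $E_1\supseteq E_2\supseteq\cdots$ in $\CA_{\leq 1}$, additivity and $\ell\geq 0$ make $\ell(E_i)$ a weakly decreasing sequence of non-negative integers, so it stabilizes; for large $i$ the quotient $E_i/E_{i+1}$ then has $\ell=0$, hence lies in $\CA_0$, hence has $d\geq 0$, and therefore $\nu(E_{i+1})\leq\nu(E_i)$. Combined with the assumption $\nu(E_{i+1})\geq\nu(E_i)$ this forces $d(E_i/E_{i+1})=0$ as well, and $d=\ell=0$ on $\CA_0$ implies the quotient vanishes by Proposition~\ref{prop: ell}~(iii), so the chain stabilizes. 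This is exactly the discreteness-and-positivity trick the paper uses; it requires no moduli-theoretic boundedness. Your general plan (reduce to $\CA_1$, bound $\ell(E')$, bound $d(E')$ for each fixed $\ell$) is salvageable in spirit, but you need a self-contained argument for the $d$-bound, and the above gives it directly in the cleanest form.
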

\begin{proof}
The proof of the see-saw property is standard, so it's enough to prove that $\CA_{\leq 1}$ is $\nu$-Artinian.

Suppose that $E_1\supseteq E_2\supseteq \ldots$ in $\CA_{\leq 1}$. Then $\ell(E_{i})$ is a decreasing sequence bounded below by $0$, so it must stabilize. Thus, for large enough $i$ the cone $C(E_{i+1}\to E_i)\in \CA$ must be in $\CA_0$ so $\nu(E_{i+1})\leq \nu(E_i)$.
\end{proof}

\begin{proposition}\label{prop: nusymmetry}
The slope~$\nu$ satisfies \[\nu\big(\rho(E)\big)=-\nu(E)\,,\quad \nu\big(E\otimes \CO_X(A)\big)=\nu(E)+1\,.\]
\end{proposition}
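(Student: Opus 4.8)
The plan is to verify both identities by direct computation on the invariants $(r, \beta, n) = (\ch_1/\sfw, \ch_2, \chi)$ of $E$, using the formulas
\[
d(E) = r(1-g) + 2n - \tfrac12\,\sfw\cdot\beta\,, \qquad \ell(E) = 2\,A\cdot\beta + r\,a_0\,.
\]
For the first identity, recall from Proposition~\ref{prop: actionCoh} that $\rho$ sends $(r\sfw, \beta, n)$ to $(r\sfw, \beta + (\sfw\cdot\beta - 2r)\,\sfb, -n)$. So I would substitute $\beta' = \beta + (\sfw\cdot\beta - 2r)\,\sfb$ and $n' = -n$ into $d$ and $\ell$, using that $A\cdot\sfb = a_0$ and $\sfw\cdot\sfb = -2$. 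For $\ell$: we get $\ell(\rho E) = 2A\cdot\beta + 2(\sfw\cdot\beta - 2r)a_0 + r a_0$, which is \emph{not} equal to $\ell(E)$ or $-\ell(E)$ in general. This signals that one should really be checking the \emph{slope} $\nu = d/\ell$ rather than numerator and denominator separately, or — more likely, given that $\rho$ shifts by $[-1]$ on $\CA_0$ and preserves $\CA_1$ — that the correct statement involves a sign flip on both $d$ and $\ell$ simultaneously so the ratio negates. So the key computation will be: $d(\rho E) = -d(E)$ and $\ell(\rho E) = -\ell(E)$ as an equality of the \emph{linear functionals} on the image class, i.e.\ one must account for the $[-1]$-shift which flips the sign of the numerical class. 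Indeed $\rho$ changes sign of the class (it is an anti-equivalence combined with $[2]$ and a twist), so $[\rho(E)]$ should be compared with $-[E]$ appropriately; I would track the sign conventions carefully via the exact triangle~\eqref{eq: triangle} to pin down that $d$ and $\ell$ each acquire a single sign when passing through $\rho$.

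Concretely, for the first identity I would proceed as follows. First, use Proposition~\ref{prop: actionCoh} to record the action of $\rho$ on $(\ch_1, \ch_2, \chi)$. Second, compute $\ell(\rho E)$ by plugging in, simplifying with $A\cdot\sfb = a_0$: one finds $\ell(\rho E) = 2A\cdot\beta' + ra_0$ where $A\cdot\beta' = A\cdot\beta + (\sfw\cdot\beta - 2r)a_0$. Third, compute $d(\rho E) = r(1-g) - 2n - \tfrac12\sfw\cdot\beta'$ where $\sfw\cdot\beta' = \sfw\cdot\beta + (\sfw\cdot\beta - 2r)(\sfw\cdot\sfb) = \sfw\cdot\beta - 2(\sfw\cdot\beta - 2r) = -\sfw\cdot\beta + 4r$. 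Fourth, substitute and check that $d(\rho E)/\ell(\rho E) = -d(E)/\ell(E)$; this should reduce to a short algebraic identity. The main subtlety — and the step I expect to cause the most friction — is getting all the signs right: whether the relevant numerical class for $\rho(E)$ is the one from Proposition~\ref{prop: actionCoh} directly, or its negative, depending on whether $\rho(E)$ lands in $\CA_k$ or $\CA_k[-1]$ (Theorem~\ref{thm: rho}). I would resolve this by testing against the explicit $0$-dimensional examples computed earlier (e.g.\ $\rho(\CO_B(-2)[1]) = \CO_B(-2)$ and $\rho(\CO_B(-1)) = \CO_B(-1)[-1]$ from Lemma~\ref{lem: 0-dim generators}), where $\ell = 0$ and one sees $d(\CO_B(-2)[1]) = 1$ vs $d(\CO_B(-2)) = -1$, confirming $d(\rho E) = -d(E)$ when $\ell = 0$.

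For the second identity, the plan is entirely formal. Tensoring by $\CO_X(A)$ leaves $\ch_0$ and $\ch_1$ unchanged (both zero, resp.\ $r\sfw$) but changes $\ch_2$ and $\chi$ by the standard Grothendieck--Riemann--Roch / multiplicativity-of-Chern-character rule: $\ch_2(E\otimes\CO_X(A)) = \ch_2(E) + \ch_1(E)\cdot A + \tfrac12\ch_0(E)A^2 = \beta + r(\sfw\cdot A)$ and similarly $\chi(E\otimes\CO_X(A))$ picks up correction terms. Since $\iota^\ast A \equiv a_0\sfb$ numerically and $E$ is supported in perverse dimension $\leq 1$, we have $\sfw\cdot A = \iota^\ast A$ viewed in $H_2$, so $A\cdot\ch_2(E\otimes\CO_X(A)) = A\cdot\beta + r a_0(A\cdot\sfb)/\ldots$ — here I would be careful to use $A\cdot\sfb = a_0$. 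The net effect on $\ell$ should be $\ell(E\otimes\CO_X(A)) = \ell(E) + (\text{something})$, and on $d$ likewise, arranged so that the ratio increments by exactly $1$. I expect this to amount to checking $d(E\otimes\CO_X(A)) = d(E) + \ell(E)$, which is the clean statement equivalent to $\nu(E\otimes\CO_X(A)) = \nu(E) + 1$; this follows from the observation that $\nu$ is the slope associated to the Hilbert-polynomial-type function $p_E(k) = \ell(E)k + d(E)$ built from $\CO_X(A)$, so twisting by $\CO_X(A)$ is literally a shift $k \mapsto k+1$. This second part is routine; the first part's sign bookkeeping is the only real obstacle, and it is minor.
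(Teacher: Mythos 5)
Your plan — substitute the action of $\rho$ on $(\ch_1,\ch_2,\chi)$ from Proposition~\ref{prop: actionCoh} into the definitions of $d$ and $\ell$, and do the same after twisting by $\CO_X(A)$ — is exactly the paper's approach. But you misstate a basic numerical fact, and it derails your first computation. You write $A\cdot\sfb = a_0$ (twice); in fact $A\cdot\sfb = 0$. Condition~\eqref{eq: nef} says precisely that $\sfb$ spans the kernel of $A\cdot(-)$ on effective curve classes, and the paper's own proof opens with ``the equality $\ell(\rho(E))=\ell(E)$ is clear since $A\cdot\sfb=0$.'' The constant $a_0$ is defined by $\iota^\ast A \equiv a_0\sfb$ as a \emph{divisor} class on the ruled surface $W$; intersecting that divisor class with a fiber curve $B$ gives $a_0\,(\text{fiber})\cdot B = 0$, since fibers of a $\BP^1$-bundle have self-intersection zero. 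So $A\cdot\sfb$ is zero, not $a_0$.

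This error cascades. You conclude that $\ell(\rho E)$ gains an extra term $2(\sfw\cdot\beta - 2r)a_0$ and ``is not equal to $\ell(E)$ or $-\ell(E)$ in general'', and you then speculate that both $d$ and $\ell$ must flip sign simultaneously. That conjecture is not even internally consistent: if $d \mapsto -d$ and $\ell \mapsto -\ell$ then $\nu = d/\ell$ is \emph{preserved}, not negated, contradicting the very statement you are proving. The real situation is simple: $\ell$ depends only on $r$ and $A\cdot\beta$; $\rho$ fixes $r$ and changes $\beta$ only by a multiple of $\sfb$, which $A$ kills, so $\ell(\rho E) = \ell(E)$ immediately, with no sign-chasing through the triangle~\eqref{eq: triangle} and no separate accounting for the shift $[-1]$ on $\CA_0$ (that shift's effect on the numerical class is already built into the formula of Proposition~\ref{prop: actionCoh}). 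All that remains is the one-line check $d(\rho E) = -d(E)$ via $\sfw\cdot\sfb = -2$, which your third and fourth steps sketch correctly. Your treatment of the second identity lands on the right normal form $d \mapsto d + \ell$, $\ell \mapsto \ell$, but the intermediate line ``$A\cdot\ch_2(E\otimes\CO_X(A)) = A\cdot\beta + ra_0(A\cdot\sfb)$ with $A\cdot\sfb = a_0$'' would incorrectly change $\ell$; the paper simply notes $A^2\cdot\sfw = 0$ (equivalent to $A\cdot\sfb=0$) to see that $\ell$ is invariant.
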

\begin{proof}
The equality $\ell\big(\rho(E)\big)=\ell(E)$ is clear since $A\cdot \sfb=0$. Using Proposition~\ref{prop: actionCoh} we have
\begin{align*}d\big(\rho(E)\big)&=r(1-g)-2n-\frac{1}{2}\,\sfw\cdot \big(\beta+(\sfw\cdot\beta -2r)\, \sfb\big) \\
&=-r(1-g)-2n+\frac{1}{2}\,\sfw\cdot \beta =-d(E)\,.
\end{align*}

For the second equality, a computation using $A^2\cdot \sfw=0$ shows that
\[\ell\big(E\otimes \CO_X(A)\big)=\ell(E)\,,\quad d\big(E\otimes \CO_X(A)\big)=d(E)+\ell(E)\,.\qedhere\]
\end{proof}
\begin{definition}
An object $E\in\CA_{\leq 1}$ is called $\nu$-stable (resp.\ semistable), if for all non-trivial subobjects $F\to E$ in $\CA_{\leq 1}$ we have $\nu(F)<\nu(E)$ (resp.\ $\nu(F)\leq\nu(E)$).
\end{definition}
The following lemma will be useful in Section~\ref{subsec: moduli}.
\begin{lemma}\label{lem: stable}
Let $L\in\Pic(C)$, then
\begin{enumerate}
    \item $\iota_\ast\big(\CO_p(-1)\otimes p^\ast L\big)$ is $\nu$-stable of slope $\chi(L)+\frac{1}{2}\deg(\CE)+1-g$,
    \item $\iota_\ast\big(\omega_p\otimes p^\ast L[1]\big)$ is $\nu$-stable of slope $\chi(L)$.
\end{enumerate}
\end{lemma}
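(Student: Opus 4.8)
The slope $\nu$ is defined by $\nu = d/\ell$, and both objects in question have $\ell$-value and $d$-value that are straightforward to compute, so the bulk of the work is the stability claim. First I would compute the invariants. For $E = \iota_\ast(\CO_p(-1)\otimes p^\ast L)$ we have $R^1p_\ast(\CO_p(-1)\otimes p^\ast L) = 0$, so $E\in\CT_{\leq 1}$ with $r = 0$ (the first Chern class $\ch_1(E)$ is a multiple of $\sfw$, and here it vanishes since $E$ is pushed forward from a curve class) — actually more carefully, $\ch_1(\iota_\ast G) = 0$ for $G$ supported in dimension $\le 1$ on $W$, so $r=0$. Then $\ell(E) = 2A\cdot\ch_2(E) = 2a_0\,\rk(Rp_\ast G) = 2a_0$, using the Grothendieck--Riemann--Roch identity recalled before the Example and the fact that $Rp_\ast(\CO_p(-1)\otimes p^\ast L) = p_\ast(\CO_p(-1))\otimes L = 0$-rank... wait: $Rp_\ast \CO_p(-1) = 0$, so this would give $\ell = 0$. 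Let me instead use GRR / Riemann--Roch on $X$ directly: $d(E) = 2\chi(E) - \tfrac12\sfw\cdot\ch_2(E)$ and $\chi(E) = \chi(W, \CO_p(-1)\otimes p^\ast L)$, which by the Leray spectral sequence and $Rp_\ast\CO_p(-1)=0$... this needs care. The honest approach: compute $\ch(E)$ via GRR for $\iota_\ast$ and $p^\ast$, extract $\ch_2(E)$ (a multiple of $\sfb$) and $\chi(E)$, then plug into the formulas for $d$ and $\ell$. I expect $\ell(E) = 2\deg(\CO_p(-1)\otimes p^\ast L$ along fibers$)$... the fiber degree of $\CO_p(-1)$ is $-1$, but $\ell$ counts $A\cdot\ch_2$ which is $a_0$ times the coefficient of $\sfb$ in $\ch_2$; since the sheaf has $1$-dimensional support meeting each fiber in one point (it's a line bundle on $W$ restricted... no, it's the whole $W$). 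I will recompute: $\ch_2(\iota_\ast G)$ for $G$ a line bundle on $W$ equals $[W]\cdot(\text{something}) $ — but $\iota_\ast$ of a rank-$1$ sheaf on the divisor $W$ has $\ch_1 = [W] = \sfw$, so in fact $r = 1$, not $0$! That changes everything: $\ell(E) = 2A\cdot\beta + a_0 = 2A\cdot\ch_2(E) + a_0$. Using $A\cdot\ch_2(\iota_\ast G) = a_0\rk(Rp_\ast G) = a_0\cdot 0 = 0$ (since $Rp_\ast(\CO_p(-1)\otimes p^\ast L) = 0$), we get $\ell(E) = a_0$. Hmm but the claimed slope is a bare number with no $a_0$, so $d(E)$ must also be proportional... no, $\nu = d/\ell$ and if $\ell = a_0$ then we'd need $d \propto a_0$, which is false. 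So let me reconsider: perhaps $\ell(E) = 2\cdot 0 + 1\cdot a_0 = a_0$ and the slope formula should read $\nu = (\text{stuff})/a_0$; but the Lemma states $\chi(L) + \tfrac12\deg\CE + 1 - g$ with no $a_0$. The resolution must be that I should just trust the bookkeeping: compute $d(E)$ and $\ell(E)$ honestly via $\ch(\iota_\ast(\CO_p(-1)\otimes p^\ast L))$ and the definitions, simplify, and the $a_0$ will cancel because $d(E)$ is also divisible by... no. Let me not get stuck here — the first step is simply the GRR computation, and I'll present it as such.

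**The stability argument.** This is the real content. For part (i): suppose $F\hookrightarrow \iota_\ast(\CO_p(-1)\otimes p^\ast L)$ in $\CA_{\leq 1}$ with $\nu(F)\ge\nu(E)$. I want to show $F$ is not a proper subobject. The key structural fact is Lemma~\ref{lem: morphism} together with Proposition~\ref{prop: supp}: any subobject of $E$ in $\CA_{\leq 1}$ is supported on $W$, and by the torsion pair decomposition splits a $\CT$-part and $\CF[1]$-part. Since $E = \iota_\ast(\CO_p(-1)\otimes p^\ast L) \in \CT_{\leq 1}$, and $\CT$ is closed under subobjects in $\Coh(X)$ but not in $\CA$; still, $\Hom(\CF[1], \CT) = 0$ forces any subobject in $\CA$ of an object of $\CT$ to lie in $\CT$ — so $F\in\CT_{\leq 1}$. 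Now $F = \iota_\ast G'$ with $G'\subset \CO_p(-1)\otimes p^\ast L$ a subsheaf on $W$ with $R^1p_\ast G' = 0$. The quotient $Q = E/F$ also lies in $\CT_{\leq 1}$, and $\nu(F)\ge\nu(E)$ iff $\nu(F)\ge\nu(Q)$ by see-saw. I then analyze $G'\subset \CO_p(-1)\otimes p^\ast L$: either $G'$ has rank $1$ (generically), in which case the quotient $Q$ is supported on $< 1$-dimensional locus, i.e. $Q\in\CA_0$ by Proposition~\ref{prop: ell}(iii), so $\ell(Q) = 0$ and $\nu(Q) = +\infty$ unless $Q = 0$... but then $\nu(F)\ge\nu(Q)=+\infty$ is impossible, forcing $Q=0$; or $G'$ has rank $0$, i.e. is a torsion subsheaf, but $\CO_p(-1)\otimes p^\ast L$ is a line bundle on the integral surface $W$ hence torsion-free, so $G' = 0$. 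This should close part (i). For part (ii), $E = \iota_\ast(\omega_p\otimes p^\ast L)[1]\in\CF_{\leq 1}[1]$ — more precisely one checks $\omega_p\otimes p^\ast L\in\CF$ since $R^1p_\ast(\omega_p\otimes p^\ast L) = L$ has full rank $1$ and there's no nonzero map from a sheaf $T$ with $R^1p_\ast\iota^\ast T = 0$ (roughly: $\Hom(T, \iota_\ast\omega_p\otimes p^\ast L)$ would give, by adjunction and relative Serre duality, a constraint incompatible with $T\in\CT$). Subobjects of $E$ in $\CA_{\leq 1}$: by the dual of the above reasoning (using $\Hom(\CT, \CF[1])$ can be nonzero, so I must be more careful), a subobject $F\to E[1]$... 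I would pass to the triangle and use that $\CF[1]\cap\CA_{\leq 1}$, combined with the rank bound $r\le\rk(R^1p_\ast G)$ from the proof of Proposition~\ref{prop: ell}(ii), severely restricts subobjects. Concretely, a subobject of $\iota_\ast(\omega_p\otimes p^\ast L)[1]$ corresponds to a quotient sheaf of $\omega_p\otimes p^\ast L$ (shifted), and I classify those: quotients with $1$-dimensional support are of the form $\iota_\ast(\omega_p\otimes p^\ast L)[1] \twoheadleftarrow \iota_\ast(\omega_p\otimes p^\ast L')[1]$ for $L'$ a sub-line-bundle... and one computes slopes to see $\nu$ strictly increases, contradicting stability; $0$-dimensional quotients land in $\CA_0$ and are ruled out as in (i).

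**Main obstacle.** The hard part is the subobject analysis in $\CA_{\leq 1}$ — in the tilted heart, subobjects and quotients do not behave as in $\Coh$, so I cannot simply say "a subsheaf of a line bundle on a surface." I need to use the long exact sequence of perverse cohomology and the structural lemmas (Proposition~\ref{prop: supp}, Lemma~\ref{lem: morphism}, Proposition~\ref{prop: ell}) to pin down exactly which objects of $\CA_{\leq 1}$ can map into $E$, and to show the slope inequality is violated. Part (ii) is genuinely harder than (i) because $E$ now sits in the $\CF[1]$ side of the tilt, where $\Hom$-vanishing goes the other way; I expect to invoke the rank bound $r\le\rk(R^1p_\ast G)$ from the proof of Proposition~\ref{prop: ell}(ii) to control potential destabilizing subobjects, and relative Serre duality on the ruled surface $W$ to identify $\omega_p\otimes p^\ast L$ as the "most negative" object in its class. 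If the direct approach gets unwieldy, an alternative is to note that $\iota_\ast(\omega_p\otimes p^\ast L)[1] = \Phi_0(L)[1]$-type object (image under a twist of $\Phi$) and $\iota_\ast(\CO_p(-1)\otimes p^\ast L) = \Phi(L)$, so stability of one might follow from stability of the other via $\rho$ and Proposition~\ref{prop: nusymmetry}; but I'd still need a base case, so I expect to prove (i) directly and deduce (ii), or vice versa.
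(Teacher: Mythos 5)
The paper's proof is short, uniform, and purely numerical: it observes that $\ell(E)$ takes the minimal positive value ($a_0$; the paper writes ``$1$'', which is a harmless slip, since on subobjects of an object supported on $W$ the function $\ell$ takes values in $a_0\BZ_{\geq 0}$), and that $\Hom(\CA_0,E)=0$ follows from the list of generators of $\CA_0$ in Lemma~\ref{lem: A0}. Then for any exact sequence $E'\to E\to E''$ in $\CA_{\leq 1}$, additivity of $\ell$ and $d$ together with $\ell\geq 0$ and the positivity of $d$ on $\CA_0$ (Proposition~\ref{prop: ell}) force either $E'=0$ or $\ell(E'')=0$, $E''\in\CA_0$, $d(E'')>0$ unless $E''=0$, giving $\nu(E')<\nu(E)$. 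Crucially this treats (i) and (ii) identically.

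Your proposal misses this uniform numerical shortcut. For part (i) you try to show that any subobject $F\hookrightarrow E$ in $\CA_{\leq 1}$ is a literal subsheaf $\iota_\ast G'\subset\iota_\ast(\CO_p(-1)\otimes p^\ast L)$. Showing $F\in\CT$ via $\Hom(\CF[1],\CT)=0$ is correct, but the further step that $F\to E$ is injective as sheaves requires showing $\CH^{-1}(E'')=0$, where $E''$ is the cokernel in $\CA$; a priori $\CH^{-1}(E'')$ is a nonzero object of $\CF$ embedded in $F\in\CT$, and $\Hom_{\Coh}(\CF,\CT)$ need not vanish, so this needs an argument you don't give. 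The paper's approach avoids the question entirely.

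For part (ii) your proposal does not close. You sketch a classification of quotients of $\omega_p\otimes p^\ast L$ using the rank bound and relative Serre duality, but you never actually rule out destabilizing subobjects; this is genuinely the harder half of your route, whereas in the paper's route there is no asymmetry at all. Your suggested fallback — deduce (ii) from (i) using $\rho$ and $\nu(\rho(E))=-\nu(E)$ — does not work as stated: by the computation in the proof of Lemma~\ref{lem: 1-dim generators}(ii), $\rho(\Phi(L))\cong\Phi(L')$ for another line bundle $L'$, so $\rho$ takes objects of type (i) to objects of type (i), not to $\iota_\ast(\omega_p\otimes p^\ast L)[1]$. The object in (ii) is instead related to (i) via the Euler sequence (as in Lemma~\ref{lem: morphism}), not via $\rho$. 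Finally, the long detour about whether $r=0$ or $r=1$ and the role of $a_0$ eventually self-corrects to $r=1$, but you leave the $a_0$ normalization unresolved; the relevant fact is that $\ell$ takes values in $a_0\BZ_{\geq 0}$ on objects supported on $W$, so ``minimal positive'' is what matters, not whether the value is $1$.
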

\begin{proof}
Let $E$ be the object in (i) or (ii). Note that $\ell(E)=1$ in both cases. From the description of $\CA_0$ in Lemma~\ref{lem: 0-dim generators} we see that $E$ is torsion-free in $\CA_{\leq 1}$, i.e.\ $\Hom(\CA_0,E)=0$. Let
\[ E' \to E \to E''\]
be an exact triangle in $\CA_{\leq 1}$. Then $\ell(E')=1$ and $\ell(E'')=0$, therefore $d(E'')\geq 0$ with equality if and only if $E''=0$. Thus, either $\nu(E')<\nu(E)$ or $E'=E$. The slope $\nu(E)$ is easily computed.
\end{proof}

\subsection{Curve classes}\label{subsec: curveclasses}
We denote by $N_1^{\textup{eff}}$ the image of $\CA_{\leq 1}$ in $N_1$. By Lemma~\ref{lem: supp} and Proposition~\ref{prop: supp}, $N_1^\eff$ is the cone generated by classes $[E]$ where $E$ is from one of the three sets
\begin{align*}    
S_1&=\big\{E\in \Coh_{\leq 1}(X)\colon \iota^\ast E\in \Coh_0(W)\big\}\,,\\
S_2&=\CT\cap \iota_\ast \Coh(W)\,, \\
S_3&=\big(\CF\cap \iota_\ast \Coh(W)\big)[1]\,. 
\end{align*}

Let $\Delta\subset N_1^\eff$ be the cone generated by classes form $S_2$ and $S_3$.
\begin{lemma}\label{lem: effectivefinite}
 For any $l>0$, the set
 \[\{\gamma\in\Delta: \ell(\gamma)\leq l\}\]
is finite.
\end{lemma}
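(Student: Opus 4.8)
The plan is to reduce the statement to counting lattice points in a bounded region. Concretely, I will produce a finitely generated subgroup $\Lambda\subseteq N_1$ with $\Delta\subseteq\Lambda$ and a \emph{pointed} rational cone $\sigma\subseteq\Lambda\otimes\BR$ with $\Delta\subseteq\sigma$, on which the linear function $\ell$ is strictly positive away from the origin. Given this, for a point $\gamma=\sum t_i v_i\in\sigma$ (with $t_i\ge 0$ and $v_i$ the ray generators of $\sigma$), the bound $\ell(\gamma)\le l$ forces $\sum t_i\le l/\min_i\ell(v_i)$, so $\sigma\cap\{\ell\le l\}$ is bounded and $\{\gamma\in\Delta:\ell(\gamma)\le l\}$, being a set of $\Lambda$-points in a bounded region, is finite.

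For the setup, note that every object of $S_2$ and $S_3$ is scheme-theoretically supported on $W$, so its Chern character lies in $\BZ\sfw\oplus\iota_\ast N_1(W)$; letting $C_0\subset W$ be a section of $p$, the images of these classes in $N_1$ lie in the rank-two subgroup $\Lambda$ generated by $\sfw$ and $\iota_\ast C_0$, hence $\Delta\subseteq\Lambda$. For $\gamma\in\Lambda$ write $\gamma=(r,d)$ with $\ch_1=r\sfw$ and $A\cdot\ch_2=a_0 d$; then $\ell(\gamma)=2A\cdot\ch_2+r\,a_0=a_0(2d+r)$. For $G\in\Coh(W)$ one has $r(\iota_\ast G)=\rk(G)$ and, by the Grothendieck--Riemann--Roch identity recalled in Section~\ref{subsec: Nironi stability}, $d(\iota_\ast G)=\rk(Rp_\ast G)=\rk(p_\ast G)-\rk(R^1p_\ast G)$.

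Now I locate the generators. If $\iota_\ast G\in S_2=\CT\cap\iota_\ast\Coh(W)$ then $R^1p_\ast G=0$, so $r=\rk(G)\ge 0$ and $d=\rk(p_\ast G)\ge 0$: every $S_2$-class lies in the first quadrant $\sigma_2=\mathrm{cone}\big((1,0),(0,1)\big)$. If $\iota_\ast G[1]\in S_3$ then $p_\ast G=0$ by Lemma~\ref{lem: Rp}(i), so $[\iota_\ast G[1]]=-[\iota_\ast G]=\big(-\rk(G),\rk(R^1p_\ast G)\big)$; the injection $G\hookrightarrow\omega_p\otimes p^\ast(R^1p_\ast G)$ established in the proof of Proposition~\ref{prop: ell}(ii) gives $\rk(G)\le\rk(R^1p_\ast G)$, so every $S_3$-class lies in $\sigma_3=\{(r,d):r\le 0,\ r+d\ge 0\}=\mathrm{cone}\big((0,1),(-1,1)\big)$. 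Therefore $\Delta\subseteq\sigma:=\sigma_2+\sigma_3=\mathrm{cone}\big((1,0),(0,1),(-1,1)\big)$. Since $\ell$ takes the values $a_0,\ 2a_0,\ a_0$ on these three ray generators, all strictly positive, $\sigma$ is pointed and $\ell$ is positive on $\sigma\setminus\{0\}$, which completes the argument as above.

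The only non-bookkeeping step is the containment $\Delta\subseteq\sigma$, i.e.\ controlling \emph{both} extremal rays of the cone generated by the $S_2$- and $S_3$-classes; positivity of $\ell$ on $\Delta$ (Proposition~\ref{prop: ell}) alone does not suffice, since $\ell$ vanishes on the ray $\BR_{\ge0}(-2,1)$. What rescues the estimate is that the rank inequality $\rk(G)\le\rk(R^1p_\ast G)$ for $\iota_\ast G\in\CF$ confines the $S_3$-classes to the strictly smaller cone $\sigma_3$, on whose closure $\ell$ is positive; this is the key input, and everything else is linear algebra over $\BZ$.
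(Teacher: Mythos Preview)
Your proof is correct and takes essentially the same approach as the paper: both restrict to the rank-two lattice coming from $N_1(W)$ and use the same key inequalities ($r,d\ge 0$ for $S_2$; the rank bound $\rk(G)\le\rk(R^1p_\ast G)$ from Proposition~\ref{prop: ell}(ii) for $S_3$) to bound the classes. Your cone formulation is a slightly cleaner packaging of the same argument, making explicit the passage from generators to arbitrary sums in $\Delta$ that the paper's ``it suffices'' leaves implicit.
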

\begin{proof}
It suffices to prove the claim for classes~$[E]$ with $E$ from either set $S_2$ or $S_3$. Consider $\iota_\ast G\in S_2$. Recall that $\ell(\iota_\ast G)=a_0\left(2\rk(Rp_\ast G)+r\right)$ and, because $\iota_\ast G\in \CT$, we have $\rk(Rp_\ast G)=\rk(p_\ast G)\geq 0$. So there are only finitely many possibilities for $r$ and for $A\cdot \ch_2(\iota_\ast G)$. Since $N_1(W)$ has rank~$2$, the map
\[N_1(W)_\BQ\,/\,\BQ\cdot \sfb \stackrel{A\cdot }{\longrightarrow}\BQ\]
is an isomorphism, showing that there are finitely many possibilities for $\ch_2(\iota_\ast G)$ in $N_1(X)/\BZ\cdot\sfb$.

The argument for $S_3$ is similar to $S_2$. Indeed, for $\iota_\ast G[1]\in S_3$ Lemma~\ref{prop: ell}~(ii) implies that
\[-A\cdot \ch_2(\iota_\ast(G))=-a_0 \rk(Rp_\ast G)=a_0 \rk(R^1p_\ast G)\]
is bounded (recall that $p_\ast G=0$ by Lemma~\ref{lem: Rp}~(i)), so again there are finitely many possibilities for both $A\cdot \ch_2(\iota_\ast G)$ and $r$.
\end{proof}

We say that a decomposition $\gamma=\sum\gamma_i$ is effective if all $\gamma_i\in N_1^\eff$.
\begin{corollary}\label{cor: finite}
There are only finitely many effective decompositions of~$\gamma\in N_1^\eff$. 
\end{corollary}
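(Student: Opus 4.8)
The plan is to reduce the finiteness of effective decompositions of $\gamma \in N_1^\eff$ to the finiteness statement of Lemma~\ref{lem: effectivefinite}, which concerns the subcone $\Delta \subset N_1^\eff$ generated by classes from $S_2$ and $S_3$. The obstacle is that $N_1^\eff$ is generated by three sets $S_1, S_2, S_3$, and only for $S_2, S_3$ do we have the boundedness controlled by $\ell$; on $S_1$-classes the function $\ell(E) = 2 A \cdot \ch_2(E)$ can vanish (e.g.\ for $0$-dimensional sheaves, cf.\ Proposition~\ref{prop: ell}~(iii)), so $\ell$ alone does not bound the number of summands. The fix is to split off the $S_1$-contribution and argue separately.

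First I would observe that any $\gamma \in N_1^\eff$ decomposes (non-uniquely) as $\gamma = \gamma_{S_1} + \delta$ where $\gamma_{S_1}$ is a sum of classes from $S_1$ and $\delta \in \Delta$; here $\gamma_{S_1}$ lies in the image of $\Coh_{\leq 1}(X)$ with $\iota^\ast(-) \in \Coh_0(W)$, which is a finitely generated subcone of $N_1(X)$ (it injects into $N_1(X)$, which has finite rank). Since $A \cdot (-)$ is non-negative on $N_1^\eff$ and additive, and since $A \cdot \sfb = 0$ while $A$ is positive on all other generators appearing, we get $\ell(\delta) \le \ell(\gamma)$, so by Lemma~\ref{lem: effectivefinite} there are only finitely many possibilities for $\delta \in \Delta$, hence finitely many for $\gamma_{S_1} = \gamma - \delta$ as a \emph{class}. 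It then remains to bound, for each fixed target class $\gamma_{S_1} \in N_1(X)$, the number of ways to write it as a sum of $S_1$-classes, and similarly to bound the number of ways to write a fixed $\delta \in \Delta$ as a sum of $S_2$- and $S_3$-classes.

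For the $S_1$-part, the key point is that $S_1$-classes $[E]$ have $\ch_1(E) = 0$ and $\ch_2(E) \in N_1(X)$ effective with $A \cdot \ch_2(E) \ge 0$; writing $\gamma_{S_1} = \sum_i \gamma_i$ with $\gamma_i \in N_1^\eff$ supported off $W$ (or $0$-dimensional), each $\gamma_i$ has $A \cdot \gamma_i \le A \cdot \gamma_{S_1}$, and the $\gamma_i$ with $A \cdot \gamma_i > 0$ are finite in number and finite in how many can appear (their $A$-degrees sum to $A\cdot\gamma_{S_1}$); the only subtlety is the purely $0$-dimensional summands, i.e.\ multiples of $\pt$ — but $\pt \notin N_1$ after passing to the quotient $N_1 = N_{\leq 1}/N_0$, so in $N_1^\eff$ the point class is already zero and contributes nothing. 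Thus in $N_1$ there are finitely many effective summands of $\gamma_{S_1}$ and finitely many ways to combine them. For the $\Delta$-part, every effective summand of $\delta$ lies in $\Delta$ (since $\Delta$ is a face-like subcone: $S_2, S_3$-classes cannot be cancelled by $S_1$-classes because $\sfw \cdot (-)$ distinguishes them, using $\sfw\cdot\sfb = -2$ and positivity of $A$), so all summands have $\ell \le \ell(\delta) \le \ell(\gamma)$, and Lemma~\ref{lem: effectivefinite} bounds both the set of possible summand classes and — since each has $\ell \ge 1$ unless it is an $S_2$- or $S_3$-class of $\ell = 0$, which by Proposition~\ref{prop: ell} forces it into $\CA_0$ where again $\ell$-zero $\Delta$-classes reduce to multiples of controlled generators with $d > 0$ — the total number of summands. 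Combining the two parts gives finiteness of effective decompositions of $\gamma$. The main obstacle, as indicated, is handling the $\ell$-degenerate classes cleanly: one must check that the only $S_1$-, $S_2$-, or $S_3$-classes with $\ell = 0$ that survive in $N_1$ are finite in number and bounded in multiplicity, which follows from the computations in Lemma~\ref{lem: A0}, Proposition~\ref{prop: ell}, and the positivity statement $\sfw\cdot\ch_2(E)\ge 0$ established in its proof.
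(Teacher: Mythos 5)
Your proof follows the same route as the paper's: split any effective decomposition into an $S_1$-part $\gamma'$ and a $\Delta$-part $\gamma''$, bound $\gamma''$ via Lemma~\ref{lem: effectivefinite} using $\ell(\gamma'')\le\ell(\gamma)$, and treat the $S_1$-part as a decomposition of an effective curve class. The main difference is that the paper disposes of the $S_1$-part with a single citation to \cite[Corollary~1.19]{KM98}, whereas your hand-made argument for that step is not fully justified, and the extra machinery you introduce (the ``face-like'' claim for $\Delta$ and the worry about $\ell$-degenerate summands) is unnecessary --- Proposition~\ref{prop: ell}(iii) already shows $\ell>0$ on every nonzero class of $N_1^{\eff}$, since $\ell(E)=0$ forces $E\in\CA_0$, whose $K$-theory class lies in $N_0$ and hence vanishes in $N_1$.
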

\begin{proof}
Every effective decomposition of $\gamma$ is a sum 
\[\gamma = \gamma'+\gamma''\]
with $\gamma'\in N_1^\eff$ a sum of classes from~$S_1$, and $\gamma''\in\Delta$. In particular, $\gamma'$ is an effective curve class and $\ell(\gamma'')\leq \ell(\gamma)$. By Lemma~\ref{lem: effectivefinite} there are finitely many such classes~$\gamma''$. By standard arguments~\cite[Corollary 1.19]{KM98}, there are finitely many decompositions of $\gamma'$ into effective curve classes.
\end{proof}

\subsection{Weak stability}\label{subsec: weakstability}
In this section we connect $\nu$-stability to the notion of weak stability in the sense of Toda~\cite{To10a}. We obtain an alternative description of the category~$\CA_{\leq 1}$. This section does not contain any results which are strictly necessary for the remainder of the paper and it rather serves as a comparison. In~\cite{PT19,To08} the authors study the moduli problem for (weak) stability conditions on tilted hearts. They are able to prove that the two key properties, \emph{generic flatness}, and \emph{boundedness of semistable objects} are preserved, in some sense, under a tilting process
\[ (Z,\CC) \rightsquigarrow (Z^\dagger, \CC^\dagger)\,.\]
It seems likely that this technique can be employed to deduce the results in Section~\ref{subsec: moduli}, although we will not pursue it in this paper.

Let $\CC_{\leq 1}=\Coh_{\leq 1}(X/W)$ be the category of coherent sheaves which are at most $1$-dimensional outside of~$W$. This category was studied in~\cite{To16} for Calabi--Yau 3-folds containing an embedded~$\BP^2$. The numerical $K$-group of~$\CC_{\leq 1}$ is the same as that of~$\CA_{\leq 1}$
\[N_0=\BZ\cdot\sfb\oplus \BZ\cdot\pt\,,\quad N_{\leq 1}=\BZ\cdot\sfw\oplus N_{\leq 1}(X)\,.\]
We can define a weak stability function~$Z=(Z_0,Z_1)$ associated to the filtration 
\[0\subset N_0\subset N_{\leq 1}\,.\]
Let $\omega\in\Amp(X)$ be an ample class. For $E\in\CC_{\leq 1}$ define
\begin{align*}
    Z_1(E) &= -\ell(E) + i\, \omega^2\cdot\ch_1(E)\,,\\
    Z_0(E) &= -d(E) + i\, \omega\cdot\ch_2(E)\,.
\end{align*}
Here, $d(E)$ and $\ell(E)$ are as defined in Section~\ref{subsec: Nironi stability}. If $[E]\in N_0$, set $Z(E)=Z_0(E)$, otherwise $Z(E)=Z_1(E)$. Then, for all $0\neq E\in\CC_{\leq 1}$:
\begin{enumerate}
    \item $Z(E)\in \BH\cup\BR_{<0}$, 
    \item $E$ admits a Harder--Narasimhan filtration.
\end{enumerate}
Property~(i) follows from condition~\eqref{eq: nef}. Property~(ii) holds because~$\CC_{\leq 1}$ is Noetherian and the image of~$Z$ is discrete.\footnote{We have not checked the support property for~$Z$. It might be possible to give a proof following the arguments in the surface case~\cite[Section 4]{BM11}.} 

Now we consider a tilting process
\[(Z,\CC_{\leq 1}) \rightsquigarrow (Z^\dagger, \CC_{\leq 1}^\dagger)\,.\]
Define the generalized slope of $0\neq E\in\CC_{\leq 1}$ as
\[ \lambda(E) = -\frac{\mathrm{Re}\,Z(E)}{\mathrm{Im}\,Z(E)}\in (-\infty,\infty]\,.\]
This leads to the standard construction of a torsion pair
\begin{align*} \CT_{\lambda} &= \big\langle \lambda\textup{-semistable } E\in\CC_{\leq 1}\textup{ with } \lambda(E)\geq 0\big\rangle_\ex\,, \\
\CF_{\lambda} &= \big\langle \lambda\textup{-semistable } E\in\CC_{\leq 1}\textup{ with } \lambda(E)< 0\big\rangle_\ex\,.
\end{align*}
Define the tilt as
\[ \CC_{\leq 1}^\dagger = \big\langle \CF_{\lambda}[1],\CT_{\lambda}\big\rangle_\ex\,,\]
and the function
\[Z^\dagger(E) = -d(E) + i\,\ell(E)\,.\]
Proposition~\ref{prop: ell} has two consequences. Firstly, the pair $(\CT_\lambda\,,\CF_\lambda)$ agrees with the perverse torsion pair:
\[ \CT_{\lambda} = \CT_{\leq 1}\,,\quad \CF_{\lambda}=\CF\,.\]
In particular, $\CA_{\leq 1} = \CC_{\leq 1}^\dagger$. Secondly, we have for all $0\neq E\in\CA_{\leq 1}$
\[ Z^\dagger(E) \in \BH \cup \BR_{<0}\,.\]
Harder--Narasimhan filtrations exist by Proposition~\ref{prop: nustability}. The associated slope function of~$Z^\dagger$ is precisely~$\nu$. In particular, $Z^\dagger$-semistability coincides with $\nu$-semistability. Note that this resembles the standard way to interpret slope stability on curves as Bridgeland stability~\cite[Example 5.4]{Br02},~\cite{MS17}. We have obtained $\CA_{\leq 1}$ and $\nu$-stability through a tilting process from~$(Z,\CC_{\leq 1})$. We do not know if this fits the general framework of tilting process established in~\cite{PT19}.


\subsection{Boundedness}\label{subsec: boundedness}
In this section we prove some boundedness and finiteness results that will be needed to ensure that the moduli stacks of $\nu$-semistable sheaves are finite type (see Proposition~\ref{prop: finitetype}. This condition is necessary for the application of the wall-crossing formula), for the analysis of the wall-crossing formula, and the proof of rationality in Section~\ref{sec: perversept}. For $E\in\CA_{\leq 1}$ we denote by $\nu_+(E)$, $\nu_-(E)$ the maximal and minimal slopes of the Harder--Narasimhan factors with respect to $\nu$-stability. For $I\subset \BR\cup\{+\infty\}$ denote by $\CM^\nu(I)$ the stack of all $E\in\CA_{\leq 1}$ such that all HN-factors have slope contained in $I$. If $I=[\delta_-,\delta_+]$, this is equivalent to $\nu_+(E)\leq \delta_+$ and $\nu_-(E)\geq \delta_-$. The substack $\CM_\gamma^\nu(I)$ parametrizes all such $E$ with fixed $[E]=\gamma\in N_1$. The special case $I=[\delta,\delta]$ parametrizes $\nu$-semistable $E$ of slope $\delta$ and is denoted $\CM_\gamma^{\nu}(\delta)$. The substack $\CM_{(\gamma,c)}^{\nu}\subset\CM_\gamma^{\nu}(\delta)$ corresponds to a fixed class $(\gamma,c)\in N_{\leq 1}$. 
We write $c_E$ to denote the class of $[E]$ in $N_0$. 
\begin{proposition}\label{prop: finite}
 Let $I\subset\BR$ be a bounded interval and $E\in \CM_\gamma^\nu(I)$. There exists a finite subset $S\subset N_0$ depending on $\gamma$ and $I$ such that $c_E\in S$, if one of the following holds:
\begin{enumerate}
    \item $E\in\Coh_{\leq 1}(X)$ with $\iota^\ast E\in \Coh_0(W)$,
    \item $E\cong\iota_\ast\iota^\ast E$.
\end{enumerate}
\end{proposition}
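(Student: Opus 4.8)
The plan is to bound the discrete invariants $c_E = (j,n)\in N_0$ given that $[E]=\gamma=(r,\beta)\in N_1$ is fixed and $\nu_\pm(E)$ lie in a bounded interval $I=[\delta_-,\delta_+]$. Fixing $\gamma$ already pins down $r$, the restriction $A\cdot\beta$, and $\sfw\cdot\beta$; the only coordinates free to vary are the fiber multiplicity $j$ (i.e.\ the part of $\ch_2$ along $\BZ\sfb$) and the Euler characteristic $n$. Since $\ell(E)=2A\cdot\beta+ra_0$ depends only on $\gamma$, the HN-slope bound $\delta_-\le \nu(E)\le\delta_+$ translates directly into a bound on $d(E)=r(1-g)+2n-\tfrac12\sfw\cdot\beta$, hence on $n$. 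So the real content is to bound $j$, and this is where the two cases diverge and where the argument has to work.

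First I would treat case (ii), $E\cong\iota_*\iota^*E = \iota_*G$ with $G\in\Coh(W)$. Here $N_1(W)$ has rank $2$, and $A\cdot\ch_2(\iota_*G) = a_0\rk(Rp_*G)$ together with $\sfw\cdot\ch_2(\iota_*G)$ determine $\ch_2(\iota_*G)$ inside $N_1(X)$ up to the kernel — but since $N_1(W)_\BQ/\BQ\sfb \xrightarrow{A\cdot} \BQ$ is an isomorphism (as used in the proof of Lemma~\ref{lem: effectivefinite}), fixing $\gamma$ fixes $\ch_2$ completely, so $j$ is determined and $S$ is automatically finite once $n$ is bounded. Actually the subtlety is only that $E\in\CM_\gamma^\nu(I)$ need not itself be semistable; but passing to its HN factors $E_i$, each $E_i$ has $\ell(E_i)\ge 0$ with $\sum\ell(E_i)=\ell(E)$ fixed, and boundedness of the finitely many effective decompositions (Corollary~\ref{cor: finite}) plus the slope bound on each factor controls all $d(E_i)$, hence $n=\sum\chi(E_i)$. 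So case (ii) reduces to bookkeeping with Corollary~\ref{cor: finite} and Proposition~\ref{prop: ell}.

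For case (i), $E\in\Coh_{\leq 1}(X)$ with $\iota^*E\in\Coh_0(W)$: the class $\beta=\ch_2(E)$ is an honest effective curve class and only its $\BZ\sfb$-component $j\sfb$ is unconstrained by $\gamma$ (which records $A\cdot\beta$). The key point is that $A\cdot\beta = A\cdot\beta_{\text{fixed}}$ bounds the "$A$-degree" of $E$, so by boundedness of effective curve classes of bounded $A$-degree modulo $\BZ\sfb$ there are finitely many possibilities for $\beta$ modulo $\sfb$; but one still must bound the $\sfb$-coefficient $j$ itself. This is the genuine obstacle: a priori $j$ could run to $\pm\infty$. I expect to control it via the slope condition combined with the torsion-pair/support structure — decompose $E$ using its HN filtration, note that each HN factor $E_i$ either lies in $\CM^\nu$ with $\iota^*E_i\in\Coh_0(W)$ or is pushed forward from $W$; in the former subcase $\ell(E_i)=2A\cdot\beta_i\ge 0$ is bounded and $\nu(E_i)\in I$ bounds $d(E_i)$, and I'd argue that once $A\cdot\beta_i$ and $d(E_i)=2\chi(E_i)-\tfrac12\sfw\cdot\beta_i$ are both bounded, and $\sfw\cdot\beta_i$ is bounded below (by effectivity and $\eqref{eq: nef}$-type positivity), the coefficient of $\sfb$ in $\beta_i$ is squeezed. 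Chasing the $\sfb$-coefficient through $\sfw\cdot\beta$ (recall $\sfw\cdot\sfb=-2$) is the step I'd expect to require the most care — it is essentially showing that within a fixed ambient curve class the "amount of fiber class one can add" is not free once slopes are bounded, and this is the crux of the whole proposition.
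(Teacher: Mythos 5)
There is a genuine gap — in fact two, and they both trace back to the same misconception. You write that ``fixing $\gamma$ already pins down $\ldots$ $\sfw\cdot\beta$'' and then compute $d(E)=r(1-g)+2n-\tfrac12\sfw\cdot\beta$. But the formula uses $\sfw\cdot\ch_2(E)$, and $\ch_2(E)=\beta+j\sfb$ with $\sfw\cdot\sfb=-2$, so in fact $d(E)=r(1-g)+2n+j-\tfrac12\sfw\cdot\beta$. Since $\ell(E)$ does depend only on $\gamma$ (because $A\cdot\sfb=0$), the bound $\nu(E)\in I$ gives a bound on $d(E)$, hence on the combination $2n+j$, \emph{not} on $n$ alone. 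One of $n$ or $j$ has to be bounded by an independent argument before the other falls out. Your opening paragraph claims this step is automatic; it is not, and it is exactly where the work of the proposition lives.

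The second gap is in case (ii). You assert that ``fixing $\gamma$ fixes $\ch_2$ completely, so $j$ is determined.'' This is false: the isomorphism $N_1(W)_\BQ/\BQ\sfb\to\BQ$ shows that $\gamma$ determines $\ch_2(\iota_*G)$ only \emph{modulo} $\BZ\sfb$ — that is precisely what $j$ parametrizes, and $j$ is not constrained by $\gamma$ at all. (This is the whole reason the statement needs a proof.) Your fallback observation about HN factors and Corollary~\ref{cor: finite} does not bound $j$ either, because Corollary~\ref{cor: finite} is a statement in $N_1^\eff$, again only modulo $N_0$.

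For comparison, the paper's proof makes the independent bound explicit in each case, and the two cases are handled in \emph{opposite} orders. In case (i) it first bounds $j$: effectivity of $\ch_2(E)=\beta+j\sfb$ forbids $j\ll 0$, while $\sfw\cdot\ch_2(E)\ge 0$ (because the 1-dimensional support of $E$ meets $W$ in a 0-dimensional set) forbids $j\gg 0$ since $\sfw\cdot\sfb=-2$; then the bound on $d(E)$ yields the bound on $n$. In case (ii) it first bounds $n=\chi(E)=\chi(G)$: for $\chi(G)\ll 0$ it produces a nonzero map $G\to\omega_p\otimes p^*L[1]$ with $\chi(L)$ controlled by $\chi(G)$, and for $\chi(G)\gg 0$ it uses Lemma~\ref{lem: morphism} to produce a nonzero map $K\to G$ from a $\nu$-stable test object $K$ of slope $\approx\chi(G)/\rk(Rp_*G)$ (Lemma~\ref{lem: stable}); the HN slope bounds $\nu_\pm(E)\in I$ then force $\chi(G)$ into a bounded range. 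Only after that does the bound on $d(E)$ pin down $j$. This second half of case (ii) — the construction of a stable destabilizing object whose slope grows with $\chi(G)$ — is the core technical input and is entirely absent from your sketch.
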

\begin{proof}
(i) In the first case, $\ch_2(E)\in N_1^\eff(X)$ is an effective curve class with residue $\gamma\in N_1^\eff$. The class $\gamma + j\sfb$ is effective only for finitely many negative values of~$j$.  On the other hand, note that for any $E\in\Coh_{\leq 1}(X)$ with $\iota^\ast E\in\Coh_0(W)$ we have $\ch_2(E)\cdot \sfw\geq 0$. If $j\gg 0$, then $\sfw\cdot(\gamma+j\sfb)<0$, since $\sfw\cdot\sfb=-2$. Thus, $j$ must lie in a bounded interval, so we have finitely many curve classes~$\ch_2(E)$. Recall that by definition of $\nu(E)$ we have
\[\chi(E) = \frac{1}{2}\big(\ell(E)\cdot\nu(E) + \frac{1}{2}\sfw\cdot\ch_2(E)\big)\,.\]
Since $\nu(E)\in I$, also $\chi(E)$ lies in a bounded interval.

(ii) Let $I\subset [\delta_-,\delta_+]$ and $G=\iota^\ast E$. We first prove that $\chi(E)$ is bounded below. For this we may assume $\chi(E)<0$. By Lemma~\ref{lem: basic} we have $Rp_\ast(G)\in\Coh(C)$ and also \[\chi(E)=\chi(G)=\chi(Rp_\ast G)\,.\]
Let $L\in\Pic(C)$ with
\[ \rk(Rp_\ast G)(\chi(L)+1-g) > \chi(G)\,.\]
We may choose $\chi(L) = \chi(G) + g$. Then by Riemann--Roch
\[ 0 \neq H^1(Rp_\ast G\otimes L^\vee \otimes \omega_C) = \Hom(Rp_\ast G, L)\,.\]
The latter is isomorphic to $\Hom(G, \omega_p\otimes p^\ast L[1])$ by adjunction. The object $\iota_\ast\big(\omega_p\otimes p^\ast L[1]\big)$ is stable by Lemma~\ref{lem: stable}, with slope $\chi(L)$. Since $\nu_-(E)\geq \delta_-$, we must have $\chi(G)\geq \delta_- -g$.

Now we prove that $\chi(E)$ is bounded above. For this we may assume $\chi(E)>0$, in particular $Rp_\ast G\neq 0$. By Lemma~\ref{lem: morphism} we obtain $L\in\Pic(C)$ with
\[k_- +\frac{\chi(G)}{\max\{\rk(Rp_\ast G), 1\}}\leq \chi(L)\leq k_+ +\frac{\chi(G)}{\max\{\rk(Rp_\ast G), 1\}}\]
and a non-zero morphism $K\to G$ with
\[K=\CO_p(-1)\otimes p^\ast L\,, \textup{ or } K=\omega_p\otimes p^\ast L[1]\,.\]
The object $\iota_\ast K\in\CA_{\leq 1}$ is stable by Lemma~\ref{lem: stable}, with slope 
\[ \nu(\iota_\ast K)=\chi(L) + \frac{1}{2}\deg(\CE) + 1-g\,, \textup{ or } \nu(\iota_\ast K)=\chi(L)\,.\]
Since $E\in \CM_\gamma^\nu(I)$ it follows that $\nu(K)\leq \delta_+$. But if $\chi(G)\gg 0$ we get $\chi(L)\gg 0$ (recall that $a_0\rk(Rp_\ast G)=A\cdot \ch_2(E)$ only depends on $\gamma$) and thus $\nu(K)\gg 0$, a contradiction.

We conclude that $\chi(G)$ is bounded. By the same argument as in (i), since $\nu(E)=d(E)/\ell(E)\in I$ is also bounded we can show that there are only finitely many possibilities for $j$ in $\ch_2(E)=\beta+j\sfb$, finishing the proof.
\end{proof}

We can now prove the boundedness of certain families of objects in~$\CA_{\leq 1}$. The underlying notion of sheaf of $t$-structures is established in~\cite{AP06} which we apply to the heart of perverse $t$-structure $\CA\subset D^b(X)$. For a discussion of bounded families see~\cite[Section 3]{To08}. We will repeatedly use the following useful result~\cite[Lemma 3.16]{To08} which relies on the finite dimensionality of $\Ext^1$-groups.
\begin{lemma}\label{lem: boundedExt}
Let $\CS_i$ be sets of objects in~$D^b(X)$ for $i=1,2,3$ such that $S_1$, $S_2$ are bounded. Assume that for any object $E_3\in\CS_3$ there are $E_i\in\CS_i$ for $i=1,2$ and an exact triangle
\[ E_1 \to E_3 \to E_2\,.\]
Then, $\CS_3$ is also bounded.
\end{lemma}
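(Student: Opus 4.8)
The plan is to unwind the definition of boundedness and assemble a single finite-type parameter space for $\CS_3$ out of the ones given for $\CS_1$ and $\CS_2$. Recall that a set of objects of $D^b(X)$ is bounded precisely when it consists of closed fibres of one object $\CE\in D^b(X\times S)$ with $S$ a finite-type $\BC$-scheme. So first I would fix finite-type schemes $T_1,T_2$ and objects $\CE_i\in D^b(X\times T_i)$ such that $\CS_i$ is contained in the set of closed fibres of $\CE_i$ for $i=1,2$, then replace both $T_i$ by $T:=T_1\times T_2$, pull $\CE_1,\CE_2$ back there, and write $\pi\colon X\times T\to T$ for the projection.

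Next I would form the relative Ext complex $\CK:=R\pi_\ast\,\RHom_{X\times T}(\CE_2,\CE_1[1])$. Since $X$ is smooth and projective and the $\CE_i$ are perfect, $\CK$ is a perfect complex on the finite-type scheme $T$ whose formation commutes with base change; its fibre over a closed point $t=(t_1,t_2)$ computes (in $\mathbb H^0$) the group $\Ext^1(E_2,E_1)$, which is finite-dimensional — this is exactly where finite dimensionality of $\Ext^1$ is used. Invoking the standard construction of the relative Ext-scheme (from the theory of moduli of complexes in $D^b(X)$, as used in \cite{To08}), this produces a finite-type $T$-scheme $q\colon Z\to T$ whose $U$-points over $T$ are the morphisms $\CE_2|_U\to\CE_1|_U[1]$ in $D^b(X\times U)$, carrying a tautological such morphism over $Z$. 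I would let $\CE_3\in D^b(X\times Z)$ be a cone of the tautological morphism, so there is an exact triangle $\CE_1|_Z\to\CE_3\to\CE_2|_Z$. As $T$ is of finite type and $Z\to T$ is of finite type, $Z$ is of finite type.

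Finally, to see that $(Z,\CE_3)$ bounds $\CS_3$: given $E_3\in\CS_3$, write it via a triangle $E_1\to E_3\to E_2$ with $E_i\in\CS_i$, choose closed points $t_i\in T_i$ with $\CE_i|_{t_i}\cong E_i$, and let $z\in Z$ be the closed point over $t=(t_1,t_2)$ corresponding to the extension class of this triangle in $\Ext^1(E_2,E_1)$. Applying the exact functor $L\iota_z^\ast$ to the defining triangle of $\CE_3$, and using base-change compatibility of $\CK$ to identify $L\iota_z^\ast$ of the tautological class with the chosen extension class, the restriction $L\iota_z^\ast\CE_3$ is a cone of that class, hence isomorphic to $E_3$.

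The main obstacle is the middle step — precisely, the construction of the finite-type parameter scheme $Z$ of extension classes together with a universal extension object, and the compatibility of cone formation with restriction to closed points. This is standard machinery in the theory of moduli of complexes, but it genuinely rests on the finite dimensionality of the relevant $\Ext$-groups together with perfectness of the relative $\RHom$; the remainder of the argument is formal. An alternative, more stack-theoretic route would be to use that the stack $\CM$ of objects with no negative self-$\Ext$s is algebraic and locally of finite type, form the algebraic stack of exact triangles whose outer terms lie in the quasi-compact substacks of $\CM$ determined by $\CS_1$ and $\CS_2$, note that the forgetful morphism to the product has finite-type fibres $[\Ext^1/\Hom]$ by the same finite-dimensionality input, and conclude that the image of this finite-type stack under $E_3\mapsto E_3$ lies in a quasi-compact open of $\CM$ — which is exactly the assertion that $\CS_3$ is bounded.
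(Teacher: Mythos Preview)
The paper does not actually supply its own proof of this lemma: it simply records the statement and cites \cite[Lemma~3.16]{To08}, noting that the result ``relies on the finite dimensionality of $\Ext^1$-groups.'' Your sketch is precisely the standard argument behind that citation --- parametrize the outer terms by a finite-type base, form the relative $\RHom$, and build a finite-type parameter space for extensions --- and you have correctly identified both the key input (finite-dimensional $\Ext^1$, perfectness of the relative complex) and the one genuinely delicate step (existence of a universal extension over a finite-type scheme, typically handled by stratifying the base so the Ext-dimension is locally constant). Nothing to add.
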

First, we consider the family of zero-dimensional perverse sheaves.

\begin{lemma}\label{lem: bounded0}
Let $D\geq 0$ and $\CS$ be the family of $E\in\CA_0$ with $d(E)=D$. Then, $\CS$ is a bounded family.
\end{lemma}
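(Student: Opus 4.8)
Let $D\geq 0$ and $\CS$ be the family of $E\in\CA_0$ with $d(E)=D$. Then $\CS$ is a bounded family.

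Here is the plan. The key structural input is Lemma~\ref{lem: A0}, which presents $\CA_0$ as the extension closure of two explicit generating sets: on the $\CF[1]$ side, the shifted line bundles $\CO_{B_y}(k)[1]$ with $k\leq -2$ supported on fibers, and on the $\CT$ side, the $0$-dimensional sheaves $\Coh_0(X)$ together with the line bundles $\CO_{B_y}(k)$ with $k\geq -1$. Since on $\CA_0$ one has $\ell(-)=0$ and $d(-)\geq 0$ is additive in short exact sequences (by Proposition~\ref{prop: ell}~(iii) and the see-saw property of $\nu$), fixing $d(E)=D$ bounds the number of generators appearing in any filtration of $E$: indeed every generator $g$ above has $d(g)\geq 1$ (for $\CO_{B_y}(k)$ with $k\geq -1$ we computed $d=2k+3\geq 1$; for $\CO_{B_y}(k)[1]$ with $k\leq -2$ we have $d=-(2k+3)\geq 1$; and for a nonzero $0$-dimensional sheaf $d=2\chi\geq 2$). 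So $E$ is a successive extension of at most $D$ of these generators.

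I would therefore argue as follows. First, the family of individual generators with $d$ bounded by $D$ is itself bounded: the line bundles $\CO_{B_y}(k)$ with $-1\leq k\leq (D-3)/2$ form a bounded family as $y$ ranges over $C$ and $k$ over this finite set (they are pushforwards of line bundles of bounded degree on the fibers of the proper morphism $p$), and likewise for the shifted line bundles $\CO_{B_y}(k)[1]$ with $(-D-3)/2\leq k\leq -2$; the $0$-dimensional sheaves of length $\leq D/2$ form a bounded family by a standard Quot-scheme argument. Taking the (finite) union, the set $\CS_1$ of all these generators is bounded. Next, I would iterate Lemma~\ref{lem: boundedExt}: every $E\in\CS$ sits in an exact triangle $E'\to E\to E''$ where $E''\in\CS_1$ is a generator and $E'\in\CA_0$ has $d(E')=D-d(E'')<D$; by induction on $D$ (the base case $D=0$ giving $E=0$ by Proposition~\ref{prop: ell}~(iii)) the family of such $E'$ is bounded, and Lemma~\ref{lem: boundedExt} then gives that $\CS$ is bounded.

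One technical point to handle carefully is the bookkeeping of the induction: the decomposition of $E$ via Lemma~\ref{lem: A0} produces a filtration whose successive quotients are generators, so one should phrase the induction as ``the family of $E\in\CA_0$ that are extensions of at most $N$ generators from $\CS_1$ is bounded, for each $N$'' and then observe $N\leq D$. A second point is that Lemma~\ref{lem: boundedExt} as stated needs $\CS_1$ and the family of $E'$ both bounded; the first is the content of the paragraph above, the second is the inductive hypothesis. I expect the main obstacle to be purely organizational rather than mathematical: making sure that the generating-set presentation of $\CA_0$ interacts correctly with boundedness of families (in particular that one may take $E''$ to be a \emph{sub}- or \emph{quotient} object realizing one generator at a time, which is exactly what the proof of Lemma~\ref{lem: A0} provides), and that the relevant $\Ext^1$-finiteness hypothesis of Lemma~\ref{lem: boundedExt} applies — but that is automatic for objects in the heart $\CA$ of a bounded $t$-structure on $D^b(X)$ with $X$ smooth projective.
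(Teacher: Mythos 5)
Your proof is correct and follows essentially the same route as the paper: use Lemma~\ref{lem: A0} to peel off a generator as a quotient $E\to Q$ with $0<d(Q)\leq d(E)$, bound the family of possible $Q$'s, and induct on $D$ via Lemma~\ref{lem: boundedExt}. The organizational concerns you flag at the end (filtration-by-generators vs.\ repeated peeling, and $\Ext^1$-finiteness for Lemma~\ref{lem: boundedExt}) are exactly the points the paper's short proof leaves implicit, and you resolve them correctly.
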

\begin{proof}
By Lemma~\ref{lem: A0}, every $E\in\CA_0$ admits a quotient $E\to Q$ in $\CA_0$ where $Q$ is one of the following objects:
\[ k(x)\,,\quad \CO_{B_y}(k-1)\,, \quad \CO_{B_y}(-k-2)[1]\,.\]
Here, $x\in X$ is a point, $B_y=p^{-1}(y)$ a fiber of $p$, and $k\geq 0$. By Lemma~\ref{prop: ell} we have $0<d(Q)\leq d(E)$, in particular $0\leq k\leq d(E)$. The family of such objects~$Q$ is bounded. We can conclude by induction and Lemma~\ref{lem: boundedExt}.
\end{proof}

We can now prove the following result.
\begin{proposition}
\label{prop: boundedfamilies}
Let $I \subset \BR$ be a bounded interval and $\gamma\in N_1$. Let $\CS$ be one of the following families of objects in~$\CM_\gamma^\nu(I)$:
\begin{enumerate}
    \item the set of $E\in\Coh_{\leq 1}(X)$ with $\iota^\ast E\in \Coh_0(W)$,
    \item the set of $E\cong\iota_\ast\iota^\ast E$.
\end{enumerate}
Then, $\CS$ is a bounded family.
\end{proposition}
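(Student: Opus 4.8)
The plan is to fix the numerical class first and then handle the two families by rather different arguments. By Proposition~\ref{prop: finite}, any $E$ in either family has class $c_E$ lying in a finite subset of $N_0$ determined by $\gamma$ and $I$; since a finite union of bounded families is bounded, it suffices to bound, for each fixed $\alpha=(\gamma,c)\in N_{\leq 1}$, the subfamily of objects of class $\alpha$.

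For family~(i) the idea is that the bounded slope window forbids precisely the ``vertical'' subsheaves that make such families unbounded in general. Since $R^1p_\ast\iota^\ast E=0$ we have $E\in\CT$, so every subobject of $E$ in $\CA_{\leq 1}$ is a subsheaf lying in $\CT$. A nonzero zero-dimensional subsheaf would lie in $\CA_0$ with $\ell=0$ and $d>0$, hence have slope $+\infty$, contradicting $\nu_+(E)<\infty$; the same applies to a subsheaf of the form $\CO_B(k)$, $k\geq-1$, on a fibre $B$ (Lemma~\ref{lem: A0}), and to any subsheaf supported on a curve whose class is a multiple of $\sfb$ (again $\ell=0$). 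Hence $E$ is pure of dimension one, its support is a curve of fixed class no component of which lies in $\BZ_{\geq0}\cdot\sfb$, so the support varies in a bounded family and $A\cdot[\ell]>0$ for every component $\ell$. The remaining slope bound $\nu(\CO_\ell(a))\leq\sup I$ on line-bundle-type subsheaves $\CO_\ell(a)\hookrightarrow E$ then bounds the twist $a$ from above, which is exactly the input needed for the standard boundedness of families with bounded Harder--Narasimhan slope. Thus family~(i) is bounded.

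For family~(ii) I would argue by induction on the nonnegative integer $\ell(\gamma)$, uniformly over all bounded intervals. Here $E=\iota_\ast G$ with $G=\iota^\ast E\in\Coh(W)$, and since $E$ is a sheaf lying in $\CA$ we have $E\in\CT_{\leq 1}$. If $\ell(\gamma)=0$ then $E\in\CA_0$ (Proposition~\ref{prop: ell}), so $E=0$ and there is nothing to prove. For the inductive step, Lemma~\ref{lem: morphism} and the remark following it provide a nonzero morphism $K=\CO_p(-1)\otimes p^\ast L\to G$; by Lemma~\ref{lem: stable} the object $\iota_\ast K$ is $\nu$-stable of slope $\mu_K=\chi(L)+\tfrac12\deg\CE+1-g$, and since its image in $\CA_{\leq 1}$ is a nonzero subobject of $E$ we get $\mu_K\leq\nu_+(E)\leq\sup I$, bounding $\chi(L)$ from above; the estimates in Lemma~\ref{lem: morphism} bound $\chi(L)$ from below in terms of quantities fixed by $\alpha$. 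So $\iota_\ast K$ ranges over a bounded family. A slope comparison — using that $\iota_\ast K$ is stable of finite slope with $\ell(\iota_\ast K)=1$ — shows the image is $\iota_\ast K$ itself and that $K\to G$ is injective on $W$, yielding an exact sequence $0\to\iota_\ast K\to E\to E''\to 0$ in $\CA_{\leq 1}$ with $E''=\iota_\ast(G/K)$ again in family~(ii) and $\ell(E'')=\ell(\gamma)-1$. One then checks, via the see-saw property, that after splitting off the $\CA_0$-part of $E''$ (which has bounded $d$, hence is bounded by Lemma~\ref{lem: bounded0}) the remaining object lies in $\CM^\nu$ of a bounded interval depending only on $\alpha$ and $I$, so the inductive hypothesis applies; two applications of Lemma~\ref{lem: boundedExt} then bound $E''$ and finally $E$.

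The hard part is the bookkeeping in family~(ii): making the bounds on $\chi(L)$ — hence on the peeled-off subobject $\iota_\ast K$ — and on the Harder--Narasimhan window of the quotient $E''$ depend only on $\alpha$ and $I$, so that the induction on $\ell(\gamma)$ actually closes. This is where condition~\eqref{eq: nef} is essential: it forces the $A$-degrees, and then the whole Chern character of $G$, to be determined by the class of $\iota_\ast G$, which is what keeps the lower bound on $\chi(L)$ under control. The argument for family~(i) is comparatively routine once the slope bounds are used to exclude zero-dimensional and fibre-type subsheaves.
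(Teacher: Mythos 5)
Your overall strategy matches the paper's: reduce to a fixed numerical class via Proposition~\ref{prop: finite}, then handle the two families separately. Your treatment of family~(ii) is essentially the paper's argument — peel off a $\nu$-stable subobject $\iota_\ast K$ with $K=\CO_p(-1)\otimes p^\ast L$ via Lemma~\ref{lem: morphism}, bound $\chi(L)$ above by $\nu$-stability and below via the Lemma~\ref{lem: morphism} estimates, bound the $\nu$-slope window of the quotient, and close the induction on $\ell(\gamma)$ with Lemmas~\ref{lem: boundedExt} and~\ref{lem: bounded0} — and your remark that $\iota_\ast K\to\iota_\ast G$ must be a monomorphism in $\CA_{\leq 1}$ is correct; it is exactly what the paper uses to get $\ell(\iota_\ast G')>0$.

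For family~(i), however, there is a gap. After establishing purity and bounding the support class, you want to invoke boundedness of families with bounded Harder--Narasimhan slope (Huybrechts--Lehn, Theorem~3.3.7). But that criterion asks for a uniform upper bound on $\mu_\omega^{\max}(E)=\max_{0\neq F\subset E}\chi(F)/(\omega\cdot\ch_2(F))$, taken over \emph{all} subsheaves $F$, not just over rank-one subsheaves of the form $\CO_\ell(a)$ on a single component $\ell$; converting a bound on line-subbundle degree into a bound on $\mu_\omega^{\max}$ requires a further Grothendieck-type argument which you do not supply, and it is not clear how to run it uniformly when the support is reducible or non-reduced. The paper's proof bounds $\chi(F)$ directly for an arbitrary subsheaf $F\subset E$: from $\nu(F)\leq\nu_+(E)\leq\sup I$ and $0<\ell(F)\leq\ell(E)$ one gets $d(F)\leq\ell(F)\cdot\sup I$, and since $0\leq\sfw\cdot\ch_2(F)\leq\sfw\cdot\ch_2(E)$ this bounds $\chi(F)$ above by a quantity depending only on $\gamma$ and $I$. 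That is the $\mu_\omega$-HN slope bound needed for Huybrechts--Lehn, and you should replace your line-bundle step with it.
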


\begin{proof}
(i) Let $I\subset[\delta_-,\delta_+]$ and let $\omega\in\Amp(X)$ be an ample class. We consider $\omega$-slope stability on $\Coh_{\leq 1}(X)$ defined by 
\[\mu_\omega(E) = \frac{\chi(E)}{\omega\cdot\ch_2(E)}\,.\]
By Proposition~\ref{prop: finite}~(i), the set of curve classes $\ch_2(E)$ for $E\in\CS$ is finite, so we can define 
\[m_-=\min\limits_{E\in\CS}\Big\{\frac{1}{2}\sfw\cdot\ch_2(E)\Big\}\,,\quad m_+=\max\limits_{E\in\CS}\Big\{\frac{1}{2}\sfw\cdot\ch_2(E)\Big\}\,.\]

Let $F\subset E$ be a subsheaf and $E\twoheadrightarrow Q$ a quotient, then\begin{align*}
    \chi(F)&\leq \frac{1}{2}\big(\ell(F)\cdot\delta_+ + m_+\big)\,,\\
    \chi(Q)&\geq \frac{1}{2}\big(\ell(Q)\cdot\delta_- + m_-\big)\,,
\end{align*} 
Recall that $\Coh_0(X)\subset \CT_0$, thus $E$ is torsion-free and $\omega\cdot\ch_2(F)>0$. By Lemma~\ref{prop: ell} we have $0\leq \ell(F),\ell(Q)\leq \ell(E)$ and so we obtain a bounded interval~$J$ (depending only on $\gamma$ and $I$) such that for all $E$ as above, the HN-factors of $E$ with respect to $\mu_\omega$-stabilty have slope contained in~$J$. Boundedness of the family of such~$E$ now follows from boundedness of $\mu_\omega$-stability~\cite[Theorem~3.3.7]{HL97}.\medskip

(ii) Assume that $E\cong \iota_\ast\iota^\ast E$ and denote by $G=\iota^\ast E$. By Proposition~\ref{prop: finite}~(ii) the set of classes $\alpha=[E]\in N_{\leq 1}$ for $E\in\CS$ is finite. Fix one such $\alpha$. We use Lemma~\ref{lem: morphism} to obtain $L\in\Pic(C)$ with $\chi(L)\geq n(\alpha)$ bounded below by some $n(\alpha)\in\BZ$ determined from the class $\alpha\in N_{\leq 1}$. We have a non-zero morphism
\[ K \to G\]
such that $K$ is either $\CO_p(-1)\otimes p^\ast L$ or $\omega_p\otimes p^\ast L[1]$. In both cases, $K$ is stable by Lemma~\ref{lem: stable}. Let $G'$ be the image of this morphism in~$\CA_{\leq 1}$, thus we obtain an exact triangle with pushforward in~$\CA_{\leq 1}$
\[ G' \to G \to G''\,.\]
Note that $\Hom(\CA_0,\iota_\ast G)=0$ since $\iota_\ast G\in \CM^\nu_\alpha(I)$ and $I\subset\BR$ is finite. Thus, $\ell(\iota_\ast G')>0$ by Lemma~\ref{prop: ell}. We can now bound the slopes of the HN-factors of $G'$ and $G''$ as follows. There are obvious inequalities
\[ \nu_+(\iota_\ast G')\leq \nu_+(\iota_\ast G)\,,\quad \nu_-(\iota_\ast G'')\geq \nu_-(\iota_\ast G)\,.\]
Since $G'$ is a quotient of $K$, we get $\nu_-(\iota_\ast G')\geq\nu(K)$, which is bounded below via $\chi(L)\geq n(\alpha)$ and Lemma~\ref{lem: stable}. Thus,  $d(\iota_\ast G') = \ell(\iota_\ast G')\nu(\iota_\ast G')$ lies in a bounded interval determined by~$\alpha$ and then the same is true for~$\iota_\ast G''$. We can conclude by induction on $\ell(\iota_\ast G)$ and Lemma~\ref{lem: boundedExt}. The case $\ell(\iota_\ast G)=0$ is covered by Lemma~\ref{lem: bounded0}.
\end{proof}

\subsection{Moduli stacks}\label{subsec: moduli}
The goal of this section is to explain the existence of finite type moduli spaces of $\nu$-semistable objects and stable pairs. The setup is as follows. 

Let $\CA=\langle \CF[1],\CT\rangle$ be the category of perverse sheaves defined in Section~\ref{sec: perverse t-structure} as the tilt along the torsion pair~$(\CT,\CF$) of $\Coh(X)$. We consider another torsion pair~$(\CT_{\leq 1},\CF')$ of $\Coh(X)$, where $\CT_{\leq 1} = \CT\cap\CA_{\leq 1}$ and $\CF' = \CT_{\leq 1}^\perp$. Define the tilt
\[ \Coh^\dagger(X) = \big\langle \CF'[1],\CT_{\leq 1}\big\rangle\,.\]
Recall Lieblich's~\cite{Li06} moduli stack $\CM$ of objects $E\in D^b(X)$ with
\[ \Ext^{<0}(E,E) = 0\,.\]
The stack~$\CM$ is an Artin stack locally of finite type.

\begin{lemma}\label{lem: open}
The stacks of objects $\Obj(\Coh^\dagger(X))$ and $\Obj(\CA)$ define open substacks of $\CM$.
\end{lemma}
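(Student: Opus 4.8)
The plan is to show that the two hearts are \emph{open} in the sense of \cite{BCR18, BR19, To08}, i.e.\ that for any scheme $S$ of finite type and any $S$-perfect complex $\CE\in D^b(X\times S)$, the locus of $s\in S$ such that $\CE_s$ lies in $\Coh^\dagger(X)$ (resp.\ $\CA$) is open. First I would recall that $\Coh(X)\subset D^b(X)$ is open (this is classical), and that openness is preserved under tilting along a torsion pair provided the torsion pair itself is ``open'', in the sense that both $\CT$ and $\CF$ (and symmetrically $\CT_{\leq 1}$ and $\CF'$) are cut out by open conditions in flat families. So the task reduces to two points: (a) $\CT=\{T\in\Coh(X)\mid R^1p_\ast\iota^\ast T=0\}$ and its perpendicular $\CF$ form an open torsion pair, and (b) the same for $(\CT_{\leq 1},\CF')$; then $\Obj(\CA)=\langle \CF[1],\CT\rangle$ and $\Obj(\Coh^\dagger(X))$ are obtained from $\Obj(\Coh(X))$ by the standard HRS recipe, which glues openness of the two pieces together (an object $E\in D^{[-1,0]}(X)$ lies in the tilt iff $\CH^0(E)\in\CT$ and $\CH^{-1}(E)\in\CF$, and both are open conditions once we know $E\in D^{[-1,0]}$, which is itself open).

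The key steps, in order: (1) Openness of $\Coh(X)$ inside $\CM$ — cite \cite{Li06} or \cite[Theorem 3.20]{To08}. (2) Openness of $\CT$: given a flat family $\CT_S$ of sheaves, the condition $R^1p_\ast\iota^\ast \CT_s=0$ is open in $s$ by semicontinuity of cohomology (applied to the proper morphism $p$), after base-changing via $\iota$; here one should note $\iota^\ast$ of a flat family need not stay flat, but $L\iota^\ast$ is perfect and by Lemma~\ref{lem: basic}(i) one has $R^1p_\ast L\iota^\ast T=R^1p_\ast\iota^\ast T$, so cohomology-and-base-change for $Rp_\ast L\iota^\ast$ applies cleanly. (3) Openness of $\CF=\CT^\perp$: this is the ``mirror'' statement and follows from \cite[Lemma 2.15]{To13} / the general fact \cite[Lemma 4.1]{BCR18} that the perpendicular of an open, Noetherian-torsion subcategory is open — alternatively, characterize $\CF$ fiberwise via vanishing of $\Hom$ from a fixed generating set of $\CT$ (using Lemma~\ref{lem: supp} and Proposition~\ref{prop: supp}, $\CT$ is generated under extensions by pushforwards from $W$ and sheaves supported away from $W$, and $\Hom$-vanishing against a bounded generating family is an open condition by semicontinuity). (4) Assemble: an object $E$ with $\Ext^{<0}(E,E)=0$ lies in $\CA$ iff $E\in D^{[-1,0]}(X)$ with $\CH^0(E)\in\CT$ and $\CH^{-1}(E)\in\CF$; each clause is open by (1)--(3), so $\Obj(\CA)$ is open in $\CM$. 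Identical reasoning with $(\CT_{\leq 1},\CF')$ in place of $(\CT,\CF)$ gives openness of $\Obj(\Coh^\dagger(X))$; here one additionally uses that $\CT_{\leq 1}=\CT\cap\CA_{\leq 1}$ is open because the perverse-dimension-$\le 1$ condition is open (it is a support condition on the perverse cohomology, hence semicontinuous in families via the sheaf-of-$t$-structures formalism of \cite{AP06}).

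The main obstacle I anticipate is step (2)--(3), specifically the interaction of $\iota^\ast$ with flat families: pulling a flat family of sheaves on $X$ back to the divisor $W$ destroys flatness, so the naive semicontinuity argument does not directly apply, and one must work with the derived pullback $L\iota^\ast$ and invoke Lemma~\ref{lem: basic} to identify $R^1p_\ast L\iota^\ast$ with $R^1p_\ast\iota^\ast$ before applying cohomology-and-base-change. Handling $\CF$ is the subtler half, since ``being perpendicular to $\CT$'' is not manifestly an open condition; the cleanest route is to cite \cite[Lemma 4.1]{BCR18}, which is precisely tailored to this torsion pair, or alternatively to reduce $\Hom$-vanishing to a \emph{bounded} family of test objects using the structural results of Section~\ref{subsec: supp}. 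Once these two openness statements are in hand, the rest is a formal consequence of the HRS tilting formalism together with the fact that $\CM$ is locally of finite type, so I would keep the exposition brief and defer to \cite{BCR18, BR19} for the standard parts.
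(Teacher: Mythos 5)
Your proposal follows essentially the same route as the paper's proof: identify each heart as a tilt of $\Coh(X)$ along a torsion pair whose torsion side is cut out by the open condition $R^1p_\ast L\iota^\ast = 0$ (using Lemma~\ref{lem: basic} to pass from $\iota^\ast$ to the derived pullback), note that the torsion-free side is an orthogonal complement and hence open as well, and conclude that the tilted heart defines an open substack. The paper compresses the final step by citing~\cite[Theorem~A.8]{ABL13}, whereas you unpack the HRS mechanism and the flat-family subtlety for $L\iota^\ast$ more explicitly, but the content and key ingredients are the same.
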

\begin{proof}
In both cases, the heart is defined as a tilt along a torsion pair. The torsion part is defined by the condition $R^1p_\ast L\iota^\ast =0$, see Lemma~\ref{lem: basic}. This an open condition in families. The torsion-free part of the torsion pair is defined as the orthogonal complement, which is an open condition as well. Then, also the tilt defines an open substack~\cite[Theorem A.8]{ABL13}. 
\end{proof}
We consider stable pairs in the subcategory
\[ {}^p\CB=\Big\langle \CO_X[1]\,,\CA_{\leq 1}\Big\rangle_\ex\subset\Coh^\dagger(X)\,.\]
It follows from the argument in~\cite[Lemma 3.5, Lemma 3.8]{To10a} that ${}^p\CB$ is a Noetherian abelian category. Note that $\Coh^\dagger(X)$, however, is not Noetherian. Let $\Obj^{\geq -1}({}^p\CB)$ be the substack of objects of rank~$\geq -1$, thus the rank is either $-1$ or $0$.

\begin{proposition}\label{prop: finitetype}
Let $I\subset\BR$ be an interval, $\delta\in \BR$, $\gamma\in N_1$, and $\alpha\in N_{\leq 1}$, then
\begin{enumerate}
    \item $\Obj^{\geq -1}({}^p\CB)\subset \CM$ is an open substack,
    \item $\CM_\gamma^\nu(I)\subset \Obj(\CA_{\leq 1})$ is an open substack. If $I$ is bounded, $\CM_\gamma^\nu(I)$ is an Artin stack of finite type,
    \item $\CM_\alpha^\nu([\delta, +\infty])$ and $\CM_\alpha^\nu((-\infty, \delta])$ are Artin stacks of finite type.
\end{enumerate}
\end{proposition}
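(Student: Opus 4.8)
The plan is to treat the three assertions in turn, using that (i) yields in particular the openness of $\Obj(\CA_{\leq 1})$ in $\CM$, that (ii) then rests on openness of the slope conditions together with boundedness of $\CM_\gamma^\nu(I)$ for bounded $I$, and that (iii) follows from (ii) via an \emph{a priori} bound on the slopes of Harder--Narasimhan factors once the full numerical class is fixed.

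\textbf{Part (i).} By Lemma~\ref{lem: open} the stack $\Obj(\Coh^\dagger(X))$ is open in $\CM$, so it is enough to see that the rank $-1$ and rank $0$ loci of ${}^p\CB$ are open in $\Obj(\Coh^\dagger(X))$; as rank is locally constant we may treat the two separately. In rank $0$ an iterated extension of $\CO_X[1]$ (of rank $-1$) and objects of $\CA_{\leq 1}$ uses no copy of $\CO_X[1]$, so the rank-$0$ part of ${}^p\CB$ is exactly $\CA_{\leq 1}$; and $\Obj(\CA_{\leq 1})\subset\Obj(\CA)$ is cut out by ${}^p\dim\leq 1$, a condition which — since $p$ maps to the curve $C$, so that the term $\dim p(\supp(E)\cap W)$ is automatically $\leq 1$ — is just $\dim\supp\bigl(E|_{X\ssetminus W}\bigr)\leq 1$ and hence open. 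In rank $-1$ exactly one copy of $\CO_X[1]$ enters, and taking perverse cohomology of the defining extensions identifies the rank-$(-1)$ part of ${}^p\CB$ with the objects $P$ satisfying ${}^p\CH^i(P)=0$ for $i\neq -1,0$, with ${}^p\CH^{-1}(P)$ a rank-$1$ subobject of $\CO_X$ in $\CA$ and ${}^p\CH^0(P)\in\CA_{\leq 1}$ — all open conditions in families, exactly as in~\cite[Lemmas~3.5--3.8]{To10a} and~\cite{BCR18}. This proves (i), and in passing that $\Obj(\CA_{\leq 1})\subset\CM$ is open.

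\textbf{Part (ii).} Openness of $\Obj(\CA_{\leq 1})$ was just noted. For a bounded closed interval $I=[\delta_-,\delta_+]$, membership in $\CM_\gamma^\nu(I)$ means $\nu_+(E)\leq\delta_+$ and $\nu_-(E)\geq\delta_-$, each of which is open in families by the standard argument once the destabilising sub- and quotient objects are known to range over a bounded family — which is Proposition~\ref{prop: boundedfamilies}, supplemented by Proposition~\ref{prop: finite} and Corollary~\ref{cor: finite} to keep their discrete invariants finite — all carried out in the sheaf-of-$t$-structures formalism of~\cite{AP06},~\cite[Section~3]{To08}. It remains to show that $\CM_\gamma^\nu(I)$ is bounded for bounded $I$. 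Any such $E$ has no nonzero subobject in $\CA_0$ (which would have slope $+\infty$), so $E\in\CA_1$; using the triangle $\HH^{-1}(E)[1]\to E\to\HH^0(E)$ with $\HH^{-1}(E)\in\CF$, $\HH^0(E)\in\CT$, together with the support and generation results of Section~\ref{sec: perverse t-structure} (Lemma~\ref{lem: supp}, Proposition~\ref{prop: supp}, Lemma~\ref{lem: generatorsT}), $E$ is an iterated extension of objects of the two types of Proposition~\ref{prop: boundedfamilies}, their shifts by $[1]$, and zero-dimensional perverse sheaves. Corollary~\ref{cor: finite} and Proposition~\ref{prop: finite} confine the numerical classes occurring to a finite set, after which boundedness is propagated by Proposition~\ref{prop: boundedfamilies}, Lemma~\ref{lem: bounded0}, and iterated use of Lemma~\ref{lem: boundedExt}. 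A bounded substack of the Artin, locally finite type stack $\Obj(\CA_{\leq 1})$ is then of finite type.

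\textbf{Part (iii), and the main difficulty.} Fix $\alpha\in N_{\leq 1}$ with image $\gamma\in N_1$. For $E\in\CM_\alpha^\nu([\delta,+\infty])$ split off the $\CA_0$-torsion part $E_0$; its complement has all HN factors with $\ell(-)\geq 1$ and slope $\geq\delta$, so a short computation with $d(E_i)=\ell(E_i)\nu(E_i)$, $\sum\ell(E_i)=\ell(\alpha)$, $\sum d(E_i)=d(\alpha)$ bounds $\nu_+$ above by an explicit $D(\alpha,\delta)$; meanwhile $d(E_0)$ is bounded, so $E_0$ ranges over a bounded family by Lemma~\ref{lem: bounded0}, and one concludes by Part~(ii) and Lemma~\ref{lem: boundedExt}. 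For $E\in\CM_\alpha^\nu((-\infty,\delta])$ there are no $\CA_0$-factors (their slope $+\infty\notin(-\infty,\delta]$), and the same bookkeeping with reversed signs bounds $\nu_-$ below by some $\delta'(\alpha,\delta)$, so $\CM_\alpha^\nu((-\infty,\delta])=\CM_\alpha^\nu\bigl([\delta'(\alpha,\delta),\delta]\bigr)$ and Part~(ii) applies. The real obstacle lies in the boundedness step of Part~(ii): a $\nu$-semistable object need not itself be of one of the two elementary types of Proposition~\ref{prop: boundedfamilies}, so the argument hinges on decomposing it through the support and generation machinery of Section~\ref{sec: perverse t-structure} while keeping every residual curve class inside a finite set — precisely where Corollary~\ref{cor: finite} is needed — so that Lemma~\ref{lem: boundedExt} can be iterated.
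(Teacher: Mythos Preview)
Your argument is correct and follows essentially the same route as the paper: openness of ${}^p\CB$ via Lemma~\ref{lem: open} and a Toda-style characterisation, boundedness in (ii) via the decomposition lemmas together with Corollary~\ref{cor: finite}, Proposition~\ref{prop: boundedfamilies} and Lemma~\ref{lem: bounded0}, and in (iii) splitting off the $\CA_0$-torsion $E_0$ before bounding $\nu_+$ on the quotient $E_1$. One small imprecision worth noting: in (iii) your identity $\sum d(E_i)=d(\alpha)$ for the HN factors of $E_1$ should read $\leq d(\alpha)$ (since $d(E_0)\geq 0$ is subtracted), and the bound on $d(E_0)$ is not parallel to but \emph{consequent upon} the boundedness of $E_1$ --- once $E_1$ ranges over a bounded family, $d(E_1)$ takes finitely many values, whence $d(E_0)=d(\alpha)-d(E_1)$ is bounded --- exactly as the paper orders the steps.
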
 

 \begin{proof}(i) By Lemma~\ref{lem: open} it suffices to show that 
 \[\Obj^{\geq -1}({}^p\CB)\subset \Obj(\Coh^\dagger(X))\]
 is open. This can be proved in the same way as~\cite[Lemma 5.1]{To16}. An object $P\in\Coh^\dagger$ of rank~$0$ (resp.\ $-1$) is contained in ${}^p\CB$ if and only if $\det(P)=0$ (resp.\ $\det(P)\cong\CO_X$) and $\CH^{-1}(P)$ is torsion-free on~$X\ssetminus W$. The openness is proved using a spectral sequence argument as in~\cite[Lemma 3.16]{To10a}.\medskip
 
 (ii) We explain that $\CM_\gamma^\nu(I)\subset \Obj(\CA_{\leq 1})$ is open and that the family of objects in $\CM_\gamma^\nu(I)$ is bounded, if $I$ is bounded. It follows that $\CM_\gamma^\nu(I)$ is an Artin stack of finite type~\cite[Lemma 3.4]{To08}. By Corollary~\ref{cor: finite}, there are only finitely many effective decompositions of $\gamma$ in $N_1^\eff$. Boundedness of the family of objects in $\CM_\gamma^\nu(I)$ then follows from Lemma~\ref{lem: supp}, Proposition~\ref{prop: supp}, Proposition~\ref{prop: finite}, Lemma~\ref{lem: bounded0} and Proposition~\ref{prop: boundedfamilies}. 

Openness can be obtained from arguments of Toda
~\cite{To08,To16} as follows. In~\cite{To16} he considers Calabi--Yau 3-folds~$X$ containing a divisor isomorphic to~$\BP^2$, and the category of sheaves with at most $1$-dimensional support outside of the divisor. He studies objects in the tilt of this category along a torsion pair and proves boundedness of the family of semistable objects~\cite[Proposition 5.2]{To16}. Openness is deduced from boundedness as in~\cite[Theorem 3.20]{To08} and the same proof can be used for $\nu$-stability. 

 (iii) Suppose that $E\in \CM_\alpha^\nu\big([\delta, +\infty]\big)$ (the other case is analogous) and without loss of generality $\delta<0$. Consider the decomposition \[E_0\to E\to E_1\]
 of $E$ with respect to the torsion pair $(\CA_0, \CA_1)$. Let $\gamma\in N_1$ be the residue of~$\alpha$. Then, $E_1\in\CM_\gamma^\nu\big([\delta, +\infty)\big)$, so for any subobject $E'\to E_1$ in $\CA$ we have either $\nu(E')\leq 0$, or
 \begin{align*}
\nu(E')&\leq d(E')=d(E_1)-d(E_1/E')\leq d(E)-\ell(E_1)\delta\\
&\leq d(\alpha)-\ell(\alpha)\delta,
 \end{align*}
 thus $E_1\in \CM_\gamma^\nu\big([\delta,\max\{0, d(\alpha)-\ell(\alpha)\delta\}]\big)$ is bounded. In particular, there are only finitely many possibilities for $d(E_1)$ and hence finitely many possibilities for $d(E_0)$, so the family of possible $E_0$ is bounded by Lemma \ref{lem: bounded0}. Using Lemma 4.11 we conclude (iii).
\end{proof}


The next lemma will be useful in the combinatorical analysis of the wall-crossing formula. Let $A$ be the nef class of condition~\eqref{eq: nef}. The restriction~$\iota^\ast A$ is numerically equivalent to a multiple of~$\sfb$, thus multiplication by $A$ defines a map 
\[ A\cdot(-)\colon N_{\leq 1}\to N_0\,.\]
\begin{lemma}[{\cite[Proposition 7.1.(3)]{BCR18}}]\label{lem: cclassesfinite}
For any $\gamma\in N_1$ the image of the set
\[\{c\in N_0\mid \M_{(\gamma, c)}^{\nu}\neq \emptyset \}\]
in the quotient 
\[N_0/\BZ(A\cdot\gamma)\] 
is finite.
\end{lemma}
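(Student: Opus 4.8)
The plan is to use the auto-equivalence $-\otimes\CO_X(A)$ to reduce to $\nu$-semistable objects whose slope lies in a fixed bounded interval, and then to invoke the finite-typeness of the moduli stacks $\CM^\nu_\gamma(I)$ established in Proposition~\ref{prop: finitetype}(ii). Throughout I may assume $\gamma\neq 0$, so that $\ell(\gamma)>0$ by Proposition~\ref{prop: ell}(iii).

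First I would check that $-\otimes\CO_X(A)$ respects the perverse heart. Because $\iota^\ast A$ is numerically a multiple of the fiber class $\sfb$ (condition~\eqref{eq: nef}), the line bundle $\iota^\ast\CO_X(A)$ restricts to a degree-$0$ line bundle on every fiber of $p$, hence is a pullback $p^\ast M$ with $M\in\Pic(C)$; by Lemma~\ref{lem: basic} this gives $R^1p_\ast L\iota^\ast\big(T\otimes\CO_X(A)\big)\cong R^1p_\ast\big(L\iota^\ast T\big)\otimes M$, so the torsion pair $(\CT,\CF)$, the heart $\CA$, and the perverse dimension are all preserved by $-\otimes\CO_X(A)$. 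Thus $-\otimes\CO_X(A)$ restricts to an auto-equivalence of $\CA_{\leq 1}$, and by Proposition~\ref{prop: nusymmetry} it raises $\nu$ by $1$ and so takes $\nu$-semistable objects to $\nu$-semistable objects. On numerical classes, a short computation using $A\cdot\sfb=0$ and $A^2\cdot\sfw=0$ (both immediate from $\iota^\ast A\equiv a_0\sfb$) gives $\big[E\otimes\CO_X(A)\big]=\big(\gamma,\,c+A\cdot\gamma\big)$ when $[E]=(\gamma,c)$: the $N_1$-component $\gamma$ is unchanged, $\ch_2$ is translated by $A\cdot\ch_1=r a_0\sfb$, and $\chi$ by $A\cdot\ch_2=A\cdot\beta$. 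Hence $-\otimes\CO_X(A)$ induces isomorphisms $\CM^\nu_{(\gamma,c)}\xrightarrow{\ \sim\ }\CM^\nu_{(\gamma,\,c+A\cdot\gamma)}$, so the set $S:=\{c\in N_0\mid\CM^\nu_{(\gamma,c)}\neq\emptyset\}$ is a union of cosets of $\BZ(A\cdot\gamma)$ in $N_0$.

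Next I would select one representative from each such coset. For a numerical class $(\gamma,c)$ the common slope of its $\nu$-semistable objects is $\nu=d(\gamma,c)/\ell(\gamma)$, and translating $c$ by $A\cdot\gamma$ changes $d$ by $\ell(\gamma)>0$; therefore every $\BZ(A\cdot\gamma)$-coset that meets $S$ already meets the subset
\[ S_0:=\big\{\,c\in S \ \big|\ 0\le d(\gamma,c)/\ell(\gamma)<1\,\big\}\,, \]
so the image of $S$ in $N_0/\BZ(A\cdot\gamma)$ coincides with the image of $S_0$. Finally, any object realizing $\CM^\nu_{(\gamma,c)}$ with $c\in S_0$ is $\nu$-semistable of slope in the bounded interval $I=[0,1]$, hence lies in $\CM^\nu_\gamma(I)$, which is a finite-type Artin stack by Proposition~\ref{prop: finitetype}(ii). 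A finite-type stack has a finite-type atlas, and the numerical class is locally constant in flat families of perfect complexes, so only finitely many classes in $N_{\leq 1}$ occur in $\CM^\nu_\gamma(I)$; in particular $S_0$ is finite, and therefore so is the image of $S$ in $N_0/\BZ(A\cdot\gamma)$.

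The only real input here is the finite-typeness of $\CM^\nu_\gamma(I)$ for bounded $I$, which rests on the boundedness statements of Section~\ref{subsec: boundedness} and which I am taking for granted. The one step that needs genuine (if routine) care is the compatibility of $-\otimes\CO_X(A)$ with the perverse $t$-structure, together with the Chern-character bookkeeping under the twist; once these are in place the argument is purely formal.
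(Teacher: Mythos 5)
Your proof is correct and takes essentially the same approach the paper relies on: the paper's one-line proof is ``same as in \cite{BCR18}, using Proposition~\ref{prop: finitetype},'' and that reference argument is exactly what you spell out --- twist by $\CO_X(A)$ to reduce to a bounded slope interval (using that the twist preserves $\CA_{\leq 1}$ and raises $\nu$ by $1$, which is Proposition~\ref{prop: nusymmetry}, and acts on $N_{\leq 1}$ by $(\gamma,c)\mapsto(\gamma,c+A\cdot\gamma)$, as also noted without proof in Section~\ref{subsec: combinatorics}), then invoke finite-typeness of $\CM^\nu_\gamma(I)$ from Proposition~\ref{prop: finitetype}(ii). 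Your preliminary check that $\iota^\ast\CO_X(A)\cong p^\ast M$ and hence that $-\otimes\CO_X(A)$ preserves $(\CT,\CF)$, $\CA$, and perverse dimension is a detail the paper leaves implicit, and your reduction to $\gamma\neq 0$ (hence $\ell(\gamma)>0$ and $A\cdot\gamma\neq 0$) is exactly the non-degenerate case in which the lemma is applied.
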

\begin{proof}
The proof is the same as in \cite{BCR18}, using Proposition~\ref{prop: finitetype}.
\end{proof}


\subsection{Refined stability}\label{subsec: refined stability}
Finally we introduce the last stability function that we'll need. This stability function $\zeta$ will be used for the $\bs/\PPT$ wall-crossing and is the analog of \cite[Definition 8.1]{BCR18}.

For $E\in\CA_{\leq 1}\ssetminus\{0\}$ define the function
\[\zeta(E)=\Big(-\frac{r}{\ell(E)},\nu(E)\Big)\in (-\infty, +\infty]\times (-\infty, +\infty]\,,\]
where as before $r\in\BZ$ such that $\ch_1(E)=r\sfw$. If $E\in \CA_0$ we set
\[\zeta(E)=(+\infty, +\infty)\,.\]
We give $(-\infty, +\infty]\times (-\infty, +\infty]$ the lexicographic order. For $x,y\in (-\infty, +\infty]\times (-\infty, +\infty]$ we write $[x,y]$ and $]x,y]$ for the set of all $z$ with $x\leq z\leq y$ resp.\ $x<z\leq y$. Note that the first component
\[ \zeta_1(E)=-\frac{r}{\ell(E)}\]
only depends on the class of $[E]$ in $N_1=N_{\leq 1}/{N_0}$. For $\gamma\in N_1$ we will also write $\zeta_1(\gamma)$.

\begin{proposition}
The slope $\zeta$ defines a stability condition on $\CA_{\leq 1}$. 
\end{proposition}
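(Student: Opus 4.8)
The plan is to establish that $\zeta$ is a stability condition on $\CA_{\leq 1}$ by verifying the two defining properties: the see-saw property, and the existence of Harder--Narasimhan filtrations. This is the analog of Proposition~\ref{prop: nustability} for a two-component, lexicographically ordered slope, and the argument should closely parallel the proof given there, together with the compatibility results already established for $\nu$ and for $\ell$, $d$, $r$.

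First I would check the see-saw property. For an exact triangle $E'\to E\to E''$ in $\CA_{\leq 1}$ the functions $\ell$, $d$, and $r$ are all additive. If all three objects lie outside $\CA_0$, then $\ell(E),\ell(E'),\ell(E'')>0$ by Proposition~\ref{prop: ell}(iii), and the usual mediant/see-saw computation applies componentwise: $\zeta_1$ obeys see-saw because $-r/\ell$ is a ratio of additive functions with positive denominators, and likewise for $\zeta_2=\nu$; one then observes that the lexicographic comparison of the mediant with the two endpoints respects the see-saw relation (if $\zeta(E')<\zeta(E'')$ then $\zeta(E')<\zeta(E)<\zeta(E'')$, and equality propagates). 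If one of the factors lies in $\CA_0$, then by definition $\zeta=(+\infty,+\infty)$ on it, which is the maximum of the order, so $\zeta(E)$ again lies (weakly) between $\zeta(E')$ and $\zeta(E'')$; this matches the way $\CA_0$ is treated as the ``torsion'' part, exactly as in the $\nu$-case. The verification here is routine once one notes $\ell>0$ off $\CA_0$ and $\ell=0$ on $\CA_0$.

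Next I would prove existence of Harder--Narasimhan filtrations, for which it suffices (by the standard argument, as invoked in Proposition~\ref{prop: nustability}) to show $\CA_{\leq 1}$ is $\zeta$-Artinian. Given a descending chain $E_1\supseteq E_2\supseteq\cdots$ in $\CA_{\leq 1}$, the sequence $\ell(E_i)$ is non-increasing and bounded below by $0$, hence eventually constant; so for $i$ large the cones $C(E_{i+1}\to E_i)$ lie in $\CA_0$. At this point the first component $\zeta_1(E_i)=-r(E_i)/\ell(E_i)$ is also eventually constant (since $\ell$ is constant and $r$ changes only by $r$ of an object in $\CA_0$, which is $0$). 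Restricted to a fixed value of $\ell$ and $r$, the chain is controlled purely by the second component $\nu$, and $\nu$-Artinianity of $\CA_{\leq 1}$ was already established in the proof of Proposition~\ref{prop: nustability}: once $C(E_{i+1}\to E_i)\in\CA_0$, one has $\nu(E_{i+1})\le \nu(E_i)$, and the argument for $\nu$ shows the chain must stabilize. Combining, the $\zeta$-chain stabilizes, so $\CA_{\leq 1}$ is $\zeta$-Artinian.

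The main obstacle, such as it is, is purely bookkeeping around the lexicographic order and the role of $\CA_0$: one must be careful that ``$\zeta$-subobject of smaller or equal slope'' interacts correctly with the torsion pair $(\CA_0,\CA_1)$ — in particular that objects of $\CA_0$, carrying the maximal value $(+\infty,+\infty)$, behave as a torsion class and do not obstruct finiteness of the HN filtration — and that the two-step stabilization (first $\ell$, then $\nu$ within fixed $\ell,r$) genuinely terminates. None of this requires new geometric input beyond Proposition~\ref{prop: ell} and the $\nu$-Artinian property; the computations are essentially identical to those already carried out for $\nu$-stability, so I would keep the write-up brief and cite Proposition~\ref{prop: nustability} and Proposition~\ref{prop: ell} for the substantive points.
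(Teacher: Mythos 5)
Your proof is correct, and the overall structure is the same as the paper's: verify see-saw, then reduce $\zeta$-Artinianity to the $\nu$-Artinianity already established in Proposition~\ref{prop: nustability}. Where you differ is in the mechanism used to show that the first component $\zeta_1 = -r/\ell$ stabilizes along a descending chain. You argue directly: $\ell$ is a non-increasing integer bounded below by $0$ (using Proposition~\ref{prop: ell}(iii)), hence eventually constant, so the cones $E_i/E_{i+1}$ lie in $\CA_0$; since every object of $\CA_0$ has $\ch_1=0$ (visible from the generators in Lemma~\ref{lem: A0}), $r$ is then also constant, so $\zeta_1$ is constant and the chain is governed by $\nu$ alone. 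The paper instead cites Corollary~\ref{cor: finite} (finitely many effective decompositions of a class in $N_1^\eff$), which yields that $\gamma(E_i)\in N_1$ stabilizes and hence so does $\zeta_1$, mirroring the structure of \cite[Proposition~8.2]{BCR18}. Both routes are valid; yours is a bit more elementary in that it bypasses the combinatorial finiteness corollary and uses only the integer arithmetic of $\ell$ and $r$, at the cost of requiring the (easy) observation that $r=0$ on $\CA_0$. Your case analysis for the see-saw property—handling the mediant when $\ell>0$ on all three terms, and separately when a factor lies in $\CA_0$ with $\zeta=(+\infty,+\infty)$—is also a fine expansion of what the paper dismisses as ``straightforward.''
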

\begin{proof}
The see-saw property is straightforward. To prove that $\CA_{\leq 1}$ is $\zeta$-Artinian, the same strategy as in \cite[Proposition 8.2]{BCR18} can be employed: by Corollary \ref{cor: finite} it's enough to show that $\CA_{\leq 1}$ is $\nu$-Artinian, which we did in Proposition \ref{prop: nustability}.
\end{proof}

Given a subset $I\subset (-\infty, +\infty]\times  (-\infty, +\infty]$ we follow the notation of Section~\ref{subsec: boundedness}, so e.g.\ $\CM^\zeta(I)$ is the stack of $E\in \CA_{\leq 1}$ such that all their $\zeta$-HN-factors are contained in $I$. To apply the wall-crossing formula to $\zeta$-wall-crossing we will need to prove that the stacks $\CM^\zeta(I)$ are open and (locally) of finite type. 

For this, we recall the linear function $L_\mu\colon N_0\to \BR$ defined by
\[L_\mu(c)=L_{\mu}(j\sfb, n)=2n+j+\frac{j}{\mu a_0}.\]
A set $S\subset N_0$ is said to be $L_\mu$-bounded if for each $M\in \BR$,
\[\# \{c\in S\colon L_\mu(c)<M\}<\infty.\]
We say that a set of objects in $\CA_{\leq 1}$ is $L_\mu$-bounded if its image in $N_0$ is $L_\mu$-bounded.

\begin{lemma}[{\cite[Lemma 8.14]{BCR18}}]
\label{lem: Lmubounded}
Given $\mu>0, \eta_1, \eta_2\in \BR$ and $\gamma\in N_1$, the sets
\[\CM^\nu_\gamma\big([\eta_1, +\infty]\big)\cap \CM^\zeta_\gamma\big(](-\infty, -\infty), (\mu,\eta_2)]\big)\]
and 
\[\CM^\nu_\gamma\big((-\infty,\eta_1]\big)\cap \CM^\zeta_\gamma\big([(\mu,\eta_2), (+\infty, +\infty)]\big)\]
are $L_\mu$-bounded.
\end{lemma}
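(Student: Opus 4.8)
The plan is to combine the finiteness results already available --- above all Corollary~\ref{cor: finite} and Lemma~\ref{lem: cclassesfinite} --- with the explicit shape of $L_\mu$, closely following \cite[Lemma~8.14]{BCR18}. I will describe the argument for the first set; the second is dual, with $\nu_+$ replacing $\nu_-$ and all inequalities reversed. The starting observation is the identity: for $\gamma'=(r'\sfw,\beta')\in N_1$ one has $A\cdot\gamma'=r'a_0\,\sfb+(A\cdot\beta')\,\pt$, so using $L_\mu(j\sfb,n)=2n+j+j/(\mu a_0)$, $\ell(\gamma')=2A\cdot\beta'+r'a_0$ and $\zeta_1(\gamma')=-r'/\ell(\gamma')$ a one-line computation gives
\[ L_\mu(A\cdot\gamma')=\ell(\gamma')\bigl(1-\zeta_1(\gamma')/\mu\bigr), \]
which for $\ell(\gamma')>0$ is $>0$, $=0$, or $<0$ according as $\zeta_1(\gamma')<\mu$, $=\mu$, or $>\mu$. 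This sign dichotomy is the engine of the proof.

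One would take $E$ in the first set whose $N_0$-class $c$ satisfies $L_\mu(c)<M$ and pass to its $\nu$-Harder--Narasimhan filtration $0=H_0\subset\dots\subset H_p=E$, with $\nu$-semistable factors $G_k=H_k/H_{k-1}$ of slopes $s_1>\dots>s_p\ge\eta_1$. Writing $[G_k]=(\delta_k,c_k)\in N_1\oplus N_0$, one has $\sum_k\delta_k=\gamma$ and $\sum_k c_k=c$, so it suffices to confine each $c_k$ to a finite subset of $N_0$. No $G_k$ can lie in $\CA_0$: if $G_1=H_1$ did, then $E$ would contain an $\CA_0$-subobject and hence have $\zeta_+(E)=(+\infty,+\infty)$, contradicting $E\in\CM^\zeta_\gamma\bigl(](-\infty,-\infty),(\mu,\eta_2)]\bigr)$. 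Therefore $\ell(\delta_k)>0$, and so $A\cdot\delta_k\ne0$, for every $k$ (Proposition~\ref{prop: ell}~(iii)), and by Corollary~\ref{cor: finite} there are only finitely many decompositions $\gamma=\sum_k\delta_k$ with $\delta_k\in N_1^\eff$; fix one. Moreover each $G_k$ is a subquotient of $E$, and $\zeta_+$ cannot increase along subquotients, so $\zeta_+(G_k)\le\zeta_+(E)\le(\mu,\eta_2)$, i.e.\ $G_k\in\CM^\zeta_{\delta_k}\bigl(](-\infty,-\infty),(\mu,\eta_2)]\bigr)$. Expressing $\zeta_1(\delta_k)$ as the $\ell$-weighted average of the (all $\le\mu$) $\zeta_1$-values of the $\zeta$-HN factors of $G_k$ gives $\zeta_1(\delta_k)\le\mu$; and if $\zeta_1(\delta_k)=\mu$, then each of those factors has $\zeta_1$-value exactly $\mu$ and so, lying below $(\mu,\eta_2)$, has $\nu\le\eta_2$, whence $\nu(G_k)\le\eta_2$.

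Now fix $k$. By Lemma~\ref{lem: cclassesfinite}, $c_k$ belongs to one of finitely many cosets $c_k^0+\BZ\,(A\cdot\delta_k)$; write $c_k=c_k^0+m_k(A\cdot\delta_k)$. Adding one copy of $A\cdot\delta_k$ to the class amounts, by Proposition~\ref{prop: nusymmetry}, to twisting by $\CO_X(A)$, so it raises $\nu$ by $1$ and changes $L_\mu$ by $L_\mu(A\cdot\delta_k)=\ell(\delta_k)(1-\zeta_1(\delta_k)/\mu)$. From $\nu(G_k)=s_k\ge\eta_1$ one gets a lower bound on $m_k$, and since $\zeta_1(\delta_k)\le\mu$ (so $L_\mu(A\cdot\delta_k)\ge0$) this yields a lower bound on $L_\mu(c_k)$ depending only on $\delta_k,\eta_1,\mu$. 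Then $\sum_k L_\mu(c_k)=L_\mu(c)<M$ forces an upper bound on each $L_\mu(c_k)$. Finally $m_k$ is bounded above: if $\zeta_1(\delta_k)<\mu$ this follows from the upper bound on $L_\mu(c_k)$ together with $L_\mu(A\cdot\delta_k)>0$; if $\zeta_1(\delta_k)=\mu$ it follows from $\nu(G_k)\le\eta_2$. Hence each $m_k$, each $c_k$, and so $c=\sum_k c_k$, lies in a finite set determined by $\gamma,M,\mu,\eta_1,\eta_2$, which is exactly the $L_\mu$-boundedness of the first set.

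The second set is treated identically after the substitutions above: the first $\nu$-HN factor $G_1=H_1\subseteq E$ again cannot be in $\CA_0$, since that would force $\nu_+(E)=+\infty>\eta_1$; the descent step now gives $\zeta_1(\delta_k)\ge\mu$, hence $L_\mu(A\cdot\delta_k)\le0$, together with $\nu(G_k)\le\eta_1$ (and $\nu(G_k)\ge\eta_2$ when $\zeta_1(\delta_k)=\mu$); and the roles of the upper and lower bounds on $m_k$ and $L_\mu(c_k)$ are exchanged in the counting step. I do not anticipate a serious obstacle here, since the content is entirely in the displayed identity for $L_\mu(A\cdot\gamma')$ together with Corollary~\ref{cor: finite} and Lemma~\ref{lem: cclassesfinite}; the two points deserving care are the descent of the $\zeta$-stability hypothesis to the $\nu$-HN factors in the precise two-sided form used in the count, and the borderline case $\zeta_1(\delta_k)=\mu$, where $L_\mu(A\cdot\delta_k)=0$ and one must fall back on the $\nu$-bound $\nu(G_k)\le\eta_2$ (resp.\ $\ge\eta_2$).
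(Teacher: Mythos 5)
The identity $L_\mu(A\cdot\gamma')=\ell(\gamma')\bigl(1-\zeta_1(\gamma')/\mu\bigr)$, the exclusion of $\CA_0$-factors, the use of Corollary~\ref{cor: finite} for the finitely many $N_1^\eff$-decompositions and of Lemma~\ref{lem: cclassesfinite} for the finitely many cosets, and the quasi-periodicity of $\nu$ and $L_\mu$ under twisting by $\CO_X(A)$ are all exactly the right ingredients. However, there is a genuine gap in the step that transfers the $\zeta$-constraint from $E$ to its $\nu$-Harder--Narasimhan factors. You assert that ``$\zeta_+$ cannot increase along subquotients'' and conclude $\zeta_+(G_k)\le\zeta_+(E)$ for each $\nu$-HN factor $G_k$. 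This is false. For a subobject $F\subset E$ one does have $\zeta_+(F)\le\zeta_+(E)$, but the opposite inequality holds for quotients: if $E$ is $\zeta$-\emph{stable} and $E\twoheadrightarrow Q$ is a proper nonzero quotient, then $\zeta(Q)>\zeta(E)$ and hence $\zeta_+(Q)\ge\zeta(Q)>\zeta(E)=\zeta_+(E)$. Concretely, take $\zeta_1(H_1)<\zeta_1(H_2)\le\mu$ in your notation: the see-saw forces $\zeta_1(G_2)>\zeta_1(H_2)$, which may well exceed $\mu$ even though $\zeta_+(E)\le(\mu,\eta_2)$. (It is exactly for this reason that the proof of Proposition~\ref{prop: finitetypezeta} in the paper truncates the $\nu$-HN filtration once, at a single threshold, and works with the resulting sub\emph{object} $E_{\ge\eta}$, where the one-sided inequality does hold, rather than with all of the $\nu$-HN factors.)

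This gap is fatal to the counting step as written. You need $\zeta_1(\delta_k)\le\mu$ to guarantee $L_\mu(A\cdot\delta_k)\ge0$, so that the lower bound on $m_k$ coming from $\nu(G_k)\ge\eta_1$ converts into a \emph{lower} bound on $L_\mu(c_k)$; and in the borderline case $\zeta_1(\delta_k)=\mu$ you need $\nu(G_k)\le\eta_2$ (again extracted from $\zeta_+(G_k)\le(\mu,\eta_2)$) to bound $m_k$ from above. If some $\zeta_1(\delta_k)>\mu$, then $L_\mu(A\cdot\delta_k)<0$, the lower bound on $m_k$ gives an \emph{upper} bound on $L_\mu(c_k)$, and the argument does not close. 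A correct proof must avoid passing the $\zeta$-hypothesis to arbitrary $\nu$-HN subquotients: for instance, one can filter instead by $\zeta_1$ (or $\zeta$), where the ``$\le\mu$'' bound descends to all factors because they are $\zeta_1$-semistable subquotients of an object with $(\zeta_1)_+\le\mu$, and extract the $\nu$-information separately; or one can truncate the $\nu$-HN filtration only once, as in Proposition~\ref{prop: finitetypezeta}, and induct. Either way, the key step that lets both constraints be used on the same pieces requires more care than ``$\zeta_+$ is monotone along subquotients''.
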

\begin{proof}
See \cite[Lemma 8.14]{BCR18}.\qedhere
\end{proof}

\begin{proposition}
\label{prop: finitetypezeta}
Let $I\subset (-\infty, +\infty]\times  (-\infty, +\infty]$ be an interval, $\gamma\in N_1$ and $(\mu, \eta)\in \BR_{>0}\times\BR$.
\begin{enumerate}
    \item The stack $\CM^\zeta(I)\subset \Obj(\CA_{\leq 1})$ is an open substack locally of finite type.
    \item The family of objects in $\CM^\zeta_\gamma(\mu, \eta)$ is $L_\mu$-bounded.
\end{enumerate}
\end{proposition}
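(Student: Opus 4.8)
The plan is to follow the strategy of \cite[Section~8]{BCR18}, reducing everything to the already-established results for $\nu$-stability (Proposition~\ref{prop: finitetype}), the finiteness of effective decompositions (Corollary~\ref{cor: finite}), and the $L_\mu$-boundedness Lemma~\ref{lem: Lmubounded}. The guiding observation is that the first component $\zeta_1(E)=-r/\ell(E)$ factors through $N_1=N_{\leq 1}/N_0$ and is additive there: hence, for a fixed class $\gamma\in N_1$, the $\zeta_1$-values of the $\zeta$-HN factors of any $E$ with $[E]=\gamma$ lie in the set $\Sigma_\gamma=\{\zeta_1(\gamma')\mid \gamma',\ \gamma-\gamma'\in N_1^\eff\}$, which is finite by Corollary~\ref{cor: finite}. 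In particular $\zeta$-stability on objects of class $\gamma$ is governed by the \emph{weak} stability function $\zeta_1$ (which is locally constant in families once $[E]=\gamma$ is fixed) together with $\nu=\zeta_2$-stability on each of the finitely many $\zeta_1$-graded pieces.

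For part (i), I would write $\CM^\zeta(I)=\coprod_{\gamma\in N_1}\CM^\zeta_\gamma(I)$ and, for a fixed $\gamma$, exhibit $\CM^\zeta_\gamma(I)$ as the locus where the canonical $\zeta_1$-HN stratification of $\Obj(\CA_{\leq 1})$ (indexed by subsets of the finite set $\Sigma_\gamma$, and locally constant once $[E]=\gamma$ is imposed) is compatible with $I$, and where on each stratum the $\zeta_1$-graded piece of slope $s$ has all its $\nu$-HN factors of slope in the slice $I_s=\{t\mid (s,t)\in I\}$. Openness of each such condition — and finite type when the $I_s$ are bounded — then follows from Proposition~\ref{prop: finitetype}(ii)--(iii), Lemma~\ref{lem: open} and the openness arguments of Toda \cite{To08,To16} recalled there; half-line slices are handled by Proposition~\ref{prop: finitetype}(iii). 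Local finite type in general reduces, for fixed $\alpha\in N_{\leq 1}$, to boundedness of $\CM^\zeta_\alpha(I)$, which I would deduce from the $\nu$-HN splitting used for (ii) below, Corollary~\ref{cor: finite}, and Proposition~\ref{prop: finitetype}.

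For part (ii), fix $\gamma$ with $\zeta_1(\gamma)=\mu$ and $\eta\in\BR$. Since $\nu(E)=\eta$ pins down $2n(E)+j(E)$ in terms of $(\gamma,\eta)$, where $j(E)$ denotes the $\sfb$-coefficient of $\ch_2(E)$, and $L_\mu$ is linear with $L_\mu(c_E)$ an increasing affine function of $j(E)$ (here $\mu a_0>0$), it suffices to bound $L_\mu$ below on $\{c_E\mid E\in\CM^\zeta_\gamma(\mu,\eta)\}$. Given such an $E$, take its $\nu$-HN filtration and split it at slope $\eta$: this yields $0\to E^+\to E\to E^-\to 0$ with $E^+\in\CM^\nu([\eta,+\infty])$, with $\nu_+(E^-)<\eta$, and with $[E^{\pm}]$ ranging (by Corollary~\ref{cor: finite}) over a finite set of classes summing to $\gamma$. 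Every subobject of $E^+$ is a subobject of the $\zeta$-semistable $E$, so has $\zeta$-slope $\leq(\mu,\eta)$; hence $E^+\in\CM^\zeta(\,](-\infty,-\infty),(\mu,\eta)]\,)$. Dually, every quotient of $E^-$ is a quotient of $E$, so $E^-\in\CM^\zeta([(\mu,\eta),(+\infty,+\infty)])$. Thus $E^+$ and $E^-$ lie in the two families of Lemma~\ref{lem: Lmubounded}, so their classes are $L_\mu$-bounded, and in particular $L_\mu(c_{E^{\pm}})$ is bounded below on each of the finitely many class components. Since $L_\mu(c_E)=L_\mu(c_{E^+})+L_\mu(c_{E^-})$, this bounds $L_\mu(c_E)$ below; boundedness of the family $\CM^\zeta_\gamma(\mu,\eta)$ itself then follows by running the same $\nu$-HN splitting together with Proposition~\ref{prop: finitetype}.

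The hard part, and the reason Lemma~\ref{lem: Lmubounded} is indispensable, is that a $\zeta$-semistable object need not be $\nu$-semistable: its $\nu$-HN factors may have arbitrarily large and arbitrarily negative $\nu$-slopes, so one cannot simply squeeze $\CM^\zeta$ inside a bounded $\CM^\nu$-interval. Splitting at slope $\eta$ is precisely what converts the one-sided data available for a $\zeta$-semistable object (all subobjects $\leq$ slope, all quotients $\geq$ slope) into the genuinely two-sided input that Lemma~\ref{lem: Lmubounded} accepts, while the finiteness of $\Sigma_\gamma$ and of effective decompositions keeps the bookkeeping finite. For (i) the analogous subtlety is that $\zeta$ is a two-step (weak-then-genuine) stability rather than a Bridgeland-type one, so openness must be read off stratum-by-stratum along the $\zeta_1$-filtration rather than all at once.
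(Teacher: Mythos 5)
Your argument is correct and, modulo presentation, is the paper's own: for (ii) both split the $\nu$-HN filtration of a $\zeta$-semistable object at slope $\eta$, note that the two pieces lie in the two families of Lemma~\ref{lem: Lmubounded} (since subobjects of $E^+$ and quotients of $E^-$ are subobjects resp.\ quotients of the $\zeta$-semistable $E$), and close with Corollary~\ref{cor: finite}; for (i) both reduce openness and local finite type to boundedness of the semistable family via \cite[Theorem~3.20]{To08}, established by the same $\nu$-HN splitting together with Proposition~\ref{prop: finitetype}(iii) and Lemma~\ref{lem: boundedExt}. Your $\zeta_1$-HN-stratification framing of openness is a more elaborate restatement of that reduction rather than a genuinely different route.
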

\begin{proof}
Given $E\in \CM^\zeta_{\gamma}(\mu, \eta)$ we consider its decomposition with respect to the $\nu$-HN-filtration
\[E_{\geq \eta}\to E\to E_{<\eta}.\]
Then, both $\gamma'=[E_{\geq \eta}]\in N_1^\eff$ and $\gamma-\gamma'\in N_1^\eff$, and we have
\[E_{\geq \eta}\in \CM^\nu_{\gamma'}\big([\eta, +\infty]\big)\cap \CM^\zeta_{\gamma'}\big(](-\infty, -\infty), (\mu,\eta)]\big)\,.\]
By Corollary~\ref{cor: finite} there are finitely many such~$\gamma'$, so by Lemma \ref{lem: Lmubounded} the set of possibilities for $c_{E_{\geq \eta}}$ is $L_\mu$-bounded. Similarly, the possibilities for $c_{E_{< \eta}}$ are also $L_\mu$-bounded and (ii) immediately follows.

For (i), by \cite[Theorem 3.20]{To08} it is again enough to show that the family of semistable sheaves in $\CM^\zeta_{(\gamma, c)}(\mu, \eta)$ is bounded. But using the decomposition above we have $c_{E_{\geq \eta}}+c_{E_{< \eta}}=c$, so there is a finite number of possibilities for both $c_{E_{\geq \eta}}$ and $c_{E_{<\eta}}$. It then follows from Proposition \ref{prop: finitetype} (iii) that the families of possible $E_{\geq \eta}, E_{<\eta}$ are both bounded. By Lemma \ref{lem: boundedExt} we conclude that $\CM^\zeta_{(\gamma, c)}(\mu, \eta)$ is bounded.\qedhere
\end{proof}

\section{Bryan--Steinberg}
\label{sec: BS}

In this section we introduce numerical invariants $\bs_{\beta,n}$ that naturally realize the quotient
\[\bs_\beta(q,Q)=\frac{\PT_\beta(q,Q)}{\PT_0(q,Q)}\,.\]
The equation will be a wall-crossing formula between $\bs$ and $\PT$ invariants. When $X$ admits a contraction map $X\to Y$ as in Section~\ref{subsec: crepant} these invariants are precisely Bryan--Steinberg invariants \cite{BS16} of the crepant resolution. Roughly speaking they count a modification of pairs $\CO_X\to F$ where instead of requiring the cokernel to have dimension zero we allow it to have support in some of the fibers~$B$. 

We define $\bs$-pairs using a torsion pair of $\Coh_{\leq 1}(X)$. Let
\[\tbs=\big\{T\in \Coh_{\leq 1}(X)\colon \, T_{|X\ssetminus W}\in\Coh_{0}(X\ssetminus W) \text{ and } Rp_\ast \iota^\ast T\in \Coh_0(X)\big\}\,.\]
One easily checks that $\tbs$ is closed under quotients and extensions (see \cite[Lemma 13]{BS16} for the case where a contraction exists), so 
\[\fbs=\{F\in \Coh_{\leq 1}(X): \Hom(\tbs, F)=0\}\]
defines the torsion-free part of a torsion pair $(\tbs, \fbs)$ of $\Coh_{\leq 1}(X)$.

The same proof as given in \cite[Lemma 51]{BS16} can be used to write the torsion pair $(\tbs, \fbs)$ in terms of the stability condition $\mu^A$ introduced in Section \ref{subsec: BS stability}:
\[\tbs=\CM^{\mu^A}\Big(\big[\frac{\infty}{2}, +\infty\big[\Big)\,, \quad \fbs=\CM^{\mu^A}\Big(\big]-\infty, \frac{\infty}{2}\big[\Big)\,,\]
where we used $\frac{\infty}{2}$ to denote 
\[\frac{\infty}{2}=(+\infty, 0)\in (-\infty, +\infty]\times (-\infty, +\infty]\,.\]

The $\bs$ numerical invariants are defined as usual via the integration map~$I$. We denote by $\Pairs^\bs$ the stack of $(\tbs, \fbs)$-pairs in the sense of Definition~\ref{def: pair}. Then, we define $\bs_{\beta,n}\in \BQ$ by the equation
\[I\big((\BL-1)\Pairs^\bs\big)=\sum_{n,\beta}\bs_{n,\beta} z^\beta\, q^n\, t^{-1}\,.\]
We also denote 
\[\bs_\beta(q,Q)=\sum_{n,j\in \BZ}\bs_{ \beta+j\sfb, n}(-q)^n\, Q^j\in \BQ[[q^{\pm 1},Q^{\pm 1}]]\,.\]

\subsection{Wall-crossing between $\bs$ and $\PT$}

The wall-crossing between $\bs$ and $\PT$ invariants can be directly deduced from the discussion in Section \ref{subsec: wallcrossing}. Recall that the usual stable pairs are defined as pairs with respect to the torsion pair \[(\CT_\PT, \CF_\PT)=\big(\Coh_0(X), \Coh_1(X)\big).\] 
The technical conditions required in Section \ref{subsec: wallcrossing} are satisfied.

\begin{proposition}\label{prop: finitetypebs}
The moduli $\Pairs^\bs_{(\beta, n)}\subset \CM$ is an open substack of finite type. Moreover, the pairs $(\CT_\PT, \CF_\PT)$ and $(\CT_\bs, \CF_\bs)$ are wall-crossing material.
\end{proposition}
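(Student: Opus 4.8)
The plan is to verify the two assertions separately, both by reduction to results already available in the literature and earlier in the paper. For the first claim — that $\Pairs^\bs_{(\beta,n)}$ is an open substack of $\CM$ of finite type — I would argue exactly as in the proof of Proposition~\ref{prop: finitetype}~(i), combined with Proposition~\ref{prop: BSstabilityfinitetype}. A $(\tbs,\fbs)$-pair is an object $P\in\CB=\langle\CO_X[1],\Coh_{\leq 1}\rangle_\ex$ of rank $-1$ with $\Hom(\tbs,P)=0$ and $\Hom(P,\fbs)=0$; openness inside $\Obj(\CB)$ follows from the same spectral-sequence argument used in~\cite[Lemma 3.16]{To10a} and~\cite[Lemma 5.1]{To16}, since the conditions on $\CH^{-1}(P)$ and $\CH^0(P)$ that pick out pairs are open. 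For finite type one bounds the sheaf $\CH^0(P)$: the constraint that the cokernel of $\CO_X\to\CH^0(P)$ lies in $\tbs$ together with the numerical class being $(\beta,n)$ forces $\CH^0(P)$ into a bounded family, using that $\tbs=\CM^{\mu^A}([\tfrac{\infty}{2},+\infty[)$ and the boundedness half of Proposition~\ref{prop: BSstabilityfinitetype}; this is the content of~\cite[Lemma 47]{BS16} transported to our setting, where only the nef class $A$ of condition~\eqref{eq: nef} is used in place of a contraction. Finiteness of $\Pairs^\bs_{(\beta,n)}$ as a substack of $\CM$ then follows from~\cite[Theorem 3.20]{To08} as in Proposition~\ref{prop: BSstabilityfinitetype}.

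For the second claim I would check the three conditions listed in Section~\ref{subsec: wallcrossing} for the wall between $(\CT_\PT,\CF_\PT)=(\Coh_0(X),\Coh_1(X))$ and $(\tbs,\fbs)$, with wall category $\CW=\CF_\PT\cap\tbs=\Coh_1(X)\cap\tbs$, i.e.\ the pure $1$-dimensional sheaves supported on fibers of $p$ over finitely many points of $C$ (equivalently, $\mu^A$-semistable sheaves of slope $\tfrac{\infty}{2}$). Condition (ii), that $\CW$ is closed under extensions and direct summands, is immediate from the torsion-pair structure of $(\tbs,\fbs)$ intersected with $\Coh_1(X)$. Condition (i), that $\CW_\alpha$ is of finite type, follows from Proposition~\ref{prop: BSstabilityfinitetype} since $\CW$ consists of $\mu^A$-semistable sheaves of a fixed slope. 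Condition (iii), finiteness of the set of decompositions $\alpha=\alpha_1+\cdots+\alpha_n$ with all $\CW_{\alpha_i}\neq\emptyset$, follows because every class in $\CW$ has $\ch_2$ a positive multiple of $\sfb$ and $\chi$ lying in a range controlled by the $\mu^A$-slope being $\tfrac{\infty}{2}$ (i.e.\ $\chi=0$ on the $A$-degree, so $\chi$ is bounded in terms of the total curve class); hence only finitely many $\alpha_i$ occur and only finitely many ways to sum to $\alpha$. Together with the already-established finite type of the pair moduli $\Pairs^\bs$ and $\Pairs^\PT$, this is precisely the statement that $(\CT_\PT,\CF_\PT)$ and $(\tbs,\fbs)$ are wall-crossing material.

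The main obstacle, as usual in this circle of ideas, is the boundedness input: showing that $\CH^0(P)$ for a $\bs$-pair of fixed numerical type lies in a bounded family, and correspondingly that $\CW_\alpha$ is of finite type. Both rest on the boundedness of $\mu^A$-semistable sheaves, which is~\cite[Theorem 38, Lemma 47]{BS16}; the only thing to observe is that those proofs never use the existence of the contraction $\pi\colon X\to Y$ beyond the nef class it produces, so they apply verbatim once condition~\eqref{eq: nef} is assumed, as already noted in the proof of Proposition~\ref{prop: BSstabilityfinitetype}. Granting that, the remaining verifications are the routine openness and combinatorial-finiteness checks sketched above, and the proof is complete.
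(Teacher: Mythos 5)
Your proposal is correct and follows essentially the same strategy as the paper's proof: reduce boundedness to the $\mu^A$-stability results of Bryan--Steinberg (already established in Proposition~\ref{prop: BSstabilityfinitetype}), deduce that the pair stack is open of finite type, and then verify the three wall-crossing-material conditions for $\CW=\Coh_1(X)\cap\tbs$. The only cosmetic difference is that you appeal to Toda's spectral-sequence openness criterion directly, while the paper cites the packaged result~\cite[Proposition 4.6]{BCR18}; and for condition (iii) the paper states the key numerical constraint more crisply — $\CW_{\alpha_i}\neq\emptyset$ forces $\beta_i=j_i\sfb$ with $j_i\geq 1$ and $n_i\geq 0$ — whereas your phrasing (``$\chi$ lying in a range controlled by the $\mu^A$-slope being $\tfrac{\infty}{2}$'') is somewhat muddled but encodes the same pair of inequalities ($A\cdot\ch_2=0$ forcing $\beta_i\in\BZ_{\geq 1}\sfb$, and the second slope component nonnegative forcing $\chi\geq 0$). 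Both routes deliver the same conclusion.
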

\begin{proof}
The torsion pair $(\CT_\PT, \CF_\PT)$ is clearly open. The torsion pair $(\CT_\bs, \CF_\bs)$ is also open thanks to the description in terms of $\mu^A$ stability and Proposition \ref{prop: BSstabilityfinitetype}. By \cite[Proposition 4.6]{BCR18} it follows that $\Pairs^\bs$, $\Pairs(\CT_\PT, \CF_\bs)$ are open, locally of finite type substacks of $\CM$. 

To show that the pairs are wall-crossing material  remains to show that $\CW=\CF_\PT\cap \CT_\bs$ satisfies conditions (i)-(iii) in Section \ref{subsec: wallcrossing}. Conditions (i) and (ii) are straightforward. For (iii), write $\alpha_i=(\beta_i, n_i)$. If $\CW_{\alpha_i}\neq \emptyset$ we must have $\beta_i=j_i\sfb$ for some $j_i\geq 1$ and $n_i\geq 0$, so it's clear that there are only finitely many such decompositions.\qedhere
\end{proof}

Joyce's wall-crossing in Section \ref{subsec: wallcrossing} (or \cite[Theorem 6.10]{BCR18}) now applies to show that, for every $\beta\in H_2(X, \BZ)$,
\begin{equation}\PT_\beta(q,Q)=f(q,Q)\,\bs_\beta(q,Q)\,,\label{eq: wallcrossingptbs}\end{equation}
where $f(q,Q)$ is defined by 
\[f(q,Q)=I\big((\BL-1)\log([\CW])\big)\in \BQ[[q,Q]]\]
and 
$\CW=\CF_\PT\cap \CT_\bs=\Coh_1(X)\cap \CT_\bs.$
Note that $f\in \BQ[[q,Q]]$ because the support of sheaves in $\CW\subset \tbs$ is a finite union of finitely many points and fibers $B$. Note also that $f$ doesn't depend on $\beta$, so we get the relation
\[\frac{\PT_\beta(q,Q)}{\bs_\beta(q,Q)}=\frac{\PT_0(q,Q)}{\bs_0(q,Q)}\,.\]

\begin{lemma}
The only $\bs$-pair with Chern class of the form $(-1, 0, j\sfb, n)$ is the trivial pair $(\CO_X\to 0)$. In particular
\[\bs_0(q,Q)=1\,.\]
\end{lemma}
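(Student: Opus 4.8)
The plan is to analyze directly what a $(\tbs,\fbs)$-pair $P$ with numerical class $(-1,0,j\sfb,n)$ must look like. Recall that $P\in\CB=\langle\CO_X[1],\Coh_{\leq 1}\rangle_\ex$ has rank $-1$, so there is an exact sequence $0\to\CH^{-1}(P)[1]\to P\to Q\to 0$ in $\CB$ with $\CH^{-1}(P)$ a torsion-free sheaf of rank $1$ and $Q=\CH^0(P)\in\Coh_{\leq 1}(X)$. Since $\ch_1(P)=0$ and $\det$ is multiplicative, $\det(\CH^{-1}(P))\cong\CO_X$ and torsion-freeness of rank one forces $\CH^{-1}(P)\cong I_Z$ for a subscheme $Z$ with $\dim Z\leq 1$; the class condition $\ch_1=0$ then gives $\dim Z\leq 1$ with no divisorial part, and in fact the cokernel $Q$ has $\ch_2(Q)=j\sfb$ supported on fibers. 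So $P$ fits in a triangle $I_Z[1]\to P\to Q$ with $Q\in\Coh_{\leq 1}(X)$ supported on finitely many fibers $B$ and points, and $\ch_2(Q)-[Z]=j\sfb$ (where $[Z]$ is the $1$-dimensional part of $Z$).

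The key step is to use the two vanishing conditions in Definition~\ref{def: pair}. First I would show that condition (iii), $\Hom(P,F)=0$ for all $F\in\fbs$, forces $\CH^{-1}(P)=\CO_X$, i.e.\ $Z=\emptyset$: if $Z\neq\emptyset$ then $I_Z\hookrightarrow\CO_X$ has cokernel $\CO_Z$, and the $1$-dimensional part of $\CO_Z$, being supported away from $W$ or having $Rp_\ast\iota^\ast\neq 0$ appropriately, produces a quotient of $P$ lying in $\fbs$ — more precisely, one argues as in \cite[Section 3]{BS16} / \cite{PT09} that the maximal subsheaf of $\CO_Z$ lying in $\fbs$ gives a nonzero map $P\to F$. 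Hence $P$ sits in a triangle $\CO_X[1]\to P\to Q$, i.e.\ $P=(\CO_X\xrightarrow{s}Q)$ is a two-term complex with $Q\in\Coh_{\leq 1}(X)$ supported on fibers and $\ch_2(Q)=j\sfb$. Then condition (ii), $\Hom(T,P)=0$ for all $T\in\tbs$, says exactly that the cokernel of $s$ has no subsheaf in $\tbs$; but since $Q$ itself is supported on finitely many fibers $B$ and points, $Q\in\tbs$ by definition of $\tbs$ (indeed $Q_{|X\ssetminus W}\in\Coh_0$ and $Rp_\ast\iota^\ast Q\in\Coh_0(X)$ because $\ch_2(Q)$ is a multiple of $\sfb$, so $\rk(Rp_\ast\iota^\ast Q)=0$). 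So $\coker(s)$, being a quotient of $Q$, also lies in $\tbs$ unless it vanishes. Applying condition (ii) to $T=\coker(s)$ forces $\coker(s)=0$, hence $s$ is surjective; but $\CO_X\to Q$ surjective with $Q$ torsion of class $j\sfb$ and $\Hom(T,P)=0$ also kills $\ker(s)$-type contributions, and a short diagram chase shows $Q=0$ and $P=\CO_X[1]$.

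The main obstacle — and the step deserving the most care — is the argument that condition (iii) rules out any nontrivial ideal sheaf $I_Z$: one must check that the obstruction sheaf (the part of $\CO_Z$ or of the cokernel that does \emph{not} lie in $\fbs$) cannot absorb the entire $1$-dimensional part when $\ch_1=0$. Here the point is that $\tbs$ only contains $1$-dimensional sheaves supported on fibers $B$ (those with $Rp_\ast\iota^\ast\in\Coh_0$), so any $1$-dimensional component of $Z$ transverse to the ruling, or with $R^1p_\ast\neq 0$, contributes to $\fbs$; and a $1$-dimensional component inside a fiber still has $\ch_2=(\text{positive})\sfb$, which combined with $\ch_2(Q)$ and the total class $(-1,0,j\sfb,n)$ is consistent, so one cannot exclude fiber-components by numerics alone — instead one uses that $I_Z[1]$ with $Z$ a fiber-curve would still admit a nonzero map to a suitable sheaf in $\fbs$ coming from the non-reduced structure, exactly as in the proof that PT pairs have $\CH^{-1}=I_C$ for a pure curve $C$. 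Once $P=\CO_X[1]$ is the only pair, the integration map gives $\bs_{\beta,n}=0$ unless $(\beta,n)=0$, and the normalization $\bs_{0,0}=1$ yields $\bs_0(q,Q)=1$.
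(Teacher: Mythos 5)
Your overall shape is right --- put the pair in the form $\CO_X\to G$ with $G\in\Coh_{\leq 1}$ supported on fibers, then use the pair conditions to force triviality --- but the crucial step, ruling out $Z\neq\emptyset$ (equivalently $s=0$), is argued by the wrong mechanism, and you yourself flag that it does not close. You try to use condition (iii), $\Hom(P,F)=0$ for $F\in\fbs$, by producing a nonzero map from $P$ to something in $\fbs$: first suggesting that a piece of $\CO_Z$ ``supported away from $W$ or having $Rp_\ast\iota^\ast\neq 0$'' supplies such a target, then conceding that numerics do not exclude fiber components, and finally hand-waving about a map to $\fbs$ ``coming from the non-reduced structure.'' No such map exists. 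Since $\ch_2(G)=j\sfb$ and $\sfw\cdot\sfb=-2<0$, every irreducible component of $\supp(G)$ has class a positive multiple of $\sfb$ (using the nef class of condition $(\diamondsuit)$), hence has negative intersection with $W$, hence lies in $W$ and is a fiber. So $\CO_Z\hookrightarrow G$ is supported on finitely many fibers and therefore lies in $\tbs$, \emph{not} in $\fbs$; the object you hope will map into $\fbs$ is on the wrong side of the torsion pair.

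The paper's argument goes in the opposite direction and is much shorter. It first invokes Lemma 3.11 of Beentjes--Calabrese--Rennemo to put the pair in normal form $P=(\CO_X\xrightarrow{s}G)$ with $G\in\fbs$ and $\coker(s)\in\tbs$; this packages both pair conditions at once and also dispenses with your unproven assertions that $\CH^{-1}(P)$ is torsion-free of rank one and that $\Hom(T,P)=0$ reduces to a condition on $\coker(s)$ (the former is automatic once $\CH^{-1}(P)=\ker(s)\subset\CO_X$). Then $\fbs\subset\Coh_1(X)$ gives purity of $G$, and the class computation above shows its reduced support is a union of fibers, so $\CO_Z=\mathrm{im}(s)\in\tbs$. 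But $\CO_Z\hookrightarrow G\in\fbs$, and $\Hom(\tbs,\fbs)=0$ by the very definition of the torsion pair, so $\CO_Z=0$, i.e.\ $s=0$, and then $G=\coker(s)\in\tbs\cap\fbs=0$. The vanishing comes from orthogonality applied to the \emph{image} of $s$ sitting in $\tbs$, not from exhibiting a quotient of $P$ in $\fbs$; your proposal has the torsion/torsion-free placement backwards, and the step it leans on cannot be repaired as stated.
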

\begin{proof}
The hypothesis of \cite[Lemma 3.11]{BCR18} applies to $\tbs$, showing that $\bs$-pairs have the form $(\CO_X\overset{s}{\to} G)$ where $G\in \fbs$ and $\coker(s)\in \tbs$. Since $\Coh_0(X)\subset \tbs$ we have $\fbs\subset \Coh_1(X)$, so $G$ is a pure 1-dimensional sheaf. Since $\ch_2(G)=j\sfb$, the reduced support of $G$ is a finite union of fibers $B$. 

Letting $Z$ be the subscheme of $X$ determined by $\ker(s)=I_Z$, we get an inclusion $\CO_Z\hookrightarrow G$. The closed subspace underlying $Z$ is a union of fibers $B$, so one easily sees that $\CO_Z\in \tbs$. As $G\in \fbs$ it follows that $G=0$.\qedhere
\end{proof}

As a consequence we get the key result of this section:
\begin{proposition}\label{prop: BSPTwallcrossing}
We have
\[\bs_\beta(q,Q)=\frac{\PT_\beta(q,Q)}{\PT_0(q,Q)}\,.\]
\end{proposition}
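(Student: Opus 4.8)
The plan is to deduce the statement directly from the wall-crossing identity already established, combined with the vanishing $\bs_0(q,Q)=1$. Recall that Proposition~\ref{prop: finitetypebs} shows the pairs $(\CT_\PT,\CF_\PT)$ and $(\CT_\bs,\CF_\bs)$ are wall-crossing material, so Joyce's formula from Section~\ref{subsec: wallcrossing} produces the identity~\eqref{eq: wallcrossingptbs},
\[\PT_\beta(q,Q)=f(q,Q)\,\bs_\beta(q,Q)\,,\]
valid for \emph{every} curve class $\beta$, where $f(q,Q)=I\big((\BL-1)\log([\CW])\big)\in\BQ[[q,Q]]$ with $\CW=\Coh_1(X)\cap\tbs$ is independent of $\beta$.

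The first step is to evaluate this at $\beta=0$: it becomes $\PT_0(q,Q)=f(q,Q)\,\bs_0(q,Q)$. By the lemma preceding the proposition, every $\bs$-pair with Chern character of the form $(-1,0,j\sfb,n)$ is the trivial pair $\CO_X\to 0$, whence $\bs_0(q,Q)=1$ and therefore $f(q,Q)=\PT_0(q,Q)$.

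Substituting $f=\PT_0$ back into~\eqref{eq: wallcrossingptbs} gives $\PT_\beta(q,Q)=\PT_0(q,Q)\,\bs_\beta(q,Q)$. Since $\PT_0(q,Q)=\prod_{j\geq 1}(1-q^jQ)^{(2g-2)j}$ has constant term $1$, it is a unit in the completion of $\BQ[[q,Q]]$ in which both series live, so dividing yields $\bs_\beta(q,Q)=\PT_\beta(q,Q)/\PT_0(q,Q)$. There is no genuine obstacle left at this point; all the substance sits in Proposition~\ref{prop: finitetypebs} (that the relevant moduli are wall-crossing material) and in the structural description of $\bs$-pairs via \cite[Lemma 3.11]{BCR18} used to pin down $\bs_0$.
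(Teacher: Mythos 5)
Your proposal is correct and follows the same route as the paper: invoke the wall-crossing identity~\eqref{eq: wallcrossingptbs} with its $\beta$-independent factor $f$, specialize to $\beta=0$ and use $\bs_0(q,Q)=1$ to pin down $f=\PT_0$, then divide. The only cosmetic difference is that the paper phrases the intermediate step as the relation $\PT_\beta/\bs_\beta=\PT_0/\bs_0$, but the substance is identical.
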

We recall that $\PT_0(q,Q)$ can be computed (for example by localization on $K_W$, see appendix \ref{appendix}) and is equal to
\[\PT_0(q,Q)=\prod_{j\geq 1}(1-q^j Q )^{(2g-2)j}\,.\]

\section{Perverse PT invariants} \label{sec: perversept}
Consider the torsion pair $(\CA_0,\CA_1)$ of $\CA_{\leq 1}$ and recall the category
\[ {}^p\CB=\Big\langle \CO_X[1]\,,\CA_{\leq 1}\Big\rangle_\ex\,.\]
 An object $P\in{}^p\CB$ is called \emph{perverse stable pair}, if it is a $(\CA_0,\CA_1)$-pair in the sense of Definition~\ref{def: pair}, i.e.\ $\rk(P)=-1$ and
\[ \Hom(\CA_0,P)=0=\Hom(P,\CA_1)\,.\]

The stack of perverse pairs is denoted by $\PPairs$. Numerical invariants counting perverse stable pairs are defined using the integration map $I$ as explained in Section \ref{sec: Hall}. For $\alpha\in N_{\leq 1}$ we let $\PPT_\alpha\in \BQ$ be the numerical invariants defined by
\[I\big((\BL-1)\,^{p}\PT\big)=\sum_{(\gamma, j,n)}\PPT_{(\gamma,j,n)}z^\gamma Q^{j}q^nt^{-[\CO_X]}.\]
The fact that the integration map $I$ can be applied to $(\BL-1)\,^{p}\PT$ is justified by Lemmas \ref{lem: pairsnuopen} and \ref{lem: wallcrossinglimite}.

In this section, we will provide a proof of the rationality and functional equation of perverse stable pairs, Theorem \ref{thm: perverse}.

\subsection{Rationality via $\nu$-wall-crossing}
\label{subsec: rationalityvianu}

For $\delta\in \BR$ we introduce the torsion pair $(\CT_{\nu,\delta}, \CF_{\nu, \delta})$ on $\CA_{\leq 1}$ by truncating the $\nu$-HN-filtation at $\delta$:
\begin{align*}
    \CT_{\nu,\delta}&=\CM^\nu\big([\delta, +\infty]\big)=\left\{T\in \CA_{\leq 1}: T\twoheadrightarrow Q\neq 0 \Rightarrow \nu(Q)\geq \delta\right\}\,,\\
     \CF_{\nu,\delta}&=\CM^\nu\big((-\infty, \delta)\big)=\left\{F\in \CA_{\leq 1}: 0\neq S\hookrightarrow F\Rightarrow \nu(S)<\delta \right\}.
\end{align*}

This family of torsion pairs depending on $\delta$ will describe the wall-crossing that ultimately will connect $^{p}\Pairs$ ($\delta\to +\infty$ limit) and $\rho(^{p}\Pairs)$ ($\delta\to -\infty$ limit). We denote by $\Pairs^{\nu, \delta}$ the category (or the stack, depending on the context) of $(\CT_{\nu,\delta}, \CF_{\nu, \delta})$-pairs as defined in Section~\ref{subsec: pairs}. This stack admits a decomposition into connected components according to the class of its elements and we write $\Pairs^{\nu, \delta}_{(\gamma, c)}$ for the stack of pairs in class $(-1, \gamma, c)$. 
\begin{lemma}\label{lem: pairsnuopen}
Let $\delta\in\BR$ and $(\gamma, c)\in N_{\leq 1}$. The stack $\Pairs_{(\gamma,c)}^{\nu,\delta}$ is a finite type open substack of $\Obj^{\geq -1}({}^p\CB)$.
\end{lemma}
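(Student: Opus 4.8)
The plan is to combine the structure theory of pairs from \cite{BCR18} with the finite-type statements for $\nu$-semistable objects proved in Section~\ref{subsec: moduli}. Since $\Obj^{\geq -1}({}^p\CB)$ is an open substack of $\CM$ by Proposition~\ref{prop: finitetype}~(i), and $\Pairs^{\nu,\delta}_{(\gamma,c)}$ is by definition a substack of it, it suffices to show that $\Pairs^{\nu,\delta}_{(\gamma,c)}$ is open in $\CM$ and of finite type. The first observation is that for every $\delta\in\BR$ one has $\Coh_0(X)\subseteq\CA_0\subseteq\CT_{\nu,\delta}$: a nonzero $0$-dimensional sheaf lies in $\CT_0$ and has $\ell=0$, $d\geq 0$ by Proposition~\ref{prop: ell}~(iii), hence $\nu=+\infty$. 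Therefore the hypotheses of \cite[Lemma 3.11]{BCR18} are met, so every $(\CT_{\nu,\delta},\CF_{\nu,\delta})$-pair is of the form $P=[\CO_X\xrightarrow{s}G]$ with $G\in\CF_{\nu,\delta}$ and $\coker(s)\in\CT_{\nu,\delta}$, while conversely any such complex is a pair.

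For openness I would argue exactly as in the proof of Proposition~\ref{prop: finitetypebs}. By Proposition~\ref{prop: finitetype}, the subcategories $\CT_{\nu,\delta}=\CM^\nu([\delta,+\infty])$ and $\CF_{\nu,\delta}=\CM^\nu((-\infty,\delta))$ define open substacks of $\Obj(\CA_{\leq 1})$, which is itself open in $\CM$ by Lemma~\ref{lem: open}; hence the torsion pair $(\CT_{\nu,\delta},\CF_{\nu,\delta})$ of $\CA_{\leq 1}$ is open. Since ${}^p\CB\subseteq\Coh^\dagger(X)$ is a Noetherian abelian category, \cite[Proposition 4.6]{BCR18} applies and yields that $\Pairs^{\nu,\delta}$ is an open, locally finite type substack of $\CM$; in particular $\Pairs^{\nu,\delta}_{(\gamma,c)}$ is open in $\Obj^{\geq -1}({}^p\CB)$.

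For finite type, observe that fixing the class $(-1,\gamma,c)$ of $P$ determines the class $[G]=[P]+[\CO_X]\in N_{\leq 1}$ of the degree-zero term $G$. As $G\in\CF_{\nu,\delta}\subseteq\CM^\nu_{[G]}((-\infty,\delta])$, Proposition~\ref{prop: finitetype}~(iii) shows that the family of possible $G$ is of finite type, hence bounded, and then $\Hom(\CO_X,G)$ has bounded dimension over this family. Consequently the stack of complexes $[\CO_X\xrightarrow{s}G]$ of the prescribed class, which maps to the finite type stack of such $G$ with fibers the affine spaces $\Hom(\CO_X,G)$, is of finite type; the structure result identifies $\Pairs^{\nu,\delta}_{(\gamma,c)}$ with the open substack cut out by the conditions $G\in\CF_{\nu,\delta}$ and $\coker(s)\in\CT_{\nu,\delta}$, and an open substack of a finite type Artin stack over $\BC$ is again of finite type.

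The hard part is the very first step: reducing the two pointwise pair conditions $\Hom(\CT_{\nu,\delta},P)=0$ and $\Hom(P,\CF_{\nu,\delta})=0$ — which a priori involve the infinite families $\CT_{\nu,\delta}$, $\CF_{\nu,\delta}$ — to conditions that are manifestly open and of finite type. Everything hinges on the structure result \cite[Lemma 3.11]{BCR18}, which collapses a pair to the rigid object $\CO_X$ together with a single cokernel $G$, and on the openness and boundedness of $\nu$-(semi)stable objects established in Section~\ref{sec: stability} (Propositions~\ref{prop: finitetype},~\ref{prop: boundedfamilies}, Corollary~\ref{cor: finite} and Lemma~\ref{lem: boundedExt}), which are what actually control $G$.
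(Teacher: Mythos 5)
The gap is in the openness step. You cite \cite[Proposition 4.6]{BCR18} as though it applies directly, but that proposition is proved for pairs in $\langle\CO_X[1],\Coh(\CY)\rangle_{\ex}$ using the orbifold derived dual $\BD^{\CY}$: the hard condition $\Hom(P,\CF)=0$ is a condition on quotients of $P$, and the BCR18 openness argument works by first converting it (via \cite[Lemma 4.5]{BCR18}) into a condition on cohomology of $\BD^{\CY}(P)$, which is a subobject-type and hence open condition. The paper accordingly does \emph{not} apply the result but ``applies the same proof,'' substituting the new duality $\rho$ for $\BD^{\CY}$, and this substitution is the whole point of Section~\ref{sec: perverse t-structure}. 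Concretely, the paper characterizes a $(\CT_{\nu,\delta},\CF_{\nu,\delta})$-pair $P\in\Obj^{\geq -1}({}^p\CB)$ by: (i) ${}^p\CH^0(P)\in\CT_{\nu,\delta}$, (ii) ${}^p\CH^0(\rho(P))\in\langle\CA_0,\rho(\CF_{\nu,\delta})\rangle_{\ex}$, (iii) ${}^p\CH^1(\rho(P))=0$, and (i),(iii) are open by \cite[Lemma 4.1]{BCR18} while (ii) is open by Proposition~\ref{prop: rho} and Property~\eqref{eq: star}. Your proposal never mentions $\rho$, Theorem~\ref{thm: rho}, Proposition~\ref{prop: rho}, or Property~\eqref{eq: star}, so the openness of the quotient condition ($\coker(s)\in\CT_{\nu,\delta}$, equivalently $\Hom(P,\CF_{\nu,\delta})=0$) is not justified; the openness of the torsion pair $(\CT_{\nu,\delta},\CF_{\nu,\delta})$ and Noetherianity of ${}^p\CB$ are necessary but not sufficient inputs to BCR18's result.

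Your finite-type argument — reduce via \cite[Lemma 3.11]{BCR18} to complexes $[\CO_X\to G]$, bound the family of possible $G$ using Proposition~\ref{prop: finitetype}~(iii), and exhibit the moduli as a substack of an affine-fibered stack over a finite-type base — is a sensible route, and in fact the paper's own proof is silent on finite type, so this is a useful supplement. But as you yourself phrase it, you identify $\Pairs^{\nu,\delta}_{(\gamma,c)}$ as the ``open substack cut out by'' the two conditions, so this part also inherits the unproved openness. You should also note that the applicability of the structure result for $G\in\CA_{\leq 1}$ (rather than $G\in\Coh_{\leq 1}$, as in BS-pairs) uses a vanishing of the form $H^1(X,{}^p\CH^0(P))=0$ in the style of \cite[Lemma 3.11(ii)]{To10a}, which the paper verifies only in the $\zeta$-stability context of Section~\ref{subsec: limitI} and which you do not check here.
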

\begin{proof}
An object $P\in\Obj^{\geq -1}({}^p\CB)$ is a $(\CT_{\nu,\delta}\,,\CF_{\nu,\delta})$-pair if and only if three conditions hold:
 \begin{enumerate}
     \item $\CH^{0}(P)\in\CT_{\nu,\delta}$,
     \item $\CH^0\big(\rho(P)\big)\in\big\langle \CA_0\,,\rho(\CF_{\nu, \delta})\big\rangle_\ex$,
     \item $\CH^1\big(\rho(P)\big)=0$.
 \end{enumerate}
 This characterization is parallel to the description of stable pairs (with respect to torsion theories) in $\big\langle\CO_X[1]\,,\Coh_{\leq 1}(X)\big\rangle_\ex$ using the dualizing functor~\cite[Lemma 4.5]{BCR18}. Instead of the dualizing functor, we use the duality~$\rho$ and apply the same proof as~\cite[Proposition 4.6]{BCR18}. The necessary properties of~$\rho$ are proven in Section~\ref{sec: perverse t-structure}. The first and third properties are open by~\cite[Lemma 4.1]{BCR18}, the second one by~Theorem~\ref{prop: rho} and Property~\eqref{eq: star}. 
\end{proof}Applying the integration morphism in the Hall algebra produces numerical invariants $\PDT^{\nu,\delta}_{(\gamma,c)}\in \BQ$ defined by
\begin{equation}I\big((\BL-1)\Pairs^{\nu, \delta}\big)=\sum_{(\gamma, j,n)}\PDT^{\nu,\delta}_{(\gamma,j,n)}z^\gamma Q^{j}q^nt^{-[\CO_X]}. \label{eq: dtinvariants}\end{equation}

\begin{lemma}[{\cite[Proposition 7.6.(1)]{BCR18}}]\label{lem: deltapairsfinite}
For any $\delta\in \BR$ and $\gamma\in N_1$ the set
\[\{c\in N_0: \Pairs_{(\gamma, c)}^{\nu, \delta}\neq 0\}\]
is finite.
\end{lemma}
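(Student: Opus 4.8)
The plan is to mimic the proof of \cite[Proposition 7.6.(1)]{BCR18}, which is feasible since we have set up all the analogous ingredients. First I would recall from Lemma~\ref{lem: pairsnuopen} that any $P\in\Pairs^{\nu,\delta}_{(\gamma,c)}$ is of rank $-1$ with $\CH^0(P)\in\CT_{\nu,\delta}$, and sits in an exact triangle $\CO_X[1]\to P\to F$ for some $F\in\CA_{\leq 1}$ with $[F]=(\gamma,c+(0,1))$ (using $[\CO_X[1]]=-[\CO_X]$); equivalently $\CH^{-1}(P)$ is a line bundle on $X\ssetminus W$ and its torsion-free-ness together with conditions (i)--(iii) of that lemma cut out the pair. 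The point is therefore to bound, for fixed $\gamma\in N_1$ and $\delta\in\BR$, the possible classes $c\in N_0$ of such $F$.

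The key step is the following dichotomy. If $F=\CH^0(P)$ already lies in $\CT_{\nu,\delta}$, then $\nu(F)\geq\delta$ bounds $d(F)$ from below in terms of $\ell(F)$, while the torsion triangle and the finiteness of effective decompositions (Corollary~\ref{cor: finite}) bound $\ell(F)$, hence $d(F)$, hence $c$, in a finite set depending on $\gamma$ and $\delta$. In general one uses the $\nu$-HN filtration of $F$: truncating at $\delta$ gives $F_{\geq\delta}\to F\to F_{<\delta}$, and the pair condition forces $F_{<\delta}$ to be "visible" only through $\CO_X[1]$, i.e.\ the composite $\CO_X[1]\to P\to F\to F_{<\delta}$ must be surjective in $\CA$ onto something of rank $-1$ — more precisely one argues as in \cite[Lemma 3.11, Lemma 4.5]{BCR18} that $P\to F$ has cone in $\CT_{\nu,\delta}[1]$ and $\ker$ controlled, so that $F_{<\delta}$ is a quotient of $\CH^{-1}(P)[1]=\CO_{X\ssetminus W}[1]$ up to bounded error. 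Combined with Proposition~\ref{prop: finitetype}~(iii) (finite type of $\CM^\nu_\alpha([\delta,+\infty])$ and $\CM^\nu_\alpha((-\infty,\delta])$) and Lemma~\ref{lem: bounded0}, this pins $[F_{\geq\delta}]$ and $[F_{<\delta}]$ each into a finite set of $N_0$-classes with fixed $N_1$-residue, and additivity of the class then yields finiteness of $\{c\}$.

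Concretely I would structure it as: (1) reduce to bounding $[F]$ where $\CO_X[1]\to P\to F$; (2) invoke the structural description of $\nu$-$\delta$-pairs — the analog of $P=(\CO_X\overset{s}{\to}G)$ with $\coker(s)\in\CT_{\nu,\delta}$ — to split $[F]=\gamma'+\gamma''$ into an effective "geometric" part and a $\CT_{\nu,\delta}$ part; (3) apply Corollary~\ref{cor: finite} to bound the finitely many $\gamma'$, and Proposition~\ref{prop: finitetype}~(iii) together with the slope bound $\nu\geq\delta$ (and $d\geq 0$ on $\CA_0$ from Proposition~\ref{prop: ell}) to bound the corresponding Euler characteristics; (4) conclude finiteness of the $c$'s. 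The main obstacle I anticipate is step (2): making precise the structural description of $(\CT_{\nu,\delta},\CF_{\nu,\delta})$-pairs in the perverse category ${}^p\CB$ — i.e.\ that such a $P$ really does decompose as $\CO_X[1]\to P\to F$ with $F$ having bounded "non-torsion" part — since unlike the $\Coh_{\leq 1}$ setting of \cite{BCR18} we are working in the tilted heart $\CA$ and $\Coh^\dagger(X)$ is not Noetherian; but ${}^p\CB$ \emph{is} Noetherian (as noted after Lemma~\ref{lem: open}), and the characterization in Lemma~\ref{lem: pairsnuopen} via $\rho$ is exactly the substitute for the dualizing-functor argument of \cite[Lemma 4.5]{BCR18}, so the same combinatorics go through.
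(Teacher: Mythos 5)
Your overall strategy is the right one and matches the paper, whose own "proof" is only the one-line remark that the argument is an easy adaptation of \cite[Proposition~7.6.(1)]{BCR18}: identify the structural decomposition of a pair $P$, use condition (i) of Lemma~\ref{lem: pairsnuopen} plus the $\rho$-dual conditions (ii)--(iii) to bound slopes in both directions, then invoke Corollary~\ref{cor: finite}, Proposition~\ref{prop: finitetype}(iii) and Lemma~\ref{lem: bounded0} to extract finitely many $c$. So the plan is sound.

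However, two details in your write-up are actually wrong as stated. First, the exact triangle is reversed. A perverse pair $P=[\CO_X\xrightarrow{s}F]$ in degrees $-1,0$ sits in a short exact sequence of complexes $0\to F\to P\to\CO_X[1]\to 0$, i.e.\ the triangle is $F\to P\to\CO_X[1]$ (equivalently $\CO_X\to F\to P$), with $F\in\CA_{\leq 1}$ a \emph{subobject} of $P$ and $\CO_X[1]$ a \emph{quotient}; it is not $\CO_X[1]\to P\to F$. (One cannot rotate one into the other, and indeed $\Hom(\CO_X[1],P)=0$ for nontrivial pairs.) This matters because it changes which $\Hom$-vanishing you are allowed to use directly: $\Hom(\CT_{\nu,\delta},P)=0$ constrains subobjects, while $\Hom(P,\CF_{\nu,\delta})=0$ constrains quotients, and the role the $\rho$-characterization plays in Lemma~\ref{lem: pairsnuopen} is precisely to turn the quotient constraint into a tractable condition on $\rho(P)$. (Also, $[F]=(\gamma,c)$ on a Calabi--Yau 3-fold, since $\chi(\CO_X)=0$, not $(\gamma, c+(0,1))$; and $\CH^{-1}(P)$ is only torsion-free rank $1$ on $X\ssetminus W$, not literally $\CO_{X\ssetminus W}$, so the clause "$F_{<\delta}$ is a quotient of $\CH^{-1}(P)[1]=\CO_{X\ssetminus W}[1]$" is not meaningful and the phrase "surjective in $\CA$" cannot apply to $\CO_X[1]\notin\CA$.) Second, you only really articulate the \emph{lower} bound $\nu_-\geq\delta$ coming from condition (i); on its own this does not pin $c$ in a finite set (it bounds $d$ from below, and $\ell$ is already fixed by $\gamma$). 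The \emph{upper} bound is the crux, and it has to come from the dual conditions (ii)--(iii) applied to $\rho(P)$ --- which by Theorem~\ref{thm: rho} and Property~\eqref{eq: star} is again of the same shape with slopes reflected by Proposition~\ref{prop: nusymmetry} --- so that $\rho(P)$ supplies the missing $\nu_+\leq -\delta$ type constraint. You gesture at this but do not make it an explicit step. With the triangle corrected and the two-sided bound spelled out via $\rho$, the rest of the plan (finitely many $N_1$-residues by Corollary~\ref{cor: finite}, boundedness of $\CM^\nu_\alpha([\delta,+\infty])$ from Proposition~\ref{prop: finitetype}(iii), and boundedness of the $\CA_0$ part from Lemma~\ref{lem: bounded0}) does carry the argument through.
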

\begin{proof}
The proof is an easy adaptation of the proof of \cite[Proposition 7.6.(1)]{BCR18}.
\end{proof}

In the limit $\delta\rightarrow +\infty$ these invariants agree with the perverse PT invariants previously defined.

\begin{lemma}\label{lem: wallcrossinglimite}
Let $P\in {}^p\CB$ be an object of class $(-1, \gamma, c)$. For $\delta\gg 0$ (depending on $\gamma,c$) we have
\[P\in \Pairs^{\nu, \delta}\textup{ if and only if }P\in {}^p\mathrm{Pairs}.\]
\end{lemma}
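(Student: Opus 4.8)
The strategy is to show, for each fixed class $(-1,\gamma,c)\in N_{\leq 1}$, that the $(\CA_0,\CA_1)$-pair condition and the $(\CT_{\nu,\delta},\CF_{\nu,\delta})$-pair condition cut out the same objects once $\delta$ is large enough. The first observation is that every nonzero object of $\CA_0$ has $\ell=0$, hence $\nu=+\infty$, and every object of $\CF_{\nu,\delta}$ has no nonzero subobject in $\CA_0$, so
\[\CA_0\subseteq\CT_{\nu,\delta}\qquad\text{and}\qquad\CF_{\nu,\delta}\subseteq\CA_1\qquad\text{for all }\delta\in\BR.\]
Consequently every $(\CT_{\nu,\delta},\CF_{\nu,\delta})$-pair automatically satisfies $\Hom(\CA_0,P)=0$, and every $(\CA_0,\CA_1)$-pair automatically satisfies $\Hom(P,\CF_{\nu,\delta})=0$. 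It remains to prove, for $\delta\gg 0$ depending on $(\gamma,c)$: (a) a perverse stable pair $P$ satisfies $\Hom(\CT_{\nu,\delta},P)=0$, and (b) a $(\CT_{\nu,\delta},\CF_{\nu,\delta})$-pair $P$ satisfies $\Hom(P,\CA_1)=0$.

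For (a): a nonzero map $T\to P$ with $T\in\CT_{\nu,\delta}$ produces, by restricting to the $\nu$-HN factors of $T$ (each of which is semistable of slope $\geq\delta$, and with successive quotients again in $\CT_{\nu,\delta}$) and passing to images, a nonzero subobject $S\hookrightarrow P$ in ${}^p\CB$ lying in $\CA_{\leq 1}$ with $\nu_-(S)\geq\delta$. Since $\Hom(\CA_0,P)=0$ and $S\hookrightarrow P$, the object $S$ has no nonzero $\CA_0$-subobject, so $S\in\CA_1$ and $\ell(S)>0$. Now $[S]\in N_1^{\eff}$ is part of an effective decomposition of $\gamma$ (the $\CA_{\leq 1}$-part of $P/S$ accounting for the complement), of which there are finitely many by Corollary~\ref{cor: finite}. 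The crucial input is that, for $P$ of fixed class $(-1,\gamma,c)$, the set of $N_0$-classes of subobjects of $P$ lying in $\CA_{\leq 1}$ is finite: this follows from the structural description of perverse pairs — the analogue of~\cite[Lemma 3.11]{BCR18}, namely an exact sequence $0\to\mathcal{I}[1]\to P\to Q\to 0$ in ${}^p\CB$ with $Q\in\CA_0$ and $\mathcal{I}\subseteq\CO_X$ a perverse ideal — combined with Lemma~\ref{lem: bounded0}, Proposition~\ref{prop: boundedfamilies} and Lemma~\ref{lem: cclassesfinite}. Hence $\nu(S)=d(S)/\ell(S)$ takes only finitely many values over all such $S$, and choosing $\delta$ strictly above all of them rules out every such $S$.

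For (b): symmetrically, a nonzero map $P\to F$ with $F\in\CA_1$ has image $F'\subseteq F$ with $F'\in\CA_1$, $\ell(F')>0$, and $F'$ a quotient of $P$. If $F'\in\CF_{\nu,\delta}$ then the surjection $P\twoheadrightarrow F'$ contradicts $\Hom(P,\CF_{\nu,\delta})=0$; otherwise $F'$ admits a subobject of $\nu$-slope $\geq\delta$, so $\nu_+(F')\geq\delta$. Using the fact that $\Pairs^{\nu,\delta}_{(\gamma,c)}$ is of finite type (Lemma~\ref{lem: pairsnuopen}), so its members form a bounded family, together with the dual of the structural input above, the $N_0$-classes of $\CA_{\leq 1}$-quotients of such $P$ lie in a finite set; hence $\nu_+(F')$ is bounded above independently of $P$, and for $\delta$ larger than this bound no such $F'$ exists and $\Hom(P,\CA_1)=\Hom(P,\CF_{\nu,\delta})=0$. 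Combining (a) and (b) yields $\Pairs^{\nu,\delta}_{(\gamma,c)}=\PPairs_{(\gamma,c)}$ for $\delta\gg 0$, which is the claim.

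The main obstacle is precisely the finiteness statement used in (a) and (b): controlling, uniformly over all members $P$ of the fixed class, the numerical types of $\CA_{\leq 1}$-subobjects (respectively quotients) of $P$. One cannot simply invoke boundedness of $\PPairs_{(\gamma,c)}$, since that boundedness is itself a consequence of this lemma; instead it must be extracted from the explicit two-step structure of the pairs together with the finite-type and boundedness results already established for $\CA_{\leq 1}$ and its Harder--Narasimhan strata in Section~\ref{subsec: boundedness}.
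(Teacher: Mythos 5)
Your two opening observations are correct and useful: since every nonzero object of $\CA_0$ has $\ell=0$ and $\nu=+\infty$, indeed $\CA_0\subseteq\CT_{\nu,\delta}$ and $\CF_{\nu,\delta}\subseteq\CA_1$ for \emph{every} $\delta$, so the only conditions to compare are (a) ``$\Hom(\CT_{\nu,\delta},P)=0$'' for a perverse stable pair and (b) ``$\Hom(P,\CA_1)=0$'' for a $\delta$-pair. The overall strategy --- reduce to showing that slopes of $\CA_1$-subobjects (resp.\ quotients) of pairs of fixed numerical class are bounded, then take $\delta$ above that bound --- is the right one, and matches the route taken in \cite[Lemma 7.10]{BCR18} to which the paper delegates.

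However, the boundedness claims as written do not actually close the argument. In part (b) you bound $\nu_+(F')$ for quotients $F'$ of members of $\Pairs^{\nu,\delta}_{(\gamma,c)}$ by invoking that this stack is of finite type (Lemma~\ref{lem: pairsnuopen}). But that gives a bound $M_\delta$ \emph{depending on $\delta$}: the family $\Pairs^{\nu,\delta}_{(\gamma,c)}$ changes with $\delta$, and nothing you say shows $\sup_\delta M_\delta<\infty$. The conclusion ``for $\delta$ larger than this bound no such $F'$ exists'' is therefore circular: you cannot take $\delta>M_\delta$. In part (a) the difficulty is hidden differently: you assert, citing an unnamed ``analogue of \cite[Lemma 3.11]{BCR18}'' (the structural description $0\to\CI[1]\to P\to Q\to 0$ with $Q\in\CA_0$), that the $N_0$-classes of $\CA_{\leq 1}$-subobjects of perverse pairs of fixed class form a finite set. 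But the class of $Q$ (equivalently the $N_0$-class of the image of $\CO_X$ in the sheaf-theoretic part of $P$) is not determined by $[P]$, and the lemmas you cite (Lemma~\ref{lem: bounded0}, Proposition~\ref{prop: boundedfamilies}, Lemma~\ref{lem: cclassesfinite}) bound families only after one already has a slope bound or a bounded family of ambient objects --- exactly what needs to be produced. What is really needed, and what the BCR18 argument supplies, is boundedness of the family of pure perverse sheaves $F\in\CA_1$ of fixed class $[F]=[P]+[\CO_X]$ appearing as the $\CH^0$-part of perverse pairs; from that, slope bounds on sub- and quotient objects follow at once, uniformly in $\delta$. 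This boundedness is not an immediate consequence of the cited lemmas and needs to be established separately.
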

\begin{proof}
The proof is analogous to \cite[Lemma 7.10]{BCR18}.
\end{proof}

We will now apply Joyce's wall-crossing formula discussed in Section \ref{subsec: wallcrossing}. The next lemma states the technical conditions under which we can use the wall-crossing formula.

\begin{lemma}
\label{lem: wallcrossingmaterial}
Let $\varepsilon>0$ be sufficiently small. Then the torsion pairs $(\CT_{\nu, \delta\pm \varepsilon},\CF_{\nu, \delta\pm \varepsilon})$ are wall-crossing material.
\end{lemma}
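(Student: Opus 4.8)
The plan is to reduce the statement to the three standard conditions from Section~\ref{subsec: wallcrossing} that make a pair of torsion pairs ``wall-crossing material'': that the moduli stacks of $\zeta$-type semistable objects on the wall, the moduli of pairs, and the relevant intermediate categories are all finite type and satisfy the finiteness-of-decompositions property, and that the ``no-poles'' hypotheses of Joyce and Behrend--Ronagh apply. First I would unwind what the wall at $\delta$ is: crossing from $\delta-\varepsilon$ to $\delta+\varepsilon$, the category $\CW = \CT_{\nu,\delta-\varepsilon}\cap\CF_{\nu,\delta+\varepsilon}$ consists (for $\varepsilon$ small, in a fixed class $\gamma$) of objects all of whose $\nu$-HN factors have slope exactly $\delta$ --- i.e.\ $\CW_\gamma = \CM_\gamma^\nu(\delta)$, the stack of $\nu$-semistable objects of slope $\delta$. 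This is an open substack of $\Obj(\CA_{\leq 1})$, and for fixed $\gamma$ it is of finite type by Proposition~\ref{prop: finitetype}(ii), since $[\delta,\delta]$ is a bounded interval.

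Next I would verify the three bullet conditions. Condition~(ii), that $\CW$ is closed under extensions and direct summands, is immediate: a class of $\nu$-semistable objects of a fixed slope is closed under extensions (see-saw, Proposition~\ref{prop: nustability}) and under summands. Condition~(i), finite type of $\CW_\gamma$, is exactly the observation above. Condition~(iii), that each $\gamma\in N_1^\eff$ admits only finitely many effective decompositions $\gamma=\gamma_1+\cdots+\gamma_n$ with $\CW_{\gamma_i}\neq\emptyset$, follows from Corollary~\ref{cor: finite}, which says there are only finitely many effective decompositions of $\gamma$ in $N_1^\eff$ at all; combined with Lemma~\ref{lem: deltapairsfinite} (or Lemma~\ref{lem: cclassesfinite}) to control the $N_0$-components, one gets the needed finiteness also after refining to $N_{\leq 1}$-classes. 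I would also note that the moduli of $(\CT_{\nu,\delta\pm\varepsilon},\CF_{\nu,\delta\pm\varepsilon})$-pairs define elements of the Hall algebra by Lemma~\ref{lem: pairsnuopen}, so Joyce's identity $[\CW]\ast[\Pairs(\CT_-,\CF_-)] = [\Pairs(\CT_+,\CF_+)]\ast[\CW]$ holds in $\HH(\CC)$ with $\CC = \langle\CA_{\geq 2}[1],\CA_{\leq 1}\rangle$, which is open by \cite[Lemma~4.1]{BCR18}.

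Finally I would invoke the no-poles theorem: with conditions (i)--(iii) in place and $\CC$ open, \cite[Theorem~8.7]{Jo07} and \cite[Theorems~4,5]{BR19} give $(\BL-1)\log([\CW])\in H^{\textup{gr,sc}}(\CC)$, so that $w = I\big((\BL-1)\log([\CW])\big)\in\BQ\{N(X)\}$ is defined and Joyce's wall-crossing formula applies --- this is precisely the assertion that $(\CT_{\nu,\delta\pm\varepsilon},\CF_{\nu,\delta\pm\varepsilon})$ are wall-crossing material. I expect the main (and really only) obstacle to be bookkeeping: making sure that ``sufficiently small $\varepsilon$'' can be chosen uniformly enough that on each fixed class $\gamma$ the wall category is genuinely $\CM_\gamma^\nu(\delta)$ and no factors of nearby slope sneak in, and checking that the finiteness statements for $\nu$ (Corollary~\ref{cor: finite}, Lemma~\ref{lem: deltapairsfinite}, Proposition~\ref{prop: finitetype}) assemble correctly over all of $N_{\leq 1}$ rather than just $N_1$; the geometric inputs are all already established, so the proof is a routine assembly modeled on \cite[Theorem~6.10, Section~7]{BCR18}, and I would write it as such.
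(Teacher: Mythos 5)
Your proof takes essentially the same approach as the paper: identify $\CW=\CT_{\nu,\delta-\varepsilon}\cap\CF_{\nu,\delta+\varepsilon}$ with $\CM^\nu(\delta)$ for $\varepsilon$ sufficiently small (depending on a bound for $\ell$), check conditions (i)--(iii) of Section~\ref{subsec: wallcrossing} using Proposition~\ref{prop: finitetype} and Corollary~\ref{cor: finite}, and invoke the no-poles theorem together with Lemma~\ref{lem: pairsnuopen}. The one small imprecision is in the $N_0$-finiteness step of condition~(iii): the paper reads it off Proposition~\ref{prop: finitetype}(ii) directly (finite type implies finitely many non-empty components), whereas Lemma~\ref{lem: deltapairsfinite} is about pairs rather than the semistable objects in $\CW$, and Lemma~\ref{lem: cclassesfinite} only gives finiteness modulo $\BZ(A\cdot\gamma)$ and would need the extra observation from Proposition~\ref{prop: nusymmetry} that twisting by $\CO_X(A)$ shifts $\nu$ by $1$ to pin down a unique representative at a fixed slope.
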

\begin{proof}
We begin by clarifying the statement and what we mean by sufficiently small $\varepsilon$. Fixing $l>0$, the moduli of semistable sheaves $\CM^\nu_{\leq l}(\delta')$ with $\ell(E)\leq l$ is empty unless $\delta'\in W_l=\frac{1}{l!}\BZ$. Hence, by picking sufficiently small $\varepsilon$ (depending on $l$) the intersection 
\[\CW=\CT_{\nu, \delta- \varepsilon}\cap \CF_{\nu, \delta+\varepsilon}\] restricted to objects with $\ell(E)\leq l$ will be precisely $\CM^\nu_{\leq l}(\delta)$. This will suffice for the way we'll write the wall-crossing formula.

Now for the actual proof. The stacks of pairs $\Pairs^{\nu, \delta\pm \varepsilon}$ define elements in the (graded pre-)Hall algebra by Lemma \ref{lem: pairsnuopen}. It's then enough to show that $\CW=\CM^\nu(\delta)$ satisfies conditions (i)-(iii) of Section \ref{subsec: wallcrossing}. Condition (ii) is obvious and condition (i) is proven in Proposition \ref{prop: finitetype}. For (iii), let $\alpha_i=(\gamma_i, c_i)$. By Corollary \ref{cor: finite} there are finitely many possibilities for each $\gamma_i$. It also follows from Proposition \ref{prop: finitetype} that for fixed $\delta, \gamma_i$ there are only finitely many $c_i$ so that $\CM^\nu_{(\gamma_i, c_i)}(\delta)$ is non-empty.\qedhere
\end{proof}

By the previous lemma, we can define the invariants $J^{\nu}_{\alpha}$ for $\alpha\in N_{\leq 1}$ by counting semistable perverse sheaves with respect to the slope $\nu$:
\begin{equation}
I\big((\BL-1)\log\left(\M^{\nu}(\delta)\right)\big)=\sum_{\nu(\alpha)=\delta}J^\nu_\alpha z^\alpha.
    \label{eq: Jinvariants}
\end{equation}
The $J$-invariants are analogous to Toda's $N$-invariants in the proof of the rationality of stable pairs generating functions.

The wall-crossing formula between $\PPT$ and $\PDT^{\nu, \delta_0}$ is
\begin{equation}\PPT_{\leq l}t^{-1}=\left(\prod_{\delta\in W_l\cap [\delta_0, +\infty)}\exp\big(\{J_{\leq l}(\delta), -\}\big)\right)\PDT^{\nu, \delta_0}_{\leq l} t^{-1}.\label{eq: pervwallcrossing}
\end{equation}

Here the subscript $\leq l$ means we're restricting the generating functions to the classes $\alpha\in N_{\leq 1}$ such that $\ell(\alpha)\leq l$. Moreover, 
\[W_l=\frac{1}{l!}\BZ\]
is the set of possible walls since  $\ell(\alpha)\leq l$ implies $\nu(\alpha)\in W_l$.

\begin{remark}
In the wall-crossing formula \eqref{eq: pervwallcrossing} the wall-crossing terms interact, i.e. $\{J(\delta), J(\delta')\}$ might be non-trivial. In the usual proof of rationality of $\PT$ generating series or in the $\bs/\PT$ wall-crossing this phenomenom doesn't happen because the wall-crossing terms are at most 1-dimensional, and $\chi$ vanishes when restricted to $\Coh_{\leq 1}\times \Coh_{\leq 1}$. However, that's no longer the case in $\CA_{\leq 1}\times \CA_{\leq 1}$ due to the presence of surface-like objects. In particular we don't get a product formula for wall-crossing similar to Proposition \ref{prop: BSPTwallcrossing}. The same phenomenon already happens in \cite{BCR18}.
\end{remark}

\subsection{Combinatorics of the wall-crossing formula}\label{subsec: combinatorics}
Expanding the right-hand side of the wall-crossing formula \eqref{eq: pervwallcrossing} and extracting the coefficient of $z^\gamma t^{-1}$ we get the following expression for the perverse PT invariants in class $\gamma\in N_1$. The generating series
\begin{equation}
    \PPT_\gamma= \sum_{j,n}\PPT_{(\gamma, j,n)}Q^j q^n=\sum \ldots \label{eq: wallcrossingexpanded}
\end{equation}
is a sum over a set of choices described by an integer $m\in \BZ_{\geq 0}$ and classes $\alpha_1, \ldots, \alpha_i=(\gamma_i, c_i), \ldots, \alpha_m\in N_{\leq 1}$ and $\alpha'=(\gamma', c')\in N_{\leq 1}$, satisfying the following conditions:
\begin{enumerate}
    \item $\gamma=\gamma'+\sum_{i=1}^m \gamma_i$,
    \item$ \delta_0\leq \nu(\alpha_1)\leq \ldots \leq \nu(\alpha_m)$,
    \item $J_{\alpha_i}^\nu\neq \emptyset$,
    \item $\PDT_{\alpha'}^{\delta, \nu}\neq \emptyset$.
\end{enumerate}
We now use the boundedness results to analyze this sum. First, conditions $(3)$ and $(4)$ imply that $\gamma_i, \gamma'\in N_1^\eff$. Together with condition $(1)$ and Corollary~\ref{cor: finite} it follows that there is only a finite amount of possibilities for $\gamma_i, \gamma'$. Lemma \ref{lem: deltapairsfinite} also tells us that there is only a finite number of possibilities for $\alpha'$. Finally, Lemma~\ref{lem: cclassesfinite} says that, after we fix $\gamma_1, \ldots, \gamma_m$ there are finitely many possibilities for the classes $\kappa_i= [c_i]\in N_0/\BZ(A\cdot \gamma_i)$.

Since twisting by $\CO_X(A)$ induces an isomorphism \[\M^{\nu}_{(\gamma_i, c_i)}\cong \M^{\nu}_{(\gamma_i, c_i+A\cdot \gamma_i)}\,,\]
it follows that $J^\nu_{(\gamma_i, c_i)}$ depends only on $\gamma_i$ and the class $\kappa_i=[c_i]$, so we write $J^\nu_{(\gamma_i, \kappa_i)}=J^\nu_{(\gamma_i, c_i)}$.

Due to the combinatorical factor in \eqref{eq: wallcrossingexpanded} we also introduce the set $\CJ$ tracking which of the inequalities in $(2)$ are strict:
\[\CJ=\big\{i\in \{1, \ldots, m-1\}: \nu(\alpha_i)=\nu(\alpha_{i+1})\big\}\,.\]
We group the terms in the right hand side of \eqref{eq: wallcrossingexpanded} in finitely many groups according to the data $\xi=(\{\gamma_i\}_i,\{\kappa_i\}_i, \gamma', c', \CJ)$. Since $\nu(\gamma_i, c_i+A\cdot \gamma_i)=\nu(\gamma_i, c_i)+1$, given a group $\xi$ we can chose a minimal set of representatives $c_i^0\in \kappa_i$ such that 
\[\delta_0\leq \nu(\gamma_1,c_1^0)<\delta_0+1 \textup{ and } \nu(\gamma_i, c_i^0)\leq \nu(\gamma_{i+1},c_{i+1}^0)<\nu(\gamma_i,c_i^0)+1\,.\]
Then we organize equation \eqref{eq: wallcrossingexpanded} as
\begin{equation}
\PPT_\gamma=\sum_{\xi}A(\xi)\sum_{(k_1, \ldots, k_m)\in S_\CJ} B_\xi(k_1, \ldots, k_m)z^{c'+\sum_{i=1}^m(c_i^0+k_i(A\cdot \gamma_i))}    \label{eq: wallcrossinggrouping}
\end{equation}
where the first sum runs over the finitely many possible groups and the second sum runs over the set
\[S_\CJ=\big\{(k_1\leq \ldots\leq k_m): k_{i}=k_{i+1}\Leftrightarrow i\in \CJ\big\}\,.\]
Since $B_\xi$ is a quasi-polynomial of period 2, the rationality of $\PPT_\gamma$ follows from \cite[Lemma 2.21]{BCR18}.

\subsection{Functional equation}

After we have established the rationality part of Theorem \ref{thm: perverse}, we turn to the functional equation. For this, the duality $\rho$ introduced in Section \ref{subsec: rho} plays a crucial role. 
\begin{lemma}
\label{lem: symmetrydelta}
Let $\delta\in \BR\ssetminus \BQ$. Then \[\rho\big(\Pairs^{\nu, \delta}\big)=\Pairs^{\nu, -\delta}\,.\]
In particular,
\[\PDT_{\alpha}^{\nu, \delta}=\PDT_{\rho(\alpha)}^{\nu, -\delta}\,.\]
\end{lemma}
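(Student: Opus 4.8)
The plan is to leverage the triangle~\eqref{eq: triangle} together with Theorem~\ref{thm: rho} and Proposition~\ref{prop: nusymmetry}, reducing everything to a statement about how $\rho$ interacts with the truncation torsion pair $(\CT_{\nu,\delta}, \CF_{\nu,\delta})$. The first and main step is to show that $\rho$ swaps the two pieces of the $\nu$-HN filtration, i.e.
\[\rho\big(\CT_{\nu,\delta}\big) \;\text{``}=\text{''}\; \CF_{\nu,-\delta}\quad\text{and}\quad \rho\big(\CF_{\nu,\delta}\big) \;\text{``}=\text{''}\; \CT_{\nu,-\delta}\]
in the appropriate shifted/cohomological sense. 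Because $\rho$ is an anti-equivalence of order two preserving $\CA_{\leq 1}$ up to the subtlety on $\CA_0$ (Theorem~\ref{thm: rho}: $\rho(\CA_0)\subset \CA_0[-1]$, $\rho(\CA_1)\subset\CA_1$), one should first note that on $\CA_1$ the functor $\rho$ is a genuine anti-autoequivalence, and that $\nu\circ\rho = -\nu$ by Proposition~\ref{prop: nusymmetry}. Since $\delta$ is irrational, no semistable object of $\CA_{\leq 1}$ has slope exactly $\delta$ (all slopes are rational by the explicit formula for $\nu$), so $\CT_{\nu,\delta}$ and $\CF_{\nu,\delta}$ are cut out purely by the condition $\nu_-(T) > \delta$ resp.\ $\nu_+(F) < \delta$, and the $\CA_0$ part (slope $+\infty$) lies entirely in $\CT_{\nu,\delta}$. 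Applying $\rho$ to a $\nu$-HN filtration of an object in $\CA_1$ and using that $\rho$ reverses the filtration order and negates slopes gives the desired swap on the $\CA_1$ part; the $\CA_0$ part requires care since $\rho$ shifts it into $\CA_0[-1]$, but objects of $\CA_0$ never appear on the $\CF_{\nu,\delta}$ side, so this is consistent.

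The second step is to translate the torsion-pair swap into the statement about pairs. Recall from the proof of Lemma~\ref{lem: pairsnuopen} the characterization: $P\in\Obj^{\geq -1}({}^p\CB)$ is a $(\CT_{\nu,\delta},\CF_{\nu,\delta})$-pair iff $\CH^0(P)\in\CT_{\nu,\delta}$, $\CH^0(\rho(P))\in\langle\CA_0,\rho(\CF_{\nu,\delta})\rangle_\ex$, and $\CH^1(\rho(P))=0$. Since $\rho\circ\rho=\id$ (Proposition~\ref{prop: rho}), applying $\rho$ to $P$ and unwinding: $\CH^0(\rho(P))$ should play the role of the ``$\CT$'' condition for the dual pair, and $\CH^0(P) = \CH^0(\rho(\rho(P)))$ the ``$\CF$'' condition. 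Plugging in the swap $\rho(\CT_{\nu,\delta}) \leftrightarrow \CF_{\nu,-\delta}$ turns the three conditions for a $(\CT_{\nu,\delta},\CF_{\nu,\delta})$-pair into exactly the three conditions for $\rho(P)$ to be a $(\CT_{\nu,-\delta},\CF_{\nu,-\delta})$-pair. One also needs $\rho$ to preserve the rank condition $\rk(P)=-1$ and to send ${}^p\CB$ to itself on the relevant classes — this follows since $\rho(\CO_X[1])$ is related to $\CO_X$ up to shift via $\rho(\CO_X)=\CO_X[2]$ (Proposition~\ref{prop: rho}), so $\rho(\CO_X[1])=\CO_X[1]$, and $\rho(\CA_{\leq 1})$ lands in $\CA_{\leq 1}$ up to the $\CA_0[-1]$ shift which is absorbed by the perverse cohomology bookkeeping already built into the pair conditions.

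The third step is the numerical consequence: since $\rho$ induces an isomorphism of stacks $\Pairs^{\nu,\delta}_\alpha \xrightarrow{\sim} \Pairs^{\nu,-\delta}_{\rho(\alpha)}$ (with $\rho$ acting on classes as in Proposition~\ref{prop: actionCoh}), and the Behrend function is invariant under (anti-)equivalences, the integration map gives $\PDT^{\nu,\delta}_\alpha = \PDT^{\nu,-\delta}_{\rho(\alpha)}$ directly from the definition~\eqref{eq: dtinvariants}. The one point needing a remark is that the motivic/Hall-algebra formalism is insensitive to whether we use $\rho$ or its inverse, so the anti-equivalence causes no sign issue; this is the standard fact that $I$ intertwines an (anti-)autoequivalence of the heart with the induced map on $\QQ[N(X)]$. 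The main obstacle I expect is the first step — carefully checking that $\rho$ really does swap $\CT_{\nu,\delta}$ and $\CF_{\nu,-\delta}$ as \emph{subcategories of} $\CA_{\leq 1}$ (not merely up to shift), handling the $\CA_0$ discrepancy, and verifying that applying $\rho$ to an HN filtration again yields an HN filtration (which uses that $\rho$ is exact for the heart $\CA_{\leq 1}$ in the appropriate sense, i.e.\ takes short exact sequences in $\CA_1$ to short exact sequences, via the long exact perverse cohomology sequence and Property~\eqref{eq: star}). Once that is pinned down, the rest is formal.
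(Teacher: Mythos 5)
Your proposal is correct and takes essentially the same approach as the paper, which simply cites \cite[Lemma 7.4]{BCR18} and prescribes precisely the substitutions you carry out ($\Coh(\CY)\leadsto\CA$, $\BD^{\CY}\leadsto\rho$). The key ingredients you identify — the characterization of pairs via $\rho$ from Lemma~\ref{lem: pairsnuopen}, the involutivity $\rho\circ\rho=\id$, the slope negation $\nu\circ\rho=-\nu$, and Theorem~\ref{thm: rho} to control the $\CA_0$ discrepancy (with the precise swap being $\rho(\CF_{\nu,\delta})=\CT_{\nu,-\delta}\cap\CA_1$, equivalently $\langle\CA_0,\rho(\CF_{\nu,\delta})\rangle_\ex=\CT_{\nu,-\delta}$) — are exactly what the cited proof uses.
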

\begin{proof}
The lemma is proven exactly as in \cite[Lemma 7.4]{BCR18}, replacing $\Coh(\CY)$ by $\CA$ and $\BD^{\CY}$ by $\rho$. The properties of $\rho$ needed for the proof are Theorem~\ref{thm: rho} and Proposition~\ref{prop: nusymmetry}.
\end{proof}

\begin{lemma}\label{lem: limitdegree}
Let $\gamma\in N_1$. We have
\[\lim_{\delta\rightarrow -\infty}\deg\left(\PPT_\gamma-\PDT^{\nu,\delta}_\gamma\right)=-\infty.\]
\end{lemma}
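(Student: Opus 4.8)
The plan is to trade the limit $\delta\to-\infty$ for the limit $-\delta\to+\infty$ by means of the duality $\rho$, in which regime Lemma~\ref{lem: wallcrossinglimite} already identifies $\PDT^{\nu,\delta}$ with $\PPT$ on each fixed numerical class, and then to read off the statement from the fact that $\PPT_\gamma$ lies in the completion $\BQ[N_0]_d$, hence has $d$-bounded-below support.

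First I would record the action of $\rho$ on classes. By Proposition~\ref{prop: actionCoh}, $\rho$ changes $\ch_2$ only by a multiple of $\sfb\in N_0$, so it fixes the residue $\gamma\in N_1$; writing $c\mapsto\rho(c)$ for the induced involution of $N_0$ on classes of residue $\gamma$, Proposition~\ref{prop: nusymmetry} gives $d(\rho(c))=-d(c)+C_\gamma$ for a constant $C_\gamma$ depending only on $\gamma$. Lemma~\ref{lem: symmetrydelta} then yields, for irrational $\delta$ and every $c\in N_0$, the identity $\PDT^{\nu,\delta}_{(\gamma,c)}=\PDT^{\nu,-\delta}_{(\gamma,\rho(c))}$. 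Now by Lemma~\ref{lem: wallcrossinglimite} there is for each class $(\gamma,c')$ a threshold $\delta_1(\gamma,c')$ above which $\PDT^{\nu,\delta'}_{(\gamma,c')}=\PPT_{(\gamma,c')}$; combining this with Lemma~\ref{lem: deltapairsfinite} and the boundedness results of Section~\ref{subsec: boundedness} (Corollary~\ref{cor: finite}, Propositions~\ref{prop: finite} and~\ref{prop: boundedfamilies}), I expect to check that $\delta_1(\gamma,c')$ is controlled by an upper bound on $d(c')$ alone. This produces a function $N_\gamma$ with $N_\gamma(\delta)\to+\infty$ as $\delta\to-\infty$ such that, for $\delta$ sufficiently negative,
\[\PDT^{\nu,\delta}_{(\gamma,c)}=\PPT_{(\gamma,\rho(c))}\qquad\text{for all }c\text{ with }d(c)\ge -N_\gamma(\delta)\,.\]

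Granting this, the conclusion is essentially formal. Since $\PPT_\gamma\in\BQ[N_0]_d$ there is $m_\gamma$ with $\PPT_{(\gamma,c')}=0$ for $d(c')<m_\gamma$; substituting $c'=\rho(c)$ and using $d(\rho(c))=-d(c)+C_\gamma$, the displayed identity shows that, for $\delta\ll 0$, the (finite, by Lemma~\ref{lem: deltapairsfinite}) series $\PDT^{\nu,\delta}_\gamma$ coincides, in $d$-degrees $\ge-N_\gamma(\delta)$, with the $\rho$-reflection of $\PPT_\gamma$: there it is supported in $d$-degrees $\le C_\gamma-m_\gamma$, and it carries a non-zero coefficient in $d$-degree $C_\gamma-b$ whenever $\PPT_\gamma$ does in $d$-degree $b$. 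As $\PPT_\gamma$ has non-zero coefficients in arbitrarily large $d$-degree, $\PDT^{\nu,\delta}_\gamma$ has non-zero coefficients in $d$-degrees tending to $-\infty$ as $\delta\to-\infty$, whereas $\PPT_\gamma$ vanishes in $d$-degrees $<m_\gamma$; hence the difference $\PPT_\gamma-\PDT^{\nu,\delta}_\gamma$ carries non-zero coefficients in $d$-degrees tending to $-\infty$, which is the claim (if $\PPT_\gamma$ vanishes, there is nothing to prove).

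The step I expect to be the main obstacle is the uniform estimate of the second paragraph: bounding, purely in terms of $d(c)$, how negative $\delta$ must be taken for the coefficient-wise stabilisation of Lemma~\ref{lem: wallcrossinglimite} to be in force. This is precisely where the finiteness of effective decompositions and the boundedness of the families of $\nu$-semistable objects are used, and the bookkeeping runs parallel to the analogous degree estimate in~\cite{BCR18}.
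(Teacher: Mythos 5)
Your route via the duality $\rho$ and Lemma~\ref{lem: wallcrossinglimite} is genuinely different from the paper's, which obtains the bound directly from the wall-crossing expansion \eqref{eq: wallcrossinggrouping}: there the difference $\PPT_\gamma-\PDT^{\nu,\delta}_\gamma$ is exactly the $m\ge 1$ part of the sum, its degree is bounded above by $\max_\xi\bigl(d(c')+\sum_i d(c_i^0)\bigr)$, and the minimality condition on the representatives $c_i^0$ sends this bound to $-\infty$.

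The gap in your proof is the final ``formal'' step: you have the degree inequality upside down. The quantity $\deg\bigl(\PPT_\gamma-\PDT^{\nu,\delta}_\gamma\bigr)$ in the lemma is the degree with respect to $d$ of the rational function $f_\gamma-\PDT^{\nu,\delta}_\gamma$ (equivalently, where its expansion in $\BQ[N_0]_{-d}$ begins); this is the notion that makes the subsequent deduction $\PPT_\gamma=\PDT^{\nu,-\infty}_\gamma$ in the proof of Theorem~\ref{thm: perverse} go through. What your argument establishes is that the $d$-expansion of the difference acquires non-zero coefficients at arbitrarily negative $d$-degree, i.e.\ that the valuation $v_d$ of the difference tends to $-\infty$. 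For general rational functions $v_d\to-\infty$ does not imply $\deg_d\to-\infty$; the inequality $\deg_d\le v_d$ that would rescue the conclusion holds here only because of the explicit $\Li$-type structure of the $m\ge 1$ terms in \eqref{eq: wallcrossinggrouping} --- precisely the structure your detour through $\rho$ bypasses. A secondary issue: when $\PPT_\gamma$ is a non-zero Laurent polynomial your premise that $\PPT_\gamma$ has non-zero coefficients in arbitrarily large $d$-degree fails, and the argument stalls, although this case is not excluded by the ``nothing to prove'' remark.
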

\begin{proof}
We consider the wall-crossing equation \eqref{eq: wallcrossingexpanded}, \eqref{eq: wallcrossinggrouping} with $\delta=\delta_0$. Note that the terms in \eqref{eq: wallcrossinggrouping} with $m=0$ (that is, in groups $\xi=(\emptyset, \emptyset, \gamma, c, \emptyset)$) give precisely $\PDT^{\nu,\delta}_\gamma$, so we may express the difference $\PPT_\gamma-\PDT^{\nu,\delta}_\gamma$ as the sum on the right-hand side of \eqref{eq: wallcrossinggrouping} restricted to $m\geq 1$. Thus we have 
\[\deg\left(\PPT_\gamma-\PDT^{\nu,\delta}_\gamma\right)\leq \max_{\xi}\left(d(c')+\sum_{i=1}^m d(c_i^0)\right)\]
where the max is taken over the groups $\xi$ with $m\geq 1$. Summing $d(\gamma)$ to both sides 
\[\deg\left(\PPT_\gamma-\PDT^{\nu,\delta}_\gamma\right)+d(\gamma)\leq \max_{\xi}\left(d(\gamma',c')+\sum_{i=1}^m d(\gamma_i,c_i^0)\right)\,.\]
By the minimality of $c_i^0$ we know that $d(\gamma_i, c_i^0)<\delta_0+i$, and therefore we get the bound 
\[\deg\left(\PPT_\gamma-\PDT^{\nu,\delta}_\gamma\right)+d(\gamma)\leq \max_{\xi}\left(d(\gamma',c')+m\delta_0+\frac{m(m+1)}{2}\right)\,.\]
Now taking $\delta\rightarrow -\infty$ gives the desired limit.
\end{proof}

By Lemmas~\ref{lem: wallcrossinglimite} and \ref{lem: symmetrydelta}, for any $\alpha\in N_{\leq 1}$ and sufficiently small $\delta$ we have $\PDT^{\nu,\delta}_{\alpha}=\PPT_{\rho(\alpha)}$. Thus, we have
\[\PDT^{\nu, -\infty}_\alpha = \lim_{\delta\rightarrow -\infty}\PDT^{\nu, \delta}_{\alpha}=\lim_{\delta\rightarrow +\infty}\PDT^{\nu, \delta}_{\rho(\alpha)}=\PPT_{\rho(\alpha)}\,.\]
Here $\rho(\alpha)$ denotes the action on cohomology induced by $\rho$ determined by Proposition~\ref{prop: actionCoh}. One can write this action as $\rho(\gamma, c)=\big(\gamma, \rho_\gamma(c)\big)$, where for each $\gamma=(r\sfw,\beta)$ the involution $\rho_\gamma\colon N_0\to N_0$ is
\[\rho_\gamma(j\sfb, n)=\big((-j+\sfw\cdot \beta-2r)\sfb, -n\big)\,.\]

We write the previous relation between $\PDT^{\nu, -\infty}$ and $\PPT$ as an equality of generating functions for $\gamma\in N_1$:

\[\PDT_\gamma^{\nu, -\infty}= \sum_{c\in N_0}\PDT_{(\gamma, c)}^{\nu, -\infty}z^c=\sum_{c\in N_0}\PPT_{(\gamma, \rho_\gamma(c))}z^c=\rho_\gamma(\PPT_\gamma)\,.\]

It follows that $\PDT_\gamma^{\nu, -\infty}$ is the expansion of a rational function in $\BQ[q,Q]_{-d}$. On the other hand, by Lemma~\ref{lem: wallcrossinglimite}
\[\lim_{\delta\rightarrow -\infty}\deg\left(\PDT^{\nu, -\infty}_\gamma-\PDT^{\nu,\delta}_\gamma\right)=-\infty\]
and, together with Lemma~\ref{lem: limitdegree}, we have an equality of rational functions
\[\PPT_\gamma=\PDT_{\gamma}^{\nu, -\infty}=\rho_\gamma\big(\PPT_{\gamma}\big)\,.\]
This finishes the proof of Theorem~\ref{thm: perverse}.

\section{Bryan--Steinberg vs.\ perverse PT invariants}
\label{sec: zetawallcrossing}

In this section we will prove the wall-crossing between the Bryan--Steinberg invariants and perverse PT invariants. Together with the $\bs/\PT$ wall-crossing of Section \ref{sec: BS}, the output of this section is a proof of Theorem~\ref{thm: wall-crossing}.

We will use the stability condition $\zeta$ defined in Section~\ref{subsec: refined stability}. The wall-crossing is entirely analogous to~\cite[Section 8]{BCR18}, where Bryan--Steinberg pairs are compared to orbifold PT pairs to prove the crepant resolution conjecture. For us, matters simplify and it is worth to point out how exactly.

The stability~$\zeta$ leads to torsion pairs $(\CT_{\zeta,(\mu,\eta)},\CF_{\zeta,(\mu,\eta)})$ on $\CA_{\leq 1}$ labelled by $(\mu,\eta)\in\BR_{>0}\times\BR$. These are defined analogously to $(\CT_{\nu,\delta},\CF_{\nu,\delta})$ in Section \ref{subsec: rationalityvianu}, by truncating the $\zeta$-HN-filtration. We consider the stack $\Pairs^{\zeta,(\mu,\eta)}$ of $(\CT_{\zeta,(\mu,\eta)},\CF_{\zeta,(\mu,\eta)})$-pairs in
\[ {}^p\CB=\Big\langle \CO_X[1],\CA_{\leq 1}\Big\rangle_\ex\]
in the sense of Definition~\ref{def: pair}. 

\begin{lemma}\label{lem: pairszetaopen}
Let $(\mu,\eta)\in\BR_{>0}\times \BR$ and $(\gamma, c)\in N_{\leq 1}$.
\begin{enumerate}
    \item The stack $\Pairs^{\zeta,(\mu,\eta)}_{(\gamma,c)}\subset\Obj^{\geq -1}({}^p\CB)$ is an open substack of finite type.
    \item The family of objects in $\Pairs^{\zeta,(\mu,\eta)}_\gamma$ is $L_\mu$-bounded.
    \end{enumerate}
\end{lemma}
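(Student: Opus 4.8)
The plan is to establish Lemma~\ref{lem: pairszetaopen} by reducing both assertions to results already available in the excerpt, following the template of the $\nu$-pairs analysis in Section~\ref{subsec: rationalityvianu} (in particular Lemma~\ref{lem: pairsnuopen}) and of \cite[Section 8]{BCR18}. For part~(i), I would first record the standard characterization of a $(\CT_{\zeta,(\mu,\eta)},\CF_{\zeta,(\mu,\eta)})$-pair $P\in\Obj^{\geq -1}({}^p\CB)$ by three conditions: $\CH^0(P)\in\CT_{\zeta,(\mu,\eta)}$, $\CH^0(\rho(P))\in\langle\CA_0,\rho(\CF_{\zeta,(\mu,\eta)})\rangle_\ex$, and $\CH^1(\rho(P))=0$, exactly as in the proof of Lemma~\ref{lem: pairsnuopen}. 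Each of these is an open condition: the first and third by \cite[Lemma~4.1]{BCR18} together with the fact that truncating the $\zeta$-HN-filtration gives torsion pairs whose torsion/torsion-free parts are open (this is where Proposition~\ref{prop: finitetypezeta}(i), openness of $\CM^\zeta(I)$, is used), and the second by Theorem~\ref{thm: rho}, Property~\eqref{eq: star}, and again openness of $\CM^\zeta$. Hence $\Pairs^{\zeta,(\mu,\eta)}_{(\gamma,c)}\subset\CM$ is open. Finite type then follows from boundedness, which I address next.

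For boundedness of $\Pairs^{\zeta,(\mu,\eta)}_{(\gamma,c)}$ with fixed class and, more generally, for the $L_\mu$-boundedness statement in part~(ii), the key input is Proposition~\ref{prop: finitetypezeta}(ii): the family of objects in $\CM^\zeta_{\gamma}(\mu,\eta)$ is $L_\mu$-bounded. A $\zeta$-pair $P$ of class $(-1,\gamma,c)$ sits in an exact triangle (in $\Coh^\dagger(X)$, using the structure of ${}^p\CB$) relating $P$ to $\CO_X[1]$ and to an object of $\CA_{\leq 1}$ whose $\zeta$-HN-factors are controlled by the truncation at $(\mu,\eta)$; more precisely, by the general theory of $(\CT,\CF)$-pairs (cf.\ \cite[Lemma~3.11]{BCR18}) such a $P$ is, up to the $\CO_X$ factor, built from $\zeta$-semistable pieces with slopes on one side of $(\mu,\eta)$, so its underlying $\CA_{\leq 1}$-part lies in $\CM^\zeta_{\gamma'}\big(](-\infty,-\infty),(\mu,\eta)]\big)$ for $\gamma'$ ranging over the finitely many effective decompositions of $\gamma$ (Corollary~\ref{cor: finite}). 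Combining this with Proposition~\ref{prop: finitetypezeta}(ii) and Lemma~\ref{lem: Lmubounded}, the set of possible classes $c_P\in N_0$ is $L_\mu$-bounded, giving~(ii); and once $(\gamma,c)$ is fixed, only finitely many $\gamma'$ and finitely many $c$-components can occur, so by Proposition~\ref{prop: finitetype}(iii) and Lemma~\ref{lem: boundedExt} the family of such $P$ is bounded, yielding finite type in~(i) via \cite[Theorem~3.20]{To08} (or \cite[Lemma~3.4]{To08}).

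I expect the main obstacle to be the bookkeeping in the second step: making precise the structural description of $\zeta$-pairs inside ${}^p\CB$ (i.e.\ the analogue of \cite[Lemma~3.11]{BCR18} and the exact triangle decomposition of $P$ in terms of its $\zeta$-HN behaviour), and then tracking carefully how the finitely many effective decompositions $\gamma=\gamma'+(\gamma-\gamma')$ interact with the $L_\mu$-boundedness of the two half-spaces in Lemma~\ref{lem: Lmubounded}. The openness in part~(i) is essentially formal once the three-condition characterization is in place; the genuine content is the $L_\mu$-boundedness, which is why the statement is phrased with respect to $L_\mu$ rather than an honest finite-type conclusion for all of $\Pairs^{\zeta,(\mu,\eta)}_\gamma$ at once. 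Since all the hard estimates have already been isolated in Proposition~\ref{prop: finitetypezeta} and Lemma~\ref{lem: Lmubounded}, the proof itself should be short, essentially a citation-and-assembly argument paralleling \cite[Section~8]{BCR18}.
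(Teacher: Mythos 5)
Your proposal reconstructs exactly the strategy the paper itself invokes: the paper's one-line proof cites \cite[Proposition~8.16]{BCR18} together with Lemmas~\ref{lem: Lmubounded}, \ref{lem: pairsnuopen} and Proposition~\ref{prop: finitetypezeta}, and your fleshed-out argument (openness via the three-condition characterization and $\rho$, boundedness via Lemma~\ref{lem: Lmubounded}, Corollary~\ref{cor: finite}, and Proposition~\ref{prop: finitetypezeta}(ii), finite type via \cite[Theorem~3.20]{To08}) is precisely the assembly those references are meant to supply. No gap; the approach is the same.
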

\begin{proof}
The same strategy of \cite[Proposition 8.16]{BCR18} can be employed to prove the result from Lemmas \ref{lem: Lmubounded} and \ref{lem: pairsnuopen} and Proposition \ref{prop: finitetypezeta}.
\end{proof}

We define numerical invariants \[\PDT^{\zeta,(\mu,\eta)}_{\gamma,c}\in \BQ\]
as we did for pairs defined using $\nu$ in Section \ref{sec: perversept}, see equation \eqref{eq: dtinvariants}.

The notion of $(\mu,\eta)$-pairs is locally constant. More precisely, for fixed $\gamma\in N_1$ there is a finite set of possible walls $V_\gamma$ such that stability is constant on
\[ \big(\BR_{>0}\ssetminus V_\gamma\big)\times \BR\,.\]
The limit $0<\mu\ll 1$ coincides with $\bs$-pairs, the limit $\mu\to+\infty$ coincides with perverse stable pairs. Crossing a wall $\mu\in V_\gamma$ leads to a wall-crossing formula. This wall-crossing is controlled in a concrete way. There is precisely one effective class $0<\gamma'\leq \gamma$ characterized by $L_\mu(A\cdot\gamma')=0$, where as before
\[L_\mu(j,n) = 2n+j+\frac{j}{\mu\,a_0}\,.\]
The asymmetry of $n$ and $j$ in this formula hints at how varying~$\mu$ separates BS from perverse PT (see Example \ref{example: Lmu} below). Recall that $L_\mu$ is the same linear function introduced in Section~\ref{sec: Hall} that controls the expansion of the rational function.

Then, to cross the $\mu$-wall, it is possible to enter the wall from either sides because for $0<\varepsilon\ll 1$ we have
\[ \Pairs^{\zeta,(\mu\pm \varepsilon,\eta)} = \Pairs^{\zeta,(\mu,\pm \infty)}\,.\]

The wall-crossing inside $\{\mu\}\times \BR$ is similar to the $\nu$-wall-crossing in Section~\ref{sec: perversept}. The combinatorics is controlled in the same way.

\begin{example}\label{example: Lmu}
We include an illustration of the wall-crossing for the limit $\mu\to 0^+$. Let $B\subset W$ be a $\BP^1$-fiber of the projection. Since $\chi\big(\CO_B(-1)\big)=0$, the class~$\sfb$ of the ruling is identified with the K-theory class $[\CO_B(-1)]$. The linear function $L_\mu$ specifies which classes in $N_0$ are considered effective. Recall the structure sheaves $k(x)$ of points in~$X$ and the perverse sheaves $\CO_B(-1)$ and $\CO_B(-2)[1]$ in $\CA_0$. Their $K$-theory classes are
\[ \big[k(x)\big] = \pt\,,\quad \big[\CO_B(-1)\big] = \sfb\,,\quad   \big[\CO_B(-2)[1]\big] = \pt - \sfb\,.\]
Both $\pt$ and $\sfb$ satisfy $L_\mu > 0$ for all $\mu>0$, i.e.\ both classes are considered effective at all times. In contrast to that, the class of $\CO_B(-2)[1]$ (considered effective for perverse stable pairs) satisfies 
\begin{align*} L_\mu(\pt-\sfb)&>0\,, \qquad \mu>1\,,\\
L_\mu(\pt-\sfb)&<0\,, \qquad 0<\mu\ll 1\,.
\end{align*}
The limit $\mu\to 0^+$ serves the purpose to exclude all such perverse sheaves (two-term complexes in~$\CA_0$) from being considered effective. 
\begin{center}
\includegraphics[width=\textwidth]{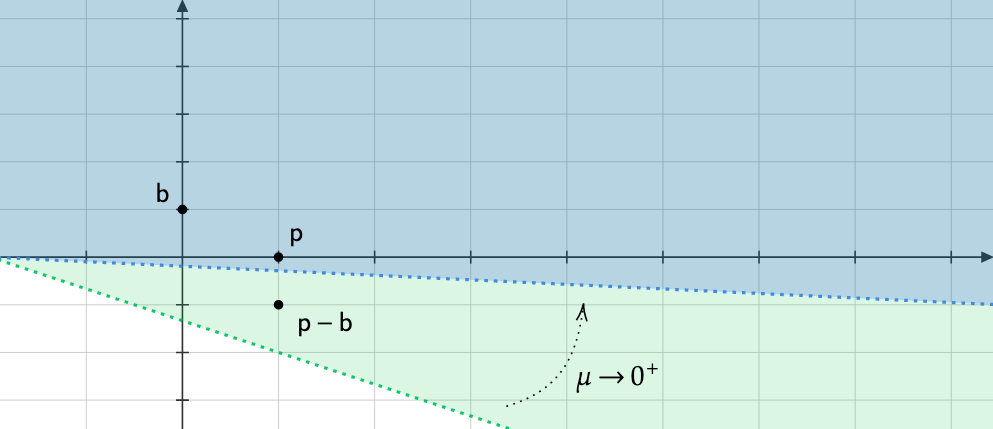}
\end{center}
The picture displays two lines $L_\mu = -2$ for $\mu<0\ll 1$ (blue dotted line) and $\mu \gg 1$ (green dotted line), and their respective areas $L_\mu > -2$.
\end{example}

\subsection{Walls}
Let $\gamma\in N_1$. Define the set of possible walls
\[ V_\gamma = \big\{ \zeta_1(\gamma'): 0<\gamma'\leq\gamma\big\}\cap \BR_{>0}\,.\]

\begin{lemma}
Stability is constant on $\big(\BR_{>0}\ssetminus V_\gamma \big)\times \BR$.
\end{lemma}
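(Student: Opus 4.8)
The plan is to observe that, once $\mu$ avoids the finite wall set $V_\gamma$, the second truncation parameter $\eta$ plays no role whatsoever, so that moving $(\mu,\eta)$ inside a connected component of $(\BR_{>0}\ssetminus V_\gamma)\times\BR$ cannot change the torsion pair on the classes relevant to $\gamma$. First I would fix $\gamma\in N_1$ and use Corollary~\ref{cor: finite} to record that there are only finitely many effective classes $\gamma'$ with $0<\gamma'\leq\gamma$; hence $V_\gamma$ is a finite subset of $\BR_{>0}$, the set $\BR_{>0}\ssetminus V_\gamma$ is a finite disjoint union of open intervals, and $(\BR_{>0}\ssetminus V_\gamma)\times\BR$ is a finite disjoint union of strips $I\times\BR$. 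It is then enough to show that the torsion pair $(\CT_{\zeta,(\mu,\eta)},\CF_{\zeta,(\mu,\eta)})$, restricted to the objects whose $\zeta$-Harder--Narasimhan factors all have class $\leq\gamma$ in $N_1^\eff$ --- which, using Noetherianity of $\CA_{\leq 1}$ and ${}^p\CB$ together with Corollary~\ref{cor: finite}, are the only objects relevant to the stacks $\Pairs^{\zeta,(\mu,\eta)}_{(\gamma,c)}$ --- is constant on each such strip; this parallels \cite[Section 8]{BCR18}.

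The heart of the argument is the following observation. Let $E\in\CA_{\leq 1}\ssetminus\{0\}$ have class $\leq\gamma$, and let $E_i$ be its $\zeta$-HN factors, of classes $\gamma_i$. Since the $\gamma_i$ are effective and sum to $[E]$, all partial sums are effective, so $0\leq\gamma_i\leq\gamma$ for every $i$. If $\gamma_i=0$ then $E_i\in\CA_0$ and $\zeta(E_i)=(+\infty,+\infty)$, which is $\geq(\mu,\eta)$ for every $(\mu,\eta)$. If $0<\gamma_i\leq\gamma$, then Proposition~\ref{prop: ell} gives $\ell(E_i)>0$, so $\zeta_1(E_i)=-r_i/\ell(E_i)$ (with $\ch_1(E_i)=r_i\sfw$) is a well-defined rational number depending only on $\gamma_i\in N_1$; moreover $\zeta_1(\gamma_i)\neq\mu$ whenever $\mu\notin V_\gamma$, since either $\zeta_1(\gamma_i)>0$ and then $\zeta_1(\gamma_i)\in V_\gamma$, or $\zeta_1(\gamma_i)\leq 0<\mu$. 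Hence the lexicographic comparison of $\zeta(E_i)=(\zeta_1(\gamma_i),\nu(E_i))$ with $(\mu,\eta)$ is decided by the first coordinates alone, independently of $\eta$ and of $\nu(E_i)$.

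It then follows immediately that membership of $E$ in $\CT_{\zeta,(\mu,\eta)}$ (all HN quotients have $\zeta\geq(\mu,\eta)$) or in $\CF_{\zeta,(\mu,\eta)}$ (all HN subobjects have $\zeta<(\mu,\eta)$) is governed only by the finitely many signs of $\zeta_1(\gamma_i)-\mu$ for $0<\gamma_i\leq\gamma$; these do not involve $\eta$, and they are constant as $\mu$ varies within a connected interval $I$ of $\BR_{>0}\ssetminus V_\gamma$. Therefore the restricted torsion pair, and with it the notion of $(\CT_{\zeta,(\mu,\eta)},\CF_{\zeta,(\mu,\eta)})$-pair of class $(-1,\gamma,c)$, is constant on each strip $I\times\BR$. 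The one step that requires genuine care --- the analogue of the corresponding reduction in \cite[Section 8]{BCR18} --- is the passage to bounded class: one must argue, via Noetherianity of ${}^p\CB$ and the structure of objects of ${}^p\CB$, that testing the pair conditions on an object of class $(-1,\gamma,c)$ only ever involves sub- and quotient objects of class $\leq\gamma$, to which the computation above applies. Once that is in place, the lemma follows.
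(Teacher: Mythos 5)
The paper states this lemma without proof, so there is no in-text argument to compare against; your sketch supplies the expected reasoning and it is correct. Let me note where it is on solid ground and where a small imprecision creeps in.

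The core of your argument is exactly right: for any object $E\in\CA_{\leq 1}$ with $\zeta$-HN factors $E_i$ of classes $\gamma_i$, each factor is either in $\CA_0$ (so $\zeta(E_i)=(+\infty,+\infty)$, always $\geq(\mu,\eta)$) or has $\ell(E_i)>0$ by Proposition~\ref{prop: ell}, so $\zeta_1(E_i)$ is a finite number depending only on $\gamma_i$. The definition $V_\gamma=\{\zeta_1(\gamma'):0<\gamma'\leq\gamma\}\cap\BR_{>0}$ is designed precisely so that whenever $\mu\in\BR_{>0}\ssetminus V_\gamma$ and $0<\gamma_i\leq\gamma$, one has $\zeta_1(\gamma_i)\neq\mu$ (either $\zeta_1(\gamma_i)\leq 0<\mu$, or $\zeta_1(\gamma_i)\in V_\gamma$). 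In the lexicographic order the comparison $\zeta(E_i)\gtrless(\mu,\eta)$ is then decided by $\zeta_1(\gamma_i)\gtrless\mu$ alone; it is independent of $\eta$ and, since the finitely many numbers $\zeta_1(\gamma')$ with $0<\gamma'\leq\gamma$ cannot separate $\mu$ from $\mu'$ when both lie in the same component of $\BR_{>0}\ssetminus V_\gamma$, also independent of the choice of $\mu$ within that component. Finiteness of $V_\gamma$ indeed comes from Corollary~\ref{cor: finite}. This gives constancy of the torsion pair restricted to effective classes $\leq\gamma$, and hence of $\Pairs^{\zeta,(\mu,\eta)}_{(\gamma,c)}$.

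The one place where your phrasing is slightly off: the reduction to classes $\leq\gamma$ is not really a Noetherianity statement. What one actually uses is the additivity of classes in the abelian category ${}^p\CB=\langle\CO_X[1],\CA_{\leq 1}\rangle_\ex$: if $T'\hookrightarrow P$ is a rank-$0$ subobject (the image of some $T\in\CT_{\zeta,(\mu,\eta)}$) then $P/T'$ is a rank-$(-1)$ object of ${}^p\CB$, whose class is $-[\CO_X]$ plus an effective class in $N_{\leq 1}$; comparing $N_1$-parts forces $[T']\leq\gamma$, and dually for quotients $P\twoheadrightarrow F'$ with $F'\in\CF_{\zeta,(\mu,\eta)}$. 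So the constraint comes from effectivity in ${}^p\CB$, not from Noetherianity. With that substitution your sketch is a complete and correct proof of the lemma, in the spirit of \cite[Section~8]{BCR18}.
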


In the following, when $\mu\in\big(\BR_{>0}\ssetminus V_\gamma\big)$ we let $\eta\in\BR$ arbitrary. Crossing a wall $\mu\in V_\gamma$ is controlled by the linear function $L_\mu$. The basic reason is the following relation between $L_\mu$ and $\zeta_1$:
\[ L_\mu (A\cdot\gamma) = d(A\cdot\gamma)\,\Big(1-\frac{\zeta_1(\gamma)}{\mu}\Big)\,.\]

\begin{lemma}\label{lem: crosszetawall}
There is, up to scaling, a unique class $\gamma_\mu$ such that $0<\gamma_\mu\leq \gamma$ and $L_\mu(A\cdot\gamma_\mu)=0$. The class $A\cdot \gamma_\mu\in N_0$ is uniquely characterized by this property.
\end{lemma}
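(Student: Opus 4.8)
The plan is to unpack the definitions of $\zeta_1$, $L_\mu$, and $d$ and reduce the statement to an elementary observation about the function $\gamma' \mapsto \zeta_1(\gamma')$ on effective subclasses. Recall that for $\gamma'=(r'\sfw,\beta')$ we have $\zeta_1(\gamma') = -r'/\ell(\gamma')$ where $\ell(\gamma') = 2A\cdot\beta' + r'a_0$, and that $A\cdot(-)\colon N_{\leq 1}\to N_0$ sends $\gamma'$ to the class $(A\cdot\beta')\sfb$ (up to the identification $\iota^\ast A \equiv a_0\sfb$, so in fact $A\cdot\gamma_\mu$ lands in $\BZ_{\geq 0}\cdot\sfb$). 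The key identity already displayed, $L_\mu(A\cdot\gamma') = d(A\cdot\gamma')\bigl(1-\zeta_1(\gamma')/\mu\bigr)$, together with the fact that $d(A\cdot\gamma') = A\cdot\beta' \geq 0$ is strictly positive whenever $A\cdot\gamma'\neq 0$ (by condition~\eqref{eq: nef} and Proposition~\ref{prop: ell}, an effective class with $A\cdot\beta'=0$ lies in $\CA_0$, hence contributes $\gamma'=0$ in $N_1$), shows that $L_\mu(A\cdot\gamma')=0$ is equivalent to $\zeta_1(\gamma')=\mu$.

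So the first step is: observe that $A\cdot\gamma_\mu = 0$ forces $\gamma_\mu = 0$ in $N_1$ (again by condition~\eqref{eq: nef}: the only effective $1$-dimensional perverse classes killed by $A$ are supported on multiples of $\sfb$, which are zero in $N_1$), so we may assume $A\cdot\gamma_\mu \neq 0$. Then $L_\mu(A\cdot\gamma_\mu)=0 \iff \zeta_1(\gamma_\mu) = \mu$. The second step is to produce such a $\gamma_\mu$: since $\mu\in V_\gamma$, by the very definition $V_\gamma = \{\zeta_1(\gamma'): 0<\gamma'\leq\gamma\}\cap\BR_{>0}$, there exists an effective $0<\gamma'\leq\gamma$ with $\zeta_1(\gamma')=\mu$, and we take $\gamma_\mu = \gamma'$. (If the statement is being invoked for arbitrary $\mu\in\BR_{>0}$ rather than $\mu\in V_\gamma$, one instead has to argue existence by a boundedness/intermediate-value argument using Corollary~\ref{cor: finite}; I would restrict to $\mu\in V_\gamma$, which is the only case needed.)

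The third, and genuinely substantive, step is uniqueness of the class $A\cdot\gamma_\mu \in N_0$. This does not follow formally from uniqueness of $\gamma_\mu$ in $N_1$ — indeed $\gamma_\mu$ is only unique up to scaling, and the point of the lemma is that the scaling ambiguity disappears after applying $A\cdot(-)$. Here is the mechanism: $N_0 = \BZ\cdot\sfb\oplus\BZ\cdot\pt$, and $A\cdot\gamma_\mu$ is a nonnegative multiple of $\sfb$, so it is determined by the single integer $A\cdot\beta_\mu$ where $\gamma_\mu=(r_\mu\sfw,\beta_\mu)$. Suppose $\gamma_\mu'=(r'\sfw,\beta')$ is another effective subclass of $\gamma$ with $\zeta_1(\gamma_\mu')=\mu$; I need $A\cdot\beta' = A\cdot\beta_\mu$. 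Both satisfy $-r_\mu/\ell(\gamma_\mu) = \mu = -r'/\ell(\gamma_\mu')$ with $\ell = 2A\cdot\beta + ra_0$. Write $a = A\cdot\beta_\mu$, $a' = A\cdot\beta'$; then $-r_\mu = \mu(2a + r_\mu a_0)$ and $-r' = \mu(2a'+r'a_0)$, so $r_\mu(1+\mu a_0) = -2\mu a$ and $r'(1+\mu a_0) = -2\mu a'$; hence $r_\mu/a = r'/a' = -2\mu/(1+\mu a_0)$ (note $1+\mu a_0 \neq 0$ since $\mu>0$), i.e. $\gamma_\mu$ and $\gamma_\mu'$ have proportional $(r,A\cdot\beta)$. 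This shows $\gamma_\mu$ is unique \emph{in $N_1$} up to the scaling by which $(r,A\cdot\beta)$ is determined; but to pin down $A\cdot\beta_\mu$ as an \emph{integer} I must use that $\gamma_\mu$ and $\gamma_\mu'$ are each effective subclasses of the \emph{fixed} class $\gamma$, and invoke that the effective classes $0<\gamma'\leq\gamma$ on a given ray form a finite set with a unique minimal one (Corollary~\ref{cor: finite}, and the fact that $N_1^\eff$ generated by $S_2,S_3$-type classes together with honest curve classes behaves like a fan). I expect \textbf{this last point — showing the minimal effective representative on the ray is unique, so that $A\cdot\gamma_\mu$ is canonically an integer rather than merely a positive real ray — to be the main obstacle}, and I would handle it by the same argument as in \cite[Lemma~8.5]{BCR18}: take $\gamma_\mu$ minimal on its ray among effective subclasses of $\gamma$, use that $A\cdot(-)$ is injective on $N_1/(\text{ray of }\sfb)$ (as in the proof of Lemma~\ref{lem: effectivefinite}, where $N_1(W)_\BQ/\BQ\sfb \xrightarrow{A\cdot} \BQ$ is an isomorphism) to conclude that minimality of $\gamma_\mu$ transfers to minimality — hence uniqueness — of $A\cdot\gamma_\mu$ in $N_0$.
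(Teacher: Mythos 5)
The paper's own proof is a one-line reference to \cite[Lemma 8.21]{BCR18}, so there is no detailed argument to compare against; but the mechanism you use in your first two paragraphs — the displayed identity $L_\mu(A\cdot\gamma') = d(A\cdot\gamma')\bigl(1-\zeta_1(\gamma')/\mu\bigr)$ together with positivity of $d(A\cdot\gamma')$, existence from the definition of $V_\gamma$, and the linear relation $r(1+\mu a_0)=-2\mu(A\cdot\beta)$ pinning down the direction of $(r,A\cdot\beta)$ — is exactly the content of the BCR lemma being cited, and it does give the conclusion. That part is right.

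Two concrete issues. First, a computational slip: you write that $A\cdot\gamma'=(A\cdot\beta')\sfb$ and that $d(A\cdot\gamma')=A\cdot\beta'$. Neither is correct. Cup product with $A$ sends $\gamma'=(r'\sfw,\beta')$ to the class with $\ch_2=r'a_0\sfb$ (since $\iota^\ast A\equiv a_0\sfb$) and $\chi=A\cdot\beta'$, i.e.\ $A\cdot\gamma'=r'a_0\,\sfb+(A\cdot\beta')\,\pt$ in $N_0$, which has a nonzero $\pt$-component in general; and consequently $d(A\cdot\gamma')=2A\cdot\beta'+r'a_0=\ell(\gamma')$, not $A\cdot\beta'$. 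Your conclusion ($d(A\cdot\gamma')>0$ for $0\neq\gamma'\in N_1^\eff$) is still saved — by Proposition~\ref{prop: ell}(iii) — and your subsequent manipulation of the equation $-r=\mu(2A\cdot\beta+ra_0)$ is in fact correct and consistent with the true formula. But the statements as written are wrong and would confuse a reader, especially since the displayed identity for $L_\mu$ only checks out with $d(A\cdot\gamma')=\ell(\gamma')$.

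Second, and more substantively, your third paragraph misdiagnoses the remaining difficulty and the proposed fix would not work. You propose to pin down $A\cdot\gamma_\mu$ by taking the minimal effective representative on the ray in $N_1$ and appealing to ``$A\cdot(-)$ is injective on $N_1/\BZ\sfb$.'' That injectivity is false in general: the isomorphism $N_1(W)_\BQ/\BQ\sfb\xrightarrow{A\cdot}\BQ$ invoked in the proof of Lemma~\ref{lem: effectivefinite} only applies to classes supported on $W$; for $X$ with larger $N_1(X)$ (e.g.\ an elliptic fibration over a Hirzebruch surface) the kernel of $A\cdot$ on $N_1(X)/\BZ\sfb$ is nontrivial, so minimality of $\gamma_\mu$ in $N_1$ does \emph{not} transfer to a canonical integer in $N_0$ via this map. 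Fortunately, that argument is also unnecessary: your proportionality computation already shows that for any two admissible $\gamma',\gamma''$ the classes $A\cdot\gamma'$ and $A\cdot\gamma''$ lie on the same ray in $N_0$ (namely $\BR_{>0}\cdot(-2\mu a_0\,\sfb+(1+\mu a_0)\,\pt)$), so the ray is canonically determined by $\mu$, and $c_\mu$ is simply its primitive generator. No uniqueness of $\gamma_\mu$ in $N_1$ beyond what you already established for the pair $(r,A\cdot\beta)$ is required, and you should drop the minimality/injectivity argument entirely.
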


\begin{proof}
The proof is a simplified version of~\cite[Lemma 8.21]{BCR18}.\footnote{In \cite{BCR18} the authors choose a very general ample class to define the stability $\zeta$ and function $L_\mu$. This choice is not necessary for our application because $\ch_1(E)\in\BZ\cdot\sfw$ for all $[E]\in N_{\leq 1}$ and $\ch_2(E)\in\BZ\cdot\sfb$ for all $[E]\in N_0$.}
\end{proof}
\begin{example}
We illustrate the previous result for $W\cong \BP^1\times\BP^1$ with projection $p\colon W\to \BP^1$. Let $B$ and $C$ be a fiber resp.\ section of~$p$ and 
\[ \sfb = [B]\,,\quad \sfc = [C]\]
their classes in $N_1$. Consider the class $\gamma=\sfc-\sfb\in N_1$. It is an effective class:
\[\gamma = \big[\CO_W(-2C-B)[1]\big]+\big[\CO_W(-C-2B)\big]\,.\]
The two objects are contained in $\CF[1]$ and $\CT$ respectively and the sum gives rise to the effective decomposition
\[ \gamma = (-\sfw,\sfc)+(\sfw,-\sfb)\,.\]
Recall the line bundle $A$ and $\ell(r,\beta) = 2A\cdot\beta + r\,a_0$. We have
\[ \zeta_1(\CO_W(-2C-B)[1]) = -\frac{-1}{2a_0 - a_0} =\frac{1}{a_0}\] 
and there is only one wall 
\[V_\gamma = \bigg\{ \frac{1}{a_0}\bigg\}\,.\]
The unique class $\gamma_\mu$ is $\big[\CO_W(-2C-B)[1]\big]=(-\sfw,\sfc)$ and 
\[A\cdot\gamma_\mu=(-a_0\,, a_0)\in N_0\,.\]
The linear function $L_\mu$ uniquely specifies $A\cdot\gamma_\mu$ as
\[ L_{\mu'}(A\cdot\gamma_\mu) \begin{cases} >0\,, \quad \mu' >\frac{1}{a_0}\,,\\[5pt]
=0\,, \quad\mu' =\frac{1}{a_0}\,,\\[5pt]
<0\,,\quad\mu' < \frac{1}{a_0}\,. 
\end{cases}
\] 
Correspondingly, the class $A\cdot\gamma_\mu = a_0 \big[\CO_B(-2)[1]\big]\in N_0$ is considered effective in the expansion of the rational function with respect to $L_{\mu'}$ for $\mu'>\frac{1}{a_0}$ ($\PPT$ pairs), whereas it is non-effective for $\mu'<\frac{1}{a_0}$~($\bs$-pairs).
\end{example}
\subsection{Limit stability I}
\label{subsec: limitI}
We identify the limit of $(\mu,\eta)$-stability for $0<\mu\ll 1$ with BS stability. First, we can give an explicit description of the limit of the torsion pair for $0<\mu\ll 1$.
\begin{definition}
We define the torsion pair $(\CT_{\zeta,0}, \CF_{\zeta,0})$ in $\CA_{\leq 1}$ by 
\[\CT_{\zeta,0}=\left\{A\in \CA_{\leq 1}: A\twoheadrightarrow Q\Rightarrow Q\in \CA_0 \textup{ or }\ch_1(Q)\in \BZ_{<0} W\right\}\]
and the orthogonal complement $\CF_{\zeta,0}=\CT_{\zeta,0}^\perp$.
\end{definition}

It's straightforward to see that the pair $(\CT_{\zeta,0}, \CF_{\zeta,0})$ is the limit of $(\CT_{\zeta,(\mu,\eta)},\CF_{\zeta,(\mu,\eta)})$ when $\mu$ becomes very small, in the following precise sense:

\begin{lemma}
Let $P\in {}^p\CB$ of class $(-1,\gamma,c)$ and $0<\mu<\min V_\gamma$. Then, $P$ is a $(\CT_{\zeta,0}, \CF_{\zeta,0})$ pair if and only if $P$ is a $(\CT_{\zeta,(\mu,\eta)},\CF_{\zeta,(\mu,\eta)})$ pair.
\end{lemma}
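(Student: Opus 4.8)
The plan is to show that for $P\in {}^p\CB$ of class $(-1,\gamma,c)$ with $0<\mu<\min V_\gamma$, the three pair conditions of Definition~\ref{def: pair} for $(\CT_{\zeta,0},\CF_{\zeta,0})$ are equivalent to those for $(\CT_{\zeta,(\mu,\eta)},\CF_{\zeta,(\mu,\eta)})$. The rank condition $\rk(P)=-1$ is identical in both cases, so the content is to match the $\Hom$-vanishing conditions, and by the usual adjointness-free argument it suffices to prove that the two torsion pairs induce the \emph{same} notion of pair on objects of class bounded by $\gamma$, i.e.\ that $\CT_{\zeta,0}$ and $\CT_{\zeta,(\mu,\eta)}$ agree on all subquotient classes that can occur. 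The key observation is the characterization of $\CT_{\zeta,(\mu,\eta)}$: it is $\CM^\zeta\big([(\mu,\eta),(+\infty,+\infty)]\big)$, i.e.\ generated under extensions by $\zeta$-semistable objects with $\zeta$-slope $\geq (\mu,\eta)$. Since we only care about classes $\leq\gamma$, and $\mu<\min V_\gamma$, the first coordinate $\zeta_1$ of any $\zeta$-HN factor of such an object is either $+\infty$ (for $\CA_0$ pieces) or lies in $\{\zeta_1(\gamma'):0<\gamma'\leq\gamma\}$, all of whose finite values exceed $\mu$; hence membership in $\CT_{\zeta,(\mu,\eta)}$ becomes independent of $\eta$ and is governed purely by the sign of $\zeta_1$, which is exactly the condition defining $\CT_{\zeta,0}$.

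Concretely, I would argue as follows. First, by the see-saw property and the definition of the torsion pair by truncating the $\zeta$-HN filtration, $A\in\CT_{\zeta,(\mu,\eta)}$ iff every $\zeta$-HN quotient $Q$ of $A$ has $\zeta(Q)\geq(\mu,\eta)$. For a $\zeta$-semistable $Q$ of class $\leq\gamma$ we have either $Q\in\CA_0$ (so $\zeta(Q)=(+\infty,+\infty)$, trivially $\geq(\mu,\eta)$, and also $Q\in\CA_0$ satisfies the $\CT_{\zeta,0}$-condition), or $Q\notin\CA_0$, in which case $\zeta_1(Q)=-r/\ell(Q)$ depends only on $[Q]\in N_1$ and equals $\zeta_1(\gamma')$ for some $0<\gamma'\leq\gamma$; since $\mu<\min V_\gamma$, either $\zeta_1(Q)\leq 0<\mu$, forcing $\zeta(Q)<(\mu,\eta)$ in the lexicographic order (so $A\notin\CT_{\zeta,(\mu,\eta)}$), or $\zeta_1(Q)\in V_\gamma$, hence $\zeta_1(Q)>\mu$ and automatically $\zeta(Q)>(\mu,\eta)$ regardless of the second coordinate. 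Thus $A\in\CT_{\zeta,(\mu,\eta)}$ iff every $\zeta$-HN quotient $Q$ of $A$ satisfies $Q\in\CA_0$ or $\zeta_1(Q)>0$, i.e.\ $\ch_1(Q)\in\BZ_{<0}\,\sfw$ (recalling $\zeta_1(Q)=-r/\ell(Q)$ with $\ell(Q)>0$ for $Q\notin\CA_0$ by Proposition~\ref{prop: ell}). But this last condition is precisely the defining condition of $\CT_{\zeta,0}$ once one checks that ``every $\zeta$-HN quotient'' can be replaced by ``every quotient'' — which follows because the class of quotients of $A$ lying in $\CA_0$ or with $\ch_1\in\BZ_{<0}\,\sfw$ is closed under the relevant operations, so an arbitrary quotient is obtained by iterated extensions of $\zeta$-HN quotients.

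Hence $\CT_{\zeta,0}$ and $\CT_{\zeta,(\mu,\eta)}$ contain the same objects among those whose class is $\leq\gamma$, and dually $\CF_{\zeta,0}=\CT_{\zeta,0}^\perp$ and $\CF_{\zeta,(\mu,\eta)}=\CT_{\zeta,(\mu,\eta)}^\perp$ agree on the relevant classes. Since a $(\CT,\CF)$-pair condition only tests $\Hom(T,P)$ for $T\in\CT$ and $\Hom(P,F)$ for $F\in\CF$ with classes occurring as subobjects or quotients of $P$ — all of class $\leq\gamma$ — the two notions of pair coincide for $P$ of class $(-1,\gamma,c)$. This gives the claimed equivalence.

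The main obstacle I anticipate is the careful bookkeeping of the lexicographic order on $(-\infty,+\infty]\times(-\infty,+\infty]$: one must be precise that a strict inequality in the \emph{first} coordinate ($\zeta_1(Q)>\mu$ versus $\zeta_1(Q)<\mu$) dominates the second coordinate $\eta$ entirely, which is exactly why the limit is independent of $\eta$; and conversely, when $\zeta_1(Q)=\mu$ — impossible here since $\mu\notin V_\gamma$ — the argument would break. Making rigorous the reduction from ``every $\zeta$-HN quotient'' to ``every quotient'' (so that the definition of $\CT_{\zeta,0}$ in terms of arbitrary quotients matches) is the other point requiring a short argument, but it is a standard torsion-pair manipulation analogous to \cite[Section 8]{BCR18}.
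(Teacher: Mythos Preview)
Your proposal is correct and is precisely the argument the paper has in mind: the lemma is stated without proof, introduced only by the sentence ``It's straightforward to see that the pair $(\CT_{\zeta,0}, \CF_{\zeta,0})$ is the limit of $(\CT_{\zeta,(\mu,\eta)},\CF_{\zeta,(\mu,\eta)})$ when $\mu$ becomes very small.'' Your write-up supplies exactly that straightforward argument: reduce to subobjects and quotients of $P$ (whose $N_1$-classes are $\leq\gamma$ since ${}^p\CB$-quotients of rank $-1$ have effective $N_1$-class), and then use that for $0<\mu<\min V_\gamma$ the first coordinate $\zeta_1$ of any relevant class is either $\leq 0$ or strictly $>\mu$, so the lexicographic comparison with $(\mu,\eta)$ never touches the second coordinate and reduces to the sign condition defining $\CT_{\zeta,0}$.
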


\begin{lemma}\label{lem: identifytorsionpairs}
We have
\begin{align*}
\CT_{\zeta,0}&=\left\langle \CF[1], \CT_0\right \rangle\\
\CF_{\zeta,0}&=\CT_1.
\end{align*}
\end{lemma}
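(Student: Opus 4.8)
The statement is an identification of two torsion pairs of $\CA_{\leq 1}$, so by the uniqueness of a torsion pair given its torsion class it suffices to prove the single equality $\CT_{\zeta,0} = \langle \CF[1],\CT_0\rangle_\ex$; the description of $\CF_{\zeta,0}$ then follows by taking right orthogonals, once we check $\langle\CF[1],\CT_0\rangle_\ex^\perp = \CT_1$. I would organize the proof into the two inclusions for the torsion class, plus the orthogonality computation.

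\textbf{Step 1: $\langle\CF[1],\CT_0\rangle_\ex\subset\CT_{\zeta,0}$.} Since $\CT_{\zeta,0}$ is closed under extensions and quotients (it is the torsion class of a torsion pair, hence closed under quotients, and torsion classes are always extension-closed), it is enough to show $\CF[1]\subset\CT_{\zeta,0}$ and $\CT_0\subset\CT_{\zeta,0}$. For $F[1]$ with $F\in\CF$: any quotient $Q$ of $F[1]$ in $\CA_{\leq 1}$ fits in $F[1]\twoheadrightarrow Q$; taking perverse cohomology and using that $\CF[1]$ is the torsion-free-shifted part, one sees $Q$ is again of the form $F'[1]$ with $F'\in\CF$ — more precisely $Q\in\CF[1]$ because $\CF[1]$ is closed under quotients in $\CA$ (it is a torsion-free class shifted). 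By Proposition~\ref{prop: ell}(ii), $\ell(F')\leq 0$, and since $\ell\geq 0$ on all of $\CA_{\leq 1}$ by Proposition~\ref{prop: ell}(iii), either $\ell(F')=0$, whence $F'\in\CF_0\subset\CA_0$, or — but $\ell(F')=0$ is forced. Hence $Q\in\CA_0$, so $F[1]\in\CT_{\zeta,0}$. For $T\in\CT_0$: $\CT_0=\CT\cap\CA_0$ is closed under quotients in $\CA$, so every quotient $Q$ of $T$ lies in $\CA_0$, giving $T\in\CT_{\zeta,0}$ trivially.

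\textbf{Step 2: $\CT_{\zeta,0}\subset\langle\CF[1],\CT_0\rangle_\ex$.} Take $A\in\CT_{\zeta,0}$ and decompose it with respect to the torsion pair $(\CA_0,\CA_1)$ as $A_0\to A\to A_1$ with $A_0\in\CA_0$, $A_1\in\CA_1$. It suffices to show $A_1\in\langle\CF[1],\CT_0\rangle_\ex$ (actually one wants $A_1\in\langle\CF_1[1],\CT_1\text{-pieces with }\ch_1<0\rangle$ lying inside the target; but note $\CT_1\not\subset$ target in general, so more care is needed — see the obstacle below). Since $A\twoheadrightarrow A_1$ and $A\in\CT_{\zeta,0}$, either $A_1\in\CA_0$ (then $A_1=0$ as it is also in $\CA_1$) or $\ch_1(A_1)\in\BZ_{<0}\,\sfw$, i.e.\ the rank $r$ of $A_1$ is strictly negative. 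I would then argue that an object of $\CA_1$ with $r<0$ all of whose nonzero quotients again have $r<0$ must be a successive extension of shifted sheaves in $\CF_1[1]$: using Lemma~\ref{lem: morphism}, Lemma~\ref{lem: generatorsT} and Proposition~\ref{prop: supp}, reduce to objects supported on $W$; a nonzero morphism from some $\CO_p(-1)\otimes p^\ast L$ (a sheaf, rank $0$) into $A_1$ would produce a quotient with $r\geq 0$ unless the image is zero, so only the $\omega_p\otimes p^\ast L[1]$ generators (which lie in $\CF[1]$) can map in; peeling these off via induction on $\ell$ exhibits $A_1$ as an iterated extension of objects in $\CF[1]\cap\CA_1\subset\langle\CF[1],\CT_0\rangle_\ex$.

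\textbf{Step 3: orthogonality.} Finally I would verify $\langle\CF[1],\CT_0\rangle_\ex^\perp=\CT_1$. The inclusion $\CT_1\subset(\cdot)^\perp$ is: $\Hom(\CF[1],\CT_1)=0$ since $\CF[1]\subset\CA_0\cup\ldots$ and $\Hom(\CA_0,\CT_1)=0$ by purity; and $\Hom(\CT_0,\CT_1)=0$ again by purity of $\CA_1$ against $\CA_0$. Conversely, if $G\in\CA_{\leq 1}$ kills $\CF[1]$ and $\CT_0$: killing $\CF[1]$ forces $G$ to have no subobject... rather, it forces the $\CF[1]$-part in the $(\CA_0,\CA_1)$... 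I'd instead argue $G$ has perverse dimension $1$ (else $G\in\CA_0=\langle\CF_0[1],\CT_0,\dots\rangle$ would receive or map nontrivially — handle the zero-dimensional generators of Lemma~\ref{lem: A0} directly) and is pure, i.e.\ $G\in\CA_1$; purity plus $\Hom(\CF[1],G)=0$ and a rank argument via Lemma~\ref{lem: morphism} pins down $\ch_1(G)$ so that $G\in\CT_1$ rather than containing $\CF_1[1]$-pieces.

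\textbf{Main obstacle.} The delicate point is Step 2: it is \emph{not} true that $\CT_1\subset\langle\CF[1],\CT_0\rangle_\ex$, so one genuinely needs the rank-negativity condition built into $\CT_{\zeta,0}$ to rule out the $\CO_p(-1)\otimes p^\ast L$-type (rank $0$, $r=0$) generators of $\CT_1$ and keep only the $\omega_p\otimes p^\ast L[1]$-type ($r<0$) generators which live in $\CF[1]$. Making the induction in Step 2 precise — i.e.\ showing that "all quotients have $r<0$" propagates correctly down a filtration and that the only building blocks compatible with it are the shifted torsion-free sheaves — is where the real work lies, and it will lean on Lemmas~\ref{lem: morphism}, \ref{lem: generatorsT} and Proposition~\ref{prop: ell} in the same spirit as~\cite[Section 8]{BCR18}.
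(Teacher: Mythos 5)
Your proposal has a genuine error in Step 1 and a substantially harder (and unfinished) route in Step 2, whereas the paper's proof is much shorter.

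In Step 1 you conclude that every quotient $Q = F'[1]$ of $F[1]$ lies in $\CA_0$ by arguing that $\ell(F')\leq 0$ (Prop.\ \ref{prop: ell}(ii)) and $\ell\geq 0$ on $\CA_{\leq 1}$ (Prop.\ \ref{prop: ell}(iii)) "force" $\ell(F')=0$. This conflates the sheaf $F'$ with the shifted object $F'[1]\in\CA_{\leq 1}$: since $\ell$ is linear, $\ell(F'[1])=-\ell(F')\geq 0$ is entirely consistent with $\ell(F')<0$, so nothing is forced. Indeed your conclusion would mean $\CF[1]\subset\CA_0$, which is false (e.g.\ $\iota_\ast(\omega_p\otimes p^\ast L)[1]\in\CF_1[1]$). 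Notice that you never invoke the alternative branch "$\ch_1(Q)\in\BZ_{<0}\sfw$" in the definition of $\CT_{\zeta,0}$ — that branch is exactly what handles the quotients $F'[1]$ with $\ch_1(F')=r\sfw$, $r>0$. The only content of Step 1 beyond that bookkeeping is the claim $\CF\cap\Coh_{\leq 1}(X)=\CF_0$, which the paper proves by a support argument (a sheaf $F\in\CF$ with $\ch_1(F)=0$ has $\supp(F)\subset W$ at most $1$-dimensional, and any fiber $B$ meeting $\supp(F)$ in a nonempty $0$-dimensional set would force $p_\ast\iota^\ast F\neq 0$, contradicting $F\in\CF$; so $\supp(F)$ is a union of fibers, i.e.\ $F\in\CF_0$).

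For Step 2 the paper decomposes $E\in\CT_{\zeta,0}$ with respect to the torsion pair $(\CF[1],\CT_{\leq 1})$ of $\CA_{\leq 1}$ rather than $(\CA_0,\CA_1)$: writing $0\to F[1]\to E\to T\to 0$, the quotient $T$ is an honest sheaf so $\ch_1(T)\in\BZ_{\geq 0}\sfw$, and the defining condition of $\CT_{\zeta,0}$ then forces $T\in\CT\cap\CA_0=\CT_0$, so $E\in\langle\CF[1],\CT_0\rangle$ immediately. Your $(\CA_0,\CA_1)$-decomposition is not wrong, but it makes you believe you need the machinery of Lemmas~\ref{lem: morphism} and~\ref{lem: generatorsT} and a delicate induction — a much harder path, which you flag as the main obstacle and leave unresolved. (In fact, applying the $(\CF[1],\CT_{\leq 1})$-decomposition to your $A_1$ would close the gap with no extra work, but you did not spot this.)

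Finally, Step 3 is unnecessary: the identity $\CA_{\leq 1}=\big\langle\langle\CF[1],\CT_0\rangle,\CT_1\big\rangle$ — which the paper records at the outset of Step 1 to show $\langle\CF[1],\CT_0\rangle$ is closed under quotients — already exhibits $\CT_1$ as the right orthogonal of $\langle\CF[1],\CT_0\rangle$, so once the first equality is established the second one follows by uniqueness of the torsion-free complement. Your attempt at a separate orthogonality argument also contains a slip ($\CF[1]\not\subset\CA_0$, so $\Hom(\CF[1],\CT_1)=0$ cannot be deduced from $\Hom(\CA_0,\CT_1)=0$; the correct reason is simply $\Hom(\CF[1],\CT)=0$ for the torsion pair $(\CF[1],\CT)$).
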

\begin{proof}
We begin by proving that $\langle \CF[1], \CT_0\rangle \subset T_{\zeta,0}$. We first note that we can write
\[\CA_{\leq 1}=\langle \CF[1], \CT_{\leq 1}\rangle=\big\langle \CF[1], \langle \CT_0, \CT_1\rangle\big\rangle=\big\langle \langle\CF[1],  \CT_0\rangle, \CT_1\big\rangle,\]
so $\langle\CF[1],  \CT_0\rangle$ is closed under quotients. Hence it's enough to show that if $E\in\CF[1]$ or $E\in \CT_0$ then $E\in \CA_0$ or $\ch_1(E)\in \BZ_{<0}\sfw$. For $T\in \CT_0$ this is clear. If $F[1]\in \CF[1]$ then $\ch_1(F[1])=r\sfw$ with $r\leq 0$ and equality if and only if $F\in \Coh_{\leq 1}(X)$. So it remains to show that if $F\in \CF$ and $\ch_1(F)=0$ then $F\in \CF_0$, i.e.\ $\CF\cap \Coh_{\leq 1}(X)=\CF_0$. 

We let $F\in \CF\cap \Coh_{\leq 1}(X)$. Then $\supp(F)\subset W$ is at most 1-dimensional. If there is a fiber $B=p^{-1}(c)$ such that $\supp(F)\cap B$ is 0-dimensional and non-empty then $(R^0p_\ast \iota^\ast F)_c\neq 0$, which would contradict $F\in \CF$. Thus $\supp(F)$ is a finite union of fibers $B$, so $F\in \CF_0$ as we wanted and proving the first inclusion.

For the inclusion $T_{\zeta,0}\subset \langle \CF[1], \CT_0\rangle$, let $E\in T_{\zeta,0}$ and consider the decomposition of $E$ in the torsion pair $\CA_{\leq 1}=\langle\CF[1], \CT_{\leq 1}\rangle$
\[0\to F[1]\to E\to T\to 0.\]
Since $\ch_1(T)\in\BZ_{\geq 0} \sfw$, by the definition of $\CT_{\zeta,0}$ we have $T\in \CT\cap \CA_0=\CT_0$.

This finishes the proof of the first equality $T_{\zeta,0}=\langle \CF[1], \CT_0\rangle$. The second equality follows from the first and
\[\langle \CT_{\zeta,0}, \CF_{\zeta,0}\rangle=\CA_{\leq 1}=\big\langle \langle\CF[1],  \CT_0\rangle, \CT_1\big\rangle.\qedhere\]
\end{proof}

Recall that $\CT_0=\CT\cap \CA_0=\tbs$, so in particular $\tbs\subset \CT_{\zeta,0}$. The key result of this section is
\begin{proposition}
Let $P\in D^b(X)$ be such that $\ch_1(P)=0$. Then $P$ is a $(\CT_{\zeta,0}, \CF_{\zeta,0})$-pair if and only if $P$ is a $(\tbs,\fbs)$-pair. In particular, for any $\beta\in N_1(X)$ and $0<\mu<\min V_{\beta}$ we have
\[\PDT^{\zeta, (\mu,\eta)}_{\beta}=\bs_{\beta}\,.\]

\end{proposition}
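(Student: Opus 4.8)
The statement has two parts: first, an identification of $(\CT_{\zeta,0},\CF_{\zeta,0})$-pairs with $(\tbs,\fbs)$-pairs among objects of vanishing first Chern class; second, the resulting equality of generating series. The plan is to deduce the first part directly from Lemma~\ref{lem: identifytorsionpairs} together with the characterization of $(\CT,\CF)$-pairs in Definition~\ref{def: pair}, and the second part by observing that the two stacks of pairs coincide, so the integration map produces the same class in the Hall algebra.

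\textbf{Step 1: reduce to a statement about subcategories.} Suppose $P\in D^b(X)$ has $\ch_1(P)=0$ and $\rk(P)=-1$. Since both ${}^p\CB$ and $\CB=\langle\CO_X[1],\Coh_{\leq1}\rangle_\ex$ contain $\CO_X[1]$ and since $\ch_1(P)=0$, the object $P$ sits in an exact triangle $\CO_X[1]\to P\to E$ with $E$ of rank $0$; I would first check that $P\in{}^p\CB$ forces $E\in\CA_{\leq1}$, and similarly $P\in\CB$ forces $E\in\Coh_{\leq1}(X)$, and that the condition $\ch_1(E)=0$ is what makes these two conditions interact cleanly. The point is that $E\in\CA_{\leq1}$ with $\ch_1(E)=0$ lies in $\langle\CF[1]\cap\Coh_{\leq1},\CT_{\leq1}\cap\Coh_{\leq1}\rangle$; but in the proof of Lemma~\ref{lem: identifytorsionpairs} it was shown that $\CF\cap\Coh_{\leq1}(X)=\CF_0\subset\CA_0$, hence any such $E$ is actually an honest coherent sheaf in $\Coh_{\leq1}(X)$. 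Conversely a coherent sheaf $E\in\Coh_{\leq1}(X)$ automatically lies in $\CA_{\leq1}$. So the candidate pairs in the two worlds are supported on the same underlying complexes, and I only need to compare the stability (torsion-pair) conditions.

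\textbf{Step 2: compare the defining vanishings.} For a $(\CT_{\zeta,0},\CF_{\zeta,0})$-pair one needs $\Hom(\CT_{\zeta,0},P)=0$ and $\Hom(P,\CF_{\zeta,0})=0$; for a $(\tbs,\fbs)$-pair one needs $\Hom(\tbs,P)=0$ and $\Hom(P,\fbs)=0$. By Lemma~\ref{lem: identifytorsionpairs}, $\CT_{\zeta,0}=\langle\CF[1],\CT_0\rangle$ and $\CF_{\zeta,0}=\CT_1$, while by definition $\tbs=\CT_0$ and $\fbs=\CT_1\cap\Coh_{\leq1}$ (using $\CT_0=\CT\cap\CA_0=\tbs$, which is recorded in the text just before the Proposition, and the identification of $\fbs$ via $\mu^A$-stability). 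The inclusion $\tbs=\CT_0\subset\CT_{\zeta,0}$ is immediate, so the first vanishing for a $(\CT_{\zeta,0},\CF_{\zeta,0})$-pair implies the first for a $(\tbs,\fbs)$-pair; for the converse I would use that $\CT_{\zeta,0}=\langle\CF[1],\CT_0\rangle$ and that $\Hom(\CF[1],P)=0$ holds automatically once we know $P\in\Coh_{\leq1}$-style position (more precisely, $\CH^{-1}(P)$ is torsion-free outside $W$ and $\ch_1(P)=0$, so $P$ has no subobject from $\CF[1]$). For the second vanishing, since $\CF_{\zeta,0}=\CT_1$ is larger than $\fbs=\CT_1\cap\Coh_{\leq1}$, the implication from $(\CT_{\zeta,0},\CF_{\zeta,0})$ to $(\tbs,\fbs)$ is again immediate; for the reverse direction I would argue that a nonzero map $P\to F$ with $F\in\CT_1$ would, after composing with the projection to its "coherent part" or using that $\coker$ of $\CO_X\to G$ lies in $\tbs$, produce a nonzero map to an object of $\fbs$, contradicting the $\bs$-pair condition. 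This is the step that requires the most care: I would model it closely on the proof of Lemma~\ref{lem: identifytorsionpairs} and on \cite[Lemma 3.11]{BCR18}, which describes $\bs$-pairs concretely as $\CO_X\overset{s}\to G$ with $G\in\fbs$ and $\coker(s)\in\tbs$.

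\textbf{Step 3: conclude the equality of invariants.} Once Steps 1–2 establish that, for classes with $\ch_1=0$, the stacks $\Pairs^{\zeta,(\mu,\eta)}_{(0,\beta,c)}$ and $\Pairs^{\bs}_{(\beta,c)}$ are isomorphic (indeed equal as substacks of $\CM$), both carry the same Behrend function, so applying the integration map $I$ of Theorem~\ref{thm: integrationmap} to $(\BL-1)[\Pairs^{\zeta,(\mu,\eta)}]$ and to $(\BL-1)[\Pairs^{\bs}]$ yields the same element of $\QQ\{N(X)\}$. Extracting the coefficient of $t^{-[\CO_X]}$ and comparing with the definitions of $\PDT^{\zeta,(\mu,\eta)}_\beta$ (equation~\eqref{eq: dtinvariants}) and of $\bs_\beta$ gives $\PDT^{\zeta,(\mu,\eta)}_\beta=\bs_\beta$ for $0<\mu<\min V_\beta$ and any $\eta$, as desired.

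\textbf{Main obstacle.} The genuine content is Step 2, specifically the direction showing that a $(\tbs,\fbs)$-pair satisfies the stronger vanishing against all of $\CF_{\zeta,0}=\CT_1$ and that it has no subobjects from $\CF[1]$. The subtlety is that $\CF_{\zeta,0}$ and $\CT_{\zeta,0}$ are defined inside the perverse heart $\CA_{\leq1}$, whereas $\tbs,\fbs$ live in $\Coh_{\leq1}(X)$; bridging the two requires the identification $\CF\cap\Coh_{\leq1}(X)=\CF_0$ established in Lemma~\ref{lem: identifytorsionpairs}, plus the structural description of $\bs$-pairs. Once the two torsion pairs are matched under the restriction $\ch_1=0$, everything else is formal.
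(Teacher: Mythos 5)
Your overall strategy (invoke Lemma~\ref{lem: identifytorsionpairs}, then compare the defining vanishing conditions) is roughly the right territory, but two of the central claims are incorrect and make the argument break down.

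\textbf{The identification $\fbs=\CT_1\cap\Coh_{\leq1}$ is false, and the ``immediate'' direction depends on it.} You assert that $\fbs\subset\CF_{\zeta,0}=\CT_1$, so that a $(\CT_{\zeta,0},\CF_{\zeta,0})$-pair is automatically a $(\tbs,\fbs)$-pair. But $\fbs$ is the right orthogonal of $\tbs$ inside $\Coh_{\leq1}(X)$, and this contains sheaves that are \emph{not} in $\CT$. A concrete counterexample: take $F=\iota_\ast\CO_B(-2)$ for a fiber $B=p^{-1}(y)$. One checks $\Hom(T,F)=0$ for all the generators $k(x)$, $\CO_{B_y}(k)$ ($k\geq -1$) of $\CT_0=\tbs$, so $F\in\fbs$. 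On the other hand $R^1p_\ast\iota^\ast F=R^1p_\ast\CO_B(-2)=k(y)\neq 0$, so $F\notin\CT$, hence $F\notin\CT_1$. Thus $\fbs\not\subset\CF_{\zeta,0}$ and the claimed containment argument does not go through. The paper handles this direction quite differently: it shows that a $(\CT_{\zeta,0},\CF_{\zeta,0})$-pair $P$ with $\ch_1(P)=0$ can be put in the form $\CO_X\to F$ with $\CH^0(P)\in\tbs$ (using the pair condition to kill the torsion of $\CH^{-1}(P)$ and then the argument of \cite[Lemma 3.11]{To10a}), and only \emph{then} verifies $F\in\fbs$ via $\Hom(T,F)=\Hom(T,P)=0$ for $T\in\tbs$.

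\textbf{Step 1 cannot be established numerically.} You claim that $E\in\CA_{\leq 1}$ with $\ch_1(E)=0$ ``is actually an honest coherent sheaf.'' The numerical condition $\ch_1(E)=0$ does force the $\CF$-part of the torsion decomposition $F[1]\to E\to T$ to satisfy $\ch_1(F)=0$, i.e.\ $F\in\CF\cap\Coh_{\leq1}(X)=\CF_0$, but $\CF_0$ is nonzero (it contains the shifted fiber sheaves $\CO_{B_y}(k)$, $k\leq -2$), so $F$ need not vanish and $E$ need not be concentrated in degree $0$. Ruling this torsion out genuinely requires the \emph{pair condition}: as in the paper, one observes that the torsion subsheaf $T\hookrightarrow\CH^{-1}(P)$ lies in $\CF$, so $T[1]\in\CF[1]\subset\CT_{\zeta,0}$, and the nonexistence of maps $\CT_{\zeta,0}\to P$ forces $T=0$. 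Without this step the two stacks of pairs are not a priori supported on the same underlying complexes, which also undermines the formal conclusion in your Step~3.

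Finally, for the direction ($\bs$-pair $\Rightarrow$ $\zeta$-pair) your sketch of showing $\Hom(P,\CF_{\zeta,0})=0$ ``by producing a map to $\fbs$'' is not what is needed; the cleaner route is the one in the paper, namely that $Q=\coker(\CO_X\to F)\in\tbs\subset\CT_{\zeta,0}$ is orthogonal to $\CF_{\zeta,0}$ by construction, so $\Hom(P,G)=\Hom(Q,G)=0$. (One also has to verify $F\in\CA_{\leq1}$ so that $P\in{}^p\CB$ at all, which the paper does via the exact sequence $0\to\CO_Z\to F\to Q\to 0$ with $Z$ the scheme-theoretic support; your proposal omits this.)
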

\begin{proof}
We begin with the proof that if $P$ is a $(\tbs, \fbs)$-pair then it's a $(\CT_{\zeta,0}, \CF_{\zeta,0})$-pair. If $P$ is a $\bs$-pair, by \cite[Lemma 3.13]{BCR18} we can write $P=(\CO_X\to F)$ with $F\in \fbs$ and $Q=\coker(\CO_X\to F)\in \tbs=\CT_0\subset \CA_{\leq 1}$. We first prove that $F\in \CA_{\leq 1}$, so $P\in {}^p\CB$. If $Z$ is the scheme-theoretical support of $F$ (which is a curve), we have the short exact sequence of sheaves
\[0\to \CO_Z\to F\to Q\to 0.\]
Since both $\CO_Z$ and $Q$ are contained in $\CA_{\leq 1}$, which is closed under extensions, it follows that $F\in \CA_{\leq 1}$. Moreover for $T\in \CT_{\zeta,0}$
\[\Hom(T, P)=\Hom(T, F)=\Hom(H^0(T), F)=0\]
The last vanishing holds because $H^0(T)\in \CT_0=\tbs$ by Lemma \ref{lem: identifytorsionpairs}  and $F\in \fbs$. Similarly, for $G\in \CF_{\zeta,0}$,
\[\Hom(P, G)=\Hom(Q,G)=0\]
vanishes since $Q\in \tbs\subset \CT_{\zeta,0}$. So we conclude that $P$ is a $(\CT_{\zeta,0}, \CF_{\zeta,0})$-pair.

We now assume that $P$ is a $(\CT_{\zeta,0}, \CF_{\zeta,0})$-pair with $\ch_1(P)=0$. Since
\[P\in {}^p\CB=\big\langle \CO_X[1], \CF[1], \CT_{\leq 1}\big\rangle_\ex\]
we can easily see that $\CH^i(P)=0$ for $i\neq -1, 0$ and $\CH^{-1}(P), \CH^0(P)$ have ranks 1 and 0, respectively. Moreover the torsion part $T\hookrightarrow \CH^{-1}(P)$ is in $\CF$, so $T[1]\in \CF[1]\subset \CT_{\zeta,0}$. By definition of $(\CT_{\zeta,0}, \CF_{\zeta,0})$-pair the composition
\[T[1]\hookrightarrow \CH^{-1}(P)[1]\rightarrow P\]
vanishes, forcing $T$ to vanish. Thus $\CH^{-1}(P)$ is torsion-free. By Lemma \ref{lem: identifytorsionpairs} we have
\[\CH^0(P)\in \CT_{\zeta,0}\cap \Coh(X)=\CT_0=\tbs.\]
In particular it follows that
\[\ch_1\big(\CH^{-1}(P)\big)=\ch_1\big(\CH^0(P)\big)-\ch_1(P)=0.\]
Hence $\CH^{-1}(P)$ is a torsion-free, rank 1 sheaf with trivial determinant, hence it's an ideal sheaf $\CH^{-1}(P)\cong I_C$. So $P$ fits in an exact triangle
\[I_C[1]\rightarrow P \rightarrow \CH^0(P).\]
Using the argument of \cite[Lemma 3.11 (ii)]{To10a} with the fact that 
\[H^1\big(X, \CH^0(P)\big)=0\,,\]
we get that $P$ has the form $P=(\CO_X\to F)$. We already know that $\CH^0(P)\in \tbs$ so it remains to show that $F\in \fbs$ (see \cite[Remark 3.10]{BCR18}). For $T\in \tbs$ we have
\[\Hom(T, F)=\Hom(T, P)=0\]
since $T\in \tbs\subset \CT_{\zeta,0}$, and we're done.
\end{proof}
\subsection{Limit stability II}
\label{subsec: limitII}
We identify the limit of $(\mu,\eta)$-stability for $\mu\to\infty$ with $\PPT$ stability.
\begin{lemma}
Let $P\in {}^p\CB$ be of class $(-1,\gamma,c)$ and $\mu>\max V_\gamma$. Then, $P$ is a perverse stable pair if and only if $P$ is a $(\CT_{\zeta,(\mu,\eta)}, \CF_{\zeta,(\mu,\eta)})$ pair. In particular, for any $\gamma\in N_{1}$ and $\mu>\max V_{\gamma}$ we have
\[\PDT^{\zeta, (\mu,\eta)}_{\gamma}=\PPT_{\gamma}\,.\]

\end{lemma}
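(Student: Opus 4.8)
The plan is to prove the statement in two steps: first, that for $P\in{}^p\CB$ of class $(-1,\gamma,c)$ with $\mu>\max V_\gamma$ (and $\eta\in\BR$ arbitrary) the two notions of pair coincide, i.e.\ $P$ is a perverse stable pair (an $(\CA_0,\CA_1)$-pair) if and only if $P$ is a $(\CT_{\zeta,(\mu,\eta)},\CF_{\zeta,(\mu,\eta)})$-pair; and then, that this forces $\PDT^{\zeta,(\mu,\eta)}_\gamma=\PPT_\gamma$. The second step is formal: the equivalence of the two pair conditions means the moduli stacks $\Pairs^{\zeta,(\mu,\eta)}_{(\gamma,c)}$ and $\PPairs_{(\gamma,c)}$ have the same points for every $c\in N_0$, hence (being open substacks of $\CM$ by Lemmas~\ref{lem: pairsnuopen} and~\ref{lem: pairszetaopen}) agree as stacks, so the integration map of Section~\ref{sec: Hall} returns the same numerical invariants; summing over $c$ gives the claimed identity of generating series. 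I will therefore focus on the first step, and since $\rk(P)=-1$ and $P\in{}^p\CB$ in both cases, only the two $\Hom$-vanishings of Definition~\ref{def: pair} must be compared.

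Two preliminary observations will be used repeatedly. \emph{(a)} Since ${}^p\CB=\langle\CO_X[1],\CA_{\leq 1}\rangle_\ex$ and $\CA_{\leq 1}$ is closed under extensions, every object of ${}^p\CB$ has rank $\leq 0$, every rank-$0$ object of ${}^p\CB$ lies in $\CA_{\leq 1}$, and every rank-$(-1)$ object of ${}^p\CB$ has $N_1$-class in $N_1^\eff$. Hence, for any morphism between an object of $\CA_{\leq 1}$ and $P$ (in either direction), its image $I$, formed in the abelian category ${}^p\CB$, has rank $0$, so $I\in\CA_{\leq 1}$; moreover $I$ is a quotient of the source and a subobject of the target, and $[I]$, $\gamma-[I]$ both lie in $N_1^\eff$. \emph{(b)} By Proposition~\ref{prop: ell}~(iii), a nonzero object of $\CA_1$ has $\ell>0$ and hence nonzero $N_1$-class; and since $\mu>\max V_\gamma$, every $0\neq\gamma'$ with $\gamma-\gamma'\in N_1^\eff$ has $\zeta_1(\gamma')<\mu$ (those with $\zeta_1(\gamma')>0$ belong to $V_\gamma$, the rest satisfy $\zeta_1\leq 0$). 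Using the lexicographic order, this gives: any nonzero $S\in\CA_1$ with $\gamma-[S]\in N_1^\eff$ has $\zeta(S)<(\mu,\eta)$; and any nonzero $Q\in\CA_{\leq 1}$ with $\gamma-[Q]\in N_1^\eff$ either lies in $\CA_0$ or has $\zeta(Q)<(\mu,\eta)$.

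For the implication ``perverse stable pair $\Rightarrow$ $(\CT_{\zeta,(\mu,\eta)},\CF_{\zeta,(\mu,\eta)})$-pair'': if $0\neq f\colon T\to P$ with $T\in\CT_{\zeta,(\mu,\eta)}$, then $I:=\mathrm{im}(f)$ is a quotient of $T$, hence in $\CT_{\zeta,(\mu,\eta)}$, and by~\emph{(a)} lies in $\CA_{\leq 1}$ with $\gamma-[I]\in N_1^\eff$; viewing $I$ as a quotient of itself gives $\zeta(I)\geq(\mu,\eta)$, so by~\emph{(b)} we must have $I\in\CA_0$, contradicting $\Hom(\CA_0,P)=0$ unless $I=0$. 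If $0\neq g\colon P\to F$ with $F\in\CF_{\zeta,(\mu,\eta)}$, then $\mathrm{im}(g)$ is a subobject of $F$; since $\CF_{\zeta,(\mu,\eta)}$ contains no nonzero object of $\CA_0$ (which has the top $\zeta$-slope), $\CF_{\zeta,(\mu,\eta)}\subset\CA_1$, so $\mathrm{im}(g)\in\CA_1$, contradicting $\Hom(P,\CA_1)=0$ unless $\mathrm{im}(g)=0$. For the converse: $\Hom(\CA_0,P)=0$ is automatic because $\CA_0\subset\CT_{\zeta,(\mu,\eta)}$ (objects of $\CA_0$ and all their quotients have $\zeta=(+\infty,+\infty)$); and for $\Hom(P,\CA_1)=0$, given $0\neq g\colon P\to F'$ with $F'\in\CA_1$, the image $I:=\mathrm{im}(g)$ is a subobject of $F'$, hence in $\CA_1$, and by~\emph{(a)} a quotient of $P$ with $\gamma-[I]\in N_1^\eff$; by~\emph{(b)}, every nonzero subobject $S\hookrightarrow I$ lies in $\CA_1$ with $\gamma-[S]\in N_1^\eff$ and therefore $\zeta(S)<(\mu,\eta)$, so $I\in\CF_{\zeta,(\mu,\eta)}$, contradicting $\Hom(P,\CF_{\zeta,(\mu,\eta)})=0$ unless $I=0$.

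The step I expect to require the most care is observation~\emph{(a)}: one must check that kernels, images and cokernels computed in the abelian category ${}^p\CB$ (as opposed to $D^b(X)$) stay inside $\CA_{\leq 1}$ and carry the expected effective $N_1$-classes — this is exactly where the presentation ${}^p\CB=\langle\CO_X[1],\CA_{\leq 1}\rangle_\ex$ together with the extension-closedness of $\CA_{\leq 1}$ is used. Apart from that, the argument is a formal torsion-pair manipulation, parallel to~\cite[Section~8]{BCR18} but much shorter, the only genuinely needed input from the geometry being Proposition~\ref{prop: ell}~(iii) and the fact — forced by $\mu>\max V_\gamma$ — that the surface-like subclasses $\gamma'$ (those with $\zeta_1(\gamma')>0$) cannot belong to $\CT_{\zeta,(\mu,\eta)}$.
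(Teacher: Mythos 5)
Your proof is correct and spells out, in full detail, the argument the paper only sketches (by deferring to \cite[Lemma~8.20]{BCR18}): the crux in both is that for $\mu>\max V_\gamma$ any $E\in\CT_{\zeta,(\mu,\eta)}$ whose $N_1$-class is $\leq\gamma$ must lie in $\CA_0$, which is exactly your observation~\emph{(b)}, and the rest follows by routine torsion-pair manipulations. Your careful check in~\emph{(a)} that images and kernels computed in ${}^p\CB$ stay in $\CA_{\leq 1}$ with effective $N_1$-classes, and the dual observation that $\CF_{\zeta,(\mu,\eta)}\subset\CA_1$, are exactly the points the paper leaves implicit.
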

\begin{proof}
The proof is analogous to \cite[Lemma 8.20]{BCR18}: for such $\mu$ and $E\in \CA_{\leq 1}$ with $[E]\leq \gamma$ in $N_1$, such that $E\in \CT_{\zeta,(\mu, \eta)}$, we must have $E\in \CA_0$.\qedhere
\end{proof}

\subsection{Crossing a wall}
Let $\mu\in V_\gamma$. First, we show that we can enter the wall $\{\mu\}\times\BR$ from either side in the following sense.

\begin{lemma}\label{lem: enterwall}
Let $\alpha\in N_{\leq 1}$ and $0<\varepsilon\ll 1$.
\begin{enumerate}
    \item For sufficiently large $\eta\gg 0$
    \[\Pairs^{\zeta,(\mu,\eta)}_\alpha=\Pairs^{\zeta,(\mu+\varepsilon,\eta)}_\alpha\,,\]
    \item for sufficiently small $\eta\ll 0$
    \[\Pairs^{\zeta,(\mu,\eta)}_\alpha=\Pairs^{\zeta,(\mu-\varepsilon,\eta)}_\alpha\,.\]
\end{enumerate}
\end{lemma}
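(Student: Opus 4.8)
The plan is to prove Lemma~\ref{lem: enterwall} by analyzing how the torsion pair $(\CT_{\zeta,(\mu,\eta)},\CF_{\zeta,(\mu,\eta)})$ — obtained by truncating the $\zeta$-HN-filtration at the point $(\mu,\eta)$ — varies as $(\mu,\eta)$ moves in a neighborhood of the wall $\{\mu\}\times\BR$. The key structural fact (already set up in Section~\ref{sec: zetawallcrossing}) is that the only $\zeta_1$-value where stability in class $\gamma$ can jump is $\mu\in V_\gamma$, and the associated ``jumping'' effective class $\gamma_\mu$ is unique up to scaling by Lemma~\ref{lem: crosszetawall}. So first I would fix $\alpha=(\gamma,c)$ and recall that, by Lemma~\ref{lem: pairszetaopen}, the moduli of $(\CT_{\zeta,(\mu',\eta')},\CF_{\zeta,(\mu',\eta')})$-pairs of class $\alpha$ only sees subobjects and quotients whose classes $\gamma'$ satisfy $0<\gamma'\le\gamma$ in $N_1$ and whose $c$-components lie in an $L_{\mu'}$-bounded set; in particular only finitely many effective classes $\gamma'$ are relevant, hence only finitely many walls $V_\gamma\cap(\mu-\delta_0,\mu+\delta_0)$ for some $\delta_0>0$, and we may choose $\varepsilon<\delta_0$ so that $(\mu+\varepsilon,\eta')$ and $(\mu-\varepsilon,\eta')$ lie in chambers adjacent to $\{\mu\}\times\BR$.

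Next I would show the two stated identifications are really the statement that, restricted to classes $\gamma'\le\gamma$, the lexicographic $\zeta$-order ``degenerates'' along the wall in a controlled way: for $\mu'=\mu+\varepsilon$ the value $\zeta(\gamma_\mu)$ sits on one side of $(\mu,\eta)$ for all $\eta$, and conversely for $\mu'=\mu-\varepsilon$ it sits on the other side. Concretely, because $\zeta(E)=(\zeta_1(E),\nu(E))$ with $\zeta_1$ depending only on the $N_1$-class, for $\mu'\neq\mu$ the relevant classes $\gamma'$ all have $\zeta_1(\gamma')\neq\mu'$, so the second coordinate $\eta'$ is irrelevant — the torsion pair truncated at $(\mu',\eta')$ only depends on $\mu'$. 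Hence for fixed small $\varepsilon$ we get $\Pairs^{\zeta,(\mu+\varepsilon,\eta')}_\alpha$ independent of $\eta'$, and likewise on the minus side; it remains to match these with $\Pairs^{\zeta,(\mu,\eta)}_\alpha$ for $\eta\gg 0$ (resp.\ $\eta\ll 0$). For this I would compare the defining conditions of a $(\CT_{\zeta,(\mu,\eta)},\CF_{\zeta,(\mu,\eta)})$-pair: a subobject $T\hookrightarrow P$ with $[T]=\gamma'$ contributes an obstruction iff $\zeta(T)\ge(\mu,\eta)$ in the lexicographic order. If $\zeta_1(\gamma')>\mu$ this holds regardless of $\eta$; if $\zeta_1(\gamma')<\mu$ it never holds; the only borderline case is $\zeta_1(\gamma')=\mu$, i.e.\ $\gamma'$ a multiple of $\gamma_\mu$, where the condition becomes $\nu(T)\ge\eta$. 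Taking $\eta\gg 0$ makes this fail for all the finitely many relevant classes (their $\nu$-values, among the $L_\mu$-bounded set of $c$'s, are bounded above), which is exactly the chamber $\mu+\varepsilon$; taking $\eta\ll 0$ makes it hold, matching $\mu-\varepsilon$. The symmetric analysis for quotients $P\twoheadrightarrow Q$ and the orthogonality conditions defining $\CF_{\zeta,(\mu,\eta)}$ proceeds identically.

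The main obstacle I anticipate is not conceptual but bookkeeping: one must verify carefully that the set of $\nu$-slopes $\nu(\gamma_\mu,c')$ that can actually arise as classes of sub/quotient objects of a pair of class $\alpha$ is bounded above (for the $\eta\gg0$ claim) and below (for $\eta\ll0$), so that a single threshold $\eta$ works uniformly. This follows by combining the $L_\mu$-boundedness of Lemma~\ref{lem: pairszetaopen}(ii) (which pins down the $c$-component modulo $\BZ(A\cdot\gamma_\mu)$ into a finite set) with the fact that twisting by $\CO_X(A)$ shifts $\nu$ by $1$ but that only finitely many twists land inside the relevant $L_\mu$-bounded window; I would also invoke Lemma~\ref{lem: cclassesfinite} to control the finitely many residue classes $\kappa$. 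Having established this uniform bound, the proof reduces to the elementary order comparison above, and I would then simply write: ``the proof is parallel to~\cite[Lemma~8.22]{BCR18}, using Lemmas~\ref{lem: crosszetawall}, \ref{lem: pairszetaopen}, and \ref{lem: cclassesfinite},'' since the crepant-resolution argument in \emph{loc.\ cit.}\ is exactly this analysis.
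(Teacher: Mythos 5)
Your proposal is essentially the paper's approach: the paper's proof is the single line "The proof is a simplified version of~\cite[Lemma~8.25]{BCR18}," and your sketch reconstructs exactly that argument (the lexicographic order on $\zeta$ implies the torsion pair truncated at $(\mu',\eta')$ with $\mu'\neq\mu$ is insensitive to $\eta'$, and a uniform bound on the $\nu$-values of on-the-wall sub/quotient classes of pairs in class $\alpha$ lets $\eta\gg 0$ match $\mu+\varepsilon$ and $\eta\ll 0$ match $\mu-\varepsilon$). Two small points: you cite BCR18 Lemma~8.22 where the paper cites Lemma~8.25, and the boundedness step is slightly imprecise — $L_\mu$-boundedness of $\Pairs^{\zeta,(\mu,\eta)}_\gamma$ alone does not "pin down the $c$-component modulo $\BZ(A\cdot\gamma_\mu)$ into a finite set" (since $L_\mu$ vanishes on $A\cdot\gamma_\mu$ the fibers of $L_\mu$ are exactly the cosets you want to control); what one actually uses is that for a subobject $T\hookrightarrow P$ of the \emph{fixed} class $\alpha=(\gamma,c)$, \emph{both} $c_T$ and $c-c_T$ lie in an $L_\mu$-bounded set, so $L_\mu(c_T)$ is bounded above and below, and combined with the finitely many possibilities for the $N_1$-class $\gamma_T$ this forces the $\nu$-value to lie in a bounded interval.
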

\begin{proof}
The proof is a simplified version of~\cite[Lemma 8.25]{BCR18}.
\end{proof}

We explain now the wall-crossing inside $\{\mu\}\times \BR$. Let $c_\mu\in N_0$ be the unique class of Lemma~\ref{lem: crosszetawall}. For any $c\in N_0$ define
\[ \PDT^{\zeta,(\mu,\eta)}_{\gamma,c+\BZ c_\mu} = \sum_{k\in\BZ} \PDT^{\zeta, (\mu,\eta)}_{\gamma, c+kc_\mu}z^{c+kc_\mu}\in \BQ[[Q^{\pm 1}, q^{\pm 1}]]\,.\]
We have used the Novikov parameter~$z$ to track both $q$ and $Q$. 
By the previous lemma, the notion of $(\mu,\eta)$-pair is constant for $\mu\gg 0$ (respectively $\mu\ll 0$) and fixed $\alpha\in N_{\leq 1}$. Thus, we can define the limit for $\eta\to\pm\infty$, which agrees with the generating series for $(\mu\pm\varepsilon,\eta)$:
\[\PDT^{\zeta,(\mu,\pm\infty)}_{\gamma,c+\BZ c_\mu} = \PDT^{\zeta, (\mu\pm\varepsilon,\eta)}_{\gamma,c+\BZ c_\mu}\,.\]
\begin{lemma}
The two generating series $\PDT^{\zeta, (\mu,\pm \infty)}_{\gamma,c+\BZ c_\mu}$ are the expansion of the same rational function. 
\end{lemma}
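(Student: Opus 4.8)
The plan is to apply Joyce's wall-crossing formula to the pair of torsion pairs on either side of the wall $\{\mu\}\times\BR$, restricted to classes in $\gamma$ with second factor in $c+\BZ c_\mu$, and then to show that the wall-crossing term is a well-behaved quasi-polynomial so that \cite[Lemma 2.21]{BCR18} (or \cite[Lemma 2.17]{BCR18}) applies. First I would fix $\gamma\in N_1$ and $\mu\in V_\gamma$, and by Lemma~\ref{lem: enterwall} identify $\Pairs^{\zeta,(\mu,\pm\infty)}$ with $\Pairs^{\zeta,(\mu\pm\varepsilon,\eta)}$ for $0<\varepsilon\ll 1$ and appropriate $\eta$, so that we are genuinely crossing only the single wall $\{\mu\}\times\BR$. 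Exactly as in Section~\ref{subsec: rationalityvianu}, the wall inside $\{\mu\}\times\BR$ is itself a finite union of $\zeta$-walls (indexed by the finite set $W_l\cap$ something, cf.\ Lemma~\ref{lem: wallcrossingmaterial}) and one checks that the relevant torsion pairs $(\CT_{\zeta,(\mu\pm\varepsilon,\eta)},\CF_{\zeta,(\mu\pm\varepsilon,\eta)})$ are wall-crossing material: openness and local finite type of the pair stacks come from Lemma~\ref{lem: pairszetaopen}, the stack $\CW=\CM^\zeta$ of semistable objects on the wall is open and of finite type by Proposition~\ref{prop: finitetypezeta}, closedness under extensions and summands is clear, and the finiteness of effective decompositions follows from Corollary~\ref{cor: finite} together with Lemma~\ref{lem: cclassesfinite} / Proposition~\ref{prop: finitetypezeta}(ii).

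Next I would write down the wall-crossing identity in the Poisson torus $\BQ\{N(X)\}$, in the form analogous to~\eqref{eq: pervwallcrossing}:
\[
\PDT^{\zeta,(\mu,+\infty)}\,t^{-1}=\Bigl(\prod_{\eta'}\exp\bigl(\{J^\zeta(\mu,\eta'),-\}\bigr)\Bigr)\,\PDT^{\zeta,(\mu,-\infty)}\,t^{-1},
\]
where $J^\zeta(\mu,\eta')$ are the generalized DT invariants of $\zeta$-semistable objects of slope $(\mu,\eta')$, the product ranging over the finitely many $\eta'$ that give walls for classes $\leq\gamma$ with bounded $\ell$. The key structural observation, parallel to the combinatorial analysis in Section~\ref{subsec: combinatorics}, is that every wall-crossing term contributing nontrivially to the $(\gamma, c+\BZ c_\mu)$-part involves only classes $\gamma_i$ whose $A\cdot\gamma_i$ is a positive rational multiple of $c_\mu$ (this is precisely the content of Lemma~\ref{lem: crosszetawall}: $c_\mu$ is the unique class killed by $L_\mu$, equivalently the unique direction with $\zeta_1=\mu$). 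Therefore, after fixing the finitely many $\gamma_i$ (finitely many by Corollary~\ref{cor: finite}) and the residues $\kappa_i\in N_0/\BZ(A\cdot\gamma_i)=N_0/\BZ c_\mu$, the remaining freedom is a tuple of integers $(k_1\le\cdots\le k_m)$ with prescribed strict/weak inequalities, and the coefficient is a quasi-polynomial of period $2$ in these $k_i$ (using that $\M^\zeta_{(\gamma_i,c_i)}\cong\M^\zeta_{(\gamma_i,c_i+c_\mu)}$ after twisting by $\CO_X(A)$ restricted to the wall, so $J^\zeta$ depends only on the residue; here I would note $A\cdot c_\mu$ relates to the period). Summing such a quasi-polynomial over $S_\CJ$ against $z^{c+kc_\mu}$, and invoking \cite[Lemma 2.21]{BCR18}, shows that $\PDT^{\zeta,(\mu,+\infty)}_{\gamma,c+\BZ c_\mu}-\PDT^{\zeta,(\mu,-\infty)}_{\gamma,c+\BZ c_\mu}$ — and hence, once we know one side is rational, both sides — is the expansion of a rational function; since the two series are related by the wall-crossing operator, which is a finite composition of exponentials of Hamiltonians that act on $\BQ(q,Q)$, they are expansions of the \emph{same} rational function, in the two directions $L_{\mu+\varepsilon}$ and $L_{\mu-\varepsilon}$ respectively.

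The main obstacle I anticipate is bookkeeping the \emph{direction of expansion}: unlike the purely numerical equalities, the two series live in different completions $\BQ[N_0]_{L_{\mu\pm\varepsilon}}$, so one must argue that the wall-crossing operator $\exp(\{J^\zeta(\mu,\eta'),-\})$ makes sense simultaneously in both completions and transports one expansion to the other. Concretely, the classes $A\cdot\gamma_i$ appearing in the Hamiltonians all satisfy $L_\mu(A\cdot\gamma_i)=0$, so $\{J^\zeta(\mu,\eta'),-\}$ shifts by vectors on which $L_{\mu+\varepsilon}$ and $L_{\mu-\varepsilon}$ have opposite signs of bounded magnitude; controlling the resulting (possibly infinite) geometric-type sums and showing they reassemble into a genuine rational function is exactly where the period-$2$ quasi-polynomiality and \cite[Lemma 2.21]{BCR18} are needed, and where I would spend the most care. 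This is entirely parallel to \cite[Section 8]{BCR18}, but, as the authors remark, simpler here because $\ch_1\in\BZ\cdot\sfw$ and $\ch_2\in\BZ\cdot\sfb$ on the relevant classes, so the ambient lattice directions are one-dimensional and no genericity of the ample class is required.
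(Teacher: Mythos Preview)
Your proposal is correct and follows essentially the same approach as the paper. The paper's proof is extremely terse—it says only that the combinatorics is the same as in Section~\ref{subsec: combinatorics}, refers to \cite[Corollary 8.28]{BCR18}, and verifies the wall-crossing-material conditions via Proposition~\ref{prop: finitetypezeta} and Lemma~\ref{lem: pairszetaopen} exactly as you do; for condition~(iii) it uses the observation that if $\sum c_i=c$ is fixed and each $c_i$ lies in an $L_\mu$-bounded set then there are finitely many possibilities, which is your Proposition~\ref{prop: finitetypezeta}(ii) argument. Your elaboration of the quasi-polynomial structure, the role of Lemma~\ref{lem: crosszetawall} in forcing all $A\cdot\gamma_i$ to be multiples of $c_\mu$, and the appeal to \cite[Lemma 2.21]{BCR18} are precisely what ``the combinatorics is the same as in Section~\ref{subsec: combinatorics}'' unpacks to. Your final paragraph on the direction-of-expansion bookkeeping anticipates the content of the paper's subsequent proposition (which invokes \cite[Lemma 2.22]{BCR18}) rather than the lemma itself, but this is harmless.
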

\begin{proof}
The combinatorics is the same as in Section~\ref{subsec: combinatorics}, see also \cite[Corollary 8.28]{BCR18}. 

The technical conditions to apply the wall-crossing formula are verified using Proposition \ref{prop: finitetypezeta} and Lemma \ref{lem: pairszetaopen} in essentially the same way as we did in the proof of Lemma \ref{lem: wallcrossingmaterial}. For condition (iii) of Section \ref{subsec: wallcrossing} we note that if $\sum_{i=1}^n c_i=c$ is fixed and each $c_i$ belongs to a $L_\mu$-bounded set, then there are only finitely many possibilities for each $c_i$. 
\end{proof}

The main result of this section is then a formal consequence.
\begin{proposition}
There exists a rational function $f_\gamma(q,Q)$ such that for all $\mu\in V_\gamma$ the series $\PDT^{\zeta, (\mu\pm\varepsilon,\eta)}_\gamma$ are the expansion of $f_\gamma$ with respect to $L_{\mu\pm\varepsilon}$. 
\end{proposition}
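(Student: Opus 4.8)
The plan is to induct on the finite set of walls $V_\gamma$, crossing them one at a time, starting from the chamber $\mu\gg 0$ where the assertion reduces to the rationality of $\PPT_\gamma$ proved in Theorem~\ref{thm: perverse}.

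Write $V_\gamma=\{\mu_1>\cdots>\mu_k\}$; stability, and hence the pair moduli and the invariants $\PDT^{\zeta,(\mu,\eta)}_\gamma$, is constant on each of the open chambers $(\mu_1,\infty),(\mu_2,\mu_1),\dots,(0,\mu_k)$. For the base case: on $(\mu_1,\infty)$ the lemma of Section~\ref{subsec: limitII} gives $\PDT^{\zeta,(\mu,\eta)}_\gamma=\PPT_\gamma$, and by Theorem~\ref{thm: perverse} this is the expansion of a rational function $f_\gamma$ with respect to $d$. Since on this chamber $L_\mu$ is equivalent to $d$ on the cone of classes that actually occur (cf.\ the remark following the definition of $L_\mu$ in Section~\ref{sec: Hall}), $\PPT_\gamma$ is also the $L_\mu$-expansion of $f_\gamma$ there; in particular for $\mu=\mu_1+\varepsilon$ with $0<\varepsilon\ll1$.

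The inductive step is to cross a wall $\mu=\mu_i$. Assuming that $\PDT^{\zeta,(\mu_i+\varepsilon,\eta)}_\gamma$ is the $L_{\mu_i+\varepsilon}$-expansion of $f_\gamma$, I would first invoke Lemma~\ref{lem: enterwall} to obtain, class by class, $\Pairs^{\zeta,(\mu_i\pm\varepsilon,\eta)}=\Pairs^{\zeta,(\mu_i,\pm\infty)}$, hence
\[\PDT^{\zeta,(\mu_i,+\infty)}_{\gamma}=\PDT^{\zeta,(\mu_i+\varepsilon,\eta)}_{\gamma}\,,\qquad \PDT^{\zeta,(\mu_i,-\infty)}_{\gamma}=\PDT^{\zeta,(\mu_i-\varepsilon,\eta)}_{\gamma}\,.\]
Then I would decompose the $\gamma$-series along cosets of $\BZ c_{\mu_i}$, where $c_{\mu_i}=A\cdot\gamma_{\mu_i}$ is the class of Lemma~\ref{lem: crosszetawall}. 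Since $L_{\mu_i}(c_{\mu_i})=0$ by that lemma, $L_{\mu_i}$ is constant on each coset, and the $L_{\mu_i\pm\varepsilon}$-boundedness of the relevant families (Lemma~\ref{lem: pairszetaopen}) forces only finitely many cosets to contribute. On each such coset the preceding lemma applies: $\PDT^{\zeta,(\mu_i,+\infty)}_{\gamma,c+\BZ c_{\mu_i}}$ and $\PDT^{\zeta,(\mu_i,-\infty)}_{\gamma,c+\BZ c_{\mu_i}}$ are the two directional expansions of one common rational function. Summing over the finitely many cosets, $\PDT^{\zeta,(\mu_i,+\infty)}_\gamma$ and $\PDT^{\zeta,(\mu_i,-\infty)}_\gamma$ are the $L_{\mu_i+\varepsilon}$- and $L_{\mu_i-\varepsilon}$-expansions, respectively, of a single rational function, which by the inductive hypothesis is $f_\gamma$. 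Since the invariants are constant on $(\mu_{i+1},\mu_i)$, this proves the statement on that chamber — in particular at $\mu_{i+1}+\varepsilon$ — and the induction proceeds down to $(0,\mu_k)$. Evaluating at each $\mu=\mu_i\in V_\gamma$ via the two adjacent chambers then yields the claim.

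The main obstacle I expect is the coset bookkeeping in the inductive step: the ``same rational function across a wall'' lemma is stated only for an individual coset $c+\BZ c_{\mu_i}$, and one must argue that it reassembles into a statement about the full $\gamma$-generating series. The key input there is the vanishing $L_{\mu_i}(c_{\mu_i})=0$ combined with Lemma~\ref{lem: pairszetaopen}, which reduces matters to finitely many cosets; beyond that, the argument is a formal bookkeeping of which linear function controls the expansion in each chamber, exactly as in \cite[Section~8]{BCR18}.
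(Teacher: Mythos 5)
Your overall plan — induct downward from $\mu\gg0$ (where the claim is Theorem~\ref{thm: perverse}), cross each wall $\mu_i$ by entering via Lemma~\ref{lem: enterwall}, and apply the single-coset comparison — is exactly the structure of the paper's argument.

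The one place where your reasoning diverges is the reassembly step. You assert that the $L_{\mu_i\pm\varepsilon}$-boundedness from Lemma~\ref{lem: pairszetaopen} ``forces only finitely many cosets to contribute.'' That inference is not valid: $L_{\mu_i}$ is the function that descends to $N_0/\BZ c_{\mu_i}$ (since $L_{\mu_i}(c_{\mu_i})=0$), but $L_{\mu_i\pm\varepsilon}$-boundedness constrains the set of $c$'s in a direction that mixes the quotient and the $c_{\mu_i}$-axis, so it does not by itself cap the number of cosets. In general infinitely many cosets can contribute. What the paper actually does is note that on each individual coset the difference of the two directional expansions is a quasi-polynomial in $k$, that $L_{\mu_i+\varepsilon}(c_{\mu_i})>0$ while $L_{\mu_i-\varepsilon}(c_{\mu_i})<0$, and then invoke the formal re-expansion lemma \cite[Lemma 2.22]{BCR18}, which is designed to reorganize such a collection of coset-wise quasi-polynomials across a sign change of $L(c_{\mu_i})$ even when the cosets are infinite in number. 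You do flag the coset bookkeeping as the main obstacle and gesture at \cite[Section~8]{BCR18}, so you have located the right pressure point; but the explicit finiteness claim should be dropped and replaced by the citation to that formal lemma, as in the paper.
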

\begin{proof}
Let $\mu=\max V_\gamma$ be the biggest wall and $c_\mu\in N_0$ the class of Lemma~\ref{lem: crosszetawall}. 
By Lemma~\ref{lem: enterwall} and Section~\ref{subsec: limitII} the series $\PDT^{\zeta, (\mu+\varepsilon,\eta)}_\gamma$ agrees with perverse stable pairs $\PPT_\gamma$ and it is the expansion of a rational function $f^\mu_\gamma$ as proven in Section~\ref{sec: perversept}. Note that in the limit $\mu'\to\infty$ the linear function
\[ L_{\mu'}(c) = d(c) + \frac{j}{\mu'\,(a_0)}\]
agrees with $d(-)$ in the sense that expansion of the rational function $f^\mu_\gamma$ is the same for $L_{\mu'}$ and $d$.

The previous lemma says that the two series $\PDT^{\zeta, (\mu,\pm \infty)}_{\gamma,c+\BZ c_\mu}$ agree as rational function for each $c\in N_0$. Their difference is a quasi-polynomial function in~$k$. Recall that, by definition of $c_\mu$, we have
\[L_{\mu+\varepsilon}(c_\mu)>0\,,\quad L_{\mu-\varepsilon}(c_\mu)<0\,.\]
It is then a formal consequence~\cite[Lemma 2.22]{BCR18} that $\PDT^{\zeta, (\mu-\varepsilon,\eta)}_\gamma$ is the expansion of the same rational function~$f^\mu_\gamma$, with respect to $L_{\mu-\varepsilon}$. 

Since stability is constant on $\big(\BR_{>0}\ssetminus V_\gamma\big)\times \BR$ we can argue by induction on the finite set of walls $\mu'\in V_\gamma$. In particular, we obtain the same rational function $f_\gamma$ for each wall.
\end{proof}

The limit of $\zeta$-stability for $0<\mu\ll 1$ was found to agree with BS stability in Section~\ref{subsec: limitI} which, together with Section~\ref{sec: BS}, concludes the proof of Theorem~\ref{thm: wall-crossing}.
\section{Gromov--Witten theory}
\label{sec: applicationGW}

In this section we assume the GW/PT correspondence for~$X$. Let
\[R=\BC\left[Q^{\pm 1}, \left(\frac{1}{1-Q^j}\right)_{j\geq 1}\right][u^{-1}, u]]\]
and
\[R_a=\big\{f\in R: f(Q, u)=Q^a f(Q^{-1}, -u)\big\}.\]
More explicitly, elements of $R$ are written as
\[f(u,Q)=\sum_{h\geq H}f_h(Q)u^h\]
where $f_h(Q)$ are rational functions of the form
\[f_h(Q)=\frac{p(Q)}{\prod_{j}(1-Q^{a_j})}\]
with $p(Q)$ a Laurent polynomial. Then $f\in R_a$ if and only if \[Q^af_h(Q^{-1})=(-1)^h f_h(Q)\,.\]

\begin{proposition}\label{prop: feperversegw}
Let $\beta\in H_2(X, \BZ)$. After the change of variables $q=e^{iu}$, one has
\[\PPT_\beta(e^{iu},Q)\in R_{\sfw\cdot \beta}\,.\]
\end{proposition}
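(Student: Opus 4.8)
The plan is to combine the rationality and functional equation for $\PPT_\beta$ already established in Theorem~\ref{thm: perverse} (in the case $r=0$, so $\gamma = (0,\beta)$) with the GW/PT correspondence and its known compatibility with the substitution $q = e^{iu}$. First I would recall that Theorem~\ref{thm: perverse} with $r=0$ gives that $\PPT_{(0,\beta)}(q,Q) = {}^p\PT_{(0,\beta)}(q,Q)$ is the expansion of a rational function $f_{(0,\beta)}(q,Q) \in \BQ(q,Q)$ satisfying $f_{(0,\beta)}(q^{-1},Q^{-1}) = Q^{-\sfw\cdot\beta}\,f_{(0,\beta)}(q,Q)$. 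By Theorem~\ref{thm: wall-crossing} and the $q\leftrightarrow q^{-1}$ symmetry of $\PT_\beta/\PT_0$ (which follows from the standard rationality results cited after Theorem~\ref{thm: main}), this rational function additionally satisfies $f_{(0,\beta)}(q^{-1},Q) = f_{(0,\beta)}(q,Q)$. Combining the two functional equations yields $f_{(0,\beta)}(q,Q^{-1}) = Q^{-\sfw\cdot\beta} f_{(0,\beta)}(q,Q)$ as well.

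Next I would invoke the GW/PT correspondence for $X$: under $q = e^{iu}$, the generating series $\PPT_\beta(e^{iu},Q)$ (which by Theorem~\ref{thm: wall-crossing} equals $\PT_\beta(e^{iu},Q)/\PT_0(e^{iu},Q)$ as rational functions, hence as Laurent expansions after the substitution) becomes, up to the normalization $\PT_0$, the Gromov--Witten generating function $\sum_{g}\sum_{j} \GW_{g,\beta+j\sfb}\, u^{2g-2}\, Q^j$. The point of working with $\PPT$ rather than $\PT$ directly is precisely that the $\PT_0$ denominator has already been divided out, so that the resulting object lies in the ring $R$: one needs to check that after expanding a rational function of the form appearing in $f_{(0,\beta)}$ with respect to $q = e^{iu}$ (i.e.\ expanding each factor $(1-q^a Q^b)^{-1}$ or polynomial in $q$ as a Laurent series in $u$), the coefficient of each power $u^h$ is a rational function of $Q$ of the stated shape $p(Q)/\prod_j(1-Q^{a_j})$ with $p$ a Laurent polynomial. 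This is a routine manipulation: the poles in $q$ of $f_{(0,\beta)}$ are at roots of unity times powers of $Q$, and the substitution $q=e^{iu}$ together with the structure of $\PT_0$ controls them; the key input is that the denominators that survive are of the form $(1-Q^{a_j})$.

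Finally, I would transport the functional equation $f_{(0,\beta)}(q,Q^{-1}) = Q^{-\sfw\cdot\beta} f_{(0,\beta)}(q,Q)$ together with $f_{(0,\beta)}(q^{-1},Q) = f_{(0,\beta)}(q,Q)$ through the substitution $q = e^{iu}$: the map $q \mapsto q^{-1}$ corresponds to $u \mapsto -u$, so writing $\PPT_\beta(e^{iu},Q) = \sum_{h\ge H} g_h(Q)\, u^h$ we get $g_h(Q^{-1}) = Q^{\sfw\cdot\beta}$ times the coefficient of $u^h$ in $f_{(0,\beta)}(e^{-iu},Q)$, which by the $q\leftrightarrow q^{-1}$ invariance equals $Q^{\sfw\cdot\beta} g_h(Q)$ if $h$ is even and $-Q^{\sfw\cdot\beta}g_h(Q)$ if $h$ is odd. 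Rearranging gives $Q^{\sfw\cdot\beta} g_h(Q^{-1}) = (-1)^h g_h(Q)$, which is exactly the membership condition $\PPT_\beta(e^{iu},Q) \in R_{\sfw\cdot\beta}$. The main obstacle I expect is purely bookkeeping: verifying carefully that $\PPT_\beta(e^{iu},Q)$ actually lands in the ring $R$ — i.e.\ that after dividing by $\PT_0$ and substituting $q = e^{iu}$, the $u$-coefficients have denominators only of the allowed form $\prod_j(1-Q^{a_j})$ and are genuinely Laurent (not merely formal) in $Q$. This amounts to analysing the explicit shape of $f_{(0,\beta)}$ coming from the wall-crossing combinatorics of Section~\ref{subsec: combinatorics} together with the explicit product formula for $\PT_0$, and is where the real content beyond formal symmetry manipulation lies.
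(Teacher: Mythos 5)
Your overall strategy is the right one — start from the rationality and functional equation already established for $\PPT_\beta$ (equivalently $\PT_\beta/\PT_0$), and push them through the substitution $q=e^{iu}$ — and your final transport of the functional equation, though more circuitous than necessary, does land on the membership condition $Q^{\sfw\cdot\beta}g_h(Q^{-1})=(-1)^h g_h(Q)$. In fact the single relation $f(q^{-1},Q^{-1})=Q^{-\sfw\cdot\beta}f(q,Q)$ from Theorem~\ref{thm: perverse} with $r=0$ suffices here, because under $q=e^{iu}$ the map $q\mapsto q^{-1}$ is $u\mapsto -u$, so it literally \emph{is} the condition defining $R_{\sfw\cdot\beta}$; splitting it into two equations and invoking $q\leftrightarrow q^{-1}$ separately only adds bookkeeping. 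The appeal to the GW/PT correspondence is extraneous and somewhat confusing: Proposition~\ref{prop: feperversegw} is a statement about the PT-side generating series $\PPT_\beta$ alone, and GW/PT enters only later in deducing Corollary~\ref{cor: rationality} from it.

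The genuine gap is the membership $\PPT_\beta(e^{iu},Q)\in R$, which you flag as ``routine manipulation'' and ``where the real content lies'' but do not actually supply. This is precisely the piece the paper proves, and it is not a formality: one uses the fact (from the wall-crossing analysis) that the rational function lies in $\BQ\left[q^{\pm1},Q^{\pm1},\left(\tfrac{1}{1-q^aQ^b}\right)_{a,b\geq 0}\right]$, and then shows each generator $\tfrac{1}{1-q^aQ^b}$ lands in $R$ by the identity
\[
\frac{1}{1-e^{iau}Q^b}=\sum_{k\geq 0}e^{ikau}Q^{kb}=\sum_{s\geq 0}\frac{(ia)^s}{s!}\,\Li_{-s}(Q^b)\,u^s\,,
\]
combined with the fact that $\Li_{-s}(Q)$ for $s\geq 0$ is a rational function with denominator $(1-Q)^{s+1}$. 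Without this polylogarithm calculation (or an equivalent argument), you have not shown that the $u^h$-coefficients have denominators only of the allowed shape $\prod_j(1-Q^{a_j})$; your heuristic about ``poles at roots of unity times powers of $Q$'' does not by itself produce the required form. So the proposal correctly identifies the skeleton of the paper's argument but leaves its central computation as an acknowledged but unfilled hole.
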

\begin{proof}
We prove first that $\PPT_\beta\in R$. By Theorem~\ref{thm: main} it holds that 
\[\PPT_\beta\in \BQ\left[q^{\pm 1}, Q^{\pm 1}, \left(\frac{1}{1-q^a Q^b}\right)_{a,b\geq 0}\right].\]
Since clearly $q^{\pm 1}, Q^{\pm 1}\in R$ it suffices to show that $\frac{1}{1-q^a Q^b}\in R$, which follows from the following simple computation:
\begin{align*}
    \frac{1}{1-e^{iau}Q^b}&=\sum_{k\geq 0} e^{ikau}Q^{kb}=\sum_{k,s\geq 0} u^s\frac{(ia)^s}{s!} k^s Q^{kb}\\
    &=\sum_{s\geq 0} u^s\frac{(ia)^s}{s!}\mathrm{Li}_{-s}(Q^b).
\end{align*}
Since the polylogarithm $\mathrm{Li}_{-s}(Q)$ is a rational function with denominator $(1-Q)^{s+1}$  for $s\geq 0$, the claim follows.

The rest of the Proposition follows from the functional equation part of Theorem~\ref{thm: main}. We have $\rho_\beta(j\sfb, n)=((-j+\sfw\cdot \beta) \sfb, -n)$ by Proposition~\ref{prop: actionCoh}, so
\[Q^{\sfw\cdot \beta}\left(\PPT_\beta(q^{-1},Q^{-1})\right)=\PPT_\beta(q,Q)\,.\]
After the change of variables $q=e^{iu}$, it follows that $\PPT_\beta\in R_{\sfw\cdot \beta}.$
\end{proof}

\begin{conjecture}
The Proposition above still holds if we replace $R$ by the smaller ring
\[R=\BC\left[Q^{\pm 1}, \frac{1}{1-Q}\right][u^{-1}, u]].\]
\end{conjecture}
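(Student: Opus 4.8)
To approach the conjecture, the plan is first to reformulate it as a statement about the $Q$-poles of the rational function $\PPT_\beta=f_{(0,\beta)}(q,Q)$ of Theorems~\ref{thm: perverse} and~\ref{thm: wall-crossing}. Unwinding the proof of Proposition~\ref{prop: feperversegw}, the conjecture is equivalent to the assertion that, when $f_{(0,\beta)}$ is written as a reduced fraction in $\BQ(q,Q)$, every irreducible factor of the denominator has $Q$-degree at most~$1$. Indeed, for an irreducible factor $1-q^aQ^b$ the computation $\tfrac{1}{1-e^{iau}Q^b}=\sum_{s\geq 0}u^s\tfrac{(ia)^s}{s!}\,\mathrm{Li}_{-s}(Q^b)$ has $u^s$-coefficient with denominator $(1-Q^b)^{s+1}$, which lies in $\BC[Q^{\pm1},\tfrac{1}{1-Q}]$ exactly when $|b|\leq 1$; a factor in $q$ alone contributes only a finite-order pole in $u$ after $q=e^{iu}$ and is harmless, as is any overall Laurent monomial. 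Since $\PT_0(q,Q)=\prod_{j\geq 1}(1-q^jQ)^{(2g-2)j}$ already has this shape, the content is a sharpening of the $Q$-pole estimate implicit in Theorem~\ref{thm: perverse}.

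The next step is to trace the $Q$-denominator through the wall-crossing of Sections~\ref{sec: perversept} and~\ref{sec: zetawallcrossing}. In the expansion~\eqref{eq: wallcrossinggrouping} the rational-function behaviour of $\PPT_\gamma$ is produced entirely by the period-$2$ quasi-polynomially weighted summation over the lattice directions $A\cdot\gamma_i\in N_0$ attached to the effective classes $\gamma_i$ with $J^\nu_{\gamma_i}\neq 0$, together with \cite[Lemma~2.21]{BCR18}. A Grothendieck--Riemann--Roch computation using $\iota^\ast A\equiv a_0\sfb$, $\sfw\cdot A^2=0$, and $\sfb^2=0$ in~$W$ gives $A\cdot\gamma=r\,a_0\,\sfb+(A\cdot\beta)\,\pt$ for $\gamma=(r\sfw,\beta)$, so that $z^{A\cdot\gamma}=Q^{ra_0}(-q)^{A\cdot\beta}$ and each summation contributes denominator factors $1\mp Q^{r_ia_0}(-q)^{A\cdot\beta_i}$. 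These have $Q$-degree~$0$ for the effective curve classes ($r_i=0$) and $Q$-degree~$1$ when $a_0|r_i|=1$, and are therefore already of the permitted type; what remains is to show that the ``surface-like'' contributions with $a_0|r_i|\geq 2$ --- whose naive denominators are not linear in~$Q$ --- organise into rational functions with no pole other than $Q=1$.

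For this cancellation the natural tools are the functional equations $f_\beta(q^{-1},Q)=f_\beta(q,Q)$ and $f_\beta(q,Q^{-1})=Q^{-\sfw\cdot\beta}f_\beta(q,Q)$ of Theorem~\ref{thm: main}, which force the $Q$-pole divisor of $f_\gamma$ to be invariant under $q\mapsto q^{-1}$ and $Q\mapsto Q^{-1}$, so that any surviving factor $1-q^aQ^b$ with $|b|\geq 2$ must occur together with $1-q^{-a}Q^b$; one would combine this rigidity with an explicit description of the generalised DT invariants $J^\nu_\gamma$ of the surface-like objects supported on~$W$. Since those objects are governed by the local surface $K_W$, their invariants --- and hence the precise denominator factors they produce in~\eqref{eq: wallcrossinggrouping} --- should be computable by the torus localisation employed in Appendix~\ref{appendix}, where the local surface case is treated explicitly.

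The main obstacle is exactly this last step. As noted in the Remark following~\eqref{eq: pervwallcrossing}, the wall-crossing contributions interact nontrivially --- the Poisson brackets $\{J(\delta),J(\delta')\}$ do not vanish because $\chi$ is nonzero on $\CA_{\leq 1}\times\CA_{\leq 1}$ --- so there is no product formula and the cancellation of the non-$(1-Q)$ poles is invisible term by term. A complete proof would most plausibly proceed either by (a) computing $J^\nu_\gamma$ for all $\gamma$ with $r\neq 0$ and resumming~\eqref{eq: wallcrossinggrouping} directly, or by (b) a more structural input --- a polynomiality or vanishing statement for the pair invariants $\PDT^{\nu,\delta}$ in the limits $\delta\to\pm\infty$ that bounds the order of the $Q$-pole, in the spirit of the sharpenings known for ordinary $\PT$ series of Calabi--Yau $3$-folds. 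I expect (a) to be feasible but computationally heavy, and (b) to be the conceptually correct route but to require an ingredient not present in the current paper.
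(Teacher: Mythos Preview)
The statement you are addressing is labelled a \emph{Conjecture} in the paper and is left without proof; there is therefore no proof in the paper to compare your proposal against. Your text is not a proof either, and you say as much in the final paragraph: you reformulate the conjecture, trace the potential sources of higher-order $Q$-poles through the wall-crossing formula, and then explain why you cannot complete the argument.

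As an outline your analysis is sensible. The reformulation --- that after $q=e^{iu}$ the conjecture becomes a constraint on the $Q$-degree of the irreducible denominator factors of $f_{(0,\beta)}$ --- is correct, and your identification $z^{A\cdot\gamma}=Q^{r a_0}(-q)^{A\cdot\beta}$ pinpoints exactly which wall-crossing contributions are a priori problematic (those with $a_0|r_i|\geq 2$). You are also right that the interaction of wall-crossing terms (the nonvanishing Poisson brackets noted after~\eqref{eq: pervwallcrossing}) is what makes the required cancellation opaque term by term. But none of this constitutes a proof: the cancellation you need is precisely the content of the conjecture, and neither of your proposed routes (a) or (b) is carried out. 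In particular, the functional-equation symmetries you invoke only pair a factor $1-q^aQ^b$ with $1-q^{-a}Q^{b}$ and $1-q^{-a}Q^{-b}$; this rigidity does not by itself exclude factors with $|b|\geq 2$, so that paragraph does not advance the argument.

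In short: the paper offers no proof, and your proposal is a reasonable diagnostic of where the difficulty lies rather than a proof.
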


We now deal with the exceptional part. This requires that we exclude genus 0 and 1 terms. More precisely, define
\[\widetilde \PT_0(q,Q)=\PT_0(q,Q)\cdot \exp\left(\frac{2}{u^2}\Li_3(Q)+\frac{1}{6}\Li_1(Q)\right).\]

\begin{proposition}\label{prop: feexceptionalgw}
After the change of variables $q=e^{iu}$, one has
\[\widetilde \PT_0(e^{iu},Q)\in R_{0}.\]
\end{proposition}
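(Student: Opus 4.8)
The plan is to compute $\log\widetilde{\PT}_0(e^{iu},Q)$ in closed form as a Laurent series in $u$, observe that the correction factor removes exactly the two coefficients that obstruct membership in $R$, and then read off the functional equation from the inversion identity for negative-index polylogarithms.

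First I would take logarithms in $\PT_0(q,Q)=\prod_{j\ge 1}(1-q^{j}Q)^{(2g-2)j}$ and use $\log(1-x)=-\sum_{k\ge 1}x^{k}/k$ together with $\sum_{j\ge 1}jy^{j}=y/(1-y)^{2}$ to get
\[
\log\PT_0(q,Q)=-(2g-2)\sum_{k\ge 1}\frac{Q^{k}}{k}\cdot\frac{q^{k}}{(1-q^{k})^{2}}.
\]
Substituting $q=e^{iu}$ and using $1-e^{iw}=-2i\,e^{iw/2}\sin(w/2)$ gives the elementary identity $q^{k}/(1-q^{k})^{2}=-\tfrac14\csc^{2}(ku/2)$; this resummation is what makes the substitution $q=e^{iu}$ meaningful, since it exhibits $\log\PT_0(e^{iu},Q)=\tfrac{2g-2}{4}\sum_{k\ge 1}\tfrac{Q^{k}}{k}\csc^{2}(ku/2)$ as a genuine Laurent series in $u$. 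Plugging in the even Laurent expansion $\csc^{2}x=x^{-2}+\sum_{m\ge 0}c_{m}x^{2m}$ with $c_{0}=\tfrac13$ and $c_m\in\BQ$, and recognising $\sum_{k\ge 1}k^{2m-1}Q^{k}=\Li_{1-2m}(Q)$, this becomes
\[
\log\PT_0(e^{iu},Q)=\frac{2g-2}{u^{2}}\Li_{3}(Q)+\frac{2g-2}{12}\Li_{1}(Q)+\sum_{m\ge 1}\frac{(2g-2)\,c_{m}}{4^{\,m+1}}\,\Li_{1-2m}(Q)\,u^{2m}.
\]

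The key point is that the correction factor in $\widetilde{\PT}_0$ is tailored precisely to cancel the $u^{-2}$ and $u^{0}$ coefficients above: since $2g-2=-2$ here (the curve $C$ being rational in the relevant geometry), adding $\tfrac{2}{u^{2}}\Li_{3}(Q)+\tfrac16\Li_{1}(Q)$ kills both non-rational pieces and leaves
\[
\log\widetilde{\PT}_0(e^{iu},Q)=-\frac12\sum_{m\ge 1}\frac{c_{m}}{4^{\,m}}\,\Li_{1-2m}(Q)\,u^{2m}.
\]
For every $m\ge 1$ the function $\Li_{1-2m}(Q)=\Li_{-(2m-1)}(Q)$ is a rational function whose only pole is at $Q=1$, so the right-hand side lies in $u^{2}\,\BC[Q^{\pm1},(1-Q)^{-1}][[u^{2}]]$; it has no pole and no constant term in $u$, hence its exponential $\widetilde{\PT}_0(e^{iu},Q)$ lies in $1+u^{2}\,\BC[Q^{\pm1},(1-Q)^{-1}][[u^{2}]]\subset R$.

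It then remains to verify the functional equation. As $\log\widetilde{\PT}_0(e^{iu},Q)$ contains only even powers of $u$, it is invariant under $u\mapsto-u$, so it suffices to prove $\Li_{1-2m}(Q^{-1})=\Li_{1-2m}(Q)$ for all $m\ge 1$. This is the classical inversion identity $\Li_{-n}(1/z)=(-1)^{n+1}\Li_{-n}(z)$ for rational polylogarithms, which I would deduce by writing $\Li_{-n}=(z\,d/dz)^{n}\Li_{0}$ with $\Li_{0}(z)=z/(1-z)$, using the operator relation $\sigma\circ(z\,d/dz)=-(z\,d/dz)\circ\sigma$ for $\sigma f(z)=f(1/z)$ and $\Li_{0}(1/z)=-1-\Li_{0}(z)$; for the odd value $n=2m-1$ this yields $\Li_{1-2m}(1/Q)=\Li_{1-2m}(Q)$. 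Hence $\log\widetilde{\PT}_0(e^{-iu},Q^{-1})=\log\widetilde{\PT}_0(e^{iu},Q)$, and exponentiating gives $\widetilde{\PT}_0(e^{iu},Q)\in R_{0}$. The only genuinely delicate step is the bookkeeping in the third paragraph — checking that the prescribed correction term cancels exactly the $\Li_{3}$ and $\Li_{1}$ contributions so that what remains has honestly rational coefficients — after which both assertions (membership in $R$ and the symmetry) are formal consequences of the polylog inversion formula.
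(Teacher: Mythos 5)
Your proof is correct and follows essentially the same route as the paper: take $\log\PT_0$, expand $q^k/(1-q^k)^2$ as a Laurent series in $u$ (the paper works with the coefficients $c_h$ of $2e^{iu}/(1-e^{iu})^2$, you with the equivalent $\csc^2$ expansion), observe that the definition of $\widetilde{\PT}_0$ subtracts exactly the $u^{-2}$ and $u^0$ terms, and conclude via the rationality and inversion formula for $\Li_{1-h}(Q)$, $h\geq 2$. The only cosmetic difference is that you exploit parity to keep only even $h$, whereas the paper keeps all $h\geq 2$ and uses the sign $(-1)^h$ in the polylog inversion to cancel against $u^h \mapsto (-u)^h$.
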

\begin{proof}
We have
\[\PT_0(q,Q)=\exp\left(\sum_{k\geq 1}\frac{2(qQ)^k}{k(1-q^k)^2}\right).\]
Writing $c_h$ for the coefficients in the $u$-expansion of 
\[\frac{2e^{iu}}{(1-e^{iu})^2}=\sum_{h\geq -2}c_h u^h\]
one has the formula
\[\PT_0(q,Q)=\exp\left(\sum_{h\geq -2} c_h u^h\Li_{1-h}(Q)
\right).\]
As easy inspection shows that $c_{-2}=-2, c_{-1}=0=c_1$ and $c_0=-1/6$. Thus, the definition of $\widetilde \PT_0$ removes the first terms in the previous formula and we find that 
\[\widetilde \PT_0(q,Q)=\exp\left(\sum_{h\geq 2} c_h u^h\Li_{1-h}(Q)
\right).\]
This concludes the proof since, for $h\geq 2$, $\Li_{1-h}(Q)$ is a rational function with denominator $(1-Q)^{h}$ and satisfies the symmetry property
\[\Li_{1-h}(Q^{-1})=(-1)^h\Li_{1-h}(Q).\qedhere\]
\end{proof}

We provide now the proof to Corollary~\ref{cor: rationality}. We denote 
\[\widetilde \PT_{\beta}(q,Q)=\PT_{\beta}(q,Q)\cdot \exp\left(\frac{2}{u^2}\Li_3(Q)+\frac{1}{6}\Li_1(Q)\right).\]

By Theorem~\ref{thm: wall-crossing},
\[{}^p\PT_{\beta}(q,Q)\widetilde \PT_0(q,Q) = \widetilde\PT_\beta(q,Q)\]
so Propositions \ref{prop: feperversegw} and \ref{prop: feexceptionalgw} together imply that $\widetilde\PT_\beta(q,Q)\in R_{\sfw\cdot \beta}$. Hence the generating function
\[\sum_{\beta\in N_1} \widetilde\PT_\beta(q,Q)z^\beta\]
belongs to the ring
\[\CR=\left\{\sum_{\beta\in N_1}f_\beta(q,Q)z^\beta: f_\beta\in R_{\sfw\cdot \beta}\right\}.\]

Moreover, with the usual change of variable $q=e^{iu}$, we have
\[\exp\left( \sum_{(g,\beta)\neq (0,0), (1,0)}u^{2g-2}z^\beta \sum_{j\in \BZ} \GW_{g, \beta+j\sfb}Q^j\right)=\sum_{\beta\in N_1} \widetilde\PT_\beta(q,Q)z^\beta\in \CR.\]
Since $\CR$ is a ring, taking the logarithm preserves $\CR$, finishing the proof of Corollary~\ref{cor: rationality}.

\appendix

\section{Local Hirzebruch surface}
\label{appendix}

In this appendix we take a closer look at the case of local Hirzebruch surfaces~$K_W$ and we use the topological vertex to compute their enumerative invariants. In particular, we prove the following strengthening of Corollary~\ref{cor: rationality} in the local case:

\begin{theorem}
\label{thm: localrationality} Let $X=K_W$ be a local Hirzebruch surface. For all $g\in \BZ_{\geq 0}$ and $\beta\in H_2(W)$ such that $(g,\beta)\neq (0,m\sfb)\,,(1,m\sfb)$, the series
\[ \sum_{j\geq 0} \GW_{g,\beta+j\sfb}\, Q^j \]
is the expansion of a rational function $f_\beta(Q)$ of the form
\[f_\beta(Q)=\frac{p_\beta(Q)}{(1-Q)^{4(\sfb\cdot \beta)+2g-2}}\]
where $p_\beta$ is a Laurent polynomial. Moreover $f_\beta$ satisfies the functional equation
\[ f_\beta(Q^{-1}) = Q^{-K_W\cdot \beta} f_\beta(Q)\,.\]
\end{theorem}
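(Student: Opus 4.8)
The strategy is to compute the Pandharipande--Thomas generating function $\PT_\beta(q,Q)$ for $X=K_W$ explicitly using the topological vertex, then feed this through the GW/PT correspondence and the structure already established in Section~\ref{sec: applicationGW}. Since $K_W$ is a toric Calabi--Yau 3-fold, the topological vertex of Aganagic--Klemm--Mari\~no--Vafa gives a closed-form expression for the full PT partition function $\sum_\beta \PT_\beta(q,Q) z^\beta$ as a sum over partitions attached to the edges of the toric web diagram of $W=\BF_k$. The ruling $\sfb$ corresponds to one family of parallel edges, so the variable $Q$ is the K\"ahler parameter for the fiber class, and the sum over $j$ in $\beta+j\sfb$ becomes a geometric-type sum over one of the partition labels. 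The first step is to write this down carefully and extract, for fixed $\beta$ transverse to $\sfb$, the dependence on $Q$; the computation of $\PT_0(q,Q)=\prod_{j\ge 1}(1-q^jQ)^{(2g-2)j}$ recorded after Theorem~\ref{thm: main} is the $\beta=0$ instance and serves as a consistency check (here $g=g(C)$, which for a Hirzebruch surface is $0$, so $\PT_0(q,Q)=\prod_{j\ge 1}(1-q^jQ)^{-2j}$).

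The second step is to pass from $q$ to $u$ via $q=e^{iu}$ and isolate the genus-$g$ Gromov--Witten contribution. By the GW/PT correspondence, $\log\big(\sum_\beta \widetilde\PT_\beta(q,Q)z^\beta\big)$ has a $u$-expansion whose $u^{2g-2}z^\beta$-coefficient is $\sum_j \GW_{g,\beta+j\sfb}Q^j$, where the tilde denotes the genus $0$ and $1$ correction from Section~\ref{sec: applicationGW} (which only affects the $\beta=0$ term, hence the exclusion of $(0,m\sfb)$ and $(1,m\sfb)$). The topological vertex expression, after taking the logarithm and expanding in $u$, produces for each $(g,\beta)$ a finite sum of products of polylogarithms $\Li_{1-h}(Q^a)$ with $a$ a positive integer. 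I would then invoke the identity $\Li_{1-h}(Q)=\tfrac{(-1)^{h-1}}{(1-1/Q)^h}\cdot(\text{polynomial})$ more precisely the fact that for $h\ge 1$, $\Li_{1-h}(Q)$ is a rational function with denominator exactly $(1-Q)^h$ and numerator a polynomial of degree $h-1$ vanishing at $Q=0$. Tracking the maximal pole order across the vertex sum gives the claimed denominator $(1-Q)^{4(\sfb\cdot\beta)+2g-2}$: the exponent $4(\sfb\cdot\beta)+2g-2$ should emerge as (roughly) twice the intersection number with the fiber contributing via the framing/edge factors, plus the $2g-2$ shift that is already visible in $\PT_0$. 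The functional equation $f_\beta(Q^{-1})=Q^{-K_W\cdot\beta}f_\beta(Q)$ is \emph{not} proved by this local computation directly; instead it is inherited from Theorem~\ref{thm: main} (equivalently Corollary~\ref{cor: rationality}), whose hypothesis~\eqref{eq: nef} is satisfied for $K_W$ since $W$ is a Hirzebruch surface and one can take $A=$ (pullback of an ample class making $-K_W$ positive on non-fiber curves). One only needs to verify that the rational function produced by the vertex computation is the \emph{same} rational function whose existence and symmetry Theorem~\ref{thm: main} guarantees --- this follows because both are the unique rational function with the given $Q\to 0$ expansion.

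\textbf{Main obstacle.} The hard part is the bookkeeping in the topological vertex sum: showing that the pole order at $Q=1$ is \emph{exactly} $4(\sfb\cdot\beta)+2g-2$ and not merely bounded by it, i.e. that the leading polar term does not cancel. This requires identifying precisely which terms in the vertex sum (which edge partitions, at which order in $u$) contribute the top-order pole, and checking a non-vanishing of the corresponding polylogarithm coefficient. A clean way to organize this is to use the known relation between the topological vertex and the $q$-deformed $2$-dimensional Yang--Mills / transfer-matrix presentation for $\BP^1$-bundles: summing over the fiber-class partition first collapses the fiber sum into an explicit product of the form $\prod_k (1-q^kQ)^{c_k}$ times a $\beta$-dependent finite sum, from which the pole order in $Q$ after the $u$-expansion can be read off directly from the exponents $c_k$. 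I would set up that summation, reduce to a generating-function identity for $\sum_j (\text{vertex weight})\,Q^j$, and then the rationality, the precise denominator, and the matching with the function of Theorem~\ref{thm: main} all follow; the functional equation is then automatic. Genus $0$ and $1$ in class $m\sfb$ are excluded exactly because there the correction terms $\tfrac{2}{u^2}\Li_3(Q)+\tfrac16\Li_1(Q)$ are subtracted, so those coefficients are not rational functions of the stated shape.
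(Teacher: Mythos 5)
Your overall architecture (topological vertex for $K_W$, GW/PT, polylogarithm analysis of the $u$-expansion, and separating the genus $0$ and $1$ contributions via the $\tfrac{2}{u^2}\Li_3(Q)+\tfrac{1}{6}\Li_1(Q)$ correction) does match the paper's appendix. But there are two genuine problems.

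First and most importantly, you cannot obtain the functional equation $f_\beta(Q^{-1})=Q^{-K_W\cdot\beta}f_\beta(Q)$ by appealing to Theorem~\ref{thm: main} or Corollary~\ref{cor: rationality}: $X=K_W$ is a \emph{non-compact} (local) Calabi--Yau 3-fold, and Theorem~\ref{thm: main} is stated and proved for a projective $X$ --- the entire wall-crossing and moduli apparatus of Sections~\ref{sec: Hall}--\ref{sec: zetawallcrossing} requires properness. The appendix is precisely an \emph{independent} proof for the local case, and the functional equation is derived there directly from two symmetries of the vertex building blocks: $\widetilde{\CW}_\mu(q)=\widetilde{\CW}_\mu(1/q)$ (self-duality of the normalized $\CW$-function) and $\Li_{1-n}(Q^{-1})=(-1)^n\Li_{1-n}(Q)$ for $n\geq 2$, together with a pairing of the $(\mu_2,\mu_4)$ and $(\mu_4,\mu_2)$ terms in Iqbal's sum \eqref{eq: iqbalhirzebruch}. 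Without such a direct argument (or a separate argument deforming $K_W$ into a compact model and matching invariants, which you do not give), the symmetry part of your proof has a hole.

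Second, what you identify as the ``main obstacle'' --- showing the pole order at $Q=1$ is \emph{exactly} $4(\sfb\cdot\beta)+2g-2$ and that the top polar term does not cancel --- is not actually required by the statement. The theorem only asserts that $f_\beta(Q)=p_\beta(Q)/(1-Q)^{4(\sfb\cdot\beta)+2g-2}$ for \emph{some} Laurent polynomial $p_\beta$, i.e.\ an upper bound on the pole order; $p_\beta$ may well vanish at $Q=1$. The actual content one has to track is the closed formula for
\[
S_{\mu\nu}(q,Q)=\sum_\lambda \CW_{\mu\lambda}(q)\CW_{\nu\lambda}(q)\,Q^{|\lambda|}
=\CW_\mu\CW_\nu\exp\Big(\sum_{k\geq 1}p_\mu(q^k)p_\nu(q^k)\tfrac{(qQ)^k}{k}\Big),
\]
which is where the constant term $|\mu|+|\nu|$ in the $u$-expansion of $p_\mu(e^{iu})p_\nu(e^{iu})e^{iu}$ produces the overall factor $(1-Q)^{-2(|\mu|+|\nu|)}$ in $\widetilde S_{\mu\nu}$, and combining two such factors with $\Li_{1-n}$ for $n\geq 2$ gives the denominator exponent $4m+2g-2$ after squaring (cf.\ \eqref{eq: iqbalhirzebruch}). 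Your proposal gestures at a ``transfer-matrix'' resummation over the fiber partition, which is the right idea, but you would still need the Eguchi--Kanno identity above (or an equivalent) to make the pole-order bookkeeping precise; as written, the proposal does not identify this key structural lemma.
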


In the theorem, the intersection products $\sfb \cdot \beta$ and $K_W\cdot \beta$ are taken in $H^\ast(W)$. The canonical class is 
\[K_W=-2\sfc-(r+2)\sfb\,\]
where $\sfc$ is the class of the torus-invariant section with non-positive self-intersection $\sfc^2=-r$.

\begin{remark}
The form of the rational function implies that if we fix $k, g, r$ then $\GW_{g,m\sfc+j\sfb}^{K_{\mathbb{F}_r}}$ is a polynomial in $j$ of degree $4m+2g-3$ for large enough $j$. In \cite[Equation 5.2]{KKV97} the authors predict the assymptotic behavior for $g=0$
\[\GW_{g=0,m\sfc+j\sfb}^{K_{\mathbb{F}_r}}\sim \gamma_{m}j^{4m-3}\]
for some constant $\gamma_m$ that doesn't depend on $r$. The independence of $r$ is not difficult to see from our proof.
\end{remark}

\subsection{Combinatorics of the 2-leg topological vertex}

The local Hirzebruch surface~$K_W$ is a toric non-compact Calabi--Yau 3-fold, so its Pandharipande--Thomas invariants can be computed via the formalism of the topological vertex. The 2-leg case of the topological vertex admits simple combinatorical expressions, also known as Iqbal's formula \cite{Iqbal02,LLZ07,YZ10,Zhou03}. We now describe such formula.

Given a partition $\mu$, we associate to it the Schur function $s_{\mu}(x_1, \ldots, x_n)$ (see for example \cite[I.3]{Ma95}). An explicit way to define $s_\mu$ is the following:
\[s_{\mu}=\det\left(h_{\mu_i-i+j}\right)_{1\leq i,j\leq N}\]
where $N\geq \ell(\mu)$ and $h_k=h_k(x_1, \ldots, x_n)$ are the complete homogeneous polynomials. We will often consider the specialization of $s_\mu$ to the infinite set of variables $x=(1, q,q^2,\ldots)$. In this case the definition above is still valid with
\[h_k(1,q,q^2, \ldots)=\prod_{j=1}^k\frac{1}{1-q^j} \textup{ for } k\geq 0\]
(if $k<0$ then $h_k=0$). An alternative way to write $s_\mu(1, q,Q^2, \ldots)$ is the hook-content product formula
\[s_\mu(1,q,q^2,\ldots)=q^{n(\mu)}\prod_{\square\in \mu}\frac{1}{1-q^{h(\square)}}.\]
In the formula above $n(\mu)$ is $\sum_{i=1}^{\ell(\mu)}(i-1)\mu_i$, the product runs over boxes in the Young diagram of $\mu$ and $h(\square)$ is the hook length of a square $\mu$.

Iqbal introduced $\CW$ functions for 1 and 2 partitions that play a role in the 1-leg and 2-leg vertex formulas, respectively. For one partition $\mu$, it's defined as
\[\CW_\mu(q)=(-1)^{|\mu|}q^{k(\mu)/2+|\mu|/2}s_\mu(1, q,Q^2, \ldots ).\]
Here
\[k(\mu)=\sum_{i=1}^{\ell(\mu)}\mu_i(\mu_i-2i+1)\in \BZ.\]
For two partitions $\mu, \nu$ we define
\[\CW_{\mu\nu}(q)=q^{|\nu|/2}\CW_\mu(q)s_\nu(q^{\mu_1-1}, q^{\mu_2-2}, \ldots)\,.\]
Althought it's not aparent from this definition, we have symmetry in the two partitions, i.e. $\CW_{\mu\nu}=\CW_{\nu\mu}$ \cite[Theorem 5.1]{Zhou03a}.

We can now formulate Iqbal's formula for the Gromov-Witten invariants of local toric surfaces. 

Let $W$ be a toric surface. Let $D_1, D_2, \ldots, D_N, D_{N+1}=D_1$ be the toric divisors in the order they appear in the moment polygon of $W$. Denote $s_j=D_j^2\in \BZ$ the self-intersection numbers.

\begin{theorem}[Theorem 1 in \cite{YZ10}]
\label{thm: iqbal}
The partition function for the disconnected Gromov-Witten invariants of $K_W$ is
\[Z^{K_W}=\sum_{\mu_1, \ldots, \mu_N} \prod_{j=1}^N\left((-1)^{s_j|\mu_j|} q^{k(\mu_j)s_j/2}  \CW_{\mu_j, \mu_{j+1}}(q) z^{|\mu_j|D_j}\right)\]
after the change of variables $q=e^{iu}$.
\end{theorem}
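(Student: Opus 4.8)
\noindent The plan is to derive the identity from the (mathematically established) topological vertex algorithm for toric Calabi--Yau $3$-folds \cite{MNOPII, MOOP11, LLZ07}: it computes the disconnected Gromov--Witten partition function of a toric CY $3$-fold as a sum over partitions placed on the internal edges of its web diagram, with a $3$-leg vertex $C_{\mu\nu\lambda}(q)$ at each trivalent vertex and an explicit framing-and-K\"ahler weight on each internal edge. The first step is to describe the web of $X=K_W$. Since $W$ is a smooth projective toric surface, its fan has $N$ rays, giving the toric divisors $D_1,\dots,D_N$ in the cyclic order of the moment polygon; the web of $K_W$ is a trivalent planar graph with $N$ vertices forming a cycle, $N$ internal edges---the $j$-th one representing the torus-invariant rational curve $C_j\subset K_W$ with class dual to $D_j$---and $N$ non-compact external edges, one out of each vertex. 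As $K_W$ is a local surface no boundary conditions are imposed, so the external legs carry the empty partition and every vertex contributes the \emph{two-leg} vertex $C_{\mu_{j-1}\,\mu_j\,\emptyset}(q)$, depending only on the partitions $\mu_{j-1},\mu_j$ on its two adjacent internal edges.

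Next I would pin down the edge data. The normal bundle of $C_j$ in $K_W$ splits as $\CO(s_j)\oplus\CO(-s_j-2)$, where $s_j=D_j^2$ is the self-intersection of $D_j$ in $W$ and the two degrees sum to $-2$ by the Calabi--Yau condition; the framing of the $j$-th internal edge is therefore governed by $s_j$, and that edge contributes the gluing factor $(-1)^{s_j|\mu_j|}\,q^{s_j k(\mu_j)/2}\,z^{|\mu_j|D_j}$, with $k(\mu_j)$ as defined above and $z^{|\mu_j|D_j}$ recording the K\"ahler parameter of the class $|\mu_j|[C_j]$. Matching the global sign and the sign of the exponent to the conventions in the statement is the first point requiring care.

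The third step is the combinatorial identification of the two-leg vertex with Iqbal's function, $C_{\mu\nu\emptyset}(q)=\CW_{\mu\nu}(q)$. This follows from the skew-Schur expression for $C_{\mu\nu\lambda}$ by specializing the third partition to $\emptyset$, which collapses the internal skew sum and leaves an expression of the form $q^{k(\mu)/2}\,s_{\nu^t}(q^{-\rho})\,s_{\mu^t}(q^{-\nu-\rho})$, and then using the hook-content identity $s_\mu(1,q,q^2,\dots)=q^{n(\mu)}\prod_{\square\in\mu}(1-q^{h(\square)})^{-1}$ recalled above together with the symmetry $\CW_{\mu\nu}=\CW_{\nu\mu}$ \cite[Theorem 5.1]{Zhou03a}; all of this is contained in \cite{Iqbal02, LLZ07, Zhou03}. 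Substituting this and the edge weights into the gluing formula, and reindexing the cyclic product so that the vertex between internal edges $j$ and $j+1$ is paired with the weight of edge $j$, yields
\[
Z^{K_W}=\sum_{\mu_1,\dots,\mu_N}\ \prod_{j=1}^N\Big((-1)^{s_j|\mu_j|}\,q^{k(\mu_j)s_j/2}\,\CW_{\mu_j,\mu_{j+1}}(q)\,z^{|\mu_j|D_j}\Big),
\]
which is the assertion after the change of variables $q=e^{iu}$.

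The main obstacle is not a single deep step but the uniform bookkeeping of conventions over the entire web: the signs $(-1)^{s_j|\mu_j|}$, the half-integer powers $q^{k(\mu_j)s_j/2}$ (in particular which summand of the edge normal bundle fixes the framing and with which sign $k(\mu_j)$ enters), the identification of $z^{|\mu_j|D_j}$ with the class of $C_j$, and the orientation chosen for the cyclic gluing; a secondary subtlety is that the vertex-versus-$\CW$ dictionary usually carries a $q\leftrightarrow q^{-1}$ and sign twist that must be absorbed consistently. I would fix all of these by checking the resulting formula against $W=\BP^2$ (where $N=3$ and all $s_j=1$) and $W=\BF_r$ (where $N=4$ and $s_j\in\{r,-r,0,0\}$), for which $Z^{K_W}$ can be computed independently, before asserting the general case.
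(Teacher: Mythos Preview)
The paper does not give its own proof of this statement: it is quoted as Theorem~1 of \cite{YZ10} and then immediately applied. There is therefore no ``paper's proof'' to compare your proposal against. Your sketch is a reasonable outline of the derivation in the cited literature \cite{Iqbal02,LLZ07,YZ10,Zhou03}, namely the topological vertex gluing algorithm specialized to the web of $K_W$ with empty external legs, together with the two-leg identification $C_{\mu\nu\emptyset}=\CW_{\mu\nu}$; this is exactly the provenance the paper attributes to the formula.
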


Recall that under the change of variables $q=e^{iu}$ we have
\[Z^{K_W}=\PT^{K_W}(q, z)=\sum_{n,\beta}P_{\beta, n}(-q)^n z^\beta.\]

\subsection{Iqbal's formula for Hirzebruch surfaces}

We specialize Theorem \ref{thm: iqbal} to the case of the Hirzebruch surface $W=\BF_r$. The homology $H_2(W, \BZ)$ is generated by two classes $\sfb, \sfc$ where $\sfb$ is the fiber class and $\sfc$ is the class of the torus-invariant section $\BP^1\hookrightarrow W$ with non-positive self-intersection $\sfc^2=-r$. The toric divisors of $W$ are
\[D_1=\sfb=D_3\,,\quad D_2=\sfc+r\sfb\,,\quad D_4=\sfc\,.\]
We denote by $Q=z^\sfb$ and $Q_{\sfc}=z^\sfc$ the Novikov variables relative to $\sfb$ and $\sfc$, respectively. 

\begin{align}
\label{eq: iqbalhirzebruch}
Z^{K_{W}}&=\sum_{\mu_1, \ldots, \mu_4}\big(q^{r(k(\mu_2)-k(\mu_4))}\CW_{\mu_1\mu_2}\CW_{\mu_2\mu_3}\CW_{\mu_3\mu_4}\CW_{\mu_4\mu_1}\\
&\quad \quad \quad\quad \quad \times ((-1)^r Q_{\sfc})^{|\mu_2|+|\mu_4|}Q^{|\mu_1|+|\mu_3|+r|\mu_2|}\big)\nonumber\\
&=\sum_{m=0}^\infty Q_{\sfc}^j(-1)^{rm}\sum_{|\mu_2|+|\mu_4|=m}\Bigg(q^{r(k(\mu_2)-k(\mu_4))}Q^{r|\mu_2|}\nonumber\\
&\quad \quad \quad\quad \quad \times\bigg(\sum_{\lambda} \CW_{\mu_2\lambda}\CW_{\mu_4\lambda}Q^{|\lambda|}\bigg)^2
\Bigg)\nonumber
\end{align}

The sum appearing in the last line
\[S_{\mu \nu}(q,Q)=\sum_{\lambda} \CW_{\mu\lambda}(q)\CW_{\nu\lambda}(q)Q^{|\lambda|}\in \BQ((q,Q))\]
admits a nice closed formula~\cite[Proposition 1]{EK03}. We give a proof which is a bit more direct than the one found in~\cite{EK03}. Let
\[p_\mu(q)=\sum_{i=1}^\infty q^{\mu_i-i}=\frac{q^{-\ell(\mu)}}{q-1}+\sum_{i=1}^{\ell(\mu)} q^{\mu_i-i}.\]

\begin{lemma}[Proposition 1 in \cite{EK03}]
For any two partitions $\mu, \nu$ we have the following identity in $\BQ((q,Q))$:
\begin{equation}S_{\mu\nu}=\mathcal W_\mu\mathcal W_\nu\exp\left(\sum_{k=1}^\infty p_\mu(q^k)p_\nu(q^k)    \frac{(qQ)^k}{k}\right).\label{eq: Smunu}\end{equation}
\end{lemma}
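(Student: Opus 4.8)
The plan is to prove the identity \eqref{eq: Smunu} by the standard ``free fermion'' / vertex operator method, using the fact that $\CW_{\mu\lambda}$ is essentially a skew Schur function after a principal specialization. First I would rewrite the building block $\CW_{\mu\nu}(q)$ in terms of skew Schur functions in the infinite set of variables $q^{\rho}=(q^{-1/2},q^{-3/2},q^{-5/2},\dots)$ (equivalently $(q^{\mu_i-i})$-type specializations). Concretely, using $\CW_{\mu\nu}(q)=q^{|\nu|/2}\CW_\mu(q)\,s_\nu(q^{\mu_1-1},q^{\mu_2-2},\dots)$ and the hook-content formula for $\CW_\mu$, one gets $\CW_{\mu\lambda}(q)$ as a monomial prefactor times $s_{\lambda}$ (or a skew Schur $s_{\lambda/\emptyset}$) evaluated at a principal specialization shifted by $\mu$. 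The point is that the sum over $\lambda$ is then a bilinear sum $\sum_\lambda s_\lambda(x)\,s_\lambda(y)\,Q^{|\lambda|}$ with $x,y$ the two shifted specializations, which is exactly the setting of the Cauchy identity.

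Second, I would apply the Cauchy identity $\sum_\lambda s_\lambda(x)s_\lambda(y)=\prod_{i,j}(1-x_iy_j)^{-1}$, inserting the variable $Q$ by rescaling one alphabet (say $y\mapsto Qy$, which multiplies $s_\lambda(y)$ by $Q^{|\lambda|}$). This converts the sum into an infinite product $\prod_{i,j}(1-Q x_i y_j)^{-1}$. Taking $\log$ and expanding $-\log(1-t)=\sum_k t^k/k$ turns the product into $\exp\big(\sum_{k\geq1}\tfrac{Q^k}{k}\,p_k(x)\,p_k(y)\big)$ where $p_k$ is the $k$-th power sum. The power sums of the shifted principal specializations are precisely what produce the functions $p_\mu(q^k)$ appearing in \eqref{eq: Smunu}: for the alphabet $(q^{\mu_i-i})_{i\ge1}$ one has $p_k(x)=\sum_i q^{k(\mu_i-i)}=p_\mu(q^k)$, and similarly for $\nu$. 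One has to track the monomial prefactors $q^{|\lambda|/2}$ etc.\ carefully: the half-integer shifts in the specialization $q^{\rho}$ combine with the $q^{|\nu|/2}$ factors in $\CW$ to yield the $(qQ)^k$ (rather than $Q^k$) inside the exponential and, after all $\lambda$-independent factors are collected, exactly the overall prefactor $\CW_\mu\CW_\nu$. I would verify the bookkeeping by checking the $\mu=\nu=\emptyset$ case (where $S_{\emptyset\emptyset}=\sum_\lambda \CW_\lambda^2 Q^{|\lambda|}$ should reduce to $\exp\big(\sum_k \tfrac{1}{(q^{k/2}-q^{-k/2})^2}\tfrac{(qQ)^k}{k}\big)$, i.e.\ the MacMahon-type function $\PT_0$ up to signs) and the $\nu=\emptyset$, $\mu$ a single box case.

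The convergence/formal-series issue is genuine but mild: the identity is claimed in $\BQ((q,Q))$, so I would note that the specialization alphabets $x=(q^{\mu_i-i})$ are not summable as written (the tail $q^{-i}$ blows up), and the honest statement is obtained by using the truncated form $p_\mu(q^k)=\tfrac{q^{-k\ell(\mu)}}{q^k-1}+\sum_{i=1}^{\ell(\mu)}q^{k(\mu_i-i)}$ given just before the lemma, which is the correct regularized power sum; equivalently one works with finitely many variables $N\gg\ell(\mu),\ell(\nu)$, applies the finite Cauchy identity, and passes to the $N\to\infty$ limit inside $\BQ((q,Q))$ where each coefficient of $Q^n$ stabilizes. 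The main obstacle I expect is precisely this normalization/prefactor accounting — matching the various fractional powers of $q$ hidden in $\CW_{\mu\nu}$, in the principal specialization $q^{\rho}$, and in the definition of $k(\mu)$ and $n(\mu)$ — rather than any conceptual difficulty, since once the sum is recognized as a Cauchy sum the exponential form is automatic. A clean alternative, if the direct bookkeeping becomes unwieldy, is to cite \cite{EK03} for the identity and only record the short derivation sketch above; but I would prefer to give the self-contained vertex-operator computation as indicated.
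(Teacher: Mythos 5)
Your plan is correct and is essentially the same argument as the paper's. The paper sets $x_i=(qQ)^{1/2}q^{\mu_i-i}$, $y_j=(qQ)^{1/2}q^{\nu_j-j}$, cancels $\CW_\mu\CW_\nu$ using $\CW_{\mu\lambda}=q^{|\lambda|/2}\CW_\mu\,s_\lambda(q^{\mu_1-1},q^{\mu_2-2},\dots)$, and matches the resulting Cauchy sum $\sum_\lambda(qQ)^{|\lambda|}s_\lambda(x)s_\lambda(y)$ against the power-sum expansion $\sum_\lambda z_\lambda^{-1}P_\lambda(x)P_\lambda(y)$ of the exponential — i.e.\ it runs the exponential-expansion and Cauchy identity in the opposite direction from your $\log$-of-Cauchy-product, but the content (Cauchy identity plus the identification $P_k(x)=(qQ)^{k/2}p_\mu(q^k)$, with the regularization you flag) is identical.
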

\begin{proof}
Let $x_i=(qQ)^{1/2}q^{\mu_i-i}$, $y_j=(qQ)^{1/2}q^{\nu_j-j}$. Then 
\[p_\mu(q^k)=(qQ)^{-k/2}\sum_{i\geq 1} x_i^k=(qQ_F)^{-k/2}P_k(x)\,,\]
where $P_k(x)$ is the $k$-th power function. For a partition $\lambda$ let \[P_\lambda(x) = \prod P_{\lambda_i}(x)\,,\quad m_k=\#\{i:\lambda_i=k\}\,,\quad z_\lambda=\prod k^{m_k}m_k!\,.\]
By expanding the exponential and cancelling $\mathcal W_\mu \mathcal W_\nu$ on both sides, using
\[\mathcal W_{\mu\lambda}= q^{|\lambda|/2}\mathcal W_\mu s_{\lambda}(q^{\mu_1-1},q^{\mu_2-2},\ldots)\,, \]
we're left to show
\begin{align*} &\sum_{\lambda} (qQ)^{|\lambda|}s_{\lambda}(q^{\mu_1-1},q^{\mu_2-2},\ldots)s_{\lambda}(q^{\nu_1-1},q^{\nu_2-2},\ldots)\\
&= \sum_{\lambda} \prod_{k=1}^{\ell(\lambda)} \frac{1}{m_k!}\left(\frac{P_{k}(x)P_{k}(y)}{k}\right)^{m_k}.\end{align*}

By the Cauchy identity \cite[Eq.\ 4.3]{Ma95} the LHS is
\[\prod_{i,j\geq 1}\frac{1}{1-x_iy_j}\,,\]
and the RHS is
\[\sum_{\lambda} z_\lambda^{-1} P_\lambda(x)P_\lambda(y)\,.\]
The two sides agree \cite[Eq.\ 4.1, 4.3]{Ma95}.\qedhere
\end{proof}

\subsection{Rationality of $\PT_\beta/\PT_0$}
We give now a quick proof of our main rationality statement \ref{thm: main} in the local case based on our computations. Equation \eqref{eq: Smunu} can also be written as an infinite product formula in the following way. If we write
\[p_\mu(q)p_\nu(q)=\frac{\sum_{i=-s}^s a_i q^i}{(1-q)^2}\]
then
\[S_{\mu\nu}=\CW_\mu\CW_\nu\prod_{i=-s}^s\left(\prod_{j\geq 1}(1-q^{j+i}Q)^{-j}\right)^{a_{i}}.\]
Note in particular that taking the constant $Q_{\sfc}^0$ coefficient in equation \eqref{eq: iqbalhirzebruch} we find
\[\PT_0(q,Q)=[Q_{\sfc}^0]Z^{K_W}=S_{\emptyset \emptyset}^2=\prod_{j\geq 1}(1-q^j Q)^{-2j}.\]
Since $\CW_\mu, \CW_\nu\in \BQ(q)$ and
\[\sum_{i=-s}^s a_i=1\, \textup{ and }\, \sum_{i=-s}^s ia_i=0\]
one can see that 
\[\frac{S_{\mu\nu}}{S_{\emptyset\emptyset}}\in \BQ(q,Q).\]
Together with \eqref{eq: iqbalhirzebruch} it follows that
\[\frac{\PT_{m\sfc}(q,Q)}{\PT_0(q,Q)}=[Q_{\sfc}^m]\frac{Z^{K_W}}{S_{\emptyset \emptyset}^2}\in \BQ(q,Q).\]

\subsection{Proof of Theorem \ref{thm: localrationality}}

We give the proof of Theorem  \ref{thm: localrationality} based on the application of Iqbal's formula \eqref{eq: iqbalhirzebruch}. We first remark that it's enough to prove the result when $\beta=j\sfc$ for some $j\geq 0$. Indeed, if $\beta'=\beta+k\sfb$ then the corresponding generating functions are related by multiplication by $Q^{-k}$ and 
\[\sfb\cdot \beta'=\sfb\cdot \beta\,, \quad -K_W\cdot \beta'=-K_W\cdot \beta+2k.\]

We define a refinement $R_{a,b}\subset R_a$ of the sets introduced in Section~\ref{sec: applicationGW}. Elements of $R_{a,b}$ are Laurent series of the form
\[f(Q,u)=\sum_{h\geq H}f_h(Q)u^h\]
such that $f_h(Q)$ take the form
\[f_h(Q)=\frac{p(Q)}{(1-Q)^{b+h}}\]
and satisfy
\[Q^a f_h(Q^{-1})=(-1)^h f_h(Q).\]
For a Laurent series $f(q,Q)$ in variables $q,Q$ we say that $f\in R_{a,b}$ if it's in $R_{a,b}$ after the change of variables $q=e^{iu}$. We're required to show that
\[\widetilde \PT_{j\sfc}(q,Q)\in R_{2j, (r+2)j}\]
(see Section \ref{sec: applicationGW} for the definition of $\widetilde \PT$).

We consider the $u$-expansion of the series
\[p_\mu(e^{iu})p_{\nu}(e^{iu})e^{iu}=\sum_{n=-2}^\infty c_n^{\mu\nu}u^n.\]
The first few terms of the expansion are easily computed:
\[p_\mu(e^{iu})p_{\nu}(e^{iu})e^{iu}=-u^{-2}+\left(|\mu|+|\nu|-\frac{1}{12}\right)+\frac{iu}{2}\big(k(\mu)+k(\nu)\big)+O(u^2).\]
Plugging the expansion into equation \eqref{eq: Smunu} we get
\[S_{\mu\nu}=\CW_\mu\CW_\nu\exp\left(\sum_{n=-2}^\infty c_n^{\mu\nu}u^n\Li_{1-n}(Q)\right).\]
Defining now the modification
\[\widetilde S_{\mu\nu}=S_{\mu\nu}\exp\left(\frac{1}{u^2}\Li_3(Q)+\frac{1}{12}\Li_1(Q)\right)\]
we have the formula
\begin{align*}\widetilde S_{\mu\nu}=(1-Q)&^{-2(|\mu|+|\nu|)}\widetilde \CW_\mu\widetilde \CW_\nu\exp\left(\frac{iu}{4}\big(k(\mu)+k(\nu)\big)\frac{1+Q}{1-Q}\right)\\
&\times\exp\left(\sum_{n=2}^\infty c_n^{\mu\nu}u^n\Li_{1-n}(Q)\right).\end{align*}
where
\[\widetilde \CW_\mu(q)=q^{-\frac{k(\mu)}{4}}\CW_\mu(q)=\exp\left(\frac{iu}{4}k(\mu)\right)\CW_\mu(q).\]
We used the identities
\[\Li_1(Q)=-\log(1-Q) \textup{ and }\Li_0(Q)=\frac{Q}{1-Q}.\]
For $n\geq 2$, $\Li_{1-n}(Q)$ is a rational function with denominator $(1-Q)^{n}$ and satisfies the symmetry property
\[\Li_{1-n}(Q^{-1})=(-1)^n\Li_{1-n}(Q)\,.\]
Moreover, $\widetilde\CW$ satisfies $\widetilde \CW(q)=\widetilde \CW(1/q)$ (see \cite[Proposition 5.1]{Zhou03a}) so we have, for $m=|\mu|+|\nu|$,
\[\widetilde S_{\nu}\in R_{2m,2m}\,.\]
We can now easily finish the proof of Theorem~\ref{thm: localrationality}. From \eqref{eq: iqbalhirzebruch} we have
\[\widetilde \PT_{m\sfc}(q,Q)=(-1)^{rm}\sum_{|\mu_2|+|\mu_4|=m}\Bigg(q^{r(k(\mu_2)-k(\mu_4))}Q^{r|\mu_2|} \widetilde S_{\mu_2\mu_4}^2
\Bigg).\]
We pair the $(\mu_2, \mu_4)$ and $(\mu_4, \mu_2)$ terms and note that
\[q^{r(k(\mu_2)-k(\mu_4))}Q^{r|\mu_2|}+q^{r(k(\mu_4)-k(\mu_2))}Q^{r|\mu_4|}\in R_{0, rm}\]
so 
\[\PT_{m\sfc}(q,Q)\in R_{2m, (r+2)m}\,.\]

\bibliographystyle{mybstfile.bst}
\bibliography{refs.bib}
\end{document}